\numberwithin{equation}{subsection}
\theoremstyle{plain} %
\newtheorem{theorem}{\indent\sc\bf Theorem}[subsection]
\newtheorem{lemma}[theorem]{\indent\sc\bf Lemma}
\newtheorem{corollary}[theorem]{\indent\sc\bf Corollary}
\newtheorem{proposition}[theorem]{\indent\sc\bf Proposition}
\theoremstyle{definition} %
\newcommand{\bA}{\mathbb{A}}
\newcommand{\bC}{\mathbb{C}}
\newcommand{\bF}{\mathbb{F}}
\newcommand{\bH}{\mathbb{H}}
\newcommand{\bQ}{\mathbb{Q}}
\newcommand{\bR}{\mathbb{R}}
\newcommand{\bZ}{\mathbb{Z}}
\newcommand{\Q}{\mathbb{Q}}
\newcommand{\Z}{\mathbb{Z}}
\newcommand{\SL}{\mathrm{SL}}
\newcommand{\GL}{\mathrm{GL}}
\newcommand{\Sp}{\mathrm{Sp}}
\newcommand{\grH}{\mathfrak{H}}
\newcommand{\grp}{\mathfrak{p}}
\newcommand{\e}{\mathbf{e}}
\newcommand{\tp}{{}^t\!}
\newcommand{\tr}{\mathrm{tr}}
\newcommand{\Tr}{\mathrm{Tr}}
\renewcommand{\Im}{{\rm Im}}
\newcommand{\cA}{\mathcal{A}}
\newcommand{\cH}{\mathcal{H}}
\newcommand{\cJ}{\mathcal{J}}
\newcommand{\cL}{\mathcal{L}}
\newcommand{\cO}{\mathcal{O}}
\newcommand{\grP}{\mathfrak{P}}
\newcommand{\cS}{\mathcal{S}}
\newcommand{\cT}{\mathcal{T}}
\newcommand{\cX}{\mathcal{X}}
\newcommand{\sS}{\mathscr{S}}
\newcommand{\inv}{^{-1}}
\newcommand{\cross}{^{\times}}
\newcommand{\ord}{\mathrm{ord}}
\newcommand{\MM}{\mathrm{M}}
\newcommand{\dd}{\mathbf{d}}
\newcommand{\vecm}[1]{\left( \begin{array}{c} #1 \end{array} \right)}
\newcommand{\Nmat}[2]{\left( \begin{array}{cc} #1 \\ #2 \end{array} \right)}
\newcommand{\ol}[1]{\overline{#1}}
\newcommand{\suma}[1]{\sum_{#1} \,}
\newcommand{\sumd}[1]{\sum_{#1} {}^{'}\,}
\newcommand{\sumt}[1]{\sum_{#1} {}^2\,}
\begin{document}

\begin{center}
{\huge On the Arakawa lifting\\[0.5cm]
Part I: Eichler commutation relations}\\[1cm]
{\Large By Atsushi Murase and Hiro-aki Narita}
\end{center}

\bigskip

%
%
%
%


\abstract{
We investigate the theta correspondence of cusp forms 
for the dual pair $(O^*(4),\Sp(1,1))$
originally introduced by Tsuneo Arakawa in the non-adelic setting. 
We call this Arakawa lifting. 
In this paper, reformulating the theta correspondence 
in the adelic setting, 
we provide commutation relations of Hecke operators 
satisfied by Arakawa lifting at all non-Archimedean places, 
which is referred to as Eichler commutation relations 
for classical modular forms. 
Their Archimedean analogue is also given in terms of reproducing kernels.
}

\footnote{2020 Mathematics subject classification: 11F55, 11F66}
\footnote{Keywords: quaternion unitary group, theta lifting, Eichler commutation relations, automorphic $L$-functions}

 \section{Introduction}

This is the first part in the series of the papers, whose aim is
to investigate the theta correspondence of cusp forms for
the dual  pair $(O^*(4),\Sp(1,1))$, which we call Arakawa lifting.

The study of this paper has an origin in Eichler (\cite{Ei1}) 
on the action of Hecke operators on theta series. 
Examining the action of Hecke operators is a fundamental tool 
to study arithmetic properties of automorphic forms. 
On the other hand, the notion of theta series fits into 
the theta correspondence or 
Howe correspondence of automorphic representations of 
reductive groups in the representation theoretic context. 
In the spirit of the Langlands philosophy, 
the Hecke actions via the theta correspondence are 
preferred to understand in terms of L-morphisms 
between relevant L-groups, 
which are nothing but the notion of the Langlands functoriality. 
This point of view is taken up  by Rallis (\cite{Ra}) for example.
Our aim is to know the Hecke action on Arakawa lifting explicitly. 
We can refer to this as ``Eichler's commutation relations'' 
in the sense of the classical modular forms 
and also to ``functoriality'', following Rallis (\cite{Ra}). 
This lift is understood in terms of an inner form version 
of the endoscopic lift for the symplectic group of degree two 
(cf. \cite{Ro}). 
Different from the case of similitude groups 
 $GO^*(4)$ and $\mathrm{GSp}(1,1)$  in our previous paper \cite{MN1}, 
our commutation relations in this paper deal 
with local Hecke algebras for isometry groups  $O^*(4)$ and $\Sp(1,1)$ 
 at all non-Archimedean places including ramified ones. 
As an additional novelty of our results 
we should remark that their Archimedean analogue is given 
in terms of the reproducing kernels of holomorphic discrete series of 
$O^*(4)(\bR)$
 and quaternoinic discrete series of $\Sp(1, 1)(\bR)$.

Let $\Sp(1,q)$ be the quaternion unitary group of a hermitian form of signature
$(1,q)$.
Tsuneo Arakawa, in his unpublished notes, constructed a ``theta lifting'' 
from the space of holomorphic cusp forms on $\SL_2$ to that of
cusp forms on $\Sp(1,q)$.
Note that $(\SL_2,\Sp(1,q))$ does not form a dual pair when $q>1$ 
and Arakawa obtained the lifting by restricting the theta lifting
for the dual  pair $(\SL_2,SO(4,4q))$ regarding $\Sp(1,q)$
as a subgroup of $SO(4,4q)$.
When $q=1$, Arakawa showed that the image of the lifting is in the space
of cusp forms on $\Sp(1,1)$ generating the quaternionic discrete series
representation at the Archimedean place.

The second named author, based on Arakawa's notes, reformulated 
Arakawa's definition of automorphic forms on $Sp(1,q)$ 
and showed that 
the image of the lifting is in the space
of cusp forms on $\Sp(1,q)$ generating the quaternionic discrete series
representation at the Archimedean place when $q\ge1$
(\cite{Na1},\cite{Na3}). 
The first and  second named authors studied
the arithmetic properties of the theta lifting
for  similitude groups $(GO^*(4),\mathrm{GSp}(1,1))$
(\cite{MN1},\cite{MN2},\cite{MN3}).
The second named author investigated 
a relation between the theta lifting
for $(GO^*(4),\mathrm{GSp}(1,1))$ (or $(GO^*(4),\mathrm{GSp}^*(2))$) 
and that for $(GO(2,2),\mathrm{GSp}(2))$
in terms of the Jacquet-Langlands-Shimizu correspondence (\cite{Na4},\cite{Na5}).
We here note that $\mathrm{GSp}(1,1)$ and $\mathrm{GSp}^*(2)$ are
 inner forms of 
$\mathrm{GSp}(2)$.

In this paper, we construct a theta lifting $\cL$
for the dual  pair
$(H,G)$, where $H=O^*(4)$ (respectively $G=\Sp(1,1)$) is the 
the quaternion unitary  group
of  an anti-hermitian (respectively hermitian) form of degree $2$,
and study its basic properties.
In particular, we show the functoriality of $\cL$ at any 
non-Archimedean places including
ramified ones by showing Eichler commutation relations.
We also give an Archimedian analogue of 
commutation relations,  by which we show that
$\cL$ sends holomorphic cuspidal automorphic forms on $H$ to
cuspidal automorphic forms on $G$ generating the quaternionic
discrete series representation at the Archimedean place.
In the forthcoming Part II and III
 of the series of the papers, 
we will investigate 
the image of $\cL$ and an inner product formula for $\cL$ based on
the results of this paper.

To be more precise, 
let $B$ be a definite quaternion algebra
over $\bQ$ with the main involution $x\mapsto \ol{x}$.
We denote by $B^{(m,n)}$ the set of $m\times n$ matrices 
whose components are in $B$.
We fix an identification  $B\otimes_{\bQ}\bC=\mathrm{M}_2(\bC)$.
Let 
\begin{align*}
 H_{\bQ}&=\left\{h\in\mathrm{M}_2(B)\mid \tp\ol{h}\Nmat{0&1}{-1&0}h
=\Nmat{0&1}{-1&0}
\right\},\\
 G_{\bQ}&=\left\{g\in\mathrm{M}_2(B)\mid \tp\ol{g}\Nmat{0&1}{1&0}g
=\Nmat{0&1}{1&0}
\right\}.
\end{align*}
Then $(H,G)$ forms a dual  pair.
We fix an integer $\kappa >8$ and let $\sigma_{\kappa}$ be
the symmetric tensor representation of 
$\mathrm{M}_2(\bC)$ on $V_{\kappa}$,
the space of homogeneous polynomials in $X$ and $Y$ of total degree $\kappa$.
Let $r$ be the Weil representation of $H_{\bA}\times G_{\bA}$ on 
$W=\sS(B_{\bA}^{(2,1)})\otimes_{\bC}V_{\kappa}$.
Here $\sS(B_{\bA}^{(2,1)})$ is the space of Schwartz-Bruhat functions
on $B_{\bA}^{(2,1)}$.
 With  suitable
test functions $\varphi_0,\varphi_0^*\in W$, we construct  theta kernels
\[
 \theta(h,g)=\sum_{X\in B^{(2,1)}}\,r(h,g)\varphi_0(X) 
\]
and
\[
 \theta^*(h,g)=\sum_{X\in B^{(2,1)}}\,r(h,g)\varphi_0^*(X). 
\]
Let $\cS_{\kappa}^H$ be the space of $V_{\kappa}$-valued holomorphic cusp forms
on $H_{\bQ}\backslash H_{\bA}$ (see Section \ref{sec:af-H}). We also let
 $\cS_{\kappa}^G$ be the space of $V_{\kappa}$-valued cusp forms
on $G_{\bQ}\backslash G_{\bA}$ generating the quaternionic discrete
series representation at the Archimedean place 
(see Section \ref{sec:af-G}).

We show that the theta lift 
\[
 \cL(f)(g):=\int_{H_{\bQ}\backslash H_{\bA}}\,\theta(h,g)f(h)dh
\qquad (g\in G_{\bA})
\]
of $f\in \cS_{\kappa}^H$
is in $\cS_{\kappa}^G$.
We also show that
the theta lift 
\[
 \cL^*(F)(h):=\int_{G_{\bQ}\backslash G_{\bA}}\,\theta^*(h,g)F(g)dg
\qquad (h\in H_{\bA})
\]
of $F\in \cS_{\kappa}^G$
is in $\cS_{\kappa}^H$
and that $\cL^*$ is the adjoint of $\cL$ with respect to the 
Petersson inner products on $\cS_{\kappa}^H$ and $\cS_{\kappa}^G$
respectively.
Note that the proof of these facts for Arakawa's lifting in \cite{Na2}
are based on the study of the lifts of
Poincar\'e series  on $\SL(2)$.
On the other hand,
 the method of the proof in this paper uses
an Archimedian analogue of
Eichler commutation relations:
\begin{equation}
 \label{eq:ecr-intro}
 c_{\kappa}^H\,\int_{H_{\infty}}\,\theta(hx,g)\omega_{\kappa}(x)dx
=  c_{\kappa}^G\,\int_{G_{\infty}}\,\Omega_{\kappa}(y)\theta(h,gy\inv)dy,
\end{equation}
where
\begin{align*}
 \omega_{\kappa}(h)&
=\sigma_{\kappa}\left(2\inv(-\sqrt{-1},1)h\vecm{\sqrt{-1}\\1}\right)\inv
\qquad(h\in H_{\infty}),\\
 \Omega_{\kappa}(g)& =(\Delta_g\ol{\Delta_g})\inv\,\sigma_{\kappa}(\Delta_g)\inv
\qquad(g\in G_{\infty},\Delta_g=2\inv (1,1)g\vecm{1\\1}).
\end{align*}
are spherical functions on $H_{\infty}=H(\bR)$ and $G_{\infty}=G(\bR)$ respectively,
 and $c_{\kappa}^H$ and $c_{\kappa}^G$ are constants defined in 
Sections \ref{subsec:af-H} and \ref{subsec:af-G} respectively.

We also show the correspondence of the local factors of the standard
$L$-functions
at all  non-Archimedean places for $\cL$.
Namely we prove that, if $f\in \cS_{\kappa}^H$ is a Hecke eigenform,
so is $\cL(f)$ and the standard $L$-function $L(s,\cL(f),\mathrm{std})$
of $\cL(f)$
coincides with the product of the Riemann zeta function and 
the standard $L$-function $L(s,f,\mathrm{std})$ of $f$.
Similar facts for $\cL^*$ are also proved.
The proof is based on the following 
Eichler commutation relations.
Let $p$ be a finite place of $\bQ$. Denote by $\cH(H_p,U_p)$ and 
$\cH(G_p,K_p)$ the Hecke algebras, where $U_p$ and $K_p$ are
maximal open compact subgroups of $H_p$ and $G_p$ defined in Section \ref{sec:groups} respectively.
Then
there exists an algebra homomorphism $\upsilon\colon \cH(G_p,K_p)\,
\to\,\cH(H_p,U_p)$ such that
\[
\int_{H_p}\,\theta(hx,g)\,\upsilon(\alpha)(x)dx=
 \int_{G_p}\,\theta(h,gy\inv)\,\alpha(y)dy
\]
holds for $\alpha\in\cH(G_p,K_p)$.

The paper is organized as follows.
In Section \ref{sec:notation}, we prepare the notation and
several basic facts on the  quaternion algebra used in the later sections.
In Section \ref{sec:groups}, we define two algebraic groups $H$ and $G$.
In Sections \ref{sec:af-H} and \ref{sec:af-G}, we introduce the space
of automorphic forms on $H$ and $G$ respectively, and define the
standard $L$-functions attached to Hecke eigenforms.
In Section \ref{sec:Arakawa-lifting}, after recalling the definition
of the Weil representation of $H_{\bA}\times G_{\bA}$,
we define the theta kernel $\theta\colon H_{\bA}\times G_{\bA}\to\mathrm{End}(V_{\kappa})$ with a suitable test function and the theta lifting $\cL$.
In Section \ref{sec:adjoint-AL},  we define the adjoint $\cL^*$ of 
$\cL$.
The heart of the paper is  the following two sections.
In Section \ref{sec:commutation-relations}, we show that $\cL$
defines a linear mapping from the space of $V_{\kappa}$-valued
holomorphic cusp forms on $H$ to the space of $V_{\kappa}$-valued
cusp forms on $G$ generating the quaternionic discrete series representation
at the Archimedean place by an analogue (\ref{eq:ecr-intro}) of Eichler 
commutation relations (Theorem \ref{th:commutation-relations}). The proof of (\ref{eq:ecr-intro}) is quite technical,
but we give full details of the proof since the method seems to be a  new
one.
In Section \ref{sec:functoriality}, we state the functoriality properties
of $\cL$ and $\cL^*$ as a correspondence of the standard $L$-functions
 (Theorem \ref{th:functoriality}).
Note that, in our previous work \cite{MN1}, the functoriality is
given as a correspondence of the spinor $L$-functions.
The proof of the functoriality is based on 
the Eichler commutation relations (Theorem \ref{th:Eichler-commutation-relations}).
Section \ref{sec:ecr-unramified} 
(respectively Section \ref{sec:ecr-ramified}) 
is devoted to a sketch of the proof
of the commutation relations
in  the unramified (respectively ramified) case.
Detailed accounts of Sections \ref{sec:ecr-unramified} and
\ref{sec:ecr-ramified} are given in Sections \ref{sec:ecr-unramified-details} and \ref{sec:ecr-ramified-details} respectively. 

 \section{Notation and Preliminaries}
\label{sec:notation}

 \subsection{}
\label{subsec:notation-1}

We denote by $\bZ$, $\bQ$, $\bR$ and $\bC$ the ring of rational integers and the fields of rational numbers, real numbers and complex numbers,
respectively. For a place $v$ of $\bQ$, $\bQ_v$ denotes the $v$-adic completion of $\bQ$. For a finite place $p$ of $\bQ$, let $|x|_p$ be the 
$p$-adic absolute value of $x\in\bQ_p\cross$.
We denote by $\bQ_{\bA}$ the adele ring of $\bQ$.

For a ring $R$, denote by $R^{(m,n)}$ the set of $m\times n$ matrices
whose components are in $R$.
We also write $\mathrm{M}_m(R)$ for $R^{(m,m)}$.
Let $X$ be an algebraic group $X$ over $\bQ$.
We denote by $X_v$  the group of $\bQ_v$-rational
points of $X$ for a place $v$ of $\bQ$, 
and by $X_{\bA}$  the adelization of $X$.

For $x\in\bC$, we put $\e(x)=\exp(2\pi\sqrt{-1}x)$.
Let $\psi$ be the additive character of $\bQ_{\bA}/\bQ$ such that
$\psi(x_{\infty})=\e(x_{\infty})$ for $x_{\infty}\in\bR$.
For a place $v$ of $\bQ$, 
the restriction of $\psi$ to $\bQ_v$ is denoted by $\psi_v$.

Denote by $X_{\mathrm{unr}}(\bQ_p\cross)$ the set of quasi-characters
of $\bQ_p\cross$ trivial on $\bZ_p\cross$.
For a prime number $p$, we write $\mathbb{F}_p$ for the finite field
$\bZ/p\bZ$.

We put
\[
 \delta(P)=
\begin{cases}
 1&\text{if $P$ holds},\\
 1&\text{if $P$ does not hold}
\end{cases}
\]
for a condition $P$ and $\delta_{i,j}=\delta(i=j)$
for $i,j\in\bZ$.

 \subsection{}
\label{subsec:B}

Throughout the paper, we fix a definite quaternion algebra $B$ 
over $\bQ$ with discriminant $d_B$.
Let $x\mapsto \ol{x}$ be the main involution of $B$.
For a subset $X$ of $B$, put $X^-=\{\alpha\in X\mid \alpha+\ol{\alpha}=0\}$.

We write $N_r$ and $\tr_r$ for the reduced norm and the reduced trace
of $\mathrm{M}_r(B)$ to $\bQ$, respectively.
We often suppress the subscript $r$ if there is no fear of confusion.

We fix an isomorphism from $B\otimes_{\bQ}\bR$ to the 
Hamilton quaternion algebra
\[
 \bH=\bR\cdot 1+\bR\cdot i+\bR\cdot j+\bR\cdot k\qquad(i^2=j^2=k^2=-1,\,ij=-ji=k)
\]
and identify $B\otimes_{\bQ}\bR$ with $\bH$.
Throughout the paper  we always distinguish the element $i$ of $\bH$ from
the imaginary unit $\sqrt{-1}\in\bC$.
We define an embedding $A\colon \bH\to\mathrm{M}_2(\bC)$ by
\begin{equation}
 \label{eq:def-A}
A(a_0+a_1i+a_2j+a_3k)
=
\Nmat{a_0+a_1\sqrt{-1}&a_2+a_3\sqrt{-1}}{-a_2+a_3\sqrt{-1} & a_0-
a_1\sqrt{-1}}
\end{equation}
for $a_0,a_1,a_2,a_3\in\bR$.
The map $A$ extends to an isomorphism of $\bH\otimes_{\bR}\bC$
to $\mathrm{M}_2(\bC)$.

 \subsection{}

The measures on $\bR$ and $\bC$ are taken to be the usual Lebesgue measures.

We define the measures $d_L\alpha$ on $\bH$  and $d_L\beta$
on $\bH^-$ by $d_L\alpha=da_0da_1da_2da_3\;\;
(\alpha=a_0+a_1i+a_2j+a_3k, a_i\in\bR)$ and
$d_L\beta=db_1db_2db_3\;\;(\beta=b_1i+b_2j+b_3k, b_i\in\bR)$,
respectively.
The invariant measure $d\cross t$ on $\bH^1=\{t\in \bH\mid N(t)=1\}$ is normalized by
$\mathrm{vol}(\bH^1)=1$.

 \subsection{}
\label{subsec:n-p-measure}

For a place $v$ of $\bQ$, we define the measures 
$d\alpha_v$ on $B_v$  and $d\beta_v$
on $B_v^-$ so that they are self-dual with respect to the pairings
$(\alpha_v,\alpha_v')\mapsto \psi_v(\tr(\ol{\alpha_v}\alpha_v'))$
and 
$(\beta_v,\beta_v')\mapsto \psi_v(\tr(\ol{\beta_v}\beta_v'))$,
respectively.
Note that, for $v=\infty$, $d\alpha_{\infty}=4d_L\alpha_{\infty}$.
We define the measure on $B_v^{(m,n)}$ by
$dX=\prod_{1\le i\le m,\,1\le j\le n}dx_{ij}\;\;(X=(x_{ij}))$,
and the measures $d\alpha$ on $B_{\bA}$
and $d\beta$ on $B^-_{\bA}$ by the product measures
$\prod_{v\le\infty}d\alpha_v$ and $\prod_{v\le\infty}d\beta_v$,
respectively.

 \subsection{}
\label{subsec:Schwartz-Bruhat}

For a place $v$ of $\bQ$, we denote by 
$\mathscr{S}(B_v^{(m,n)})$ the space of Schwartz-Bruhat functions on $B_v^{(m,n)}$.

 \subsection{}
\label{subsec:?}

For a positive integer $\kappa$,  let $V_{\kappa}$ be the space
of homogeneous polynomials in the 
two variables $X$ and $Y$ of total degree $\kappa$.
Then $\{P_r\}_{r=0}^{\kappa}$ forms a basis of $V_{\kappa}$
and $\dim V_{\kappa}=\kappa+1$, where
\begin{equation}
 \label{eq:Pr}
P_r(X,Y)=X^r\,Y^{\kappa-r}\in V_{\kappa}\qquad(0\le r\le \kappa).
\end{equation}
We define $\sigma_{\kappa}\colon \bH\to \mathrm{End}(V_{\kappa})$ by
\begin{equation}
\label{eq:sigma}
 (\sigma_{\kappa}
(\alpha)P)(X,Y)=P((X,Y)A(\alpha))
\qquad(\alpha\in\bH,P\in V_{\kappa}),
\end{equation}
where $A$ is defined by (\ref{eq:def-A}).
Then $\sigma_{\kappa}$ defines
 an irreducible representation of $\bH$ on $V_{\kappa}$.
The inner product of $V_{\kappa}$ defined by
\[
 ( P,P')_{\kappa}=\int_{\bC^2}P(x,y)\,\ol{P'(x,y)}e^{-2\pi
(|x|^2+|y|^2)}dxdy
\qquad(P,P'\in V_{\kappa})
\]
is $A(\bH^1)$-invariant.
Put $\|P\|_{\kappa}=\sqrt{(P,P)_{\kappa}}$ for $P\in V_{\kappa}$.
The following formula is often used in this paper.

\begin{lemma}\label{lem:Ei}
 For $\alpha,\beta\in\bH$, we have
\[
 \int_{\bH}\e(\tr(\alpha y)+\sqrt{-1}\ol{y}y)\,\sigma_{\kappa}(y+\beta)dy
=\e(\sqrt{-1}\ol{\alpha}\alpha)\,\sigma_{\kappa}(\sqrt{-1}\ol{\alpha}+\beta).
\]
\end{lemma}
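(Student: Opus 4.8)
The plan is to recognise the left-hand side as a Gaussian Fourier transform and to exploit the harmonicity of the matrix coefficients of $\sigma_{\kappa}$. Throughout I take $dy$ to be the self-dual measure on $\bH=B_{\infty}$, so that $\e(\sqrt{-1}\,\ol{y}y)=e^{-2\pi N(y)}$ (writing $N(y)=\ol{y}y$) and the Gaussian has total mass $\int_{\bH}e^{-2\pi N(y)}\,dy=1$; this normalisation is exactly what makes the stated identity exact.

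First I would dispose of the linear exponential by completing the square. Using $\tr(\ol{x}y)=2\langle x,y\rangle$ for the standard Euclidean pairing on $\bH\cong\bR^4$ (so that $\tr(\alpha y)=2\langle\ol{\alpha},y\rangle$ and $N(y)=\langle y,y\rangle$), the exponent $2\pi\sqrt{-1}\,\tr(\alpha y)-2\pi N(y)$ becomes $-2\pi\langle y-c,y-c\rangle-2\pi N(\alpha)$, where $c=\sqrt{-1}\,\ol{\alpha}$. The constant factor is precisely $e^{-2\pi N(\alpha)}=\e(\sqrt{-1}\,\ol{\alpha}\alpha)$, matching the claimed prefactor. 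Shifting the contour $y\mapsto y+c$ (a purely imaginary translation of $\bR^4$, justified by Cauchy's theorem together with the Gaussian decay of the entire integrand) then reduces the claim to the single identity
\begin{equation*}
\int_{\bH}e^{-2\pi N(y)}\,\sigma_{\kappa}(y+w)\,dy=\sigma_{\kappa}(w),\qquad w=\sqrt{-1}\,\ol{\alpha}+\beta .
\end{equation*}

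The heart of the matter is that each matrix coefficient of $\sigma_{\kappa}$, viewed as a function on $\bH\cong\bR^4$, is harmonic for the Euclidean Laplacian $\Delta=\sum_{m=0}^{3}\partial_{y_m}^2$. I would verify this directly: writing $A(y)=\left(\begin{smallmatrix} a & b \\ -\ol{b} & \ol{a}\end{smallmatrix}\right)$ with $a=y_0+y_1\sqrt{-1}$, $b=y_2+y_3\sqrt{-1}$, one has $N(y)=|a|^2+|b|^2$ and $\Delta=4(\partial_a\partial_{\ol{a}}+\partial_b\partial_{\ol{b}})$. On the basis vector $P_r$ one computes $\sigma_{\kappa}(y)P_r=(aX-\ol{b}Y)^r(bX+\ol{a}Y)^{\kappa-r}$, and applying $\partial_a\partial_{\ol{a}}+\partial_b\partial_{\ol{b}}$ produces the common factor $XY-YX=0$; hence $\Delta\sigma_{\kappa}=0$. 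Conceptually, the entries of $\sigma_{\kappa}$ are the degree-$\kappa$ matrix coefficients of the $\mathrm{SU}(2)$-representation, i.e.\ spherical harmonics on $S^3$, which are harmonic homogeneous polynomials on $\bR^4$.

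Given harmonicity, the displayed identity follows from the mean value property: since the weight $e^{-2\pi N(y)}$ is radial and $y\mapsto\sigma_{\kappa}(y+w)$ is harmonic, the spherical average over $\{N(y)=R^2\}$ equals $\sigma_{\kappa}(w)$ for every $R$, and integrating against the radial Gaussian of total mass $1$ yields $\sigma_{\kappa}(w)$. Equivalently, $\int_{\bH}p(y)e^{-2\pi N(y)}\,dy=(e^{\Delta/(8\pi)}p)(0)$ for polynomials $p$, and harmonicity kills every term beyond the constant. The identity extends to complex $w$ by analytic continuation, since both sides are polynomial in $w$. I expect the main obstacle to be the rigorous justification of the complex contour shift and, conceptually, the harmonicity computation, which is exactly what forces all higher Gaussian moments to vanish and leaves only the value at the centre.
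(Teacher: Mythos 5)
Your proposal is correct and follows essentially the same route as the paper: the paper's proof simply observes that the matrix coefficients of $\sigma_{\kappa}$ are spherical (harmonic) functions on $\bH\simeq\bR^4$ and refers to Eichler's argument, which is precisely the completion of the square, the contour shift, and the Gaussian--mean-value computation you carry out in full. Your normalisation check (the self-dual measure gives the Gaussian total mass $1$) and the direct verification that $\Delta\sigma_{\kappa}=0$ supply exactly the details the paper delegates to the reference.
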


\begin{proof}
 Note that the matrix coefficients of $\sigma_{\kappa}(y)\;(y\in \bH)$
are spherical functions on $\bH\simeq \bR^4$. Then the lemma follows from
an argument in Chapter 1, Section 2 in \cite{Ei2}.
\end{proof}

 \section{Groups}
\label{sec:groups}

 \subsection{}
\label{subsec:Groups-lattice}

We henceforth fix a maximal order $\cO$ of $B$.
For a finite place $p$ of $\bQ$, let $\cO_p=\cO\otimes_{\Z}\bZ_p$.
For $p|d_B$, denote by $\grP_p$ the maximal ideal of $\cO_p$.
We define a lattice $L_p$ (respectively $L_p'$) of $B_p^{(1,2)}$
(respectively $B_p^{(2,1)}$) by
\begin{align*}
 L_p &=
\begin{cases}
 (\cO_p,\cO_p)& (p\not | d_B)\\
 (\cO_p,\grP_p\inv)&(p|d_B)
\end{cases},\\
L_p'&=\vecm{\cO_p\\\cO_p}.
\end{align*}
Then $L_p$ and $L'_p$ are maximal $\cO_p$- lattices of
$B_p^{(1,2)}$ and $B_p^{(2,1)}$ respectively
(see \cite{Shi}, Section  3.2).

 \subsection{}
\label{subsec:groups-H}

Let $H$ be an algebraic group  over $\bQ$ defined by
\begin{align*}
 H_{\bQ} &=\left\{h\in \MM_2(B) \mid \tp\ol{h}Jh=J\right\}, J=\Nmat{0&1}{-1&0}.
\end{align*}
Note that a homomorphism 
\[
 H'\ni \left(\Nmat{a&b}{c&d},\alpha\right)\mapsto \alpha\Nmat{a&b}{c&d}\in H
\]
defines an isomorphism $H'/Z'\simeq H$,
where $\bQ$-algebraic groups $H'$ and $Z'$ are defined by
\begin{align*}
 H'_{\bQ}&=\{(h,\alpha)\in \GL_2(\bQ)\times B\cross\mid \det h\cdot N(\alpha)=1\},\\
Z'_{\bQ}&=\{(a1_2,a\inv)\mid a\in\bQ\cross\}.
\end{align*}

For a finite place $p$ of $\bQ$, we put $U_p=\{u\in H_p\mid u L'_p=L'_p\}$.
Then $U_p$ is a maximal open
compact subgroup of $H_p$. We put $U_f=\prod_{p<\infty}U_p$.

We define an action of $H_{\infty}$ on the upper half plane $\grH=\{\tau\in\bC\mid \Im(\tau)>0\}$ by
\[
 h\cdot \tau=(a\tau+b)(c\tau+d)\inv\qquad\left(h=\alpha\Nmat{a&b}{c&d}\in H_{\infty},
\tau\in\grH\right).\
\]
Then
\begin{align*}
 U_{\infty}&=\{u\in H_{\infty}\mid u\cdot\sqrt{-1}=\sqrt{-1}\}\\
&=\left\{t\,\Nmat{\cos\theta&\sin\theta}{-\sin\theta&\cos\theta}\mid
t\in\bH^1,\,0\le \theta<2\pi\right\}
\end{align*}
is a maximal compact subgroup of $H_{\infty}$ and
the representation $\rho_{\kappa}$ of $U_{\infty}$ on $V_{\kappa}$ defined by
\[
 \rho_{\kappa}\left(t\,\Nmat{\cos\theta&\sin\theta}{-\sin\theta&\cos\theta}\right)=
e^{-\sqrt{-1}\kappa\theta}\sigma_{\kappa}(t)
\qquad(t\in\bH^1,\,0\le \theta<2\pi)
\]
 is irreducible.

We normalize the Haar measure $dh$  on $H_{\bA}$ by
$\mathrm{vol}(H_{\bQ}\backslash H_{\bA})=1$.

 \subsection{}
\label{subsec:groups-G}

Let $G=\mathrm{Sp}(1,1)$ be the quaternion unitary group of signature
$(1,1)$ defined over $\bQ$ with
\[
 G_{\bQ}=\left\{g\in \MM_2(B)\mid \tp \ol{g}Qg=Q\right\},\quad 
Q=\Nmat{0&1}{1&0}.
\]
This is also known as the indefinite symplectic group of signature
$(1,1)$.

For a finite place $p$ of $\bQ$, we put
\[
 K_p=\left\{k\in G_p\mid L_pk=L_p \right\}.
\]
Then $K_p$
 is a maximal open compact subgroup of $G_p$.
Note that, for $p|d_B$, a maximal open compact subgroup of $G_p$ is
conjugate in $G_p$ to either $K_p$ or $G_p\cap \GL_2(\cO_p)$.
Put $K_f=\prod_{p<\infty}K_p$.

The real points $G_{\infty}$ of $G$ acts on $\cX=\{z\in\bH\mid \tr(z)>0\}$
by
\[
 g\cdot z=(az+b)(cz+d)\inv\qquad
\left(g=\Nmat{a&b}{c&d}\in G_{\infty},\,z\in\cX\right).
\]
Then
\[
 K_{\infty}=\{k\in G_{\infty}\mid k\cdot 1=1\}
=\left\{\Nmat{a&b}{b&a}\mid a,b\in\bH,\,N(a)+N(b)=1,\,\tr(\ol{a}b)=0\right\}
\]
is a maximal compact subgroup of $G_{\infty}$ isomorphic to
$\bH^1\times \bH^1$ via $\Nmat{a&b}{b&a}\mapsto (a+b,a-b)$.
For a positive integer $\kappa$, let $\tau_{\kappa}$ and $\tau_{\kappa}^-$
 be  the representations of $K_{\infty}$ on $V_{\kappa}$ defined by
\begin{align*}
 \tau_{\kappa}(k)&=\sigma_{\kappa}(a+b)\\
 \tau_{\kappa}^-(k)&=\sigma_{\kappa}(a-b)
\qquad
\left(k=\Nmat{a&b}{b&a}\in K_{\infty}\right).
\end{align*}
Note that $\tau_{\kappa}$ and $\tau_{\kappa}^-$ are irreducible and that
$\tau_{\kappa}^-$ is not equivalent to $\tau_{\kappa}$.

We normalize the Haar measure  on $G_p$ by
$\mathrm{vol}(K_p)=1$.
We normalize the Haar measure on $G_{\infty}$  by
\[
 dg_{\infty}=a^{-6}d_L\beta d\cross a\,dk,
\]
where 
\[
 g_{\infty}=\Nmat{1&\beta}{0&1}\Nmat{a&0}{0&a\inv}k
\qquad(\beta\in\bH^-,a\in\bR_{>0},k\in K_{\infty})
\]
and
$dk$ is normalized by $\mathrm{vol}(K_{\infty})=1$.
Define a Haar measure $dg$ on $G_{\bA}$  by
$dg=\prod_{v\le\infty}dg_v$.

 \section{Automorphic forms on $H$}  \label{sec:af-H}

 \subsection{} \label{subsec:af-H}

We normalize the Haar measure $dx_{\infty}$ on $H_{\infty}$
by
\begin{equation}
 \label{eq:measure-H}
\int_{H_{\infty}}\,\varphi(x_{\infty})d_{\infty}=
\int_{\bR}db\,\int_0^{\infty}a^{-2}d\cross a\,\int_{U_{\infty}}du_{\infty}
\,\varphi\left(\Nmat{1&b}{0&1}\Nmat{a&0}{0&a\inv}u_{\infty}\right)
\end{equation}
for $\varphi\in L^1(H_{\infty})$,
where $du_{\infty}$ is normalized by $\mathrm{vol}(U_{\infty})=1$.

For a positive integer $\kappa$, 
we define an $\mathrm{End}(V_{\kappa})$-valued spherical function
$\omega_{\kappa}$ on $H_{\infty}$ by
\begin{align*}
 \omega_{\kappa}(h) 
&=\sigma_{\kappa}\left(\dfrac{1}{2}\left(-\sqrt{-1},1\right)
h\vecm{\sqrt{-1}\\1}\right)\inv
\qquad(h\in H_{\infty}).
\end{align*}
It is easily verified that
\begin{equation}
 \label{eq:U-equivariance-omega}
 \omega_{\kappa}(uhu')=\rho_{\kappa}(u')\inv 
\omega_{\kappa}(h)\rho_{\kappa}(u)\inv
\qquad(u,u'\in U_{\infty},h\in H_{\infty})
\end{equation}
and that $\omega_{\kappa}$ is integrable on $H_{\infty}$ if $\kappa>2$.

Suppose that $\kappa>2$.
Let $\cS_{\kappa}^H$ be the space of smooth functions 
$f\colon H_{\bQ}\backslash H_{\bA} \to V_{\kappa}$ satisfying
the following three conditions:
\begin{itemize}
 \item[(1)] \quad$f(hu_fu_{\infty})=\rho_{\kappa}(u_{\infty})\inv f(h)\quad(h\in H_{\bA},u_f\in U_f,u_{\infty}\in U_{\infty}$).

 \item[(2)] \quad $\|f(h)\|_{\kappa}$ is bounded on $H_{\bA}$.
\item[(3)] \quad For $h\in H_{\bA}$, we have
\begin{equation}
 \label{eq:reproducing-H}
 f(h)=c_{\kappa}^H\,\int_{H_{\infty}}\,
\omega_{\kappa}(x_{\infty})f(hx_{\infty}\inv)
dx_{\infty},
\end{equation}
where 
\begin{equation}
 \label{eq:c_kappa^H}
c_{\kappa}^H=\dfrac{\kappa-1}{2\pi}.
\end{equation}
\end{itemize}
Note that
\begin{equation}
 \label{eq:cuspidality-f}
\int_{\bQ\backslash\bQ_{\bA}}\,f\left(\Nmat{1&b}{0&1}h\right)db=0
\qquad(h\in H_{\bA}),
\end{equation}
which is deduced from the boundedness and the Fourier expansion
of $f$.

We define an inner product $\langle,\rangle_H$ on $\cS_{\kappa}^H$
by%
\[
 \langle f,f'\rangle_H=
\int_{H_{\bQ}\backslash H_{\bA}}( f(h),f'(h))_{\kappa}dh
\qquad(f,f'\in\cS_{\kappa}^H),
\]
where $dh$ is the right-invariant measure 
on $H_{\bQ}\backslash H_{\bA}$ induced by the Haar measure on $H_{\bA}$
given in Section \ref{subsec:groups-H}.

 \subsection{}
\label{subsec:Omega-H}

For a finite place $p$ of $\bQ$, let
$\cH(H_p,U_p)$ be the Hecke algebra: By definition, $\cH(H_p,U_p)$
is the space of compactly supported bi-$U_p$-invariant functions on $H_p$
and the product is defined by
\[
 \left(\varphi_p*\varphi'_p\right)
(h_p)=\int_{H_p}\varphi_p(h_px_p\inv)\varphi'_p(x_p)dx_p
\qquad(\varphi_p,\varphi'_p\in \cH(H_p,U_p),h_p\in H_p).
\]
Here the Haar measure $dx_p$ is normalized by $\mathrm{vol}(U_p)=1$.
The Hecke algebra $\cH(H_p,U_p)$ acts on $\cS_{\kappa}^H$ by
\[
 \left(f*\varphi_p\right)(h)=\int_{H_p}f(hx_p\inv)\varphi_p(x_p)dx_p
\qquad(f\in\cS_{\kappa}^H,\varphi_p\in\cH(H_p,U_p),h\in H_{\bA}).
\]

We say that 
$f\in\cS_{\kappa}^H$ is a \emph{Hecke eigenform}
if
\begin{equation}
 \label{eq:Hecke-eigenvalues-H}
f*\varphi_p=\lambda_p^f(\varphi_p)\,f
\end{equation}
holds for any finite place $p$ of $\bQ$ and any $\varphi_p\in \cH(H_p,U_p)
$ with   a $\bC$-algebra homomorphism $\lambda_p^f$ from
$\cH(H_p,U_p)$ to $\bC$.
The following result is a straightforward consequence of 
\cite{Sa}, Section 5.2.

\begin{lemma}
\label{lem:Hecke-eigen-H}
Suppose that $f\in\cS_{\kappa}^H$ is a Hecke eigenform and
 let $p$ be a finite place of $\bQ$. 
\begin{itemize}
 \item[(1)]\ There exists uniquely a bi-$U_p$-invariant
function $\omega_p^f$ on $G_p$ satisfying $\omega_p^f(1)=1$ and
\[
 \int_{H_p}\,\omega_p^f(hx\inv)\varphi_p(x)dx
=\lambda_p^f(\varphi_p)\omega_p^f(h)\qquad(h\in H_p)
\]
for any $\varphi_p\in\cH(H_p,U_p)$.
 
 \item[(2)] \ We have
\[
 \int_{U_p}\,f(hu_px)du_p=\omega_p^f(x_p)f(hx_p\inv x)
\]
for $h, x\in H_{\bA}$, where $x_p$ is the $p$-component of $x$.
\end{itemize}
\end{lemma}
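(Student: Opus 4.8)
The plan is to read (1) as the existence–uniqueness statement for the \emph{spherical function} attached to the Hecke character $\lambda_p^f$, and then to deduce (2) from (1) by a purely formal averaging over $U_p$. Throughout, elements written $u_p,x_p$ are understood to be embedded at the place $p$, and I use that $f$ is right $U_p$-invariant (condition (1) in the definition of $\cS_{\kappa}^H$) and that $\mathrm{vol}(U_p)=1$. For the \emph{uniqueness} in (1) no structure theory is needed: suppose $\omega_p^f$ is bi-$U_p$-invariant on $H_p$, normalized by $\omega_p^f(1)=1$, and satisfies the eigenequation. Fix a double coset $U_pgU_p$ and take $\varphi_p=\mathbf{1}_{U_pgU_p}$. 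Evaluating the eigenequation at $h=1$ and using that $y\in U_pgU_p$ gives $y\inv\in U_pg\inv U_p$, whence $\omega_p^f(y\inv)=\omega_p^f(g\inv)$ by bi-$U_p$-invariance, I obtain
\[
\mathrm{vol}(U_pgU_p)\,\omega_p^f(g\inv)=\lambda_p^f(\mathbf{1}_{U_pgU_p}).
\]
As $U_pg\inv U_p$ runs over all double cosets this pins down $\omega_p^f$ completely. The same computation shows more generally that every bi-$U_p$-invariant eigenfunction with character $\lambda_p^f$ equals its value at $1$ times $\omega_p^f$; I will use this in (2).

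For \emph{existence} the one nonformal input is that $\cH(H_p,U_p)$ is commutative at every finite place, including the ramified ones $p\mid d_B$. Via the isomorphism $H\simeq H'/Z'$ the group $H_p$ is $(\GL_2(\bQ_p)\times B_p\cross)/\bQ_p\cross$, and at $p\mid d_B$ the factor $B_p\cross$ is compact modulo its center, so the relevant Hecke algebra splits into the commutative spherical Hecke algebra of $\GL_2(\bQ_p)$ and the commutative Hecke algebra of $B_p\cross$ relative to $\cO_p\cross$ (a group algebra of $\bZ$ via the reduced-norm valuation). Granting commutativity, the existence of a normalized bi-$U_p$-invariant eigenfunction realizing $\lambda_p^f$ is exactly the spherical-function theory of \cite{Sa}, Section 5.2, which I would cite. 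This existence step is the only real obstacle; the point that makes it work uniformly at \emph{all} finite places, rather than only the unramified ones, is precisely the commutativity at $p\mid d_B$ just noted.

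For (2) I would first strip off the part of $x$ away from $p$: write $x=x'x_p$ with $x'$ trivial at $p$. Since $u_p$ and $x'$ lie in disjoint components they commute, so with $h'=hx'$ one has $\int_{U_p}f(hu_px)\,du_p=\int_{U_p}f(h'u_px_p)\,du_p=:\Psi(x_p)$, and $h'=hx_p\inv x$. Right $U_p$-invariance of $f$ together with the invariance of Haar measure on $U_p$ shows that $\Psi$ is bi-$U_p$-invariant, and $\Psi(1)=f(h')=f(hx_p\inv x)$. Interchanging the two integrations and invoking the eigenform relation $f*\varphi_p=\lambda_p^f(\varphi_p)f$ gives, for every $\varphi_p\in\cH(H_p,U_p)$,
\[
\int_{H_p}\Psi(x_py\inv)\varphi_p(y)\,dy
=\int_{U_p}\Big(\int_{H_p}f(h'u_px_py\inv)\varphi_p(y)\,dy\Big)du_p
=\lambda_p^f(\varphi_p)\,\Psi(x_p),
\]
so $\Psi$ is a bi-$U_p$-invariant eigenfunction with character $\lambda_p^f$. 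By the uniqueness clause of (1), $\Psi=\Psi(1)\,\omega_p^f=f(hx_p\inv x)\,\omega_p^f$, which is the asserted identity. Everything here beyond the cited existence is bookkeeping with Haar measures and double cosets.
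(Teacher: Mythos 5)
Your proposal is correct and follows exactly the route the paper intends: the paper gives no proof beyond citing Satake (Section 5.2), and your argument — uniqueness by testing the eigenequation at $h=1$ against characteristic functions of double cosets, existence from commutativity of $\cH(H_p,U_p)$ via Satake's spherical function theory, and part (2) by recognizing $\Psi(y)=\int_{U_p}f(h'u_py)\,du_p$ as a bi-$U_p$-invariant eigenfunction and applying the uniqueness clause — is precisely the standard content behind that citation. (One small imprecision: $H\simeq H'/Z'$ where $H'$ is the norm-one subgroup of $\GL_2\times B^\times$, not the full product modulo its center, but this does not affect the commutativity you need, which is in any case covered by the reference to Satake.)
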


 \subsection{}
\label{subsec:Satake-H-unramified} 

In this and the next subsections, we assume that
$f\in\cS_{\kappa}^H$ is a Hecke eigenform and 
 define the local factor $L_p(s,f,\mathrm{std})$ of the standard $L$-function 
attached to $f$ at a finite place $p$ of $\bQ$.

In this subsection, we suppose that $B$ is unramified at $p$
(namely $p \nmid d_B$).
We identify
$B_p$ with $\MM_2(\bQ_p)$ 
by an isomorphism
$B_p\simeq \MM_2(\bQ_p)$ such that $\cO_{p}$ maps to $\MM_2(\bZ_p)$ and 
such that
the main involution of $B_p$ corresponds to 
\[
 X=\Nmat{a&b}{c&d}\mapsto J\inv \tp XJ=\Nmat{d&-b}{-c&a},\qquad J=\Nmat{0&1}{-1&0}.
\]
Then $\tr(X)$ and $N(X)$ are the trace and the determinant of $X\in \mathrm{M}_2(\bQ_p)$, respectively.
We identify $H_p$ and $U_p$ with 
\[
 \left\{h\in \GL_4(\bQ_p)\mid \tp h\Nmat{0&J}{-J&0}h
=\Nmat{0&J}{-J&0}\right\},
\]
and 
\[
 \left\{u\in \GL_4(\bZ_p)\mid \tp u\Nmat{0&J}{-J&0}u
=\Nmat{0&J}{-J&0}\right\},
\]
respectively.

For $\chi=(\chi_1,\chi_2)\in X_{\mathrm{unr}}(\bQ_p\cross)^2$, 
we define the spherical function $\omega_{\chi}$ on $H_p$ by
\begin{align*}
 \omega_{\chi}(h)&=\int_{U_p}\eta^H_{\chi}(uh)du,\\
 \eta^H_{\chi}(h)&=
\chi_1(a_1)\chi_2(a_2)|a_1|_p\\
&\qquad
\qquad\left(h=
\begin{pmatrix}
 a_1 & * & * & *\\
0 & a_2 & * & *\\
0 & 0 & a_2\inv & *\\
0 & 0 & 0 & a_1\inv
\end{pmatrix}\,u\in H_p,\ a_1,a_2\in \bQ_p\cross, u\in U_p
\right)
\end{align*}
(for the definition of $X_{\mathrm{unr}}(\bQ_p\cross)$, see Section
\ref{subsec:notation-1}).

\begin{lemma}[\cite{Sa}, Section 5.4, Theorem 2]
\label{lem:SatakeParameter-H-unramified}
 There exists a $\chi=(\chi_1,\chi_2)\in X_{\mathrm{unr}}(\bQ_p\cross)^2$
such that $\omega_p^f=\omega_{\chi}$.
\end{lemma}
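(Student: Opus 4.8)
The plan is to read this lemma as the special case of Satake's classification of spherical functions applied to the group $H_p$, which at an unramified prime is a split symplectic group. Accordingly, the proof reduces to checking that $\omega_p^f$ is a spherical function in Satake's sense and that our group together with its normalizations fits his framework; once these identifications are in place the conclusion is immediate.

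First I would invoke Lemma \ref{lem:Hecke-eigen-H}(1), which produces $\omega_p^f$ as the unique bi-$U_p$-invariant function on $H_p$ with $\omega_p^f(1)=1$ satisfying $\int_{H_p}\omega_p^f(hx\inv)\varphi_p(x)\,dx=\lambda_p^f(\varphi_p)\,\omega_p^f(h)$ for every $\varphi_p\in\cH(H_p,U_p)$. Thus $\omega_p^f$ is precisely the zonal spherical function attached to the $\bC$-algebra homomorphism $\lambda_p^f\colon\cH(H_p,U_p)\to\bC$, and the assertion becomes the statement that every such spherical function is of the form $\omega_\chi$ for some unramified $\chi$.

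Next I would record that $(H_p,U_p)$ meets the hypotheses of Satake's theorem. Since $p\nmid d_B$, the isomorphism $B_p\simeq\MM_2(\bQ_p)$ fixed in Section \ref{subsec:Satake-H-unramified} identifies $H_p$ with the split symplectic group $\{h\in\GL_4(\bQ_p)\mid\tp h\,\Nmat{0&J}{-J&0}\,h=\Nmat{0&J}{-J&0}\}\cong\Sp_4(\bQ_p)$ and $U_p$ with the hyperspecial maximal compact subgroup $\Sp_4(\bZ_p)$. This is a split connected reductive group equipped with a good maximal compact, so the Iwasawa decomposition $H_p=B_pU_p$ and the Cartan decomposition both hold, where $B_p$ is the upper-triangular Borel whose diagonal torus is coordinatized by $\mathrm{diag}(a_1,a_2,a_2\inv,a_1\inv)$ exactly as in the definition of $\eta^H_\chi$.

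Then I would apply Satake's theorem (\cite{Sa}, Section 5.4, Theorem 2): the map sending an unramified character $\chi=(\chi_1,\chi_2)$ of the split torus, modulo the Weyl group, to the spherical function $\omega_\chi(h)=\int_{U_p}\eta^H_\chi(uh)\,du$ is a bijection onto the set of zonal spherical functions, where $\eta^H_\chi$ is the extension of $\chi$ to $H_p$ along the Iwasawa decomposition twisted by the prescribed modular factor. Applying surjectivity to $\omega_p^f$ yields a $\chi\in X_{\mathrm{unr}}(\bQ_p\cross)^2$ with $\omega_p^f=\omega_\chi$, which is the claim. Since the result is a citation there is no conceptual obstacle; the only delicate point is the bookkeeping of conventions, namely that the explicit normalizing factor $|a_1|_p$ and the torus coordinates used here coincide with those of \cite{Sa}, so that the function $\omega_\chi$ defined in this section is literally Satake's spherical function. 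Verifying this compatibility of normalizations is where essentially all of the (routine) work lies.
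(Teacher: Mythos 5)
Your proposal is correct and matches the paper, which proves this lemma purely by citation to \cite{Sa}, Section 5.4, Theorem 2; your write-up simply makes explicit the routine verifications (that $\omega_p^f$ is the zonal spherical function attached to $\lambda_p^f$ via Lemma \ref{lem:Hecke-eigen-H}, that the identification $B_p\simeq\MM_2(\bQ_p)$ puts $(H_p,U_p)$ into Satake's framework, and that the normalizations agree) that the authors leave implicit.
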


We define the local factor  of the standard $L$-function 
 attached to $f$ at $p$ by
\[
 L_p(s, f,\mathrm{std})=\prod_{i=1}^2\,L_p(s,\chi_i)L_p(s,\chi_i\inv),
\]
where $L_p(s,\chi_i^{\pm})=(1-\chi_i^{\pm}(p)p^{-s})\inv$.
Note that the definition of $L_p(s,f,\mathrm{std})$
does not depend on the choice of $\chi$.

 \subsection{}
\label{subsec:Satake-H-ramified} %

In this subsection we suppose that $B$ is ramified at $p$
(namely $p | d_B$).

Let $X_{\mathrm{unr}}(B_p\cross)$ be the set of homomorphisms
from $B_p\cross$ to $\bC\cross$ trivial on $\cO_p\cross$.
For  $\chi\in X_{\mathrm{unr}}(B_p\cross)$, we define the spherical function $\omega_{\chi}$ on $H_p$ by
\begin{align*}
 \omega_{\chi}(h)&=\int_{U_p}\,\eta_{\chi}(uh)du,\\
\eta_{\chi}\left(\Nmat{1&b}{0&1}\Nmat{\alpha&0}{0&\ol{\alpha}\inv}u\right)
&=\chi(\alpha)|N(\alpha)|_p^{1/2}\qquad (b\in \bQ_p^-,\alpha\in B_p\cross,
u\in U_p).
\end{align*}

\begin{lemma}[\cite{Sa}, Section 5.4, Theorem 2]
\label{lem:SatakeParameter-H-ramified}
\quad
 There exists a $\chi\in X_{\mathrm{unr}}(B_p\cross)$ such that
$\omega_p^f=\omega_{\chi}$.
\end{lemma}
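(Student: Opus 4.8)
The plan is to obtain the statement as the surjectivity half of Satake's classification of zonal spherical functions, applied to the $\bQ_p$-rank one group $H_p$. Since $B$ is ramified at $p$, the local algebra $B_p$ is the quaternion division algebra over $\bQ_p$, its maximal order $\cO_p$ consists of the elements $\alpha$ with $|N(\alpha)|_p\le 1$, and $\cO_p\cross=\{\alpha\in B_p\cross\mid |N(\alpha)|_p=1\}$ while $B_p\cross/\cO_p\cross\simeq\bZ$ via the valuation of the reduced norm. First I would record the parabolic structure implicit in the statement: the stabilizer in $H_p$ of the isotropic line $B_p\cdot\vecm{1\\0}$ is a minimal parabolic subgroup $P_p=M_pN_p$ with one-dimensional unipotent radical $N_p$ and Levi factor
\[
 M_p=\left\{\Nmat{\alpha&0}{0&\ol{\alpha}\inv}\mid \alpha\in B_p\cross\right\}\simeq B_p\cross,
\]
together with the Iwasawa decomposition $H_p=P_pU_p$, where $U_p$ is the maximal compact subgroup stabilizing $L_p'$. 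Under $M_p\simeq B_p\cross$ one has $M_p\cap U_p\simeq\cO_p\cross$, so the unramified characters of $M_p$ are precisely the elements of $X_{\mathrm{unr}}(B_p\cross)$.

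Next I would match the normalization in the definition of $\omega_\chi$ with Satake's. Conjugation by $\Nmat{\alpha&0}{0&\ol{\alpha}\inv}$ scales $N_p$ by the reduced norm $N(\alpha)$, so the modular character of $P_p$ is $\delta_{P_p}\!\left(\Nmat{\alpha&0}{0&\ol{\alpha}\inv}\right)=|N(\alpha)|_p$, whence $\delta_{P_p}^{1/2}=|N(\alpha)|_p^{1/2}$. Thus, for $\chi\in X_{\mathrm{unr}}(B_p\cross)$, the function $\eta_\chi$ of the statement is exactly the right-$U_p$-invariant function on $H_p$ transforming under $P_p$ by $\chi\,\delta_{P_p}^{1/2}$; the requirement that $\chi$ be trivial on $\cO_p\cross$ is precisely what makes $\eta_\chi$ well defined on $P_pU_p=H_p$. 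Its $U_p$-average $\omega_\chi(h)=\int_{U_p}\eta_\chi(uh)\,du$ is then the normalized zonal spherical function attached to $\chi$, with $\omega_\chi(1)=1$.

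Finally I would invoke Satake's theorem (\cite{Sa}, Section 5.4, Theorem 2): the Hecke algebra $\cH(H_p,U_p)$ is commutative, and the Satake transform identifies its $\bC$-algebra homomorphisms with the unramified characters of $M_p$ modulo the relative Weyl group, which here acts by $\chi\mapsto\chi\inv$, the spherical function realizing a given homomorphism being the corresponding $\omega_\chi$. Applying this to the homomorphism $\lambda_p^f$ of the Hecke eigenform $f$ yields a $\chi\in X_{\mathrm{unr}}(B_p\cross)$ whose eigencharacter is $\lambda_p^f$; since $\omega_p^f$ is by Lemma \ref{lem:Hecke-eigen-H} the \emph{unique} bi-$U_p$-invariant eigenfunction with eigencharacter $\lambda_p^f$ and $\omega_p^f(1)=1$, and $\omega_\chi$ has the same three properties, the two coincide, giving $\omega_p^f=\omega_\chi$. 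The point that genuinely requires care, rather than the formal passage from $\lambda_p^f$ to $\chi$, is the verification that $(H_p,U_p)$ satisfies the hypotheses of the cited theorem in this non-split, division-algebra setting: that $U_p=\{u\in H_p\mid uL_p'=L_p'\}$ is special, that the Iwasawa and Cartan decompositions take the stated form with Levi $B_p\cross$, and that the half-modulus normalization is the one computed above. These are exactly the inputs packaged into Satake's theorem, so the argument reduces to checking that our group fits its framework.
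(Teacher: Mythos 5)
Your proposal is correct and follows the same route as the paper, which proves this lemma solely by citing Satake (\cite{Sa}, Section 5.4, Theorem 2); you have simply made explicit the verifications (Iwasawa decomposition $H_p=P_pU_p$ with Levi $B_p\cross$, the half-modulus normalization $\delta_{P_p}^{1/2}=|N(\alpha)|_p^{1/2}$, and the uniqueness from Lemma \ref{lem:Hecke-eigen-H}) that the paper leaves implicit in the citation. No discrepancy to report.
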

Define
\[
 L_p(s, f,\mathrm{std})=
    \left(1-\chi(\Pi_p)p^{-s-1/2}\right)\inv 
\left(1-\chi\inv(\Pi_p)p^{-s-1/2}\right)\inv.
\]

 \subsection{} \label{sec:global-L-H}

We define the standard $L$-function attached to a Hecke
eigenform $f\in\cS_{\kappa}^H$ by
\begin{equation}
 \label{eq:global-L-H}
L(s,f,\mathrm{std})
=\prod_{p<\infty}\,L_p(s,f,\mathrm{std}).
\end{equation}

 \section{Automorphic forms on $G$} \label{sec:af-G}

 \subsection{}\label{subsec:af-G}

Recall that the Haar measure $dx_{\infty}$ on $G_{\infty}$
is normalized by
\begin{equation}
 \label{eq:measure-G}
\int_{G_{\infty}}\,\varphi(x_{\infty})dx_{\infty}=
\int_{\bH^-}d_L\beta\,\int_0^{\infty}a^{-6}d\cross a\,\int_{K_{\infty}}dk_{\infty}
\,\varphi\left(\Nmat{1&\beta}{0&1}\Nmat{a&0}{0&a\inv}k_{\infty}\right)
\end{equation}
\begin{equation*}
 =\int_{K_{\infty}}dk_{\infty}\,\int_0^{\infty}a^{6}d\cross a\,\int_{\bH^-}d_L\beta\,
\,\varphi\left(k_{\infty}\Nmat{a&0}{0&a\inv}\Nmat{1&\beta}{0&1}\right)
\end{equation*}
for $\varphi\in L^1(G_{\infty})$,
where $dk_{\infty}$ is normalized by $\mathrm{vol}(K_{\infty})=1$
(see Section \ref{subsec:groups-G}).

For $g=\Nmat{a&b}{c&d}\in G_{\infty}$, put
\[
 \Delta_g=\dfrac{1}{2}(1,1)g\vecm{1\\1}=\dfrac{1}{2}(a+b+c+d).
\]
Then $\Delta_g\in\bH\cross$.
For a positive integer $\kappa$,
we define an $\mathrm{End}(V_{\kappa})$-valued spherical function
$\Omega_{\kappa}$ on $G_{\infty}$ by
\begin{equation}
\label{eq:def-Omega}
 \Omega_{\kappa}(g) =N(\Delta_g)\inv\,\sigma_{\kappa}(\Delta_g)\inv
\qquad(g\in G_{\infty}),\\
\end{equation}
We see that
\begin{equation}
 \label{eq:K-equivariance-Omega}
\Omega_{\kappa}(kgk')=\tau_{\kappa}(k')\inv 
\Omega_{\kappa}(g)\tau_{\kappa}(k)\inv
\qquad(k,k'\in K_{\infty},g\in G_{\infty}).
\end{equation}
It is known  that $\Omega_{\kappa}$ is integrable on $G_{\infty}$ 
if $\kappa>4$ (see \cite{Ar1}, Lemma 1.1).
We henceforth fix an integer $\kappa>4$.

 Let $\cS_{\kappa}^G$ be the space of smooth functions 
$F\colon G_{\bQ}\backslash G_{\bA}\to V_{\kappa}$ satisfying
the following three conditions:
\begin{itemize}
 \item[(1)] \quad For $g\in G_{\bA},k_f\in K_f,k_{\infty}\in K_{\infty}$,
we have
\begin{equation}
 \label{eq:def-af-G-1}
F(gk_fk_{\infty})=\tau_{\kappa}(k_{\infty})\inv F(g).
\end{equation}
 \item[(2)] \quad $\|F(g)\|_{\kappa}$ is bounded on $G_{\bA}$.
 \item[(3)] \quad For $g\in G_{\bA}$, we have
\begin{equation}
 \label{eq:def-af-G-2}
 F(g)=c_{\kappa}^G\,\int_{G_{\infty}}\,
\Omega_{\kappa}(x_{\infty})F(gx_{\infty}\inv)
dx_{\infty},
\end{equation}
where 
\begin{equation}
 \label{eq:c_kappa^G}
c_{\kappa}^G=\dfrac{\kappa(\kappa-1)}{8\pi^2}.
\end{equation}
\end{itemize}
Note that 
\begin{equation}
 \label{eq:cuspidal}
\int_{B^-\backslash B^-_{\bA}}\,F\left(\Nmat{1&\beta}{0&1}g\right)d\beta
=0\qquad(g\in G_{\bA})
\end{equation}
for $F\in\cS_{\kappa}^G$
(see \cite{Ar2}, Proposition 3.1).
We define an inner product $\langle,\rangle_G$ on $\cS_{\kappa}^G$
by
\[
 \langle F,F'\rangle_G=
\int_{G_{\bQ}\backslash G_{\bA}}\,(F(g),F'(g))_{\kappa}dg
\qquad(F,F'\in\cS_{\kappa}^G),
\]
where  $dg$ is the right-invariant measure 
on $G_{\bQ}\backslash G_{\bA}$ induced by the Haar measure on $G_{\bA}$
given in Section \ref{subsec:groups-G}.

For a general theory of automorphic forms on $G$
generating the quaternionic discrete series
representation at the Archimedean place, we refer to
 \cite{Ar1}, \cite{Ar2},
\cite{Na1} and \cite{Na2}.

 \subsection{}
\label{subsec:Omega-G}

For a finite place $p$ of $\bQ$, let
$\cH(G_p,K_p)$ be the Hecke algebra: By definition, $\cH(G_p,K_p)$
is the space of compactly supported bi-$K_p$-invariant functions on $G_p$
and the product is defined by
\[
 \left(\varphi_p*\varphi'_p\right)
(g_p)=\int_{G_p}\varphi_p(g_px_p\inv)\varphi'_p(x_p)dx_p
\qquad(\varphi_p,\varphi'_p\in \cH(G_p,K_p),g_p\in G_p).
\]
Here we recall that 
the Haar measure $dx_p$ on $G_p$ is normalized by $\mathrm{vol}(K_p)=1$.
The Hecke algebra $\cH(G_p,K_p)$ acts on $\cS_{\kappa}^G$ by
\[
 \left(F*\varphi_p\right)(g)=\int_{G_p}F(gx_p\inv)\varphi_p(x_p)dx_p
\qquad(F\in\cS_{\kappa}^G,\varphi_p\in\cH(G_p,K_p),g\in G_{\bA}).
\]

We say that 
$F\in\cS_{\kappa}^G$ is a \emph{Hecke eigenform}
if
\begin{equation}
 \label{eq:Hecke-eigenvalues-G}
 F*\varphi_p=\Lambda_p^F(\varphi_p)\,F
\end{equation}
holds for any finite place $p$  of $\bQ$  and any $\varphi_p\in \cH(G_p,K_p)
$ with   a $\bC$-algebra homomorphism $\Lambda_p^F$ from
$\cH(G_p,K_p)$ to $\bC$.
The following result is a straightforward consequence of 
\cite{Sa}, Section 5.2.

\begin{lemma}
\label{lem:Hecke-eigen}
Suppose that $F\in\cS_{\kappa}^G$ is a Hecke eigenform and
 let $p$ be a finite place of $\bQ$. 
\begin{itemize}
 \item[(1)]\ There exists uniquely a bi-$K_p$-invariant
function $\Omega_p^F$ on $G_p$ satisfying $\Omega_p^F(1)=1$ and
\[
 \int_{G_p}\,\Omega_p^F(gx\inv)\varphi_p(x)dx=\Lambda_p^F(\varphi_p)\Omega_p^F(g)\qquad(g\in G_p)
\]
for any $\varphi_p\in\cH(G_p,K_p)$.
 
 \item[(2)] \ We have
\[
 \int_{K_p}\,F(gk_px)dk_p=\Omega_p^F(x_p)F(gx_p\inv x)
\]
for $g, x\in G_{\bA}$, where $x_p$ is the $p$-component of $x$.
\end{itemize}
\end{lemma}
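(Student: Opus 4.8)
The plan is to treat the two assertions exactly as the passage from a Hecke eigenform to its associated zonal spherical function, in complete parallel with Lemma~\ref{lem:Hecke-eigen-H}; the whole argument is driven by one auxiliary function. For a fixed $g\in G_{\bA}$ I would set
\[
\Phi_g(x)=\int_{K_p}F(gk_px)\,dk_p\qquad(x\in G_{\bA}),
\]
where $k_p$ is inserted at the place $p$. First I would record two properties of $\Phi_g$ in its $p$-component $x_p$: it is right $K_p$-invariant because $F$ is, and left $K_p$-invariant by the translation invariance of the Haar measure on $K_p$, so $\Phi_g$ is bi-$K_p$-invariant in $x_p$; and, using Fubini together with the eigenform relation (\ref{eq:Hecke-eigenvalues-G}), it intertwines the Hecke action,
\[
\int_{G_p}\Phi_g(xy\inv)\varphi_p(y)\,dy
=\int_{K_p}(F*\varphi_p)(gk_px)\,dk_p
=\Lambda_p^F(\varphi_p)\,\Phi_g(x)
\]
for every $\varphi_p\in\cH(G_p,K_p)$.

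For part~(1) I would first prove uniqueness and then produce $\Omega_p^F$ from $\Phi_g$. If $\phi$ is any bi-$K_p$-invariant function on $G_p$ with $\phi(1)=1$ satisfying the displayed eigenfunction equation, then setting $g=1$ and taking $\varphi_p=\mathbf{1}_{K_pg_0K_p}$ for a double-coset representative $g_0$ gives, by bi-$K_p$-invariance of $\phi$,
\[
\mathrm{vol}(K_pg_0K_p)\,\phi(g_0\inv)=\Lambda_p^F(\mathbf{1}_{K_pg_0K_p}),
\]
so $\phi$ is pinned down on every double coset by $\Lambda_p^F$; this yields uniqueness, and shows more generally that any bi-$K_p$-invariant eigenfunction equals its value at $1$ times $\Omega_p^F$. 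For existence I would choose $g$ and a linear functional $\ell$ on $V_{\kappa}$ with $\ell(F(g))\neq0$ and set $\Omega_p^F(x_p)=\ell(\Phi_g(x_p))/\ell(F(g))$, taking $x=x_p$ with trivial components away from $p$; the computation above shows this is a normalized bi-$K_p$-invariant eigenfunction, and its independence of $g$ and $\ell$ is forced by uniqueness. One may alternatively quote \cite{Sa}, Section~5.2, for existence.

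For part~(2) I would fix $g$ together with the components of $x$ away from $p$ and regard, for each linear functional $\ell$ on $V_{\kappa}$, the scalar function of $x_p$ given by $\ell(\Phi_g(x))$ as a bi-$K_p$-invariant Hecke eigenfunction. By part~(1) it equals its value at $x_p=1$ times $\Omega_p^F(x_p)$. That value I would identify by noting that $x_p\inv x$ is the part of $x$ away from $p$, that $k_p$ commutes with it, and that right $K_p$-invariance of $F$ together with $\mathrm{vol}(K_p)=1$ give $\ell(\Phi_g(x))|_{x_p=1}=\ell(F(gx_p\inv x))$. Therefore $\ell(\Phi_g(x))=\Omega_p^F(x_p)\,\ell(F(gx_p\inv x))$ for every $\ell$, which is the asserted identity
\[
\int_{K_p}F(gk_px)\,dk_p=\Omega_p^F(x_p)\,F(gx_p\inv x).
\]

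The steps are individually routine, so the only point demanding care is uniformity at the ramified places $p\mid d_B$, where $B_p$ is a division algebra and $G_p$ is an inner form. The virtue of deriving existence of $\Omega_p^F$ directly from $F$ and uniqueness from the double-coset bookkeeping is that no finer structure theory—such as commutativity of $\cH(G_p,K_p)$ via a Gelfand-pair argument—is needed beyond what \cite{Sa} already supplies, and the remaining work is purely the tracking of adelic components and the Fubini interchange.
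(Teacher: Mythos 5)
Your proof is correct. For comparison: the paper gives no argument at all here, simply declaring the lemma ``a straightforward consequence of \cite{Sa}, Section 5.2,'' so your self-contained reconstruction is the standard one that the citation is meant to encode. The three ingredients all check out: the averaged function $\Phi_g(x)=\int_{K_p}F(gk_px)\,dk_p$ is bi-$K_p$-invariant in $x_p$ and intertwines the Hecke action (Fubini is harmless since $F$ is bounded and $\varphi_p$ has compact support); the double-coset computation $\mathrm{vol}(K_pg_0K_p)\,\phi(g_0\inv)=\Lambda_p^F(\mathbf{1}_{K_pg_0K_p})\,\phi(1)$ pins down every bi-invariant eigenfunction up to its value at $1$, since $g_0\mapsto g_0\inv$ permutes the double cosets, and this simultaneously gives uniqueness and the scalar-multiple statement needed for (2); and the evaluation $\Phi_g(x)\big|_{x_p=1}=F(gx_p\inv x)$ follows from $\mathrm{vol}(K_p)=1$, right $K_f$-invariance of $F$, and the fact that $k_p$ commutes with the components of $x$ away from $p$. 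Two minor points worth making explicit: existence of a pair $(g,\ell)$ with $\ell(F(g))\ne 0$ uses $F\not\equiv 0$, which is implicit in the definition of a Hecke eigenform (otherwise $\Lambda_p^F$ is not determined); and in part (2) the case where $x_p\mapsto\ell(\Phi_g(x))$ vanishes identically is covered trivially, so no nonvanishing hypothesis is needed there. Your approach also has the advantage you note of bypassing any appeal to commutativity of $\cH(G_p,K_p)$, which matters at the places $p\mid d_B$ where $K_p$ is the nonstandard maximal compact.
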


 \subsection{}
\label{subsec:Satake-G-unramified} 

In this and the next subsections, we assume that
$F\in\cS_{\kappa}^G$ is a Hecke eigenform and 
 define the local factor $L_p(s,F,\mathrm{std})$ of the standard $L$-function 
attached to $F$ at a finite place $p$.

In this subsection, we suppose that $B$ is unramified at $p$
(namely $p \nmid d_B$).
We use the same notation as in Section \ref{subsec:Satake-H-unramified}.
We identify $G_p$ and $K_p$ with 
\[
 \Sp_2(\bQ_p)=\left\{g\in \GL_4(\bQ_p)\mid \tp g\Nmat{0&1_2}{-1_2&0}g
=\Nmat{0&1_2}{-1_2&0}\right\},
\]
and $\Sp_2(\bZ_p)=G_p\cap \GL_4(\bZ_p)$, respectively.

For $\Xi=(\Xi_1,\Xi_2)\in X_{\mathrm{unr}}(\bQ_p\cross)^2$, 
we define the spherical function $\Omega_{\Xi}$ on $G_p$ by
\begin{align*}
 \Omega_{\Xi}(g)&=\int_{K_p}\eta^G_{\Xi}(kg)dk,\\
 \eta^G_{\Xi}(g)&=
\Xi_1(a_1)\Xi_2(a_2)|a_1|^2_p|a_2|_p\\
&\qquad
\qquad\left(g=
\begin{pmatrix}
 a_1 & * & * & *\\
0 & a_2 & * & *\\
0 & 0 & a_1\inv & 0\\
0 & 0 & * & a_2\inv
\end{pmatrix}\,k\in G_p,\ a_1,a_2\in \bQ_p\cross, k\in K_p
\right).
\end{align*}

\begin{lemma}[\cite{Sa}, Section 5.4, Theorem 2]
\label{lem:SatakeParameter-G-unramified}
 There exists a $\Xi=(\Xi_1,\Xi_2)\in X_{\mathrm{unr}}(\bQ_p\cross)^2$
such that $\Omega_p^F=\Omega_{\Xi}$.
\end{lemma}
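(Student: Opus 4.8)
The plan is to identify $\Omega_p^F$ with the spherical function attached to an unramified character by combining three ingredients: the commutativity of $\cH(G_p,K_p)$, the fact that each $\Omega_{\Xi}$ is itself a normalized Hecke eigenfunction, and the surjectivity of the Satake parameterization onto the $\bC$-algebra homomorphisms of $\cH(G_p,K_p)$. By Lemma \ref{lem:Hecke-eigen}(1), the function $\Omega_p^F$ is the \emph{unique} bi-$K_p$-invariant function on $G_p$ with $\Omega_p^F(1)=1$ satisfying $\int_{G_p}\Omega_p^F(gx\inv)\varphi_p(x)\,dx=\Lambda_p^F(\varphi_p)\,\Omega_p^F(g)$ for all $\varphi_p\in\cH(G_p,K_p)$. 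Thus it suffices to produce a character $\Xi$ for which $\Omega_{\Xi}$ is a normalized eigenfunction with the same eigencharacter $\Lambda_p^F$; uniqueness then forces $\Omega_p^F=\Omega_{\Xi}$.

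First I would verify that each $\Omega_{\Xi}$ is bi-$K_p$-invariant with $\Omega_{\Xi}(1)=1$ and is a Hecke eigenfunction. Bi-invariance and the normalization are immediate from the definition $\Omega_{\Xi}(g)=\int_{K_p}\eta^G_{\Xi}(kg)\,dk$ together with the right $K_p$-invariance of $\eta^G_{\Xi}$ and the Iwasawa decomposition $G_p=B_pK_p$, where $B_p$ is the Borel subgroup appearing in the definition of $\eta^G_{\Xi}$; the character $\eta^G_{\Xi}$ is well defined since the $\Xi_i$ are trivial on units and the exponents depend only on valuations. The exponent $|a_1|_p^2|a_2|_p$ in $\eta^G_{\Xi}$ is exactly $\delta_{B}^{1/2}$, the square root of the modulus character of $B_p$ for $\Sp_2(\bQ_p)$ (the positive roots summing to $2\rho=4e_1+2e_2$), so $\eta^G_{\Xi}$ is the normalized spherical section of the unramified principal series $\mathrm{Ind}_{B_p}^{G_p}(\Xi)$. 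Since the space of $K_p$-fixed vectors in this principal series is one dimensional, every Hecke operator acts on it by a scalar; evaluating this scalar via the Iwasawa decomposition identifies it with the Satake transform $\widehat{\varphi_p}(\Xi)$, giving $\int_{G_p}\Omega_{\Xi}(gx\inv)\varphi_p(x)\,dx=\widehat{\varphi_p}(\Xi)\,\Omega_{\Xi}(g)$ for all $\varphi_p\in\cH(G_p,K_p)$.

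The second ingredient is the Satake isomorphism for $(G_p,K_p)$. For $p\nmid d_B$ the group $G_p\cong\Sp_2(\bQ_p)$ is unramified and $(G_p,K_p)$ is a Gelfand pair, so $\cH(G_p,K_p)$ is commutative and the Satake map identifies it with the Weyl-invariant subalgebra of the group algebra of the cocharacter lattice of the diagonal torus. Consequently the $\bC$-algebra homomorphisms $\cH(G_p,K_p)\to\bC$ are parameterized precisely by the Weyl orbits of unramified characters $\Xi=(\Xi_1,\Xi_2)\in X_{\mathrm{unr}}(\bQ_p\cross)^2$, the homomorphism attached to $\Xi$ being $\varphi_p\mapsto\widehat{\varphi_p}(\Xi)$. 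Applying this to $\Lambda_p^F$ yields a $\Xi$ with $\Lambda_p^F(\varphi_p)=\widehat{\varphi_p}(\Xi)$ for every $\varphi_p$. Then $\Omega_{\Xi}$ is a normalized bi-$K_p$-invariant eigenfunction sharing the eigencharacter of $\Omega_p^F$, and the uniqueness in Lemma \ref{lem:Hecke-eigen}(1) gives $\Omega_p^F=\Omega_{\Xi}$.

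The main obstacle is really to pin down Satake's structure theorem in the exact normalization used here: one must check that the half-integral exponents defining $\eta^G_{\Xi}$ reproduce $\delta_B^{1/2}$ for the rank-two symplectic group and that the Satake transform is a bijection onto the characters of the commutative algebra $\cH(G_p,K_p)$. Both facts are standard for the unramified group $\Sp_2(\bQ_p)$, and the content of the cited result \cite{Sa} is exactly this matching of the abstract eigencharacter $\Lambda_p^F$ with the explicit spherical function $\Omega_{\Xi}$.
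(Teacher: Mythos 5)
Your argument is correct and is essentially the paper's approach: the paper proves this lemma simply by citing Satake's theorem ([Sa], Section 5.4, Theorem 2), and what you have written out — uniqueness of the normalized bi-$K_p$-invariant eigenfunction from Lemma \ref{lem:Hecke-eigen}(1), the identification of $\eta^G_{\Xi}$ as the $\delta_B^{1/2}$-normalized section of the unramified principal series (indeed $2\rho=4e_1+2e_2$ gives $|a_1|_p^2|a_2|_p$), and the surjectivity of $\Xi\mapsto\widehat{(\cdot)}(\Xi)$ onto $\mathrm{Hom}_{\bC\text{-alg}}(\cH(G_p,K_p),\bC)$ — is exactly the standard content of that cited result. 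No gap.
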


We define the local factor  of the standard $L$-function 
 attached to $F$ at $p$ by
\[
 L_p(s, F,\mathrm{std})=\zeta_p(s)\,\prod_{i=1}^2\,L_p(s,\Xi_i)L_p(s,\Xi_i\inv),
\]
where
\begin{align*}
 \zeta_p(s)&=(1-p^{-s})\inv,\\
 L_p(s,\Xi_i^{\pm 1})&=\left(1-\Xi_i^{\pm 1}(p)p^{-s}\right)\inv.
\end{align*}
Note that the definition of $L_p(s,F,\mathrm{std})$
does not depend on the choice of $\Xi$.

 \subsection{}
\label{subsec:Satake-G-ramified} %

In this subsection we suppose that $B$ is ramified at $p$
(namely $p|d_B$).
We use the same notation as in Section \ref{subsec:Satake-H-ramified}.

For  $\Xi\in X_{\mathrm{unr}}(B_p\cross)$, we define the spherical function $\Omega_{\Xi}$ on $G_p$ by
\begin{align*}
 \Omega_{\Xi}(g)&=\int_{K_p}\,\eta^G_{\Xi}(kg)dk,\\
\eta^G_{\Xi}\left(\Nmat{1&\beta}{0&1}\Nmat{\alpha&0}{0&\ol{\alpha}\inv}k\right)
&=\Xi(\alpha)|N(\alpha)|_p^{3/2}\qquad (\beta\in B_p^-,\alpha\in B_p\cross,
k\in K_p).
\end{align*}

\begin{lemma}[\cite{Sa}, Section 5.4, Theorem 2]
\label{lem:SatakeParameter-G-ramified}
\quad
 There exists a $\Xi\in X_{\mathrm{unr}}(B_p\cross)$ such that
$\Omega_p^F=\Omega_{\Xi}$.
\end{lemma}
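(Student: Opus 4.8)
The statement to prove is Lemma \ref{lem:SatakeParameter-G-ramified}, which asserts that for a Hecke eigenform $F \in \cS_\kappa^G$ at a ramified prime $p \mid d_B$, the spherical function $\Omega_p^F$ equals $\Omega_\Xi$ for some unramified quasi-character $\Xi$ of $B_p^\times$. This is cited from \cite{Sa}, Section 5.4, Theorem 2, so my plan is to derive it as a direct application of Satake's general parametrization of zonal spherical functions on a $p$-adic reductive group, specialized to the present rank-one situation.

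\medskip

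The plan is as follows. First I would identify the relevant group-theoretic data: since $B$ is ramified at $p$, the group $G_p = \Sp(1,1)(\bQ_p)$ is a $\bQ_p$-rational form whose Borel-type structure is governed by the Iwasawa-type decomposition already recorded in Section \ref{subsec:Satake-G-ramified}, namely $g = \Nmat{1&\beta}{0&1}\Nmat{\alpha&0}{0&\ol\alpha\inv}k$ with $\beta \in B_p^-$, $\alpha \in B_p\cross$, and $k \in K_p$. The group is of split $\bQ_p$-rank one, with maximal split torus parametrized by $|N(\alpha)|_p$ and with the centralizer of the split torus contributing the anisotropic factor $B_p\cross/\cO_p\cross$. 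Next I would invoke the Cartan (or Iwasawa) decomposition $G_p = K_p \cdot A_p \cdot K_p$ together with the fact that $\Omega_p^F$ is, by Lemma \ref{lem:Hecke-eigen}(1), the unique bi-$K_p$-invariant eigenfunction of the spherical Hecke algebra $\cH(G_p,K_p)$ normalized by $\Omega_p^F(1) = 1$. Satake's theorem then states precisely that every such bi-$K_p$-invariant joint eigenfunction arises as the integral $\int_{K_p}\eta^G_\Xi(kg)\,dk$ of an unramified principal-series character $\eta^G_\Xi$ associated to an unramified quasi-character $\Xi$ of the Levi component, where the exponent $|N(\alpha)|_p^{3/2}$ is exactly the modulus character $\delta_{P_p}^{1/2}$ of the relevant parabolic $P_p$ of $G_p$.

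\medskip

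The key steps, in order, would be: (i) verify that $\cH(G_p,K_p)$ is commutative and finitely generated, so that its $\bC$-algebra homomorphisms $\Lambda_p^F$ correspond to points of the appropriate spectrum; (ii) compute the modulus character of the minimal parabolic of $G_p$ and check that it matches the normalization $|N(\alpha)|_p^{3/2}$ built into $\eta^G_\Xi$; (iii) apply the Satake isomorphism identifying $\cH(G_p,K_p)$ with the ring of Weyl-invariant functions on the space of unramified characters $X_{\mathrm{unr}}(B_p\cross)$ modulo the relevant Weyl-group action (here $\Xi \mapsto \Xi\inv$, reflecting the rank-one root reflection); and (iv) conclude that the eigenvalue system $\Lambda_p^F$ is realized by some $\Xi$, whence $\Omega_p^F = \Omega_\Xi$ by the uniqueness in Lemma \ref{lem:Hecke-eigen}(1). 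I expect the main obstacle to be step (ii): because $B$ is \emph{ramified} at $p$, the standard Satake theory for split groups does not apply verbatim, and one must work with the quasi-split-but-anisotropic-modulo-center structure of $\Sp(1,1)$ over $\bQ_p$, correctly accounting for the anisotropic inner-form contribution $B_p\cross$ in the Levi. Concretely, one must confirm that the compact factor $\cO_p\cross$ is absorbed into $K_p$ so that the spherical function depends on $\Xi$ only through its value on $B_p\cross/\cO_p\cross \cong \bZ$ generated by a uniformizer $\Pi_p$, which is what makes the eventual $L$-factor in Section \ref{subsec:Satake-G-ramified} depend only on $\Xi(\Pi_p)$. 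Once this local structural matching is in place, the remainder is a routine citation of \cite{Sa}, Section 5.4, Theorem 2.
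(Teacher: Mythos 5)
Your proposal is correct and follows essentially the same route as the paper, which proves this lemma purely by citing \cite{Sa}, Section 5.4, Theorem 2. Your additional structural checks (the Iwasawa decomposition $G_p=N_GM_GK_p$ with Levi $B_p\cross$, the modulus character $\delta_P^{1/2}(d_G(\alpha))=|N(\alpha)|_p^{3/2}$, the rank-one Weyl action $\Xi\mapsto\Xi\inv$, and the surjectivity of $\Lambda_\Xi$ onto the characters of $\cH(G_p,K_p)=\bC[\cT_1^G]$) are exactly the data needed to apply Satake's theorem and are consistent with what the paper records in Sections \ref{subsec:Satake-G-ramified} and \ref{subsec:hecke-operators-G-ramified}.
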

Define
\[
 L_p(s, F,\mathrm{std})=\left(1-p^{-s}\right)\inv
    \left(1-\Xi(\Pi_p)p^{-s-1/2}\right)\inv 
\left(1-\Xi\inv(\Pi_p)p^{-s-1/2}\right)\inv,
\]
where $\Pi_p$ is a prime element of $B_p$.

 \subsection{} \label{sec:global-L-G}

We define the standard $L$-function attached to a Hecke
eigenform $F\in\cS_{\kappa}^G$ by
\begin{equation}
 \label{eq:global-L-G}
L(s,F,\mathrm{std})
=\prod_{p<\infty}\,L_p(s,F,\mathrm{std}).
\end{equation}

 \section{Arakawa lifting}
\label{sec:Arakawa-lifting}

 \subsection{}

Let $v$ be a place of $\bQ$.
We define the Weil representation $r_v$ of $H_v\times G_v$ on the space
\[
 W_v=
\begin{cases}
 \mathscr{S}(B_v^{(2,1)}) 
     & (v<\infty),\\
 \mathscr{S}(B_v^{(2,1)})\otimes \mathrm{End}(V_{\kappa}) & (v=\infty)
\end{cases}
\]
as follows: For $\varphi\in W_v$ and $X\in B_v^{(2,1)}$,
\begin{align*}
 r_v(h,1_2)\varphi(X)&=\varphi(\tp\ol{h}X)\qquad (h\in H_v)\\
 r_v\left(1_2,\Nmat{\alpha&0}{0&\ol{\alpha}\inv}\right)\varphi(X) & = 
       |N(\alpha)|_v^2\,\varphi(X\alpha)\qquad(\alpha\in B_v\cross)\\
r_v\left(1_2,\Nmat{1&\beta}{0&1}\right)\varphi(X) & = 
   \psi_v\left(-\dfrac{1}{2}\tr(\beta\tp\ol{X}JX)\right)\varphi(X)\qquad(\beta\in B_v^-)\\
r_v\left(1_2,\Nmat{0&1}{1&0}\right)\varphi(X) & = 
\int_{B_v^{(2,1)}}\psi_v\left(-\tr(\tp\ol{Y}JX)\right)\varphi(Y)dY.
\end{align*}

To give another realization of the Weil representation, we define an intertwining
operator $I_v$ from $W_v$ to 
\[
 W_v'=
\begin{cases}
 \mathscr{S}(B_v^{(1,2)}) 
     & (v<\infty),\\
 \mathscr{S}(B_v^{(1,2)})\otimes \mathrm{End}(V_{\kappa}) & (v=\infty)
\end{cases}
\]
by a partial Fourier transform
\[
 I_v\varphi(x_1,x_2)=\int_{B_v}\psi_v\left(\tr(\ol{y}x_2)\right)
   \varphi\vecm{x_1\\y}dy
\qquad
(\varphi\in W_v, x_1,x_2\in B_v).
\]
The inverse of $I_v$ is given by
\[
  I_v\inv\phi\vecm{x_1\\x_2}=\int_{B_v}\psi_v\left(-\tr(\ol{y}x_2)\right)
   \phi(x_1,y)dy
\qquad
(\phi\in W_v', x_1,x_2\in B_v).
\]
We set
\[
 r_v'(h,g)=I_v\circ r_v(h,g)\circ I_v\inv\qquad(h\in H_v,\,g\in G_v),
\]
which defines a smooth representation of $H_v\times G_v$ on
$W_v'$.
A direct calculation shows the following:

\begin{lemma}
\label{lem:r'}
 For $\phi\in W_v'$ and $X'\in B_v^{(1,2)}$,
\begin{align*}
 r_v'\left(\Nmat{\alpha&0}{0&\ol{\alpha}\inv},1_2\right)\phi(X') & = 
       |N(\alpha)|_v^2\,\phi(\ol{\alpha}X')\qquad(\alpha\in B_v\cross)\\
r_v'\left(\Nmat{1&b}{0&1},1_2\right)\phi(X') & = 
   \psi_v\left(-b\,\ol{X'}Q\tp X'\right)\phi(X')\qquad(b\in \bQ_v)\\
r_v'\left(\Nmat{0&1}{-1&0},1_2\right)\phi(X') & = 
\int_{B_v^{(1,2)}}\psi_v\left(-\tr(\ol{X'}Q\tp Y)\right)\phi(Y)dY\\
r_v'(1_2,g)\phi(X')&=\phi(X'g)\qquad (g\in G_v).
\end{align*}
\end{lemma}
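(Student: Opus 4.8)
The plan is to verify each displayed identity separately by unwinding the definition $r_v'(h,g)=I_v\circ r_v(h,g)\circ I_v\inv$: I would substitute the explicit formula for $r_v$ on the generator in question, insert $I_v\inv\phi$, and then apply $I_v$. The whole argument rests on three elementary facts over $B_v$: the self-duality of the measure, which gives the Fourier inversion $\int_{B_v}\psi_v(\tr(\ol{y}u))\,dy=\delta(u)$; the trace identities $\tr(\ol{y}w)=\tr(\ol{w}y)$ together with the anti-automorphism rule $\overline{ab}=\ol{b}\,\ol{a}$, used to move every phase into the standard pairing $\tr(\ol{y}\,\cdot\,)$; and the scaling law by which left or right multiplication by $\alpha\in B_v\cross$ multiplies $dy$ by $|N(\alpha)|_v^2$.

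For the $H$-torus $\Nmat{\alpha&0}{0&\ol{\alpha}\inv}$, the map $X=\vecm{X_1\\X_2}\mapsto\tp\ol{h}X=\vecm{\ol{\alpha}X_1\\\alpha\inv X_2}$ rescales the two coordinates; after inserting the two partial transforms the inner integral would collapse, by Fourier inversion, to a delta forcing the integration variable to equal $\ol{\alpha}x_2$, and the accompanying Jacobian would contribute exactly $|N(\alpha)|_v^2$, leaving $\phi(\ol{\alpha}X')$. For the $H$-unipotent $\Nmat{1&b}{0&1}$ with $b\in\bQ_v$, the shear $X_2\mapsto bX_1+X_2$ would, after the same collapse, turn into the quadratic phase $\psi_v(-b\,\tr(\ol{x_2}x_1))=\psi_v(-b\,\ol{X'}Q\tp X')$. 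For $\Nmat{0&1}{-1&0}$, which swaps and negates the rows of $X$, composing this swap with the two partial transforms in $I_v$ and $I_v\inv$ would assemble the full Fourier transform on $B_v^{(1,2)}$ with the stated kernel $\int\psi_v(-\tr(\ol{X'}Q\tp Y))\phi(Y)\,dY$.

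For the $G$-action it suffices to check $r_v'(1_2,g)\phi(X')=\phi(X'g)$ on a generating set of $G_v$, since both sides are representations and agreement on generators forces agreement everywhere. For the $G$-torus $\Nmat{\alpha&0}{0&\ol{\alpha}\inv}$ the Weil factor $|N(\alpha)|_v^2$ in $r_v$ would be cancelled precisely by the Jacobian of the collapsing integral, leaving the clean right translation $\phi(x_1\alpha,x_2\ol{\alpha}\inv)$. For the $G$-unipotent $\Nmat{1&\beta}{0&1}$ with $\beta\in B_v^-$, the quadratic character $\psi_v(-\tfrac12\tr(\beta\tp\ol{X}JX))$ would split, using $\ol{\beta}=-\beta$ and the trace identities, into two equal linear terms whose sum converts Fourier inversion into the pure shift $x_2\mapsto x_1\beta+x_2$; and for $\Nmat{0&1}{1&0}$ the full Fourier transform inside $r_v$ would combine with the partial transforms to give the coordinate swap $(x_1,x_2)\mapsto(x_2,x_1)$.

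The computations are conceptually routine, so the hard part will be purely the non-commutative bookkeeping: one must carry the quaternion conjugation correctly through each trace pairing—this is where $\overline{ab}=\ol{b}\,\ol{a}$ and the distinction between left and right multiplication matter—and must confirm, in the $G$-torus case, that the explicit Weil factor $|N(\alpha)|_v^2$ and the inverse factor $|N(\alpha)|_v^{-2}$ produced by the change of variables cancel exactly. Keeping the conjugates, signs, and measure constants mutually consistent across all six generators is the only genuine work.
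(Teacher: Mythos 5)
Your proposal is correct and coincides with the paper's own treatment: the paper offers no written argument beyond the remark that ``a direct calculation shows the following,'' and the calculation you outline---conjugating each generator's action through the partial Fourier transform $I_v$, collapsing the double integral by Fourier inversion with respect to the self-dual measure, and tracking the Jacobian $|N(\alpha)|_v^2$ and the conjugation rule $\overline{ab}=\ol{b}\,\ol{a}$ through the trace pairings---is exactly that direct calculation. In particular your identifications $\ol{X'}Q\,\tp X'=\tr(\ol{x_1}x_2)$, the splitting $\tr(\beta(\ol{x_1}z-\ol{z}x_1))=2\tr(\beta\ol{x_1}z)$ via $\ol{\beta}=-\beta$, and the cancellation of $|N(\alpha)|_v^{2}$ against the change-of-variables factor in the $G$-torus case all check out.
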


 \subsection{}
Let $\mathscr{S}(B_{\bA}^{(2,1)})$ and $\mathscr{S}(B_{\bA}^{(1,2)})$ 
be the restricted
tensor products of $\mathscr{S}(B_{v}^{(2,1)})$ and
$\mathscr{S}(B_{v}^{(1,2)})$ 
over the places $v$
of $\bQ$ with respect to $\{\varphi_{0,p}\}_{p<\infty}$
and $\{\phi_{0,p}\}_{p<\infty}$, respectively.
Here $\varphi_{0,p}$ and $\phi_{0,p}$
 denotes the characteristic functions of $L_p'$ and $L_p$, respectively
(for the definitions of $L_p'$ and $L_p$, see Section \ref{subsec:Groups-lattice}).
We define smooth representations $r$ and $r'$ of $H_{\bA}\times G_{\bA}$
on 
$W=\mathscr{S}(B_{\bA}^{(2,1)})\otimes\mathrm{End}(V_{\kappa})$
and 
$W'=\mathscr{S}(B_{\bA}^{(1,2)})\otimes\mathrm{End}(V_{\kappa})$
as  restricted tensor products of $r_v$ and $r_v'$ respectively:
\[
 r=\otimes_v\,r_v,\, r'=\otimes_v\,r'_v.
\]
Then $r'(g)=I\circ r(g)\circ I\inv$, where 
\[
 I\varphi(x_1,x_2)=\int_{B_{\bA}}\psi\left(\tr(\ol{y}x_2)\right)
   \varphi\vecm{x_1\\y}dy
\qquad
(\varphi\in W, x_1,x_2\in B_{\bA}).
\]

We define  test functions  $\varphi_0=\otimes_v\varphi_{0,v}\in W$
and  $\varphi_0=\otimes_v\varphi_{0,v}\in W$,
where 
\begin{align*}
 \varphi_{0,\infty}(X)&=\e(\sqrt{-1}\,\tp\ol{X}X)\,\sigma_{\kappa}((1,-\sqrt{-1})\ol{X})\qquad (X\in\bH^{(2,1)}),\\
 \phi_{0,\infty}(X')&=\e(\sqrt{-1}\,\ol{X'}\,\,\tp\! X')\,\sigma_{\kappa}((1,1)\tp\ol{X'})\qquad (X\in\bH^{(1,2)}).
\end{align*}

The following results are proved in a straightforward manner.

\begin{lemma}
\label{lem:phi0}

For a place $v$ of $\bQ$, we have 
\[
 \phi_{0,v}=I_v\varphi_{0,v}.
\]
 \end{lemma}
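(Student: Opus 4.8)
The plan is to verify $\phi_{0,v}=I_v\varphi_{0,v}$ one place at a time, separating the non-Archimedean places from $v=\infty$, since the two cases use entirely different tools (lattice Fourier analysis versus Lemma \ref{lem:Ei}).

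At a finite place $p$, both sides are built from characteristic functions and $I_p$ is just the partial Fourier transform in the second quaternionic coordinate. Since $\varphi_{0,p}=\mathbf{1}_{L_p'}$ with $L_p'=\vecm{\cO_p\\\cO_p}$, I would write
\[
I_p\varphi_{0,p}(x_1,x_2)=\mathbf{1}_{\cO_p}(x_1)\int_{\cO_p}\psi_p(\tr(\ol{y}x_2))\,dy,
\]
so everything reduces to the Fourier transform of $\mathbf{1}_{\cO_p}$ for the pairing $(y,x)\mapsto\psi_p(\tr(\ol{y}x))$. That transform is a constant multiple of $\mathbf{1}_{\cO_p^\vee}$, where $\cO_p^\vee=\{x\in B_p:\tr(\ol{y}x)\in\bZ_p\ \text{for all}\ y\in\cO_p\}$. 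The crux is then to identify $\cO_p^\vee$: for $p\nmid d_B$ the reduced trace form is unimodular on $\cO_p$, so $\cO_p^\vee=\cO_p$, while for $p\mid d_B$ the different of $\cO_p$ is $\grP_p$, so $\cO_p^\vee=\grP_p^{-1}$. In both cases $\cO_p^\vee$ is exactly the second component of $L_p$, and the self-dual normalization of $d\alpha_p$ from Section \ref{subsec:n-p-measure} pins down the constant, giving $I_p\varphi_{0,p}=\mathbf{1}_{L_p}=\phi_{0,p}$.

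The Archimedean place is the substantive case, and here I would reduce everything to Lemma \ref{lem:Ei}. Writing $X=\vecm{x_1\\x_2}$, one has $\tp\ol{X}X=N(x_1)+N(x_2)$ and $(1,-\sqrt{-1})\ol{X}=\ol{x_1}-\sqrt{-1}\,\ol{x_2}$, so after pulling out the factor depending only on $x_1$,
\[
I_\infty\varphi_{0,\infty}(x_1,x_2)=\e(\sqrt{-1}N(x_1))\int_{\bH}\e(\tr(\ol{y}x_2)+\sqrt{-1}\,\ol{y}y)\,\sigma_{\kappa}(\ol{x_1}-\sqrt{-1}\,\ol{y})\,dy.
\]
Substituting $z=\ol{y}$, a measure-preserving involution of $\bH$, turns $\tr(\ol{y}x_2)$ into $\tr(x_2 z)$ and the argument of $\sigma_{\kappa}$ into $\ol{x_1}-\sqrt{-1}z=-\sqrt{-1}(z+\sqrt{-1}\,\ol{x_1})$. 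Since $\sigma_{\kappa}$ extends multiplicatively to $\bH\otimes_{\bR}\bC$ and the central scalar $-\sqrt{-1}$ acts by $(-\sqrt{-1})^{\kappa}$, I can factor $\sigma_{\kappa}(\ol{x_1}-\sqrt{-1}z)=(-\sqrt{-1})^{\kappa}\sigma_{\kappa}(z+\sqrt{-1}\,\ol{x_1})$ and apply Lemma \ref{lem:Ei} with $\alpha=x_2$ and $\beta=\sqrt{-1}\,\ol{x_1}$. This produces $\e(\sqrt{-1}N(x_2))\,\sigma_{\kappa}(\sqrt{-1}(\ol{x_1}+\ol{x_2}))$, and extracting the scalar $(\sqrt{-1})^{\kappa}$ from $\sigma_{\kappa}$ cancels the earlier $(-\sqrt{-1})^{\kappa}$. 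Recombining the exponentials into $\e(\sqrt{-1}(N(x_1)+N(x_2)))$ and recognizing $N(x_1)+N(x_2)=\ol{X'}\,\tp X'$ and $\ol{x_1}+\ol{x_2}=(1,1)\tp\ol{X'}$ for $X'=(x_1,x_2)$ reproduces $\phi_{0,\infty}$ exactly.

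The hard part will be the Archimedean bookkeeping: massaging the integral into the precise shape required by Lemma \ref{lem:Ei} via the conjugation substitution, and correctly extracting the scalars $(\pm\sqrt{-1})^{\kappa}$ from $\sigma_{\kappa}$ using its multiplicativity on $\bH\otimes_{\bR}\bC$, so that one checks these powers cancel rather than leaving a spurious constant. At the ramified finite places the only delicate point is identifying the dual lattice through the different $\grP_p$ together with the self-dual volume normalization; the unramified case is immediate.
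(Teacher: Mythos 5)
The paper gives no proof of this lemma (it is dismissed as ``proved in a straightforward manner''), so there is nothing to compare against line by line; your route --- lattice duality at the finite places and Lemma \ref{lem:Ei} at $v=\infty$ --- is surely the intended one. Your Archimedean computation is correct: with $z=\ol{y}$ the integral becomes $(-\sqrt{-1})^{\kappa}\int_{\bH}\e(\tr(x_2z)+\sqrt{-1}\ol{z}z)\sigma_{\kappa}(z+\sqrt{-1}\ol{x_1})\,dz$, Lemma \ref{lem:Ei} with $\alpha=x_2$, $\beta=\sqrt{-1}\ol{x_1}$ produces $(\sqrt{-1})^{\kappa}\sigma_{\kappa}(\ol{x_1}+\ol{x_2})$, the two scalar powers multiply to $1$, and the result is exactly $\phi_{0,\infty}(x_1,x_2)=\e(\sqrt{-1}(N(x_1)+N(x_2)))\sigma_{\kappa}(\ol{x_1}+\ol{x_2})$. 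The unramified finite places are also fine.

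The gap is at the ramified places $p\mid d_B$, where your appeal to self-duality to ``pin down the constant'' to $1$ does not do what you claim. You correctly identify the dual lattice of $\cO_p$ for the pairing $\psi_p(\tr(\ol{y}x))$ as $\grP_p\inv$, but self-duality of $d\alpha_p$ forces $\mathrm{vol}(\cO_p)\,\mathrm{vol}(\grP_p\inv)=1$, and since $[\grP_p\inv:\cO_p]=p^2$ this gives $\mathrm{vol}(\cO_p)=p\inv$, not $1$. Carrying out your own computation then yields $I_p\varphi_{0,p}(x_1,x_2)=p\inv\cdot\mathbf{1}_{\cO_p}(x_1)\mathbf{1}_{\grP_p\inv}(x_2)=p\inv\phi_{0,p}(x_1,x_2)$, i.e.\ the characteristic function of $L_p$ scaled by $p\inv$. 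So either the constant must be tracked honestly (in which case the identity as stated acquires the factor $\prod_{p\mid d_B}p\inv$, which propagates into the Poisson-summation expression for $\theta$), or one must argue that the paper intends a different normalization; as written, the step ``the self-dual normalization pins down the constant, giving $I_p\varphi_{0,p}=\mathbf{1}_{L_p}$'' is false for $p\mid d_B$ and needs to be repaired or explicitly flagged.
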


\begin{lemma}
\label{lem:phi0-UK}
 \begin{itemize}
  \item[(1)] \ For a finite place $p$, we have
\begin{align*}
 r_{p}(u_{p},k_{p})\varphi_{0,p}&=\varphi_{0,p},\\
r'_{p}(u_{p},k_{p})\phi_{0,p}&=\phi_{0,p}
\end{align*}
for $u_{p}\in U_{p}, k_{p}\in K_{p}$.
  \item[(2)] \ We have
\begin{align*}
 \left(r_{\infty}(u_{\infty},k_{\infty})\varphi_{0,\infty}\right)(X)
&=\tau_{\kappa}(k_{\infty})\inv\,\circ\,\varphi_{0,\infty}(X)\,\circ\,
\rho_{\kappa}(u_{\infty}),\\
\left(r'_{\infty}(u_{\infty},k_{\infty})\phi_{0,\infty}\right)(X')
&=\tau_{\kappa}(k_{\infty})\inv\,\circ\,\phi_{0,\infty}(X')\,\circ\,
\rho_{\kappa}(u_{\infty})
\end{align*}
for $u_{\infty}\in U_{\infty},
k_{\infty}\in K_{\infty},X\in\bH^{(2,1)},X'\in\bH^{(1,2)}$.
 \end{itemize}
\end{lemma}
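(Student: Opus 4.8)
The proof rests on two structural features of the set-up. First, the actions of $H_v$ and $G_v$ in the Weil representation commute, so $r_v(u,k)=r_v(u,1_2)\circ r_v(1_2,k)$ and likewise for $r_v'$; thus it suffices to treat the $H_v$- and $G_v$-parts separately and compose. Second, we have at our disposal the two realizations $r_v$ and $r_v'=I_v\circ r_v\circ I_v\inv$, in which $H_v$ acts by the substitution $\varphi\mapsto\varphi(\tp\ol{h}\,\cdot\,)$ and $G_v$ acts by $\phi\mapsto\phi(\,\cdot\,g)$, respectively. The plan is to verify each invariance in whichever realization makes the relevant group act by mere substitution, and then transport the result to the other realization using Lemma \ref{lem:phi0} ($\phi_{0,v}=I_v\varphi_{0,v}$) together with the fact that $I_v$ acts only on the Schwartz variable and hence commutes with left and right multiplication by a constant element of $\mathrm{End}(V_{\kappa})$.

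For (1) I would first treat the $H_p$-action on $\varphi_{0,p}$. Since $\tp\ol{u_p}=J\,u_p\inv J\inv$ for $u_p\in U_p$, and since $J L_p'=J\inv L_p'=L_p'$, one gets $\tp\ol{u_p}L_p'=L_p'$, so that $r_p(u_p,1_2)\varphi_{0,p}(X)=\varphi_{0,p}(\tp\ol{u_p}X)=\varphi_{0,p}(X)$. Dually, in the $r_p'$ realization $r_p'(1_2,k_p)\phi_{0,p}(X')=\phi_{0,p}(X'k_p)=\phi_{0,p}(X')$ is immediate from $L_p k_p=L_p$. The two remaining invariances follow by transport: $r_p(1_2,k_p)\varphi_{0,p}=I_p\inv\,r_p'(1_2,k_p)\phi_{0,p}=I_p\inv\phi_{0,p}=\varphi_{0,p}$, and symmetrically $r_p'(u_p,1_2)\phi_{0,p}=I_p\,r_p(u_p,1_2)\varphi_{0,p}=I_p\varphi_{0,p}=\phi_{0,p}$. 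Composing the $H_p$- and $G_p$-parts yields both identities of (1).

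For (2) the same scheme applies, but the invariances are now twisted by $\rho_{\kappa}$ and $\tau_{\kappa}$, and one must track the $\sigma_{\kappa}$-factors carried by the test functions. I would establish the $U_{\infty}$-equivariance of $\varphi_{0,\infty}$ in the $r_{\infty}$ realization by checking the two types of generators of $U_{\infty}$: for $t1_2$ with $t\in\bH^1$, the relation $N(t)=1$ keeps the Gaussian fixed while the homomorphism property of $\sigma_{\kappa}$ gives $\varphi_{0,\infty}(\ol{t}X)=\varphi_{0,\infty}(X)\circ\sigma_{\kappa}(t)$; for the rotation $\Nmat{\cos\theta&\sin\theta}{-\sin\theta&\cos\theta}$ (orthogonal, hence Gaussian-preserving) the row vector $(1,-\sqrt{-1})$ is a left eigenvector with eigenvalue $e^{-\sqrt{-1}\theta}$, and since $\sigma_{\kappa}$ sends a complex scalar $c$ to $c^{\kappa}$ this contributes $e^{-\sqrt{-1}\kappa\theta}$. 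Together these give $r_{\infty}(u_{\infty},1_2)\varphi_{0,\infty}=\varphi_{0,\infty}(\,\cdot\,)\circ\rho_{\kappa}(u_{\infty})$. Dually, in the $r_{\infty}'$ realization I would compute $\phi_{0,\infty}(X'k_{\infty})$ for $k_{\infty}=\Nmat{a&b}{b&a}\in K_{\infty}$: the key identity is $(1,1)\tp\ol{X'k_{\infty}}=\ol{(a+b)}\,\big((1,1)\tp\ol{X'}\big)$, and the defining relations $N(a)+N(b)=1$, $\tr(\ol{a}b)=0$ force $a+b,a-b\in\bH^1$. The identity then gives $\sigma_{\kappa}(\ol{a+b})=\sigma_{\kappa}((a+b)\inv)=\tau_{\kappa}(k_{\infty})\inv$, while $a\pm b\in\bH^1$, via the parallelogram identity $N(y_1+y_2)+N(y_1-y_2)=2N(y_1)+2N(y_2)$, shows the Gaussian is unchanged, yielding $r_{\infty}'(1_2,k_{\infty})\phi_{0,\infty}=\tau_{\kappa}(k_{\infty})\inv\circ\phi_{0,\infty}$. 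Transporting through $I_{\infty}$ as in (1)—now using that $I_{\infty}$ commutes with left multiplication by $\tau_{\kappa}(k_{\infty})\inv$ and with right multiplication by $\rho_{\kappa}(u_{\infty})$—and composing the one-sided statements produces both displayed formulas of (2).

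The lattice bookkeeping in (1) and the intertwining transport are formal. The main obstacle is the Archimedean computation in (2): one must confirm simultaneously that the Gaussian $\e(\sqrt{-1}\,\tp\ol{X}X)$ is genuinely invariant and that the polynomial factor transforms by exactly $\rho_{\kappa}$ on the right and $\tau_{\kappa}\inv$ on the left, with no residual scalar left over. This hinges on the precise normalizations built into $\sigma_{\kappa}$, $\rho_{\kappa}$, $\tau_{\kappa}$ and on the arithmetic of $K_{\infty}$ (the identities $a\pm b\in\bH^1$ and $\ol{a+b}=(a+b)\inv$), so the difficulty lies in matching these normalizations rather than in any analytic estimate.
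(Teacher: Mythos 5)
Your argument is correct: the paper itself omits the proof (declaring it ``proved in a straightforward manner''), and your decomposition into the $H$- and $G$-parts, the direct lattice check at finite places, the generator-by-generator computation at the Archimedean place (with $a\pm b\in\bH^1$ and $\ol{a+b}=(a+b)\inv$), and the transport between the two realizations via $I_v$ is exactly the routine verification being alluded to. The only point to state carefully is that the eigenvalue $e^{-\sqrt{-1}\theta}$ belongs to the matrix $\tp\ol{u_\infty}$ that actually acts in $r_\infty(u_\infty,1_2)\varphi_{0,\infty}(X)=\varphi_{0,\infty}(\tp\ol{u_\infty}X)$ (i.e.\ to the rotation by $-\theta$), which is what makes the factor come out as $e^{-\sqrt{-1}\kappa\theta}$ in agreement with $\rho_\kappa$.
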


 \subsection{}

We define the theta kernel $\theta\colon H_{\bA}\times G_{\bA}\to
\mathrm{End}(V_{\kappa})$ by
\[
 \theta(h,g)=\sum_{X\in B^{(2,1)}}\,r(h,g)\varphi_0(X)
\qquad(h\in H_{\bA}, g\in G_{\bA}).
\]
By Poisson summation formula, we have
\[
 \theta(h,g)=\sum_{X'\in B^{(1,2)}}\,r'(h,g)\phi_0(X').
\]

\begin{lemma}
 We have
\[
 \theta\left(\gamma hu_fu_{\infty},\gamma'gk_fk_{\infty}\right)
=\tau_{\kappa}(k_{\infty})\inv\circ \theta(h,g)\circ \rho_{\kappa}(u_{\infty})
\]
for $\gamma\in H_{\bQ},h\in H_{\bA}, u_f\in U_f, u_{\infty}\in U_{\infty},
\gamma'\in G_{\bQ},g\in G_{\bA}, k_f\in K_f, k_{\infty}\in K_{\infty}
$.
\end{lemma}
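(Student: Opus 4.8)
The plan is to verify the claimed equivariance by decomposing the transformation into three independent pieces—left-invariance under the rational points $H_{\bQ}\times G_{\bQ}$, right-invariance under the finite-part maximal compacts $U_f\times K_f$, and the correct $(U_\infty,K_\infty)$-type at the Archimedean place—and to handle each using the representation-theoretic definition of $\theta$ together with the lemmas already proved. Throughout I would work with the realization $\theta(h,g)=\sum_{X\in B^{(2,1)}}r(h,g)\varphi_0(X)$, switching to the dual realization $\theta(h,g)=\sum_{X'\in B^{(1,2)}}r'(h,g)\phi_0(X')$ only where the $r'$-formulas of Lemma \ref{lem:r'} make a computation cleaner.

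First I would treat the left $H_{\bQ}\times G_{\bQ}$-invariance. The key point is that the summation runs over the \emph{rational} points $B^{(2,1)}$, which is stable under the left action of $H_{\bQ}$ via $X\mapsto\tp\ol{\gamma}X$ and under the right $G_{\bQ}$-action coming from the Weil representation; since $r(\gamma,\gamma')\varphi_0$ evaluated at a lattice point amounts to re-indexing the sum by an automorphism of $B^{(2,1)}$, the theta kernel is genuinely left-invariant, $\theta(\gamma h,\gamma' g)=\theta(h,g)$. Concretely this reduces to checking that the global Weil representation $r$ is automorphic for $\varphi_0$, i.e. that the partial sums are permuted by the rational action; the vanishing of the residual phase factors is exactly the statement that $\psi$ is trivial on $\bQ$, which is built into the definition of $\psi$ in Section \ref{subsec:notation-1}.

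Next comes the right $U_f\times K_f$-invariance. Here the mechanism is purely local at the finite places: by Lemma \ref{lem:phi0-UK}(1) the test functions $\varphi_{0,p}$ and $\phi_{0,p}$ are fixed by $r_p(u_p,k_p)$ and $r'_p(u_p,k_p)$ for $u_p\in U_p,\,k_p\in K_p$. Inserting $r(h u_f, g k_f)=r(h,g)\,r(u_f,k_f)$ and using that $r(u_f,k_f)\varphi_0=\varphi_0$ place by place gives $\theta(hu_f,gk_f)=\theta(h,g)$ immediately. Finally, for the Archimedean type I would again factor $r_\infty(hu_\infty,gk_\infty)=r_\infty(h,g)\,r_\infty(u_\infty,k_\infty)$ and apply Lemma \ref{lem:phi0-UK}(2), which states that $r_\infty(u_\infty,k_\infty)$ conjugates $\varphi_{0,\infty}$ by $\tau_\kappa(k_\infty)\inv$ on the left and $\rho_\kappa(u_\infty)$ on the right in $\mathrm{End}(V_\kappa)$; since summation over $X$ commutes with these fixed endomorphisms, the factors pull outside the sum and yield $\theta(hu_\infty,gk_\infty)=\tau_\kappa(k_\infty)\inv\circ\theta(h,g)\circ\rho_\kappa(u_\infty)$.

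Assembling the three pieces, and using that the adelic group factors as $H_{\bA}=H_\infty\times\prod_p H_p$ with the various actions commuting, gives the full statement. I do not expect a serious obstacle here: the entire content has been front-loaded into Lemma \ref{lem:phi0-UK} and the automorphy of $r$ on the lattice $B^{(2,1)}$. The only point demanding a little care is bookkeeping—ensuring that the left action by $H_{\bQ}$, which acts through $\tp\ol{h}$ on the $H$-side, is compatible with the right translation used in the statement, and that the order of composition of $\tau_\kappa(k_\infty)\inv$ and $\rho_\kappa(u_\infty)$ in $\mathrm{End}(V_\kappa)$ matches the convention in Lemma \ref{lem:phi0-UK}(2). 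That is a matter of tracking conventions rather than a genuine difficulty.
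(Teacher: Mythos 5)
Your proposal is correct and follows essentially the same route as the paper, whose proof is simply the observation that the statement follows from Lemma \ref{lem:phi0-UK}; you have merely made explicit the standard re-indexing argument for the left $H_{\bQ}\times G_{\bQ}$-invariance of the theta sum (which the paper leaves implicit) and the place-by-place application of Lemma \ref{lem:phi0-UK} for the $U_f\times K_f$ and $U_\infty\times K_\infty$ equivariance. No gaps.
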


\begin{proof}
 This follows from  Lemma \ref{lem:phi0-UK}.
\end{proof}

 \subsection{}

We define 
the theta lift $\cL(f)$ of $f\in \cS_{\kappa}^H$ by
\begin{equation}\label{eq:theta-lift}
 \cL(f)(g)=\int_{H_{\bQ}\backslash H_{\bA}}\theta(h,g)f(h)dh.
\end{equation}
Here we recall that the Haar measure $dh$ on $H_{\bA}$ is normalized by
$\mathrm{vol}(H_{\bQ}\backslash H_{\bA})=1$.
The integral (\ref{eq:theta-lift}) is absolutely convergent.

\begin{theorem}[\cite{Na3}]\label{th:holomorphy-L}
\quad
We have
 $\cL(f)\in\cS_{\kappa}^G$.
\end{theorem}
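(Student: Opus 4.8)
The plan is to verify the three defining conditions of $\cS_{\kappa}^G$ for $F:=\cL(f)$ one at a time: condition (1) is formal, condition (3) rests on the Archimedean Eichler commutation relation, and condition (2) is the genuine analytic point. Throughout, absolute convergence of the integral (\ref{eq:theta-lift}), already asserted, together with the integrability of $\omega_{\kappa}$ (for $\kappa>2$) and $\Omega_{\kappa}$ (for $\kappa>4$), will justify the Fubini interchanges; since $\kappa>8$ both hold.

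Condition (1) is immediate from the transformation law of the theta kernel. Setting the $H$-variable trivial there gives $\theta(h,gk_fk_{\infty})=\tau_{\kappa}(k_{\infty})\inv\circ\theta(h,g)$ for $k_f\in K_f$, $k_{\infty}\in K_{\infty}$, and since $\tau_{\kappa}(k_{\infty})\inv$ does not depend on $h$ it factors out of (\ref{eq:theta-lift}), yielding $F(gk_fk_{\infty})=\tau_{\kappa}(k_{\infty})\inv F(g)$, which is (\ref{eq:def-af-G-1}).

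For condition (3) I would start from the right-hand side of (\ref{eq:def-af-G-2}) and move the $\Omega_{\kappa}$-integral inside the definition of $\cL(f)$, producing the inner integral $c_{\kappa}^G\int_{G_{\infty}}\Omega_{\kappa}(y)\theta(h,gy\inv)dy$. By the Archimedean commutation relation (\ref{eq:ecr-intro}) (Theorem \ref{th:commutation-relations}) this equals $c_{\kappa}^H\int_{H_{\infty}}\theta(hx,g)\omega_{\kappa}(x)dx$. After a second Fubini and the right-invariant change of variable $h\mapsto hx\inv$ on $H_{\bQ}\backslash H_{\bA}$, the whole expression becomes
\[
\int_{H_{\bQ}\backslash H_{\bA}}\theta(h,g)\left(c_{\kappa}^H\int_{H_{\infty}}\omega_{\kappa}(x)f(hx\inv)dx\right)dh,
\]
and the reproducing property (\ref{eq:reproducing-H}) of $f$ collapses the inner bracket to $f(h)$, leaving exactly $\cL(f)(g)$. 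The only subtlety is the order of composition in $\mathrm{End}(V_{\kappa})$, as $\theta$, $\omega_{\kappa}$ and $\Omega_{\kappa}$ are all operator-valued; the equivariance relations (\ref{eq:U-equivariance-omega}) and (\ref{eq:K-equivariance-Omega}) ensure the factors combine consistently, and one checks the scalars $c_{\kappa}^H$ and $c_{\kappa}^G$ match up as in (\ref{eq:c_kappa^H}) and (\ref{eq:c_kappa^G}).

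The main obstacle is condition (2), the boundedness of $\|F(g)\|_{\kappa}$ on $G_{\bA}$. The naive estimate $\|F(g)\|_{\kappa}\le\int_{H_{\bQ}\backslash H_{\bA}}\|\theta(h,g)\|_{\mathrm{op}}\,\|f(h)\|_{\kappa}\,dh$, using boundedness of $f$ and the finite volume $\mathrm{vol}(H_{\bQ}\backslash H_{\bA})=1$, is not enough, since a crude bound on the theta kernel does not control $g$ as it leaves a compact set. The remedy is to exploit the cuspidality (\ref{eq:cuspidality-f}) of $f$: integrating the theta kernel against a cusp form kills the constant term of $\theta$ along the unipotent radical $\left(\begin{smallmatrix}1&\beta\\0&1\end{smallmatrix}\right)$ of $G$, which is precisely the slowly growing part, and what remains decays as $g$ moves into a Siegel domain. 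This yields not merely boundedness but rapid decay of $F$, from which (\ref{eq:cuspidal}) also follows. This estimate is the delicate step and is the content carried out in \cite{Na3}; I would organize it by reducing to a Siegel set via reduction theory for $G$ and bounding the resulting theta average termwise.
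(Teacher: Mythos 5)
Your treatment of conditions (1) and (3) is exactly the paper's argument: (1) falls out of the equivariance of $\theta$, and (3) is the chain reproducing property of $f$ $\rightarrow$ Fubini $\rightarrow$ Archimedean commutation relation (\ref{eq:ecr-intro}) $\rightarrow$ Fubini, which the paper runs in the forward direction from $\cL(f)(g)$ while you run it backward from the right-hand side of (\ref{eq:def-af-G-2}); these are the same computation. The one real divergence is condition (2). You correctly identify cuspidality of $f$ as the mechanism, but you then propose a quantitative route --- reduction to a Siegel set and termwise decay estimates for the theta average, deferred to \cite{Na3} --- whereas the paper's point in this section is precisely to avoid that. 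It instead reduces boundedness to the exact vanishing of the constant term $F_0(g)=\int_{B^-\backslash B^-_{\bA}}F\left(\Nmat{1&\beta}{0&1}g\right)d\beta$ and proves $F_0\equiv 0$ by an orbit computation: integrating over $\beta$ isolates the isotropic vectors, and
\[
\left\{X\in B^{(2,1)}\mid \tp\ol{X}JX=0\right\}=\left\{\vecm{0\\0}\right\}\cup\left\{\tp\ol{\gamma}\vecm{0\\1}\mid \gamma\in N^H_{\bQ}\backslash H_{\bQ}\right\},
\]
after which unfolding the $\gamma$-sum turns both contributions into integrals of the constant term (\ref{eq:cuspidality-f}) of $f$, hence zero. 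This is an exact, self-contained identity requiring no growth estimates; your version leaves the delicate analytic step unproved (you explicitly outsource it), and it also inverts the logical order of the paper, which deduces boundedness from cuspidality of $F$ together with (1) and (3) rather than deducing (\ref{eq:cuspidal}) from rapid decay. If you want a complete proof along the paper's lines, replace your Siegel-domain sketch with the unfolding above.
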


The theta lifting $\cL\colon \cS_{\kappa}^H\,\to\,\cS_{\kappa}^G$
was first introduce essentially by Arakawa in his unpublished notes.
The second named author of this paper, based on Arakawa's notes,
 developed Arakawa's work to 
the the lifting from elliptic modular forms to automorphic
forms on $\mathrm{Sp}(1,q)$ generating the quaternionic discrete series representations at the Archimedean place (\cite{Na3}).

Specifying some arithmetic group defined by a global maximal lattice, Arakawa and the second named author proved Theorem \ref{th:holomorphy-L}
 by showing that the lifts of
Poincar\'e series on $H$ satisfy (\ref{eq:def-af-G-2})
under the assumption $\kappa> 6$.
In Section \ref{sec:commutation-relations},
 we give another proof of Theorem \ref{th:holomorphy-L} by using
an Archimedean 
analogue of Eichler commutation relations
under the assumption $\kappa>8$.

We call $\cL(f)$ the \emph{Arakawa lift} of $f$.
The arithmetic properties of the Arakawa lifting $\cL$
(in particular the Fourier expansion of $\cL(f)$) have been 
studied in \cite{MN1}, \cite{MN2} and \cite{MN3}.
In \cite{Na4} and \cite{Na5}, the second named author 
treated the case where the involved representation 
at the Archimedean place belongs to a more general discrete 
series.
He also studied a relation between 
the theta lifting for $(GO^*(4),\mathrm{GSp}(1,1))$
(or $(GO^*(4),\mathrm{GSp}^*(2))$)
and  that for $(GO(2,2),\mathrm{GSp}(2))$
in terms of the Jacquet-Langlands-Shimizu correspondence.

 \section{The adjoint of the Arakawa lifting}
\label{sec:adjoint-AL}  

 \subsection{}

Define $\varphi^*_0\in W$ and $\phi^*_{0}\in W'$ by
\begin{align*}
 \varphi^*_0 &=\varphi^*_{0,\infty}\otimes\bigotimes_{p<\infty}\varphi_{0,p}\\
 \phi^*_0 &=\phi^*_{0,\infty}\otimes\bigotimes_{p<\infty}\phi_{0,p},
\end{align*}
where
\begin{align*}
 \varphi^*_{0,\infty}(X) &=\e\left(\sqrt{-1}\tp\ol{X}X\right)
\sigma_{\kappa}((1,\sqrt{-1})X)\qquad(X\in\bH^{(2,1)}),\\
 \phi^*_{0,\infty}(X') &=\e\left(\sqrt{-1}X'\,\tp\ol{X'}\right)
\sigma_{\kappa}((1,-1)\tp X')\qquad(X'\in\bH^{(1,2)}).
\end{align*}

The proofs of the following results are straightforward and omitted.

\begin{lemma}
\label{lem:*0}
 \begin{itemize}
  \item[(1)]\ We have $\phi^*_0=I\varphi^*_0$.
  \item[(2)] \ For $v,v'\in V_{\kappa}$, we have
\begin{align*}
 ( \varphi_0(X)v,v')_{\kappa}&=( v,\varphi^*_0(X)v')_{\kappa}\qquad(X\in B_{\bA}^{(2,1)}),\\
 ( \phi_0(x_1,x_2)v,v')_{\kappa}&=( v,\phi^*_0(x_1, -x_2)v')_{\kappa}\qquad(x_1,x_2\in B_{\bA}).
\end{align*}
  \item[(3)] \ For $u_{\infty}\in U_{\infty}$ and $k_{\infty}\in K_{\infty}$,
we have
\begin{align*}
 \left(r(u_{\infty},k_{\infty})\varphi^*_0\right)(X)&=
\rho_{\kappa}(u_{\infty})\inv\circ \varphi^*_0(X)\circ 
\tau_{\kappa}^-(k_{\infty})\qquad(X\in B_{\bA}^{(2,1)}),\\
\left(r'(u_{\infty},k_{\infty})\phi^*_0\right)(X')&=
\rho_{\kappa}(u_{\infty})\inv\circ \phi^*_0(X')\circ 
\tau_{\kappa}^-(k_{\infty})\qquad(X'\in B_{\bA}^{(1,2)}).
\end{align*}
 \end{itemize}
\end{lemma}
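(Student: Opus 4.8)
The plan is to reduce all three assertions to the Archimedean place and then to short Gaussian computations, since at each finite $p$ the factors $\varphi_{0,p}=\phi_{0,p}$ are the \emph{scalar} characteristic functions of $L_p'$, $L_p$, which carry no $\mathrm{End}(V_{\kappa})$-dependence and are invariant under $y\mapsto-y$ in each variable. Thus in (2) the finite components contribute the same real scalar on both sides (for the second line one uses $\phi_{0,p}(x_1,x_2)=\phi_{0,p}(x_1,-x_2)$, i.e.\ that $L_p$ is a lattice), so it suffices to prove the Archimedean identities $(\varphi_{0,\infty}(X)v,v')_{\kappa}=(v,\varphi^*_{0,\infty}(X)v')_{\kappa}$ and its analogue for $\phi_{0,\infty}$. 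For (1) I would use $I=\otimes_v I_v$ together with $\varphi^*_0=\varphi^*_{0,\infty}\otimes\bigotimes_p\varphi_{0,p}$: at finite places $\phi_{0,p}=I_p\varphi_{0,p}$ is exactly Lemma \ref{lem:phi0}, so only $\phi^*_{0,\infty}=I_{\infty}\varphi^*_{0,\infty}$ remains.

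For that remaining identity I would substitute the definitions into the partial Fourier transform. Writing $X=\tp(x_1,y)$, using $\tp\ol{X}X=N(x_1)+N(y)$, $(1,\sqrt{-1})X=x_1+\sqrt{-1}\,y$, and $\tr(\ol{y}x_2)=\tr(\ol{x_2}y)$, one obtains
\[
 I_{\infty}\varphi^*_{0,\infty}(x_1,x_2)=\e(\sqrt{-1}N(x_1))\int_{\bH}\e\!\left(\tr(\ol{x_2}y)+\sqrt{-1}\,\ol{y}y\right)\sigma_{\kappa}(x_1+\sqrt{-1}\,y)\,dy .
\]
The one trick is to pull the scalar out of $\sigma_{\kappa}$: since $\sigma_{\kappa}(\sqrt{-1})=(\sqrt{-1})^{\kappa}\,\mathrm{id}$ and $\sigma_{\kappa}(-1)=(-1)^{\kappa}\,\mathrm{id}$, we have $\sigma_{\kappa}(x_1+\sqrt{-1}\,y)=(\sqrt{-1})^{\kappa}\sigma_{\kappa}(y-\sqrt{-1}\,x_1)$, which is precisely the shape $\sigma_{\kappa}(y+\beta)$ required by Lemma \ref{lem:Ei} with $\alpha=\ol{x_2}$ and $\beta=-\sqrt{-1}\,x_1$. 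Applying Lemma \ref{lem:Ei} and collecting the resulting powers of $\sqrt{-1}$ and $(-1)$ (which combine to $1$) yields $\e(\sqrt{-1}(N(x_1)+N(x_2)))\sigma_{\kappa}(x_1-x_2)=\phi^*_{0,\infty}(x_1,x_2)$. Keeping track of these $\sqrt{-1}$- and $(-1)$-powers is the only place an error can creep in.

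For (2) I would cancel the common factor $\e(\sqrt{-1}\tp\ol{X}X)$, which is a positive real and hence its own complex conjugate, reducing the first line to $(\sigma_{\kappa}((1,-\sqrt{-1})\ol{X})v,v')_{\kappa}=(v,\sigma_{\kappa}((1,\sqrt{-1})X)v')_{\kappa}$. The key input is that, with respect to $(\cdot,\cdot)_{\kappa}$, one has $\sigma_{\kappa}(\alpha)^{*}=\sigma_{\kappa}(\alpha^{\dagger})$, where $\alpha\mapsto\alpha^{\dagger}$ denotes the conjugate-linear extension of the main involution to $\bH\otimes_{\bR}\bC$; this follows from the stated $A(\bH^1)$-invariance by writing $\alpha=\sqrt{N(\alpha)}\,t$ with $t\in\bH^1$ (so $\sigma_{\kappa}(t)$ is unitary and $\sqrt{N(\alpha)}$ is a positive self-adjoint scalar), giving $\sigma_{\kappa}(\alpha)^{*}=\sigma_{\kappa}(\ol{\alpha})$ on $\bH\cross$, and then extending conjugate-linearly. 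Since $(1,-\sqrt{-1})\ol{X}=\ol{x_1}-\sqrt{-1}\,\ol{x_2}=\big((1,\sqrt{-1})X\big)^{\dagger}$, the first identity is immediate; the second line is the same computation with $(1,1)\tp\ol{X'}=\ol{x_1+x_2}$ against $(1,-1)\tp(x_1,-x_2)=x_1+x_2$.

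For (3) it is cleanest to prove the $U_{\infty}$-equivariance in the $W$-model and the $K_{\infty}$-equivariance in the $W'$-model, then transport by $I$ using (1) and the fact that $I$ commutes with the $X$-independent operators $\rho_{\kappa}(u)\inv$ and $\tau_{\kappa}^{-}(k)$. Since $H$ acts in $W$ by $r(u,1_2)\varphi(X)=\varphi(\tp\ol{u}X)$, for $u=t\,\Nmat{\cos\theta&\sin\theta}{-\sin\theta&\cos\theta}\in U_{\infty}$ a direct check gives $\tp\ol{u}=\ol{t}\,\Nmat{\cos\theta&-\sin\theta}{\sin\theta&\cos\theta}$ with $\tp\ol{u}\,u=1_2$ (so the Gaussian is preserved) and $(1,\sqrt{-1})\tp\ol{u}X=e^{\sqrt{-1}\theta}\ol{t}\,(1,\sqrt{-1})X$; hence $\sigma_{\kappa}((1,\sqrt{-1})\tp\ol{u}X)=e^{\sqrt{-1}\kappa\theta}\sigma_{\kappa}(\ol{t})\circ\sigma_{\kappa}((1,\sqrt{-1})X)=\rho_{\kappa}(u)\inv\circ\sigma_{\kappa}((1,\sqrt{-1})X)$, producing $\rho_{\kappa}(u)\inv$ on the left. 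Since $G$ acts in $W'$ by $r'(1_2,k)\phi(X')=\phi(X'k)$ (Lemma \ref{lem:r'}), for $k=\Nmat{a&b}{b&a}\in K_{\infty}$ the relations $N(a)+N(b)=1$ and $\tr(\ol{a}b)=0$ give $k\,\tp\ol{k}=1_2$ (preserving the Gaussian) and $(1,-1)\tp(X'k)=(x_1-x_2)(a-b)$, so that $\sigma_{\kappa}((1,-1)\tp(X'k))=\sigma_{\kappa}((1,-1)\tp X')\circ\tau_{\kappa}^{-}(k)$, producing $\tau_{\kappa}^{-}(k)$ on the right. Combining the two (they act on opposite sides and hence commute) gives both displays of (3). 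The genuinely delicate point, and the main obstacle, is the bookkeeping that makes $\tau_{\kappa}^{-}$ rather than $\tau_{\kappa}$ appear and that places $\rho_{\kappa}(u)\inv$ on the left and $\tau_{\kappa}^{-}(k)$ on the right; this is exactly what distinguishes the starred test functions from their unstarred counterparts in Lemma \ref{lem:phi0-UK}.
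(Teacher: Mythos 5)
Your proposal is correct, and it is essentially the verification the authors have in mind: the paper omits the proof of this lemma as ``straightforward,'' and your argument supplies exactly the expected steps — reduction to the Archimedean place, Lemma \ref{lem:Ei} with $\alpha=\ol{x_2}$, $\beta=-\sqrt{-1}x_1$ for (1), the adjoint identity $\sigma_{\kappa}(\alpha)^{*}=\sigma_{\kappa}(\ol{\alpha})$ coming from $A(\bH^1)$-unitarity for (2), and the same matrix manipulations underlying Lemma \ref{lem:phi0-UK} (with the sign flips that produce $\tau_{\kappa}^{-}$) transported through $I_{\infty}$ for (3). All the $\sqrt{-1}$-power bookkeeping in your computation of $I_{\infty}\varphi^{*}_{0,\infty}$ checks out.
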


\begin{lemma}
\label{lem:adjoint-formula}
 We have
\[
 (r(h,g)\varphi_0(X)v,v')_{\kappa}
=(v,r(h,g^-)\varphi^*_0(X)v')_{\kappa}
\qquad(h\in H_{\bA},g\in G_{\bA},X\in B_{\bA}^{(2,1)},v,v'\in V_{\kappa}),
\]
where
\[
 g^-=\Nmat{a&-b}{-c&d}\qquad \text{ for }\ g=\Nmat{a&b}{c&d}.
\]
\end{lemma}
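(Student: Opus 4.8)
The plan is to verify the adjoint formula by checking it on a generating set of the group $H_{\bA}\times G_{\bA}$, exploiting the fact that the Weil representation action is given by explicit formulas (as recorded in the definition of $r_v$ and in Lemma \ref{lem:r'}) and reducing the global statement to its local factors. Since both sides are built from $\varphi_0=\otimes_v\varphi_{0,v}$ and $\varphi_0^*$ differs from $\varphi_0$ only at the Archimedean place, and since the pairing $(\,,\,)_\kappa$ lives entirely on $V_\kappa$, the verification decouples place by place: at finite $p$ the $\mathrm{End}(V_\kappa)$-components are absent (so one only tracks the Schwartz-function part and the scalar $\psi_v$-factors), while at $v=\infty$ one must track how $\sigma_\kappa$ interacts with the pairing.

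First I would record the key pairing identity from Lemma \ref{lem:*0}(2), namely $(\varphi_0(X)v,v')_\kappa=(v,\varphi_0^*(X)v')_\kappa$, which is the $g=h=1$ case of the desired statement. Then I would show that if the formula
\[
(r(h,g)\varphi_0(X)v,v')_\kappa=(v,r(h,g^-)\varphi_0^*(X)v')_\kappa
\]
holds for two pairs $(h_1,g_1)$ and $(h_2,g_2)$, it holds for their product; this is where the map $g\mapsto g^-$ must be shown to be an anti-homomorphism in the relevant sense, i.e.\ $(g_1g_2)^-=g_1^-g_2^-$ matched against the reversed order forced by taking adjoints. Concretely I would check the formula on each generator: for $h\in H_v$ acting by $\varphi\mapsto\varphi(\tp\ol{h}X)$ (here $g=1$, so $g^-=1$ and the two sides agree after the change of variables $X\mapsto\tp\ol{h}X$); for the three types of elements $\Nmat{\alpha&0}{0&\ol\alpha\inv}$, $\Nmat{1&\beta}{0&1}$, and $\Nmat{0&1}{1&0}$ generating $G_v$. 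The diagonal torus acts by the scalar $|N(\alpha)|_v^2$ together with $X\mapsto X\alpha$, which is self-adjoint under the pairing; the unipotent element acts by the scalar $\psi_v\bigl(-\tfrac12\tr(\beta\tp\ol{X}JX)\bigr)$, and here the sign flip $\beta\mapsto-\beta$ built into $g\mapsto g^-$ is exactly what exchanges $\psi_v$ with its complex conjugate to match the adjoint; the Weyl element acts by a Fourier transform whose adjoint is the inverse Fourier transform, again accounted for by the sign in $g^-$.

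The main obstacle I expect is the Archimedean generator where $\sigma_\kappa$ enters nontrivially: one must confirm that the adjoint of the operator $v\mapsto\sigma_\kappa((1,-\sqrt{-1})\ol X)v$ against $(\,,\,)_\kappa$ produces precisely the factor $\sigma_\kappa((1,\sqrt{-1})X)$ appearing in $\varphi_0^*$, and that this identity survives composition with the $G_\infty$-action after replacing $g$ by $g^-$. This amounts to understanding how $\sigma_\kappa$ transforms under the main involution and complex conjugation of the entries of $X$, i.e.\ relating $A(\bar\alpha)$ to a conjugate-transpose of $A(\alpha)$; the $A(\bH^1)$-invariance of $(\,,\,)_\kappa$ handles the unitary part, but the non-unitary scaling pieces must be tracked by hand. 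Once the generator-by-generator check is complete at every place, the global identity follows by taking the product over $v$, since all but finitely many local factors are the unramified characteristic functions for which the formula is the trivial identity $\varphi_{0,p}=\varphi_{0,p}^*$.
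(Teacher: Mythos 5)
Your proposal is correct and is essentially the ``straightforward'' verification that the paper omits: reduce to the local factors, check the identity on the generators of $G_v$ listed in the definition of $r_v$ (the whole of $H_v$ is covered by the single formula $\varphi\mapsto\varphi(\tp\ol{h}X)$), and use $\varphi_0^*(X)=\varphi_0(X)^*$ from Lemma \ref{lem:*0}(2) together with $A(\ol{\alpha})=\tp\overline{A(\alpha)}$ at the Archimedean place. Two small clean-ups: the multiplicativity step should be run at the operator level --- prove the intertwining identity $\dagger\circ r_v(h,g)=r_v(h,g^-)\circ\dagger$, where $\dagger$ is the pointwise $(\,,\,)_{\kappa}$-adjoint (complex conjugation at finite places), and only then apply it to $\varphi_0$; as stated, knowing the formula for $(h_2,g_2)$ acting on $\varphi_0$ does not by itself give the formula for $(h_1,g_1)$ acting on $r(h_2,g_2)\varphi_0$. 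Also, $g\mapsto g^-$ is conjugation by $\mathrm{diag}(1,-1)$, hence a genuine homomorphism, and since $\dagger$ conjugates (rather than composes) the operators $r_v(h,g)$, no order reversal ever enters --- your worry about an anti-homomorphism is a red herring.
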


 \subsection{}

Set
\[
\theta^*(h,g) =\sum_{X\in B^{(2,1)}}\,\left(r(h,g^-)\varphi^*_0\right)(X)\in
\mathrm{End}(V_{\kappa})\qquad (h\in H_{\bA},g\in G_{\bA}).
\]
By Poisson summation formula, we have
\[
 \theta^*(h,g) =\sum_{X'\in B^{(1,2)}}\,\left(r'(h,g^-)\phi^*_0\right)(X').
\]
The following facts are direct consequences of Lemma \ref{lem:*0} (3)
and Lemma \ref{lem:adjoint-formula}.

\begin{lemma}
\label{lem:adjoint-theta}
\begin{enumerate}
 \item  We have
\[
 \theta^*\left(\gamma hu_fu_{\infty},\gamma'gk_fk_{\infty}\right)
=\rho_{\kappa}(u_{\infty})\inv\circ \theta^*(h,g)\circ \tau_{\kappa}(k_{\infty})
\]
for $\gamma\in H_{\bQ},h\in H_{\bA}, u_f\in U_f, u_{\infty}\in U_{\infty},
\gamma'\in G_{\bQ},g\in G_{\bA}, k_f\in K_f, k_{\infty}\in K_{\infty}
$.
 \item We have
 \[
 (\theta(h,g)v,v')_{\kappa}
=(v,\theta^*(h,g)v')_{\kappa}
\qquad(h\in H_{\bA},g\in G_{\bA},v,v'\in V_{\kappa}).
\]
\end{enumerate}

\end{lemma}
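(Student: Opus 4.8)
The plan is to handle the two assertions separately: (2) will come straight out of Lemma~\ref{lem:adjoint-formula} by interchanging summation and inner product, while (1) will be assembled from the transformation law already proved for $\theta$ together with Lemma~\ref{lem:*0}~(3), the essential bridge being the involution $g\mapsto g^-$. For (2) I would write, for $v,v'\in V_\kappa$,
\[
(\theta(h,g)v,v')_\kappa=\sum_{X\in B^{(2,1)}}(r(h,g)\varphi_0(X)v,v')_\kappa
=\sum_{X\in B^{(2,1)}}(v,r(h,g^-)\varphi^*_0(X)v')_\kappa,
\]
the second equality being Lemma~\ref{lem:adjoint-formula} applied termwise. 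Since $(\,,\,)_\kappa$ is additive in the second slot, pulling the (absolutely convergent) sum out and recalling the definition of $\theta^*$ gives $(\theta(h,g)v,v')_\kappa=(v,\theta^*(h,g)v')_\kappa$; the only thing to check is the absolute convergence legitimizing the interchange, which holds exactly as for $\theta$ itself.

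For (1) the key structural observation I would record first is that $g\mapsto g^-$ is conjugation $g\mapsto sgs$ by $s=\Nmat{1&0}{0&-1}$, hence a group automorphism of $G$. A one-line computation using $sQs=-Q$ shows $\tp\ol{g^-}Qg^-=Q$ whenever $\tp\ol{g}Qg=Q$, so the involution preserves $G_\bQ$; since $L_ps=L_p$ one gets $L_pk^-=L_p s k s=L_p$ for $k\in K_p$, so it preserves each $K_p$, hence $K_f$; and it visibly preserves $K_\infty$. Most importantly, writing $k=\Nmat{a&b}{b&a}\in K_\infty$ so that $k^-=\Nmat{a&-b}{-b&a}$, I get
\[
\tau_\kappa^-(k^-)=\sigma_\kappa\bigl(a-(-b)\bigr)=\sigma_\kappa(a+b)=\tau_\kappa(k).
\]
This identity is precisely what will convert the $\tau_\kappa^-$ produced by Lemma~\ref{lem:*0}~(3) into the $\tau_\kappa$ demanded by the statement.

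With these facts in place, I would assemble (1) in three steps. Invariance under $H_\bQ\times G_\bQ$ follows by the usual theta-functional identity (Poisson summation and the permutation of $B^{(2,1)}$ by rational points), applied to $\varphi_0^*$ with $g$ replaced by $g^-$; here one uses $(\gamma'g)^-=\gamma'^-g^-$ with $\gamma'^-\in G_\bQ$. Invariance under $U_f\times K_f$ follows from Lemma~\ref{lem:phi0-UK}~(1): writing $(gk_f)^-=g^-k_f^-$ with $k_f^-\in K_f$, the operator $r(u_f,k_f^-)$ fixes the finite part of $\varphi_0^*$ (which coincides with that of $\varphi_0$), so the sum is unchanged. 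Finally, for $u_\infty\in U_\infty$ and $k_\infty\in K_\infty$ I would factor $r(hu_\infty,(gk_\infty)^-)=r(h,g^-)\,r(u_\infty,k_\infty^-)$ and apply Lemma~\ref{lem:*0}~(3) with $k_\infty$ replaced by $k_\infty^-$, obtaining
\[
(r(u_\infty,k_\infty^-)\varphi_0^*)(X)=\rho_\kappa(u_\infty)\inv\circ\varphi_0^*(X)\circ\tau_\kappa^-(k_\infty^-)=\rho_\kappa(u_\infty)\inv\circ\varphi_0^*(X)\circ\tau_\kappa(k_\infty).
\]
Because $r_\infty(h,g^-)$ acts only on the Schwartz variable (the $\mathrm{End}(V_\kappa)$-tensor factor is a passive coefficient space), it commutes with pre- and post-composition by the fixed endomorphisms $\rho_\kappa(u_\infty)\inv$ and $\tau_\kappa(k_\infty)$; pulling these outside the action and summing over $X$ yields $\theta^*(hu_\infty,gk_\infty)=\rho_\kappa(u_\infty)\inv\circ\theta^*(h,g)\circ\tau_\kappa(k_\infty)$, which together with the first two invariances is the claim.

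The hard part, conceptually, is the archimedean equivariance, where the mismatch is that Lemma~\ref{lem:*0}~(3) naturally outputs $\tau_\kappa^-$ whereas (1) requires $\tau_\kappa$; everything hinges on recognizing that the defining twist $g\mapsto g^-$ in $\theta^*$ is exactly the involution that intertwines $\tau_\kappa^-$ and $\tau_\kappa$ on $K_\infty$. The remaining work—checking that this same involution stabilizes $G_\bQ$, each $K_p$ and $K_\infty$ so that it disturbs neither the rational nor the finite-adelic invariance—is routine bookkeeping. I note that one could instead deduce (1) from (2) and the transformation law of $\theta$, provided one first established that $\rho_\kappa$ and $\tau_\kappa$ are unitary for $(\,,\,)_\kappa$; the route above is preferable because it sidesteps that and uses only Lemma~\ref{lem:*0}~(3) directly.
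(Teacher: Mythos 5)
Your proposal is correct and follows essentially the same route as the paper, which states the lemma as a direct consequence of Lemma \ref{lem:*0} (3) and Lemma \ref{lem:adjoint-formula}; you have simply written out the details, correctly identifying the crux that the involution $g\mapsto g^-$ (conjugation by $\mathrm{diag}(1,-1)$) preserves $G_{\bQ}$, $K_f$ and $K_{\infty}$ and satisfies $\tau_{\kappa}^-(k_{\infty}^-)=\tau_{\kappa}(k_{\infty})$, which converts the $\tau_{\kappa}^-$ output by Lemma \ref{lem:*0} (3) into the $\tau_{\kappa}$ in the statement. Part (2) is, as you say, just Lemma \ref{lem:adjoint-formula} summed termwise over $X\in B^{(2,1)}$.
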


\bigskip

For $F\in \cS_{\kappa}^G$, we define
\begin{equation}
 \label{eq:def-L^*}
 \cL^*(F)(h)=\int_{G_{\bQ}\backslash G_{\bA}}
\theta^*(h,g)\,F(g)dg
\qquad(h\in H_{\bA}).
\end{equation}

\begin{theorem}
\label{th:hol-L*}
Suppose that $\kappa>8$.
 \begin{itemize}
  \item[(1)] \ For $F\in \cS_{\kappa}^G$,
we have $\cL^*(F)\in \cS_{\kappa}^H$.
  \item[(2)] \ We have
\[
 \langle \cL(f), F\rangle_{G}=
\langle f,\cL^*(F)\rangle_{H}
\]
for $f\in \cS_{\kappa}^H$ and  $F\in \cS_{\kappa}^G$.
  \item[(3)] \ For $F\in \cS_{\kappa}^G$,
\begin{equation}
 \label{eq:reproduce-f}
 \cL(\cL^*(F))(g)=
\int_{H_{\bQ}\backslash H_{\bA}}dh\,
\int_{G_{\bQ}\backslash G_{\bA}}dy\,
\theta(h,g)\circ \theta^*(h,y)\,F(y).
\end{equation}
 \end{itemize}
\end{theorem}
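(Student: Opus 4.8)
The plan is to prove the three assertions of Theorem~\ref{th:hol-L*} in order, using the adjointness of the theta kernels (Lemma~\ref{lem:adjoint-theta}) as the central tool. For part~(1), I would verify the three defining conditions of $\cS_{\kappa}^H$ directly for $\cL^*(F)$. The $U$-equivariance condition~(1) in Section~\ref{subsec:af-H} follows from Lemma~\ref{lem:adjoint-theta}(1): since $\theta^*(hu_fu_\infty,g)=\rho_\kappa(u_\infty)\inv\circ\theta^*(h,g)\circ\tau_\kappa(k_\infty)$ for the left variable and $F$ transforms under $K_\infty$ by $\tau_\kappa(k_\infty)\inv$, the $\tau_\kappa(k_\infty)$ factors cancel after integrating over $G_\bQ\backslash G_\bA$, leaving precisely $\rho_\kappa(u_\infty)\inv\cL^*(F)(h)$. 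The left $H_\bQ$-invariance of $\cL^*(F)$ comes from the $\gamma\in H_\bQ$ invariance in the same lemma. The boundedness condition~(2) should follow from the rapid decay of the theta kernel against the cusp form $F$; here one uses the cuspidality~(\ref{eq:cuspidal}) of $F$ together with the integrability estimates underlying the convergence of the defining integral. The reproducing property~(3) is the subtlest point and will require invoking the Archimedean Eichler commutation relation~(\ref{eq:ecr-intro}): this is exactly the mechanism by which $\theta^*$ intertwines the reproducing kernels $\Omega_\kappa$ on $G$ and $\omega_\kappa$ on $H$, transferring the reproducing identity~(\ref{eq:def-af-G-2}) satisfied by $F$ into the identity~(\ref{eq:reproducing-H}) for $\cL^*(F)$.

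For part~(2), the adjointness $\langle\cL(f),F\rangle_G=\langle f,\cL^*(F)\rangle_H$ is a formal computation once one has Lemma~\ref{lem:adjoint-theta}(2). I would write out both sides as iterated integrals over $H_\bQ\backslash H_\bA$ and $G_\bQ\backslash G_\bA$ and use Fubini to interchange the order of integration; the interchange is justified by the absolute convergence established for the theta lifts. Concretely,
\begin{align*}
\langle\cL(f),F\rangle_G
&=\int_{G_\bQ\backslash G_\bA}\Bigl(\int_{H_\bQ\backslash H_\bA}\theta(h,g)f(h)\,dh,\,F(g)\Bigr)_\kappa dg\\
&=\int_{H_\bQ\backslash H_\bA}\int_{G_\bQ\backslash G_\bA}\bigl(\theta(h,g)f(h),F(g)\bigr)_\kappa\,dg\,dh.
\end{align*}
Moving $\theta(h,g)$ to the other slot of the inner product via Lemma~\ref{lem:adjoint-theta}(2) turns $\bigl(\theta(h,g)f(h),F(g)\bigr)_\kappa$ into $\bigl(f(h),\theta^*(h,g)F(g)\bigr)_\kappa$, and integrating in $g$ first reconstitutes $\cL^*(F)(h)$ in the second argument, giving $\langle f,\cL^*(F)\rangle_H$.

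Part~(3) is then immediate by composing the definitions: unfolding $\cL(\cL^*(F))(g)=\int_{H_\bQ\backslash H_\bA}\theta(h,g)\,\cL^*(F)(h)\,dh$ and substituting the defining integral~(\ref{eq:def-L^*}) for $\cL^*(F)(h)$ yields the stated double integral, with the interchange of $dh$ and $dy$ again justified by absolute convergence.

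The main obstacle I expect is the convergence and the verification of condition~(3) for $\cL^*(F)$ in part~(1). Parts~(2) and~(3) are essentially bookkeeping exercises in Fubini once the adjoint kernel relation is in hand, and the equivariance in part~(1) is formal. But establishing that $\cL^*(F)$ genuinely satisfies the reproducing kernel identity~(\ref{eq:reproducing-H})---rather than merely being a smooth, bounded, equivariant function---requires the full force of the Archimedean commutation relation, whose proof the introduction explicitly flags as ``quite technical.'' In particular, one must track the constants $c_\kappa^H$ and $c_\kappa^G$ carefully through the kernel identity so that~(\ref{eq:c_kappa^H}) and~(\ref{eq:c_kappa^G}) match up, and one must ensure the boundedness estimate~(2) survives after integrating the (merely integrable, not compactly supported) spherical functions $\omega_\kappa$ and $\Omega_\kappa$ against the cusp form, which is where the hypothesis $\kappa>8$ enters.
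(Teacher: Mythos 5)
Your proposal matches the paper's proof in all essentials: the paper deduces (2) directly from Lemma \ref{lem:adjoint-theta}, treats (3) as an immediate unfolding of the definitions, and proves (1) exactly as you describe --- formal equivariance, boundedness via cuspidality, and the reproducing identity (\ref{eq:reproducing-H}) via the Archimedean commutation relation, in parallel with the argument given for Theorem \ref{th:holomorphy-L}. The one small correction is that the kernel identity needed for $\cL^*$ is the starred relation (\ref{eq:ecr*}) of Theorem \ref{th:commutation-relations}, not (\ref{eq:ecr-intro}), which is the version adapted to $\theta$ and $\cL$.
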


The proof of (1) is given in the next section.
The second assertion follows from Lemma \ref{lem:adjoint-theta}.
The third assertion is easily verified.

In a forthcoming paper, we will investigate $\cL(\cL^*(F))$
by using (\ref{eq:reproduce-f}).

 \section{An Archimedean analogue of 
Eichler commutation relations}
\label{sec:commutation-relations}

 \subsection{}

The object of this section is to prove Theorem \ref{th:holomorphy-L}
and Theorem \ref{th:hol-L*} (1) by using the following 
Archimedean analogue of Eichler commutation relations:

\begin{theorem}
 \label{th:commutation-relations}
Suppose that $\kappa>8$.
For $(h,g)\in H_{\bA}\times G_{\bA}$, we have
\begin{equation}
\label{eq:ecr}
 c_{\kappa}^H\,\int_{H_{\infty}}\,\theta(hx,g)\omega_{\kappa}(x)dx
=  c_{\kappa}^G\,\int_{G_{\infty}}\,\Omega_{\kappa}(y)\theta(h,gy\inv)dy
\end{equation}
and
\begin{equation}
\label{eq:ecr*}
 c_{\kappa}^H\,\int_{H_{\infty}}\,\omega_{\kappa}(x)\theta^*(hx\inv,g)dx
=  c_{\kappa}^G\,\int_{G_{\infty}}\,\theta^*(h,gy)\Omega_{\kappa}(y)dy,
\end{equation}
where the Haar measures $dx$ and $dy$ on $H_{\infty}$ and
$G_{\infty}$ are given by (\ref{eq:measure-H}) and (\ref{eq:measure-G}),
respectively.

\end{theorem}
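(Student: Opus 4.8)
The plan is to reduce the claimed identity \eqref{eq:ecr} to a purely local (Archimedean) statement about the test function $\varphi_{0,\infty}$ by exploiting the factorization $\theta = \sum_{X} r(h,g)\varphi_0$ and the fact that both sides differ only in the Archimedean slot. Since the finite-place components $\varphi_{0,p}$ are untouched by the integrals over $H_\infty$ and $G_\infty$, it suffices to prove the analogous identity for the single summand $r(h,g)\varphi_0$ at each $X\in B^{(2,1)}$, and hence to establish the operator identity
\begin{equation}
\label{eq:local-target}
c_\kappa^H\int_{H_\infty}\bigl(r_\infty(hx,g)\varphi_{0,\infty}\bigr)(X)\,\omega_\kappa(x)\,dx
= c_\kappa^G\int_{G_\infty}\Omega_\kappa(y)\,\bigl(r_\infty(h,gy\inv)\varphi_{0,\infty}\bigr)(X)\,dy
\end{equation}
for all $(h,g)\in H_\infty\times G_\infty$ and all $X\in\bH^{(2,1)}$. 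By the equivariance of $\omega_\kappa$ and $\Omega_\kappa$ under $U_\infty$ and $K_\infty$ (relations \eqref{eq:U-equivariance-omega} and \eqref{eq:K-equivariance-Omega}) together with Lemma \ref{lem:phi0-UK}(2), I expect both sides to transform compatibly under left/right translation, so that it is enough to verify \eqref{eq:local-target} at the identity, i.e. for $h=g=1_2$, and then propagate by the group action.

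First I would insert the explicit formula $\varphi_{0,\infty}(X)=\e(\sqrt{-1}\,\tp\ol XX)\,\sigma_\kappa((1,-\sqrt{-1})\ol X)$ and the spherical functions $\omega_\kappa$, $\Omega_\kappa$ into both integrals. On the $H_\infty$ side, using the Iwasawa-type measure \eqref{eq:measure-H} and the action formulas $r_\infty(h,1_2)\varphi(X)=\varphi(\tp\ol h X)$, the integral becomes a Gaussian-type integral over $\bH\simeq\bR^4$ against a matrix coefficient of $\sigma_\kappa$; the key computational tool here is Lemma \ref{lem:Ei}, which evaluates exactly $\int_{\bH}\e(\tr(\alpha y)+\sqrt{-1}\ol y y)\sigma_\kappa(y+\beta)\,dy$. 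On the $G_\infty$ side, I would likewise use the decomposition \eqref{eq:measure-G} with coordinates $(\beta,a,k)$ and the action formulas for $\Nmat{1&\beta}{0&1}$, $\Nmat{a&0}{0&a\inv}$, and $K_\infty$; the $\beta$-integral over $\bH^-$ and the $a$-integral over $\bR_{>0}$ should each produce Gamma-factor constants, which is precisely where the normalizing constants $c_\kappa^H=(\kappa-1)/2\pi$ and $c_\kappa^G=\kappa(\kappa-1)/8\pi^2$ enter to make the two sides match.

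The main obstacle will be matching the two Gaussian integrals term by term after they have been reduced to closed form: the $H_\infty$ integral naturally produces an $\End(V_\kappa)$-valued expression indexed by the single $\bH$-variable coming from the unipotent/torus directions, whereas the $G_\infty$ integral produces one indexed by $\bH^-\times\bR_{>0}$, and reconciling these requires a nontrivial change of variables together with the identity $N(\Delta_g)\inv\sigma_\kappa(\Delta_g)\inv$ built into $\Omega_\kappa$. I anticipate that the cleanest route is to pass to the partial-Fourier-transformed model $r'_\infty$ via Lemma \ref{lem:phi0} ($\phi_{0,\infty}=I_\infty\varphi_{0,\infty}$) and Lemma \ref{lem:r'}, since in that model the $G_\infty$-action is the simple right translation $r'_\infty(1_2,g)\phi(X')=\phi(X'g)$, which should make the $G_\infty$-integral far more transparent and symmetric with the $H_\infty$-side; the two computations can then be organized so that both collapse, via Lemma \ref{lem:Ei} applied in the appropriate variable, to the same holomorphic $\End(V_\kappa)$-valued function of $(h,g)$. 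Finally, the companion relation \eqref{eq:ecr*} follows by the same argument applied to $\varphi^*_{0,\infty}$, or more economically by taking adjoints and invoking Lemma \ref{lem:adjoint-theta}(2) to transfer \eqref{eq:ecr} into \eqref{eq:ecr*}.
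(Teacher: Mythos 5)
Your overall strategy --- factor out the finite places, reduce (\ref{eq:ecr}) to an identity between the two Archimedean-integrated test functions, and compute both integrals in Iwasawa coordinates with the partial-Fourier model $r'_\infty$ as the bridge --- is the same as the paper's, and the reduction to $h=g=1_2$ is legitimate (though the correct reason is simply that the $(h,g)$-dependence factors out as $r(h,g)$ applied to the $(h,g)$-independent integrated kernels; equivariance under the compact groups $U_\infty$, $K_\infty$ alone would not let you propagate from the identity to all of $H_\infty\times G_\infty$). However, two essential pieces are missing. First, the matching of the two integrated kernels is not a ``nontrivial change of variables'' that Lemma \ref{lem:Ei} resolves. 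The $H_\infty$-integral is only tractable in the $W'$-model, where it evaluates (via Lemma \ref{lem:integral-residue}) to an explicit function $\phi_{\omega,\infty}$ on $\bH^{(1,2)}$ (Proposition \ref{prop:ecr-testf}(1)); the $G_\infty$-integral is only tractable in the $W$-model, where it evaluates to $\varphi_{\Omega,\infty}$ on $\bH^{(2,1)}$ via the operator-valued integral $M_\kappa$ of (\ref{eq:M-kappa}) and Arakawa's evaluation (Proposition \ref{prop:M-kappa}). Your proposed termwise identity therefore amounts to computing the partial Fourier transform $I_\infty\inv\phi_{\omega,\infty}$ over $\bH\simeq\bR^4$ and showing that it equals $(c_\kappa^G/c_\kappa^H)\,\varphi_{\Omega,\infty}$; this is Proposition \ref{prop:ecr-testf}(3) and Lemma \ref{lem:8-7-1}, the technical heart of the section, and it is exactly where the ratio of the two constants emerges. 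It is not a bookkeeping step, and Lemma \ref{lem:Ei} plays no role in it.

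Second, your termwise reduction silently interchanges the lattice sum over $B^{(2,1)}$ with the integrals over the noncompact groups $H_\infty$ and $G_\infty$, and (in whichever model one works) one of the two theta sums must be converted between the $B^{(1,2)}$- and $B^{(2,1)}$-lattices by Poisson summation. Both steps require the decay estimates of Proposition \ref{prop:estimate} (namely $\phi_{\omega,\infty}(X')=O((1+\|X'\|)^{-\kappa-2})$ and $\varphi_{\Omega,\infty}(X)=O((1+\|X\|)^{-\kappa})$), and the Poisson summation step (Corollary 2.6 of Chapter VII of \cite{SW}) is valid precisely when $\kappa>\dim_{\Q}B^{(1,2)}=8$. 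Your proposal never explains where the hypothesis $\kappa>8$ enters, which is a reliable sign that this analytic layer has been skipped. Finally, the treatment of (\ref{eq:ecr*}) by rerunning the argument with $\varphi_{0,\infty}^*$ is what the paper does (Proposition \ref{prop:ecr-testf-*}); the ``take adjoints'' shortcut is not obviously formal because of the twist $g\mapsto g^-$ in the definition of $\theta^*$ and the inequivalent $K_\infty$-types $\tau_\kappa$ and $\tau_\kappa^-$, so I would not rely on it without checking.
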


 \subsection{}

We first prove Theorem \ref{th:holomorphy-L} assuming (\ref{eq:ecr}).
We can show Theorem \ref{th:hol-L*} (1) assuming (\ref{eq:ecr*}) 
in a similar manner and omit its proof.
Throughout this section, we fix an integer $\kappa>4$.
To simplify the notation, we write $\omega$ and $\Omega$
for $\omega_{\kappa}$ and $\Omega_{\kappa}$, respectively.

Let $f\in S_{\kappa}^H$ and put $F=\cL(f)$. The condition (\ref{eq:def-af-G-1}) for $F=\cL(f)$ is
easily verified.
We have
\begin{align*}
\cL(f)(g)
&= \int_{H_{\bQ}\backslash H_{\bA}}\,\theta(h,g)f(h)dh\\
 &=\int_{H_{\bQ}\backslash H_{\bA}}\,\theta(h,g)
\left(c_{\kappa}^H\,\int_{H_{\infty}}\omega(x)f(hx\inv)dx\right)
dh\\
 &=\int_{H_{\bQ}\backslash H_{\bA}}
\left(c_{\kappa}^H\,\int_{H_{\infty}}\,\theta(hx,g)\omega(x)dx\right)
f(h)dh\\
&=\int_{H_{\bQ}\backslash H_{\bA}}
\left(c_{\kappa}^G\,\int_{G_{\infty}}\,\Omega(y)\theta(h,gy\inv)dy\right)
f(h)dh\\
&=c_{\kappa}^G\,\int_{G_{\infty}}\Omega(y)
  \left(\int_{H_{\bQ}\backslash H_{\bA}}\,\theta(h,gy\inv)f(h)dh\right)dy\\
 &=c_{\kappa} ^G\,\int_{G_{\infty}}\,\Omega(y)\cL(f)(gy\inv)dy,
\end{align*}
proving (\ref{eq:def-af-G-2}) for $F=\cL(f)$.

To prove the boundedness of $F$, it suffices to show that
\[
 F_0(g):=\int_{B^-\backslash B^-_{\bA}}\,
F\left(\Nmat{1&\beta}{0&1}g\right) d\beta
\]
vanishes, namely $F$ is cuspidal.
We have
\begin{align*}
F_0(g)&=\int_{B^-\backslash B^-_{\bA}}\,\left(
\int_{H_{\bQ}\backslash H_{\bA}}\,
\sum_{X\in B^{(2,1)}}\,
r\left(h,\Nmat{1&\beta}{0&1}g\right)\varphi_0(X)f(h)dh
\right) d\beta\\
 & = \int_{H_{\bQ}\backslash H_{\bA}}\,
      \sum_{X\in B^{(2,1)}}\,
      \left(\int_{B^-\backslash B^-_{\bA}}\,
      \psi\left(-\dfrac{1}{2}\tr(\beta \tp\ol{X}JX)\right)d\beta\right)
  r(h,g)\varphi_0(X)f(h)dh\\
 &=\int_{H_{\bQ}\backslash H_{\bA}}\,
     \sum_{X\in B^{(2,1)},\,\tp\ol{X}JX=0}\,
    r(1,g)\varphi_0(\tp \ol{h}X)f(h)dh.
\end{align*}
Observe that
\[
 \left\{X\in B^{(2,1)}\mid \tp\ol{X}JX=0\right\}=
 \left\{\vecm{0\\0}\right\}\cup
  \left\{\tp\ol{\gamma}\vecm{0\\1}\mid \gamma\in N^H_{\bQ}
\backslash H_{\bQ}\right\},
\]
where 
\[
 N^H_{\bQ}=\left\{\Nmat{1&b}{0&1}\mid b\in\bQ\right\}.
\]
It follows that
\begin{align*}
 & F_0(g)\\
&=(r(1,g)\varphi_0)(0)\int_{H_{\bQ}\backslash H_{\bA}}\,f(h)dh
+\int_{N^H_{\bA}\backslash H_{\bA}}\,
\left(r(1,g)\varphi_0\right)\left(\tp\ol{h}\vecm{0\\1}\right)
\left(\int_{\bQ\backslash \bQ_{\bA}}\,f\left(\Nmat{1&b}{0&1}h\right)db
\right)dh\\
 & =0,
\end{align*}
since $f$ is cuspidal.

 \subsection{}

To prove Theorem \ref{th:commutation-relations}, 
we need some preparations.
Define $\Phi_{\kappa}\colon \bH\cross\,\to\,\mathrm{End}(V_{\kappa})$
by
\[
 \Phi_{\kappa}(x)=N(x)\inv \sigma_{\kappa}(x)\inv\qquad(x\in\bH\cross).
\]
Note that $\Omega(g)=\Phi_{\kappa}\left(2\inv(1,1)g\vecm{1\\1}\right)$ 
for $g\in G_{\infty}$.
The following is easily verified.

\begin{lemma}
 \label{lem:Phi-kappa}
\begin{enumerate}
 \item For $a\in\bR\cross$ and $x\in\bH\cross$, we have
$\Phi_{\kappa}(ax)=a^{-\kappa-2}\Phi_{\kappa}(x)$.
 \item \ For $x,y\in\bH\cross$, we have
$\Phi_{\kappa}(xy)=\Phi_{\kappa}(y)\Phi_{\kappa}(x)$.
 \item \ For $x\in \bH\cross$, we have $\sigma_{\kappa}(\ol{x})
=N(x)^{\kappa+1}\Phi_{\kappa}(x)$.
\end{enumerate}
\end{lemma}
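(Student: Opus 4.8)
The plan is to reduce all three identities to three elementary multiplicative facts about the ingredients of $\Phi_{\kappa}$. First I would record that $\sigma_{\kappa}$ is an algebra homomorphism on $\bH\cross$, that is $\sigma_{\kappa}(xy)=\sigma_{\kappa}(x)\sigma_{\kappa}(y)$; this is immediate from (\ref{eq:sigma}) since $A$ is a homomorphism, so $P((X,Y)A(x)A(y))$ may be read either as $\sigma_{\kappa}(xy)$ or as $\sigma_{\kappa}(x)\sigma_{\kappa}(y)$ applied to $P$. Next I would record that the reduced norm $N$ is multiplicative and that on a real scalar $a\in\bR\cross$ it gives $N(a)=a^2$, because $A(a)=a1_2$ has determinant $a^2$. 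Finally, evaluating (\ref{eq:sigma}) on the scalar $a$ yields $(\sigma_{\kappa}(a)P)(X,Y)=P(aX,aY)=a^{\kappa}P(X,Y)$ by homogeneity, so $\sigma_{\kappa}(a)=a^{\kappa}\,\mathrm{id}$.

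With these in hand, (1) is a direct substitution: $N(ax)=a^2N(x)$ and $\sigma_{\kappa}(ax)\inv=a^{-\kappa}\sigma_{\kappa}(x)\inv$ combine to produce the scalar factor $a^{-\kappa-2}$. For (2) I would use $N(xy)\inv=N(x)\inv N(y)\inv$ together with $\sigma_{\kappa}(xy)\inv=\sigma_{\kappa}(y)\inv\sigma_{\kappa}(x)\inv$; the order reversal of the $\sigma_{\kappa}$-factors under inversion is exactly what produces $\Phi_{\kappa}(y)\Phi_{\kappa}(x)$ on the right, while the norm scalars are central and may be freely rearranged.

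The only identity carrying any content is (3), and there the key input is the quaternionic relation $x\ol{x}=N(x)$, equivalently $\ol{x}=N(x)x\inv$ for $x\in\bH\cross$. Applying $\sigma_{\kappa}$ and using its multiplicativity together with the scalar formula $\sigma_{\kappa}(N(x))=N(x)^{\kappa}\,\mathrm{id}$ gives $\sigma_{\kappa}(\ol{x})=N(x)^{\kappa}\sigma_{\kappa}(x)\inv$, which is precisely $N(x)^{\kappa+1}\Phi_{\kappa}(x)$ by the definition of $\Phi_{\kappa}$. I do not anticipate a genuine obstacle: the whole lemma is bookkeeping around the homomorphism property of $\sigma_{\kappa}$ and the scalar normalizations, and the only point demanding care is distinguishing the order reversal under inversion in (2) from the centrality of the norm scalars.
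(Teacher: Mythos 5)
Your proof is correct, and the paper itself gives no argument here (it simply states that the lemma "is easily verified"), so your direct verification via the homomorphism property of $\sigma_{\kappa}$, the scalar identities $N(a)=a^{2}$ and $\sigma_{\kappa}(a)=a^{\kappa}\,\mathrm{id}$, and the relation $\ol{x}=N(x)x\inv$ is exactly the intended routine check.
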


We define a map $M_{\kappa}\colon \bH\to \mathrm{End}(V_{\kappa})$ by
\begin{equation}
 \label{eq:M-kappa}
 M_{\kappa}(\xi)=\int_{\bH^-}\,\e(-\tr(\xi\beta))\Phi_{\kappa}(1+\beta)
d_L\beta\qquad(\xi\in\bH).
\end{equation}
It is easily verified that
$M_{\kappa}(\xi)=M_{\kappa}(x)$ for $\xi=t+x\;\;(t\in\bR,x\in\bH^-)$
and that
\[
 M_{\kappa}(\alpha\xi\alpha\inv)=\sigma_{\kappa}(\alpha) M_{\kappa}(\xi)
\sigma_{\kappa}(\alpha)\inv
\qquad (\alpha\in \bH\cross,\xi\in\bH).
\]
For $\xi\in\bH$, there exists a $u\in\bH^1$ such that
$\xi=u(s+it)u\inv\;\;(s,t\in\bR,t\ge 0)$. Then $M_{\kappa}(\xi)=
\sigma_{\kappa}(u) M_{\kappa}(it)\sigma_{\kappa}(u)\inv
$.
The following fact is proved in \cite{Ar1}, Lemma 1.2.
{Recall that we have introduced $P_r(X,Y)=X^rY^{\kappa-r}\in V_{\kappa}$ at (\ref{eq:Pr}).

\begin{proposition}
 \label{prop:M-kappa}
For $t\ge 0$, we have
\[
  M_{\kappa}(it)P_r=\delta_{r\kappa}\,\lambda(\kappa)\,t^{\kappa-1}
e^{-4\pi t}\,P_{\kappa}\qquad(0\le r\le \kappa),
\]
where
\[
 \lambda(\kappa)=\dfrac{2^{2\kappa-1}\pi^{\kappa+1}}{\kappa!}.
\]
In particular, $M_{\kappa}(0)=0$.
\end{proposition}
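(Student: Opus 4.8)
\emph{Overview and Step 1 (diagonalization).} My plan is to first use the symmetry of $it$ to show that $M_{\kappa}(it)$ is diagonal in the basis $\{P_r\}$, and then to evaluate its diagonal entries by collapsing the integral over $\bH^-$ to a planar Beta integral followed by a one-dimensional contour integral; the vanishing for $r<\kappa$ will be the crux and will fall out of the terminating Chu--Vandermonde summation. For $e^{i\phi}=\cos\phi+i\sin\phi\in\bH^1$ one has $e^{i\phi}(it)e^{-i\phi}=it$, so the equivariance $M_{\kappa}(\alpha\xi\alpha\inv)=\sigma_{\kappa}(\alpha)M_{\kappa}(\xi)\sigma_{\kappa}(\alpha)\inv$ recorded above shows that $M_{\kappa}(it)$ commutes with $\sigma_{\kappa}(e^{i\phi})$ for every $\phi$. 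Since $A(e^{i\phi})=\mathrm{diag}(e^{\sqrt{-1}\phi},e^{-\sqrt{-1}\phi})$, we get $\sigma_{\kappa}(e^{i\phi})P_r=e^{\sqrt{-1}(2r-\kappa)\phi}P_r$, and the weights $2r-\kappa$ are pairwise distinct. Hence $M_{\kappa}(it)$ preserves each line $\bC P_r$, i.e.\ $M_{\kappa}(it)P_r=c_r(t)P_r$, and it remains only to determine the scalars $c_r(t)$.

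\emph{Step 2 (reduction to a scalar integral).} By Lemma \ref{lem:Phi-kappa}(3) and $\ol{1+\beta}=1-\beta$ we have $\Phi_{\kappa}(1+\beta)=N(1+\beta)^{-\kappa-1}\sigma_{\kappa}(1-\beta)$ with $N(1+\beta)=1+N(\beta)$, while $\tr\bigl((it)\beta\bigr)=-2tb_1$ for $\beta=b_1i+b_2j+b_3k$, so $\e(-\tr((it)\beta))=\e(2tb_1)$. Thus $c_r(t)$ is the $P_r$-coefficient of $\int_{\bH^-}\e(2tb_1)(1+N(\beta))^{-\kappa-1}\sigma_{\kappa}(1-\beta)P_r\,d_L\beta$. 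Expanding $\sigma_{\kappa}(1-\beta)P_r=P_r\bigl((X,Y)A(1-\beta)\bigr)$ and extracting the coefficient of $X^rY^{\kappa-r}$ produces a finite sum over an index $a$, whose terms involve $p=1-b_1\sqrt{-1}$, $u=1+b_1\sqrt{-1}$, and the combination $-(b_2^2+b_3^2)$.

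\emph{Step 3 (the two integrations).} Integrating first over $(b_2,b_3)$, the $a$-th term carries a factor $(b_2^2+b_3^2)^{r-a}$, and the radial integral against $(1+b_1^2+b_2^2+b_3^2)^{-\kappa-1}$ is an elementary Beta integral equal to a constant times $(1+b_1^2)^{r-a-\kappa}$. Using $pu=1+b_1^2$, the $b_1$-integrand then collapses to $p^{\,r-\kappa}u^{-r}=(1-b_1\sqrt{-1})^{r-\kappa}(1+b_1\sqrt{-1})^{-r}$, \emph{independently of $a$}; this is the decisive cancellation. Summing the now factored-off combinatorial constants, I expect to reach
\[
 c_r(t)=\frac{\pi}{\kappa}\;{}_2F_1\!\bigl(-r,\,-(\kappa-r);\,-(\kappa-1);\,1\bigr)\int_{\bR}\e(2tb)(1-b\sqrt{-1})^{r-\kappa}(1+b\sqrt{-1})^{-r}\,db .
\]
For $t>0$ the factor $\e(2tb)$ decays in the upper half $b$-plane, so the integral is computed by residues; the only singularity there is the pole of order $r$ at $b=\sqrt{-1}$ arising from $(1+b\sqrt{-1})^{-r}$.

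\emph{Step 4 (conclusion and the main obstacle).} For $r=0$ there is no pole in the upper half-plane, so the integral vanishes and $c_0(t)=0$. For $r=\kappa$ the hypergeometric factor is $1$ and the integral is the standard one $\int_{\bR}\e(2tb)(1+b\sqrt{-1})^{-\kappa}\,db=\tfrac{2\pi(4\pi t)^{\kappa-1}}{(\kappa-1)!}e^{-4\pi t}$ $(t>0)$, giving precisely $c_{\kappa}(t)=\lambda(\kappa)t^{\kappa-1}e^{-4\pi t}$. The remaining range $1\le r\le\kappa-1$ is where the work lies: the residue is generically nonzero, so the vanishing must come from the hypergeometric prefactor. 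By the terminating Chu--Vandermonde identity, ${}_2F_1(-r,-(\kappa-r);-(\kappa-1);1)=(1-r)_r/(-(\kappa-1))_r$, which is $0$ because $(1-r)_r=0$ once $r\ge1$, while $(-(\kappa-1))_r\ne0$ exactly when $r\le\kappa-1$; hence $c_r(t)=0$ for $1\le r\le\kappa-1$. Finally $M_{\kappa}(0)=0$ follows by letting $t\to0^+$ under dominated convergence. The obstacle I anticipate is entirely in Step 3: verifying that the $a$-dependence in the $b_1$-integrand cancels so that the combinatorial sum factors off, and then recognizing that sum as a terminating ${}_2F_1$ at $1$ that Chu--Vandermonde annihilates.
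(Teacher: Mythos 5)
Your argument is correct, but there is no in-paper proof to compare it against: the paper imports this proposition verbatim from Arakawa \cite{Ar1}, Lemma 1.2, so your computation serves as an independent, self-contained verification. Its two load-bearing claims both check out. First, writing $\Phi_{\kappa}(1+\beta)=(1+N(\beta))^{-\kappa-1}\sigma_{\kappa}(1-\beta)$ and extracting the $X^rY^{\kappa-r}$-coefficient, the $a$-th term of the sum carries $(-(b_2^2+b_3^2))^{r-a}p^a u^{\kappa-2r+a}$ with $p=1-b_1\sqrt{-1}$, $u=1+b_1\sqrt{-1}$; the $(b_2,b_3)$-integral contributes $\pi(1+b_1^2)^{r-a-\kappa}(r-a)!\,(\kappa-r+a-1)!/\kappa!$, and since $pu=1+b_1^2$ the product $(1+b_1^2)^{r-a-\kappa}p^au^{\kappa-2r+a}=p^{r-\kappa}u^{-r}$ is indeed independent of $a$. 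The factored-off constant is
\[
\frac{\pi}{\kappa!}\sum_{m\ge 0}(-1)^m\,\frac{r!}{(r-m)!}\,\frac{(\kappa-r)!}{m!\,(\kappa-r-m)!}\,(\kappa-m-1)!
\;=\;\frac{\pi}{\kappa}\,{}_2F_1\bigl(-r,\,-(\kappa-r);\,-(\kappa-1);\,1\bigr),
\]
which equals $\pi/\kappa$ for $r=\kappa$ (only $m=0$ survives) and vanishes for $1\le r\le\kappa-1$ by the terminating Chu--Vandermonde identity, the value being $(1-r)_r/(-(\kappa-1))_r$ in rising-factorial notation; your caveat that the identity is applicable precisely because $(-(\kappa-1))_r\ne 0$ for $r\le\kappa-1$ is the right one (for $r=\kappa$ the closed form degenerates to $0/0$, but you correctly evaluate that case from the series directly). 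Second, the residual $b$-integral for $r=\kappa$ is exactly Lemma \ref{lem:integral-residue} with its parameters specialized to $1$ and $2t$, and the resulting constant $\tfrac{\pi}{\kappa}\cdot\tfrac{(2\pi)^{\kappa}2^{\kappa-1}}{(\kappa-1)!}=\tfrac{2^{2\kappa-1}\pi^{\kappa+1}}{\kappa!}=\lambda(\kappa)$ matches; the $r=0$ case dies because the only pole lies in the lower half-plane, and $M_{\kappa}(0)=0$ follows by dominated convergence since the integrand is $O\bigl(N(\beta)^{-(\kappa+2)/2}\bigr)$, integrable on the three-dimensional space $\bH^-$ for $\kappa\ge 2$. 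The only cosmetic shortfall is that Step 3 is announced (``I expect to reach'') rather than fully derived, but the asserted formula is the correct one.
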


\begin{lemma}
 \label{lem:diagonal}
For $a,b\in\bC$, we have
\begin{equation}
 \label{eq:diagonal}
\sigma_{\kappa}(a+ib)\,P_r=
(a+b\sqrt{-1})^{r}(a-b\sqrt{-1})^{\kappa-r}\,P_r
\qquad(0\le r\le \kappa).
\end{equation}
\end{lemma}

\begin{proof}
 This follows from the definition (\ref{eq:sigma}) and
\[ 
 A(a+bi)=\Nmat{a+b\sqrt{-1}&0}{0&a-b\sqrt{-1}}
\]
(for the definition of $A$, see (\ref{eq:def-A})).
\end{proof}

The following formula is well-known.
\begin{lemma}
 \label{lem:integral-residue}
For $s,t\in\bR$ with $t>0$, we have
\[
 \int_{\bR}(t+\sqrt{-1}x)^{-\kappa}\,e(sx)dx=
\begin{cases}
 \dfrac{(2\pi)^{\kappa}}{(\kappa-1)!}\,s^{\kappa-1}\e(\sqrt{-1}st)
& (s>0),\\
 0&  (s\le 0).
\end{cases}
\]
\end{lemma}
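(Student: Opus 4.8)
The plan is to evaluate the integral by the residue theorem, viewing the integrand
\[
 f(x)=(t+\sqrt{-1}x)^{-\kappa}\,\e(sx)
\]
as a meromorphic function of $x\in\bC$. Its only singularity is a pole of order $\kappa$ at $x_0=\sqrt{-1}t$, which lies in the upper half-plane since $t>0$. First I would record the convenient form obtained from $t+\sqrt{-1}x=\sqrt{-1}(x-x_0)$, namely $f(x)=(\sqrt{-1})^{-\kappa}(x-x_0)^{-\kappa}\,\e(sx)$, which isolates the pole and will make the residue computation transparent.

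Next I would choose the direction in which to close the contour according to the sign of $s$. Since $\e(sx)=\exp(2\pi\sqrt{-1}sx)$ has modulus $\exp(-2\pi s\,\Im(x))$, for $s>0$ the exponential decays in the upper half-plane and for $s<0$ in the lower half-plane; in both cases the polynomial factor decays like $|x|^{-\kappa}$ with $\kappa>1$, so the contribution of the large semicircular arc vanishes by Jordan's lemma (for $s=0$ the bound $|x|^{-\kappa}$ with $\kappa\ge 2$ alone forces the arc contribution to $0$). For $s\le 0$ I would close in the lower half-plane: no pole is enclosed, so the integral vanishes, giving the second case of the formula.

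For $s>0$ I would close in the upper half-plane and apply the residue theorem, so that the integral equals $2\pi\sqrt{-1}$ times the residue of $f$ at $x_0$. For the pole of order $\kappa$ this residue is
\[
 \mathrm{Res}_{x=x_0}f=\frac{(\sqrt{-1})^{-\kappa}}{(\kappa-1)!}\left.\frac{d^{\kappa-1}}{dx^{\kappa-1}}\e(sx)\right|_{x=x_0}=\frac{(\sqrt{-1})^{-\kappa}}{(\kappa-1)!}\,(2\pi\sqrt{-1}s)^{\kappa-1}\,\e(\sqrt{-1}st),
\]
where I used $\e(sx_0)=\exp(-2\pi st)=\e(\sqrt{-1}st)$. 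Multiplying by $2\pi\sqrt{-1}$ and collecting the powers of $2\pi$ (which combine to $(2\pi)^{\kappa}$) and of $\sqrt{-1}$ (whose exponents $1-\kappa+(\kappa-1)=0$ cancel) yields exactly $\frac{(2\pi)^{\kappa}}{(\kappa-1)!}\,s^{\kappa-1}\,\e(\sqrt{-1}st)$, as claimed.

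Since every step is standard, I do not expect any genuine obstacle; the only points deserving a word of care are the justification that the arc contribution vanishes (handled by Jordan's lemma together with $\kappa>1$) and the bookkeeping of the powers of $\sqrt{-1}$ and $2\pi$ in assembling the final constant.
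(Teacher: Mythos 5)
Your proof is correct: the paper states this lemma without proof, simply calling it ``well-known,'' and the residue-theorem argument you give is the standard way to establish it. Your bookkeeping checks out --- writing $t+\sqrt{-1}x=\sqrt{-1}(x-\sqrt{-1}t)$, the pole of order $\kappa$ at $\sqrt{-1}t$ contributes $2\pi\sqrt{-1}\cdot\frac{(\sqrt{-1})^{-\kappa}}{(\kappa-1)!}(2\pi\sqrt{-1}s)^{\kappa-1}\e(\sqrt{-1}st)$, and the powers of $\sqrt{-1}$ and $2\pi$ assemble to the stated constant; since $\kappa\ge 2$ the arc estimates (and the vanishing for $s\le 0$) are unproblematic.
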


 \subsection{}

Define 
$\phi_{\omega,\infty},\phi_{\omega,\infty}^*\colon 
\bH^{(1,2)}\to \mathrm{End}(V_{\kappa})$
and
$\varphi_{\Omega,\infty},\varphi_{\Omega,\infty}^*\colon 
\bH^{(2,1)}\to \mathrm{End}(V_{\kappa})$
by
\begin{align*}
 \phi_{\omega,\infty}(X')&
  =\int_{H_{\infty}}\,r'(h,1)\phi_{0,\infty}(X')\,\omega(h)dh
\\
\phi^*_{\omega,\infty}(X')&
=\int_{H_{\infty}}\,\omega(h)\,r'(h\inv,1)\phi^*_{0,\infty}(X')dh,
\\
 \varphi_{\Omega,\infty}(X)
&=\int_{G_{\infty}}\,\Omega(g)\,r(1,g\inv)\varphi_{0,\infty}(X)dg
\\
 \varphi^*_{\Omega,\infty}(X)&
=\int_{G_{\infty}}\,r(1,g^-)\varphi^*_{0,\infty}(X)\,\Omega(g)dg
\end{align*}
for $X'\in \bH^{(1,2)}$ and $X\in \bH^{(2,1)}$.
The following facts are easily verified.

\begin{lemma}
 \label{lem:action-of-H^1}
\begin{enumerate}
 \item We have
\[
\phi_{\omega,\infty}(0,0)=\phi_{\omega,\infty}^*(0,0)=0,\
 \varphi_{\Omega,\infty}\vecm{0\\0}= \varphi_{\Omega,\infty}^*\vecm{0\\0}=0.
\]
\item
For $\alpha\in\bH^1, X\in\bH^{(2,1)}$ and $X'\in\bH^{(1,2)}$, we have
\begin{align*}
\phi_{\omega,\infty}(X'\alpha)&
 =\sigma_{\kappa}(\alpha)\inv\,\phi_{\omega,\infty}(X'),\\
 I_{\infty}\inv\phi_{\omega,\infty}(X\alpha)&
 =\sigma_{\kappa}(\alpha)\inv\,I_{\infty}\inv\phi_{\omega,\infty}(X),\\
 \varphi_{\Omega,\infty}(X\alpha)&
   =\sigma_{\kappa}(\alpha)\inv\,\varphi_{\Omega,\infty}(X),\\
\phi_{\omega,\infty}^*(X'\alpha)&
 =\phi_{\omega,\infty}^*(X')\,\sigma_{\kappa}(\alpha),\\
 I_{\infty}\inv\phi_{\omega,\infty}^*(X\alpha)&
 =I_{\infty}\inv\phi_{\omega,\infty}^*(X)
\sigma_{\kappa}(\alpha),\\
 \varphi_{\Omega,\infty}^*(X\alpha)&
   =\varphi_{\Omega,\infty}(X)
\sigma_{\kappa}(\alpha).
\end{align*}
\end{enumerate}
\end{lemma}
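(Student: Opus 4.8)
The plan is to establish (2) first and then deduce (1) from it; the unifying principle behind (2) is that each of the four ``seed'' kernels carries an exact right $\bH^1$-equivariance which propagates through the defining group integral. To begin, I would record these seed equivariances. Writing $\alpha 1_2=\Nmat{\alpha&0}{0&\alpha}$, which lies in $G_{\infty}$ for $\alpha\in\bH^1$ since $\ol{\alpha}\inv=\alpha$ when $N(\alpha)=1$, a direct computation using that $\sigma_{\kappa}$ is a homomorphism, that $\bR$ and $\sqrt{-1}\in\bC$ are central in $\bH\otimes_{\bR}\bC$, and that $N(\alpha)=1$ gives
\[
\varphi_{0,\infty}(X\alpha)=\sigma_{\kappa}(\alpha)\inv\varphi_{0,\infty}(X),\qquad
\phi_{0,\infty}(X'\alpha)=\sigma_{\kappa}(\alpha)\inv\phi_{0,\infty}(X'),
\]
while the starred seeds transform on the opposite side,
\[
\varphi^*_{0,\infty}(X\alpha)=\varphi^*_{0,\infty}(X)\,\sigma_{\kappa}(\alpha),\qquad
\phi^*_{0,\infty}(X'\alpha)=\phi^*_{0,\infty}(X')\,\sigma_{\kappa}(\alpha).
\]
In each case the Gaussian factor is $\bH^1$-invariant and the linear argument of $\sigma_{\kappa}$ merely acquires a factor $\ol{\alpha}$ or $\alpha$ on the appropriate side.

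For the four kernels integrated over $H_{\infty}$ (namely $\phi_{\omega,\infty}$, $I_{\infty}\inv\phi_{\omega,\infty}$, $\phi^*_{\omega,\infty}$, $I_{\infty}\inv\phi^*_{\omega,\infty}$), I would exploit that right translation $X\mapsto X\alpha$ is the Weil operator $r(1,\alpha 1_2)$ (or $r'(1,\alpha 1_2)$), which commutes with the $H$-action because $H$ and $G$ commute in the Weil representation. Thus right translation slides past $r(h,1)$, the seed equivariance applies under the integral, and the constant $\sigma_{\kappa}(\alpha)^{\pm1}$ factors out of the $H_{\infty}$-integral on the correct side. For the $I_{\infty}\inv$-versions I would first note, using $\phi_{0,\infty}=I_{\infty}\varphi_{0,\infty}$ (Lemma \ref{lem:phi0}), $\phi^*_0=I\varphi^*_0$ (Lemma \ref{lem:*0}) and $r'=I\circ r\circ I\inv$, that $I_{\infty}\inv\phi_{\omega,\infty}=\int_{H_{\infty}}(r(h,1)\varphi_{0,\infty})\,\omega(h)\,dh$, reducing to the $r$-realization where $H$ acts by the left multiplication $X\mapsto\tp\ol{h}X$, which visibly commutes with right multiplication by $\alpha$.

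The genuinely different cases are $\varphi_{\Omega,\infty}$ and $\varphi^*_{\Omega,\infty}$, integrated over $G_{\infty}$: here $X\mapsto X\alpha$ is itself a $G$-operation and does \emph{not} commute with the $G$-integration, so I would instead change variables. For $\varphi_{\Omega,\infty}$, writing $(r(1,g\inv)\varphi_{0,\infty})(X\alpha)=(r(1,\alpha 1_2\,g\inv)\varphi_{0,\infty})(X)$ and substituting $g\mapsto g\,\alpha 1_2$ (Haar invariance, $G_{\infty}$ unimodular), the whole effect is transferred to the weight, and I would invoke the key equivariance $\Omega(g\,\alpha 1_2)=\sigma_{\kappa}(\alpha)\inv\Omega(g)$, which follows from $\Delta_{g\alpha 1_2}=\Delta_g\,\alpha$ together with $N(\alpha)=1$ and $\Omega(g)=N(\Delta_g)\inv\sigma_{\kappa}(\Delta_g)\inv$. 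For $\varphi^*_{\Omega,\infty}$ the factor $g^-$ requires the observation that $g\mapsto g^-$ is a homomorphism commuting with $\alpha 1_2$, so that $\alpha 1_2\,g^-=(\alpha 1_2\,g)^-$; substituting $g\mapsto\alpha\inv 1_2\,g$ and using $\Omega(\alpha\inv 1_2\,g)=\Omega(g)\,\sigma_{\kappa}(\alpha)$ gives the right-handed equivariance (so the right-hand side of the last identity in the statement should read $\varphi^*_{\Omega,\infty}(X)$). This bookkeeping of which side $\sigma_{\kappa}(\alpha)^{\pm1}$ lands on, together with the $g^-$-twist, is the only delicate point.

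Finally, (1) is immediate from (2): evaluating each identity of (2) at the zero argument, which is fixed by $X\mapsto X\alpha$, yields $T=\sigma_{\kappa}(\alpha)\inv T$ or $T=T\,\sigma_{\kappa}(\alpha)$ for all $\alpha\in\bH^1$, where $T$ denotes the corresponding value at $0$. Since the restriction of $\sigma_{\kappa}$ to $\bH^1$ is the irreducible $(\kappa+1)$-dimensional representation, which for $\kappa\ge1$ admits no nonzero invariant or coinvariant vectors, these relations force $T=0$.
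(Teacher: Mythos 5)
Your proof is correct, and since the paper dismisses this lemma with ``easily verified,'' your argument is exactly the routine verification the authors intend: the seed equivariances under $X\mapsto X\alpha$, propagated through the integrals (commuting with the $H$-action for the $H_{\infty}$-kernels, change of variables $g\mapsto g\alpha 1_2$ resp.\ $g\mapsto \alpha\inv 1_2\,g$ plus the $\Omega_{\kappa}$-equivariance for the $G_{\infty}$-kernels), with (1) then forced by irreducibility of $\sigma_{\kappa}|_{\bH^1}$. You are also right that the final identity of (2) contains a typo and should read $\varphi^*_{\Omega,\infty}(X\alpha)=\varphi^*_{\Omega,\infty}(X)\,\sigma_{\kappa}(\alpha)$.
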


One of the keys to the proof 
of Theorem \ref{th:commutation-relations} 
is the following results.

\begin{proposition}
 \label{prop:ecr-testf}
Let $x_1,x_2\in\bH$.
\begin{enumerate}
 \item We have
\begin{align*}
 \phi_{\omega,\infty}(x_1,x_2)
=2^{\kappa-2}\pi\inv \,\kappa\cdot\delta(\tr(\ol{x_1}x_2)>0)\,
(\tr(\ol{x_1}x_2))^{\kappa-1}\,
\e(\sqrt{-1}\tr(\ol{x_1}x_2))
\Phi_{\kappa}(x_1+x_2).
\end{align*}
 \item Suppose that $\ol{x_1}x_2=s-it\;\;(s,t\in\bR,t\ge 0)$.
Then we have
\[
 \varphi_{\Omega,\infty}\vecm{x_1\\x_2}\,
\sigma_{\kappa}(x_1+\sqrt{-1}x_2) P_r
=
\delta_{r\kappa}\cdot 2^{2\kappa}\pi\,\kappa\inv\cdot
t^{\kappa-1}e^{-4\pi t}
\cdot P_{\kappa}\qquad(0\le r\le \kappa).
\]
 \item We have
\[
 I_{\infty}\inv \phi_{\omega,\infty}\vecm{x_1\\x_2}=
\dfrac{c_{\kappa}^G}{c_{\kappa}^H}\,\varphi_{\Omega,\infty}\vecm{x_1\\x_2}.
\]
\end{enumerate}
\end{proposition}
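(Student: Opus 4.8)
The plan is to prove the three assertions in a specific order, since each builds on the structural tools assembled just before the statement. I would begin with part (1), which is the most computationally explicit, then use it together with part (2) to obtain the identity in part (3). The main obstacle will be executing the Gaussian-type integral defining $\phi_{\omega,\infty}$ and recognizing the emerging $t^{\kappa-1}e^{-4\pi t}$ factor as the signature of $M_{\kappa}$ via Proposition \ref{prop:M-kappa}.

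\textbf{Part (1).} First I would unwind the definition of $\phi_{\omega,\infty}(X')=\int_{H_{\infty}}r'(h,1)\phi_{0,\infty}(X')\,\omega(h)\,dh$ using the measure decomposition (\ref{eq:measure-H}) together with the explicit action of $r'$ from Lemma \ref{lem:r'}. By the $U_{\infty}$-equivariance in (\ref{eq:U-equivariance-omega}) and Lemma \ref{lem:action-of-H^1}(2), the $U_{\infty}$-part of the integral collapses, leaving an integral over the $\Nmat{1&b}{0&1}\Nmat{a&0}{0&a\inv}$-coordinates. On this piece the unipotent action contributes the character $\psi_{\infty}(-b\,\ol{X'}Q\,\tp\! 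X')$ and the torus action rescales both the Gaussian weight in $\phi_{0,\infty}$ and the argument of $\sigma_{\kappa}$. The key is that $\ol{X'}Q\,\tp\! X'=\tr(\ol{x_1}x_2)$ (up to the normalization built into $Q$), so the $b$-integral is a one-dimensional Fourier integral in the variable $b$, and the $a$-integral becomes a Mellin-type integral producing the gamma factor responsible for the constant $2^{\kappa-2}\pi^{-1}\kappa$. The $\delta(\tr(\ol{x_1}x_2)>0)$ indicator arises exactly as in Lemma \ref{lem:integral-residue}: the $b$-integral vanishes unless the relevant quantity is positive. The resulting $\Phi_{\kappa}(x_1+x_2)$ is assembled from the $\sigma_{\kappa}$-factor of $\phi_{0,\infty}$ via Lemma \ref{lem:Phi-kappa}(3).

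\textbf{Part (2).} For $\varphi_{\Omega,\infty}=\int_{G_{\infty}}\Omega(g)\,r(1,g\inv)\varphi_{0,\infty}\,dg$ I would use the Iwasawa decomposition of $G_{\infty}$ from (\ref{eq:measure-G}), noting that $\Omega(g)=\Phi_{\kappa}\!\left(2^{-1}(1,1)g\sNmat{1}{1}\right)$. After the $K_{\infty}$-integration is handled by equivariance, the $\beta$-integral over $\bH^-$ is precisely the integral defining $M_{\kappa}$ in (\ref{eq:M-kappa}), so applying the hypothesis $\ol{x_1}x_2=s-it$ and Proposition \ref{prop:M-kappa} forces the $\delta_{r\kappa}$ and the $t^{\kappa-1}e^{-4\pi t}$ behavior. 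The factor $\sigma_{\kappa}(x_1+\sqrt{-1}x_2)$ on the left is exactly what is needed to convert the $\sigma_{\kappa}$-weight of $\varphi_{0,\infty}$ into the diagonal form handled by Lemma \ref{lem:diagonal}, isolating the $P_{\kappa}$-component. The constant $2^{2\kappa}\pi\,\kappa^{-1}$ then collects $\lambda(\kappa)$ from Proposition \ref{prop:M-kappa} together with the $a$-integral normalization.

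\textbf{Part (3).} Finally, part (3) is a comparison of the two explicit formulas through the partial Fourier transform $I_{\infty}^{-1}$. I would compute $I_{\infty}^{-1}\phi_{\omega,\infty}\sNmat{x_1}{x_2}$ directly from the integral formula for $I_v^{-1}$, substituting the closed form from part (1); the Fourier transform in the second variable converts the indicator-weighted power $t^{\kappa-1}$ expression of part (1) into the $M_{\kappa}$-type expression of part (2), matching them on the nose. Since both sides are determined by their action on the basis $\{P_r\}$, and parts (1) and (2) pin down both sides completely in terms of the invariants $s,t$ of $\ol{x_1}x_2$, the proportionality constant is forced to be $c_{\kappa}^G/c_{\kappa}^H=\tfrac{\kappa(\kappa-1)/8\pi^2}{(\kappa-1)/2\pi}=\tfrac{\kappa}{4\pi}$, which is consistent with the ratio of the constants $2^{2\kappa}\pi\kappa^{-1}$ and $2^{\kappa-2}\pi^{-1}\kappa$ appearing in (1) and (2). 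The hardest step is the careful bookkeeping in part (1): tracking how the $\sigma_{\kappa}$-weights, the Gaussian factors, and the unipotent character combine under the $H_{\infty}$-integration without sign or normalization errors, since every later identity is calibrated against that constant.
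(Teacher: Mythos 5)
Your proposal is correct and follows essentially the same route as the paper: part (1) via the Iwasawa decomposition (\ref{eq:measure-H}), the $b$-integral evaluated by Lemma \ref{lem:integral-residue} and the Mellin $a$-integral; part (2) via (\ref{eq:measure-G}), reducing the $\beta$-integral to $M_{\kappa}$ and invoking Proposition \ref{prop:M-kappa} and Lemma \ref{lem:diagonal}; part (3) by applying $I_{\infty}^{-1}$ to the closed form from (1) and matching against (2), with the ratio $c_{\kappa}^G/c_{\kappa}^H=\kappa/4\pi$ coming out exactly as you predict. The only point to watch is that comparing the two operators through their action on $\sigma_{\kappa}(x_1+\sqrt{-1}x_2)P_r$ requires the degenerate cases ($x_1=0$, and implicitly $x_2=-x_1i$ where $x_1+\sqrt{-1}x_2$ is not invertible) to be disposed of first via Lemma \ref{lem:action-of-H^1}, which the paper does and you should too.
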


\begin{proposition}
 \label{prop:ecr-testf-*}
Let $x_1,x_2\in\bH$.
\begin{enumerate}
 \item We have
\begin{align*}
 \phi_{\omega,\infty}^*(x_1,x_2)
=2^{\kappa-2}\pi\inv \,\kappa\cdot\delta(\tr(\ol{x_1}x_2)<0)
\left(-\tr(\ol{x_1}x_2)\right)^{\kappa-1} \e(-\sqrt{-1}\tr(\ol{x_1}x_2))
\Phi_{\kappa}(\ol{x_1-x_2}).
\end{align*}
 \item Suppose that $\ol{x_1}x_2=s-it\;\;(s,t\in\bR,t\ge 0)$.
If $t=0$, we have $\varphi_{\Omega}^*\vecm{x_1\\x_2}=0$.
If $t>0$, we have
\begin{align*}
& \varphi_{\Omega,\infty}^*\vecm{x_1\\x_2}\,P_r\\
&=
\delta_{r\kappa}\cdot 2^{2\kappa}\pi\,\kappa\inv\cdot
t^{\kappa-1}e^{-4\pi t}\left(\ol{x_1}x_1+\ol{x_2}x_2+2t\right)^{-\kappa}
\cdot \sigma_{\kappa}(x_1+\sqrt{-1}x_2)\,P_{\kappa}\qquad(0\le r\le \kappa).
\end{align*}
 \item We have
\[
 I_{\infty}\inv \phi_{\omega,\infty}^*\vecm{x_1\\x_2}=
\dfrac{c_{\kappa}^G}{c_{\kappa}^H}\,
\varphi_{\Omega,\infty}^*\vecm{x_1\\x_2}.
\]
\end{enumerate}
\end{proposition}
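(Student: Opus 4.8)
The three assertions run parallel to Proposition~\ref{prop:ecr-testf}, and the plan is to prove them by the same direct method: parts~(1) and~(2) are explicit evaluations of the integrals defining $\phi^*_{\omega,\infty}$ and $\varphi^*_{\Omega,\infty}$, carried out by Iwasawa decomposition of $H_\infty$ and $G_\infty$, while part~(3) is a partial-Fourier-transform identity linking them, which is the genuine commutation relation at the level of test functions. It is tempting to deduce the starred formulas from the unstarred ones by adjunction, since $\varphi^*_{0,\infty},\phi^*_{0,\infty}$ are the pointwise adjoints of $\varphi_{0,\infty},\phi_{0,\infty}$ (Lemma~\ref{lem:*0}(2)) and one can check that $\varphi^*_{\Omega,\infty}(X)$ is the adjoint of $\varphi_{\Omega,\infty}(X)$ for $(\cdot,\cdot)_\kappa$ (using $\Delta_{g\inv}=\ol{\Delta_g}$ and the change of variables $g\mapsto g\inv$). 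However, this does not yield the stated formulas directly, because the adjoint of $\sigma_\kappa$ evaluated at a complexified quaternion such as $x_1+\sqrt{-1}x_2$ is \emph{not} $\sigma_\kappa$ of its main-involution conjugate; so a genuine computation is unavoidable.

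For~(1), I would write $h=\Nmat{1&b}{0&1}\Nmat{a&0}{0&a\inv}u$ with $b\in\bR$, $a\in\bR_{>0}$, $u\in U_\infty$ as in~(\ref{eq:measure-H}), and evaluate $r'(h\inv,1)\phi^*_{0,\infty}(x_1,x_2)$ using Lemma~\ref{lem:r'} and the explicit $\phi^*_{0,\infty}$. The $U_\infty$-integral collapses by the equivariance~(\ref{eq:U-equivariance-omega}) of $\omega$ together with Lemma~\ref{lem:*0}(3), leaving a single $\sigma_\kappa$-factor which Lemma~\ref{lem:Phi-kappa}(3) lets me rewrite as $\Phi_\kappa(\ol{x_1-x_2})$. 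The integral over $b\in\bR$ is a one-dimensional Fourier integral: Lemma~\ref{lem:integral-residue} supplies both the support condition $\delta(\tr(\ol{x_1}x_2)<0)$ and the factor $\e(-\sqrt{-1}\tr(\ol{x_1}x_2))$, while the $a$-integral is a Gamma integral giving $(-\tr(\ol{x_1}x_2))^{\kappa-1}$ and the constant $2^{\kappa-2}\pi\inv\kappa$.

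For~(2), I would write $g=\Nmat{1&\beta}{0&1}\Nmat{a&0}{0&a\inv}k$ with $\beta\in\bH^-$, $a\in\bR_{>0}$, $k\in K_\infty$ as in~(\ref{eq:measure-G}), substitute $\Omega(g)=\Phi_\kappa(\Delta_g)$, and track the twist $g\mapsto g^-$ in $r(1,g^-)$ (note $\Nmat{1&\beta}{0&1}^-=\Nmat{1&-\beta}{0&1}$, so the twist only flips the sign of $\beta$). The decisive point is that the integral over $\beta\in\bH^-$ is exactly the integral~(\ref{eq:M-kappa}) defining $M_\kappa$; Proposition~\ref{prop:M-kappa} then delivers the Kronecker delta $\delta_{r\kappa}$, the factor $t^{\kappa-1}e^{-4\pi t}$, and the vanishing at $t=0$ (from $M_\kappa(0)=0$). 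Completing the $a$-integral against the Gaussian $\e(\sqrt{-1}\tp\ol X X)$ of $\varphi^*_{0,\infty}$ — whose exponent is $\tp\ol X X=\ol{x_1}x_1+\ol{x_2}x_2$ — produces, via the homogeneity Lemma~\ref{lem:Phi-kappa}(1) and a Gamma integral, the norm factor $(\ol{x_1}x_1+\ol{x_2}x_2+2t)^{-\kappa}$, and the $K_\infty$-integral (via Lemma~\ref{lem:diagonal}) selects the highest weight $\sigma_\kappa(x_1+\sqrt{-1}x_2)P_\kappa$. I expect this to be the main obstacle: aligning the Iwasawa coordinates of the twisted $g^-$ with the variables of $M_\kappa$, and isolating precisely the factor $(\ol{x_1}x_1+\ol{x_2}x_2+2t)^{-\kappa}$ with the correct constant, is the most delicate bookkeeping in the argument.

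For~(3), I would apply $I_\infty\inv$ to the formula of~(1). Since $r'=I_\infty\circ r\circ I_\infty\inv$ and $\varphi^*_{0,\infty}=I_\infty\inv\phi^*_{0,\infty}$ by Lemma~\ref{lem:*0}(1), one has $I_\infty\inv\phi^*_{\omega,\infty}(X)=\int_{H_\infty}\omega(h)\,r(h\inv,1)\varphi^*_{0,\infty}(X)\,dh$, so the assertion genuinely equates an $H_\infty$-integral with the $G_\infty$-integral defining $\varphi^*_{\Omega,\infty}$. Concretely I would compute the partial Fourier transform $\int_{\bH}\psi_\infty(-\tr(\ol y x_2))\,\phi^*_{\omega,\infty}(x_1,y)\,dy$ of the explicit kernel from~(1); the quaternionic Gaussian that arises is evaluated by Lemma~\ref{lem:Ei}, which converts $\Phi_\kappa(\ol{x_1-y})$ into the $\sigma_\kappa(x_1+\sqrt{-1}x_2)$ appearing in~(2). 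Matching this against the formula of~(2), and collecting the constants produced by Lemma~\ref{lem:Ei} against $c_\kappa^H$ and $c_\kappa^G$ of~(\ref{eq:c_kappa^H}) and~(\ref{eq:c_kappa^G}), yields the factor $c_\kappa^G/c_\kappa^H$ and completes the proof.
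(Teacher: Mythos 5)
Your plan is essentially the paper's: the authors omit the proof of this proposition, saying only that it is ``quite similar to that of Proposition~\ref{prop:ecr-testf} and slightly easier,'' and the Iwasawa-decomposition computations you outline for (1) and (2), together with the partial-Fourier-transform comparison for (3), are exactly the method of Sections~\ref{subsec:proof-erc-testf-1}--\ref{subsec:proof-erc-testf-3}. One small correction: in step (3) the integrand contains $\Phi_{\kappa}(\ol{x_1-y})$, which carries negative powers of $y$, so Lemma~\ref{lem:Ei} (a Gaussian integral against the polynomial-valued $\sigma_{\kappa}$) does not apply; as in Lemma~\ref{lem:8-7-1}, the $y$-integral is instead reduced by the substitutions $y\mapsto x_1y$, $y=a+\beta$, $\beta\mapsto(1+a)\beta$ to the integral (\ref{eq:M-kappa}) defining $M_{\kappa}$ and then evaluated by Proposition~\ref{prop:M-kappa} --- the same tool you correctly invoke in (2).
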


The proof of Proposition \ref{prop:ecr-testf} is given
 in 
Sections \ref{subsec:proof-erc-testf-1}--\ref{subsec:proof-erc-testf-3}.
We omit the proof of Proposition \ref{prop:ecr-testf-*},
since it is quite similar to that of Proposition \ref{prop:ecr-testf}
 and slightly easier.

 \subsection{}
\label{subsec:proof-erc-testf-1}

In this subsection, we prove
Proposition \ref{prop:ecr-testf} (1).
Let $x_1,x_2\in\bH$. 
By (\ref{eq:U-equivariance-omega}) and Lemma \ref{lem:phi0-UK},
we have
\begin{align*}
 & \phi_{\omega,\infty}(x_1,x_2)\\
 &= \int_{H_{\infty}}\,r'(h,1)\phi_{0,\infty}(x_1,x_2)\omega(h)dh\\
 & =\int_{\bR}db\,\int_0^{\infty}a^{-2}d\cross a\,
\int_{U_{\infty}}du
\,r'\left(\Nmat{1&b}{0&1}\Nmat{a&0}{0&a\inv}u,1\right)\phi_{0,\infty}
(x_1,x_2)\,\omega\left(\Nmat{1&b}{0&1}\Nmat{a&0}{0&a\inv}u\right)\\
 &= \int_{\bR}db\,\int_0^{\infty}a^{-2}d\cross a\,
  \e(-b\, \tr(\ol{x_1}x_2))\,a^4\,\phi_{0,\infty}(ax_1,ax_2)\,
\left(\dfrac{a+a\inv-\sqrt{-1}a\inv b}{2}\right)^{-\kappa}\\
 &=2^{\kappa} \sigma_{\kappa}(\ol{x_1+x_2})\,
\int_0^{\infty}\,a^{2\kappa+2}\e(\sqrt{-1}a^2(\ol{x_1}x_1+\ol{x_2}x_2))
   d\cross a
  \int_{\bR}\,\left(a^2+1-\sqrt{-1}b\right)^{-\kappa}
\e(-b\,\tr(\ol{x_1}x_2))db.
\end{align*}
By Lemma \ref{lem:integral-residue}, we see that the integral over $\bR$
is equal to
\[
 \dfrac{(2\pi)^{\kappa}}{(\kappa-1)!}\,\delta(\tr(\ol{x_1}x_2)>0)
\,(\tr(\ol{x_1}x_2))^{\kappa-1}\,\e(\sqrt{-1}(a^2+1)\tr(\ol{x_1}x_2)).
\]
Thus we obtain
\begin{align*}
  \phi_{\omega,\infty}(x_1,x_2)
 & =\dfrac{(4\pi)^{\kappa}}{(\kappa-1)!}\,\delta(\tr(\ol{x_1}x_2)>0)
\,\sigma_{\kappa}(\ol{x_1+x_2})\,(\tr(\ol{x_1}x_2))^{\kappa-1}\,
\e(\sqrt{-1}\,\tr(\ol{x_1}x_2))\\
&\quad\,\int_0^{\infty}\,a^{2\kappa+2}
\,\e(\sqrt{-1}a^2(\ol{x_1}x_1+\ol{x_2}x_2+\tr(\ol{x_1}x_2)))d\cross a.
\end{align*}
Since the last integral is equal to
\begin{align*}
 \dfrac{1}{2}\,\int_0^{\infty}\,
a^{\kappa+1}\e(\sqrt{-1}N(x_1+x_2)a)d\cross a
=\dfrac{\kappa!}{2}(2\pi N(x_1+x_2))^{-\kappa-1},
\end{align*}
we have
\begin{align*}
 \phi_{\omega,\infty}(x_1,x_2)
=2^{\kappa-2}\pi\inv \,\kappa\cdot\delta(\tr(\ol{x_1}x_2)>0)
\,(\tr(\ol{x_1}x_2))^{\kappa-1}\, \e(\sqrt{-1}\tr(\ol{x_1}x_2))
\Phi_{\kappa}(x_1+x_2),
\end{align*}
which completes the proof of \ref{prop:ecr-testf} (1).
Here we used Lemma \ref{lem:Phi-kappa} (3).
\subsection{}
\label{subsec:proof-erc-testf-2}

In this subsection, we prove
Proposition \ref{prop:ecr-testf} (2).

\begin{lemma}
 \label{lem:8-6-1}
Let $x_1,x_2\in\bH$. We have
\begin{align*}
 \varphi_{\Omega,\infty}\vecm{x_1\\x_2}&
=2^{\kappa+1}\int_0^{\infty}\,a^{\kappa}(a+1)^{1-\kappa}
  \e(\sqrt{-1}(N(x_1)+N(x_2))a)\,M_{\kappa}(-(a+1)\ol{x_1}x_2)
d\cross a\\
&\qquad \cdot\sigma_{\kappa}(\ol{x_1}-\sqrt{-1}\ol{x_2}).
\end{align*}
\end{lemma}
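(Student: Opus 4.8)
The plan is to unfold the definition $\varphi_{\Omega,\infty}(X)=\int_{G_{\infty}}\Omega(g)\,r(1,g\inv)\varphi_{0,\infty}(X)\,dg$ and to integrate over $G_{\infty}$ via the Iwasawa decomposition $g=k\,\Nmat{a&0}{0&a\inv}\Nmat{1&\beta}{0&1}$ with Haar measure $a^{6}\,d\cross a\,d_L\beta\,dk$ (the second form of (\ref{eq:measure-G})). This ordering is chosen so that $g\inv=\Nmat{1&-\beta}{0&1}\Nmat{a\inv&0}{0&a}k\inv$, whence the rightmost Weil operator $r(1,k\inv)$ acts directly on $\varphi_{0,\infty}$. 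First I would dispose of the $K_{\infty}$-integration: by the equivariance (\ref{eq:K-equivariance-Omega}), $\Omega(k\,g')=\Omega(g')\tau_{\kappa}(k)\inv$ with $g'=\Nmat{a&0}{0&a\inv}\Nmat{1&\beta}{0&1}$, while Lemma \ref{lem:phi0-UK}(2) with $u_{\infty}=1$ gives $r(1,k\inv)\varphi_{0,\infty}=\tau_{\kappa}(k)\circ\varphi_{0,\infty}$. Since the surviving operators $r(1,\Nmat{1&-\beta}{0&1})$ and $r(1,\Nmat{a\inv&0}{0&a})$ merely rescale the Schwartz variable and multiply by scalars, they commute with left-composition by $\tau_{\kappa}(k)$, so the factors $\tau_{\kappa}(k)\inv$ and $\tau_{\kappa}(k)$ cancel; as $\mathrm{vol}(K_{\infty})=1$, only the integrals over $a$ and $\beta$ remain.

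Next I would evaluate the two surviving pieces. For the spherical function, $\Delta_{g'}=\frac{a}{2}(1+a^{-2}+\beta)$, so (\ref{eq:def-Omega}) together with Lemma \ref{lem:Phi-kappa}(1) gives $\Omega(g')=(a/2)^{-\kappa-2}\Phi_{\kappa}(1+a^{-2}+\beta)$. Applying the formulas for $r(1,\Nmat{a\inv&0}{0&a})$ and $r(1,\Nmat{1&-\beta}{0&1})$ to $\varphi_{0,\infty}$ produces a scalar multiple of $\sigma_{\kappa}(\ol{x_1}-\sqrt{-1}\,\ol{x_2})$, the scalar carrying the Gaussian $\e(\sqrt{-1}\,a^{-2}(N(x_1)+N(x_2)))$ and the phase $\e(\frac12\tr(\beta w))$ with $w=\tp\ol{X}JX=\ol{x_1}x_2-\ol{x_2}x_1\in\bH^-$. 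Because this $\sigma_{\kappa}$-factor does not depend on $\beta$, it pulls out of the $\bH^-$-integral on the right, reproducing the right factor $\sigma_{\kappa}(\ol{x_1}-\sqrt{-1}\,\ol{x_2})$ in the statement.

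The crux is to identify the remaining $\beta$-integral $\int_{\bH^-}\e(\frac12\tr(\beta w))\Phi_{\kappa}(c+\beta)\,d_L\beta$ with $c=1+a^{-2}$ as a value of $M_{\kappa}$. Writing $c+\beta=c(1+c\inv\beta)$, invoking Lemma \ref{lem:Phi-kappa}(1), and substituting $\beta\mapsto c\beta$ (Jacobian $c^{3}$ on the three-dimensional $\bH^-$) turns it, by the definition (\ref{eq:M-kappa}), into $c^{1-\kappa}M_{\kappa}(-\frac{c}{2}w)$; since $w=2(\ol{x_1}x_2-s)$ where $s$ is the real part of $\ol{x_1}x_2$, and $M_{\kappa}$ depends only on the $\bH^-$-part of its argument, this equals $c^{1-\kappa}M_{\kappa}(-c\,\ol{x_1}x_2)$. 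Collecting the powers of $a$ and $2$ against the measure $a^{6}\,d\cross a$, and finally substituting $a\mapsto a^{-2}$ (which sends $1+a^{-2}\mapsto 1+a$, $a^{-2}\mapsto a$, and contributes a Jacobian $\frac12$), yields precisely the integral asserted in Lemma \ref{lem:8-6-1}. I expect the principal difficulty to be bookkeeping rather than conceptual: keeping the $\mathrm{End}(V_{\kappa})$-valued factors in the correct order (so that $\Omega$ stays on the left and $\sigma_{\kappa}(\ol{x_1}-\sqrt{-1}\,\ol{x_2})$ exits on the right), and nailing down the constants and the change of variables that converts $1+a^{-2}$ into $a+1$.
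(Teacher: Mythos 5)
Your proposal is correct and follows essentially the same route as the paper's proof: the same Iwasawa decomposition with the measure $a^{6}\,d^{\times}a\,d_L\beta\,dk$, cancellation of the $K_{\infty}$-factors by equivariance, identification of the $\beta$-integral with $M_{\kappa}(-(1+a^{-2})\ol{x_1}x_2)$ after rescaling $\beta$, and a final change of variable in $a$. The only difference is cosmetic — the paper first substitutes $a\mapsto a^{-1}$ and then rescales $\beta$ by $1+a^{2}$, whereas you rescale $\beta$ by $1+a^{-2}$ first and combine the two $a$-substitutions into the single map $a^{-2}\mapsto a$ — and your constants and exponents all check out.
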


\begin{proof}
 By (\ref{eq:K-equivariance-Omega}) and
 Lemma \ref{lem:phi0-UK}, we have
\begin{align*}
 \varphi_{\Omega,\infty}\vecm{x_1\\x_2}
&=\int_{K_{\infty}}dk\,\int_0^{\infty}a^6d\cross a\,\int_{\bH^-}d_L\beta\,
   \Omega\left(k\Nmat{a&0}{0&a\inv}\Nmat{1&\beta}{0&1}\right)\\
 &\qquad r\left(1,\Nmat{1&-\beta}{0&1}\Nmat{a\inv&0}{0&a}k\inv\right)
    \,\varphi_{0,\infty}\vecm{x_1\\x_2}\\
 &=\int_0^{\infty}a^6d\cross a\,\int_{\bH^-}d_L\beta\,
\Phi_{\kappa}\left(\dfrac{a}{2}(1+a^{-2}+\beta)\right)\,
 \e(\tr(\beta\ol{x_1}x_2))a^{-4}\,\varphi_{0,\infty}\vecm{a\inv x_1\\a\inv x_2}\\
 &=\int_0^{\infty}a^{-2}d\cross a\,\int_{\bH^-}d_L\beta\,
\left(\dfrac{a\inv}{2}\right)^{-\kappa-2}
\Phi_{\kappa}\left(1+a^2+\beta\right)\,
 \e(\tr(\beta\ol{x_1}x_2))\,\varphi_{0,\infty}\vecm{a x_1\\a x_2}\\
&=2^{\kappa+2}\int_0^{\infty}a^{2\kappa}d\cross a\,\int_{\bH^-}d_L\beta\,
    \Phi_{\kappa}\left(1+a^2+\beta\right)\,
  \e\left(\tr(\beta\ol{x_1}x_2)+\sqrt{-1}a^2(N(x_1)+N(x_2))\right)
\\
&\qquad \cdot\sigma_{\kappa}(\ol{x_1}-\sqrt{-1}\ol{x_2}).
\end{align*}
Changing the variable $\beta$ into $(1+a^2)\beta$, we obtain
\begin{align*}
\varphi_{\Omega,\infty}\vecm{x_1\\x_2}
&=2^{\kappa+2} \int_0^{\infty}
  a^{2\kappa}(a^2+1)^{1-\kappa}
  \e\left(\sqrt{-1}a^2(N(x_1)+N(x_2)\right)
d\cross a\,\\
&\qquad \int_{\bH^-}\,\e(\tr((a^2+1)\beta\ol{x_1}x_2)))
\Phi_{\kappa}(1+\beta)d_L\beta\,
 \cdot\sigma_{\kappa}(\ol{x_1}-\sqrt{-1}\ol{x_2})\\
 &=2^{\kappa+1}
\int_0^{\infty}
  a^{\kappa}(a+1)^{1-\kappa}
  \e\left(\sqrt{-1}(N(x_1)+N(x_2))a\right)
M_{\kappa}(-(a+1) \ol{x_1}x_2)
d\cross a\\
&\qquad  \cdot\sigma_{\kappa}(\ol{x_1}-\sqrt{-1}\ol{x_2})
\end{align*}
and the lemma has been proved.
\end{proof}

We now suppose that $\ol{x_1}x_2=s-it\;\;(s,t\in\bR,t\ge 0)$.
Since
\begin{align*}
 (\ol{x_1}-\sqrt{-1}\ol{x_2})(x_1+\sqrt{-1}x_2)&
=N(x_1)+N(x_2)+\sqrt{-1}(\ol{x_1}x_2-\ol{x_2}x_1)\\
&=N(x_1)+N(x_2)-2\sqrt{-1}it,
\end{align*}
we have
\[
 \sigma_{\kappa}(\ol{x_1}-\sqrt{-1}\ol{x_2})
\,\sigma_{\kappa}(x_1+\sqrt{-1}x_2)\,P_r
=(N(x_1)+N(x_2)+2t)^r\,(N(x_1)+N(x_2)-2t)^{\kappa-r}\,P_r
\]
by Lemma \ref{lem:diagonal}.
We also have
\[
 M_{\kappa}(-(a+1)\ol{x_1}x_2)P_r=
M_{\kappa}(i(a+1)t)P_r=\delta_{r\kappa}\,\lambda(\kappa)(a+1)^{\kappa-1}
t^{\kappa-1}e^{-4\pi(a+1)t}P_{\kappa}
\qquad(0\le r\le \kappa)
\]
by Proposition \ref{prop:M-kappa}.
We thus have
\begin{align*}
& \varphi_{\Omega,\infty}\vecm{x_1\\x_2}\,
\sigma_{\kappa}(x_1+\sqrt{-1}x_2)\,P_r\\
&=
\delta_{r\kappa}\,
2^{\kappa+1}\lambda(\kappa)\,
(N(x_1)+N(x_2)+2t)^{\kappa}
\, t^{\kappa-1}e^{-4\pi t}\,
\int_0^{\infty}\,a^{\kappa}e^{-2\pi(N(x_1)+N(x_2)+2t)a}d\cross a
\,P_{\kappa}\\
 &=\delta_{r\kappa}\, 2^{2\kappa}\pi\kappa\inv\, t^{\kappa-1}e^{-4\pi t}\, P_{\kappa},
\end{align*}
which completes the proof of Proposition \ref{prop:ecr-testf} (2).

\subsection{}
\label{subsec:proof-erc-testf-3}

In this subsection, we prove
Proposition \ref{prop:ecr-testf} (3).
Observe that $\varphi_{\Omega,\infty}\vecm{0\\x_2}=0$ and 
$I_{\infty}\inv\phi_{\omega,\infty}\vecm{0\\x_2}=0$
in view of Proposition \ref{prop:ecr-testf} (1), (2)
and Lemma \ref{lem:action-of-H^1} (1).
We hereafter assume that $x_1\ne 0$.

\begin{lemma}
\label{lem:8-7-1}
We have
\begin{align*}
& I_{\infty}\inv\phi_{\omega,\infty}\vecm{x_1\\x_2}\\
&=2^{2\kappa-1}\pi\inv\kappa\,
\int_0^{\infty}\,a^{\kappa}(a+1)^{1-\kappa}
\,\e(\tr(-\ol{x_1}x_2+\sqrt{-1}N(x_1))a)\,
M_{\kappa}((a+1)\ol{x_2}x_1)\,d\cross a\,\cdot\, 
\sigma_{\kappa}(\ol{x_1}).
\end{align*}
 
\end{lemma}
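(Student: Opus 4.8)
The plan is to substitute the closed formula for $\phi_{\omega,\infty}$ from Proposition \ref{prop:ecr-testf} (1) into the inverse partial Fourier transform and to recognize the outcome as a rescaled copy of $M_{\kappa}$. By definition,
\[
 I_{\infty}\inv\phi_{\omega,\infty}\vecm{x_1\\x_2}
 =\int_{\bH}\e(-\tr(\ol{y}x_2))\,\phi_{\omega,\infty}(x_1,y)\,dy,
\]
with $dy=d\alpha_{\infty}=4\,d_Ly$. Inserting Proposition \ref{prop:ecr-testf} (1), the indicator $\delta(\tr(\ol{x_1}y)>0)$ restricts the $y$-integral to a half-space. I would first rewrite the $\Phi_{\kappa}$-factor: since $x_1+y=x_1(1+x_1\inv y)$, Lemma \ref{lem:Phi-kappa} (2) gives $\Phi_{\kappa}(x_1+y)=\Phi_{\kappa}(1+x_1\inv y)\Phi_{\kappa}(x_1)$, and Lemma \ref{lem:Phi-kappa} (3) turns the right-hand factor into $N(x_1)^{-\kappa-1}\sigma_{\kappa}(\ol{x_1})$, which will supply the trailing $\sigma_{\kappa}(\ol{x_1})$ of the assertion.

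Next I would substitute $y=x_1w$ (permissible since $x_1\ne 0$), whose Jacobian on $\bH\simeq\bR^4$ equals $N(x_1)^2$; here $\tr(\ol{x_1}y)=N(x_1)\tr(w)$, $\tr(\ol{y}x_2)=\tr(\ol{w}\,\ol{x_1}x_2)$, and $x_1\inv y=w$. Writing $w=u+\beta$ with $u\in\bR$, $\beta\in\bH^-$, one has $\tr(w)=2u$ and $d_Lw=du\,d_L\beta$, so the half-space condition becomes $u>0$ and the integral separates into a $u$-integral over $(0,\infty)$ and an inner integral over $\bH^-$.

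The inner integrand is $\e(\tr(\beta\,\ol{x_1}x_2))\,\Phi_{\kappa}(1+u+\beta)$. Factoring $1+u+\beta=(1+u)(1+(1+u)\inv\beta)$ and applying Lemma \ref{lem:Phi-kappa} (1) produces $(1+u)^{-\kappa-2}\Phi_{\kappa}(1+(1+u)\inv\beta)$; rescaling $\beta\mapsto(1+u)\beta$ (with $d_L\beta\mapsto(1+u)^3d_L\beta$) then yields $(1+u)^{1-\kappa}M_{\kappa}(-(1+u)\ol{x_1}x_2)$ by the very definition (\ref{eq:M-kappa}) of $M_{\kappa}$. Because $M_{\kappa}(\xi)$ depends only on the imaginary part of $\xi$ and $\Im(-\ol{x_1}x_2)=\Im(\ol{x_2}x_1)$, this equals $(1+u)^{1-\kappa}M_{\kappa}((1+u)\ol{x_2}x_1)$, which is precisely the integrand of the assertion with $a=u$.

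It remains to assemble the scalars. The factors $(2N(x_1))^{\kappa-1}$ coming from $(\tr(\ol{x_1}y))^{\kappa-1}$, the Jacobian $N(x_1)^2$, and $N(x_1)^{-\kappa-1}$ from $\Phi_{\kappa}(x_1)$ collapse to $2^{\kappa-1}$, so $N(x_1)$ cancels; combined with the prefactor $2^{\kappa-2}\pi\inv\kappa$ of Proposition \ref{prop:ecr-testf} (1) and the measure factor $4$ this gives $2^{2\kappa-1}\pi\inv\kappa$, while the scalar part of the exponential assembles into $\e(\tr(-\ol{x_1}x_2+\sqrt{-1}N(x_1))u)$ and $u^{\kappa-1}(1+u)^{1-\kappa}du=a^{\kappa}(a+1)^{1-\kappa}d\cross a$. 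The steps I expect to be delicate are the measure bookkeeping (the self-dual normalization $d\alpha_{\infty}=4\,d_L\alpha_{\infty}$ and the $N(x_1)^2$ Jacobian of left multiplication) and the use of Fubini to separate the $u$- and $\beta$-integrations; the latter is justified by the rapid decay of $M_{\kappa}(it)$ recorded in Proposition \ref{prop:M-kappa}, which renders the double integral absolutely convergent in our range of $\kappa$.
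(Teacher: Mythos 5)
Your proposal is correct and follows essentially the same route as the paper's proof: insert the closed formula from Proposition \ref{prop:ecr-testf} (1), substitute $y=x_1w$ (picking up the Jacobian $N(x_1)^2$ and extracting $\sigma_{\kappa}(\ol{x_1})$ via Lemma \ref{lem:Phi-kappa}), split $w=u+\beta$, rescale $\beta\mapsto(1+u)\beta$, and identify the inner integral with $M_{\kappa}$. The only cosmetic difference is that you reach $M_{\kappa}(-(1+u)\ol{x_1}x_2)$ and then invoke invariance of $M_{\kappa}$ under real translations, whereas the paper writes the exponential as $\e(-\tr(\ol{x_2}x_1\beta))$ and reads off $M_{\kappa}((a+1)\ol{x_2}x_1)$ directly; these agree since $\tr(\ol{x_2}x_1\beta)=-\tr(\beta\,\ol{x_1}x_2)$ for $\beta\in\bH^-$.
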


\begin{proof}
 We put $c=2^{\kappa}\pi\inv \kappa$.
By Proposition \ref{prop:ecr-testf} (1), we have
\begin{align*}
 & I_{\infty}\inv\phi_{\omega,\infty}\vecm{x_1\\x_2}\\
 &=4\int_{\bH}\,\e(-\tr(\ol{y}x_2)) \phi_{\omega}(x_1,y)d_Ly\\
 & =c\,\int_{\bH}\,\delta(\tr(\ol{x_1}y)>0)\,(\tr(\ol{x_1}y))^{\kappa-1}
  \,\e(-\tr(\ol{x_2}y)+\sqrt{-1}\,\tr(\ol{x_1}y))\Phi_{\kappa}(x_1+y)d_Ly. 
\end{align*}
Changing the variable $y$ into $x_1y$ 
and using Lemma \ref{lem:Phi-kappa} (2), 
we obtain
\begin{align*}
 & I_{\infty}\inv\phi_{\omega,\infty}\vecm{x_1\\x_2}\\
 & =c\,\int_{\bH}\,\delta(\tr(y)>0)\,
    (\tr(y))^{\kappa-1}\,\e(\tr(-\ol{x_2}x_1y+\sqrt{-1}N(x_1)y))
    \Phi_{\kappa}(1+y)d_Ly\cdot N(x_1)^{\kappa+1}\Phi_{\kappa}(x_1)\\
 &=c\, 
  \int_{\bR}da\,\int_{\bH^-}d_L\beta\,
   \delta(\tr(a+\beta)>0)\,
    (\tr(a+\beta))^{\kappa-1}\,
\e(\tr((-\ol{x_2}x_1+\sqrt{-1}N(x_1))(a+\beta)))
\Phi_{\kappa}(1+a+\beta)\\
 &\qquad 
  \cdot  \sigma_{\kappa}(\ol{x_1})\\
 &=c\, 
  \int_{0}^{\infty}
       (2a)^{\kappa-1}\,\e(\tr((-\ol{x_2}x_1+\sqrt{-1}N(x_1)))a)da\,
\int_{\bH^-}\,\e(-\tr(\ol{x_2}x_1\beta))\,\Phi_{\kappa}(1+a+\beta)d_L\beta
\,\cdot\, \sigma_{\kappa}(\ol{x_1}).
\end{align*}
Changing the variable $\beta$ into $(1+a)\beta$, we obtain
\begin{align*}
&  I_{\infty}\inv\phi_{\omega,\infty}\vecm{x_1\\x_2}\\
&=c\cdot 2^{\kappa-1}\int_0^{\infty}\,a^{\kappa-1}(a+1)^{1-\kappa}
  \e(\tr(-\ol{x_2}x_1+\sqrt{-1}N(x_1))a)da\,\\
 & \qquad \int_{\bH^-}\,\e(-\tr((a+1)\ol{x_2}x_1\beta))\,\Phi_{\kappa}(1+\beta)d_L\beta\,
 \cdot\,\sigma_{\kappa}(\ol{x_1})\\
 &=2^{2\kappa-1}\pi\inv\kappa
\int_0^{\infty}\,a^{\kappa}(a+1)^{1-\kappa}
\,\e(\tr(-\ol{x_1}x_2+\sqrt{-1}N(x_1))a)\,
M_{\kappa}((a+1)\ol{x_2}x_1)\,d\cross a\,\cdot\,\sigma_{\kappa}(\ol{x_1})
\end{align*}
and we are done.
\end{proof}

In view of Lemma \ref{lem:action-of-H^1},
it  suffices to show the following lemma to complete the proof of
Proposition \ref{prop:ecr-testf} (3).

\begin{lemma}
 Suppose that
$\ol{x_1}x_2=s-it\;\;(s,t\in\bR,\,t\ge 0)$.
Then
\begin{align*}
 I_{\infty}\inv \phi_{\omega,\infty}\vecm{x_1\\x_2}\,
\sigma_{\kappa}(x_1+\sqrt{-1}x_2)\, P_r
=
\delta_{r\kappa}\cdot 2^{2\kappa-2}t^{\kappa-1}e^{-4\pi t}
\, P_{\kappa}
\quad(r=0,\ldots,\kappa).
\end{align*}
\end{lemma}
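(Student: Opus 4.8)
The plan is to start from the closed form for $I_{\infty}\inv\phi_{\omega,\infty}\vecm{x_1\\x_2}$ obtained in Lemma \ref{lem:8-7-1} and evaluate it on the vector $\sigma_{\kappa}(x_1+\sqrt{-1}x_2)P_r$. After substituting that formula, the operator acting on $P_r$ is, inside the $a$-integral, $M_{\kappa}((a+1)\ol{x_2}x_1)\,\sigma_{\kappa}(\ol{x_1})\,\sigma_{\kappa}(x_1+\sqrt{-1}x_2)$, so the whole problem reduces to two scalar computations, the effect of the $\sigma_{\kappa}$-product and the effect of $M_{\kappa}$, followed by an elementary integral in $a$.

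For the $\sigma_{\kappa}$-part I would first merge $\sigma_{\kappa}(\ol{x_1})\sigma_{\kappa}(x_1+\sqrt{-1}x_2)=\sigma_{\kappa}(\ol{x_1}(x_1+\sqrt{-1}x_2))=\sigma_{\kappa}(N(x_1)+\sqrt{-1}(s-it))$, using $\ol{x_1}x_2=s-it$. Writing $N(x_1)+\sqrt{-1}(s-it)=(N(x_1)+\sqrt{-1}s)+i(-\sqrt{-1}t)$ and invoking Lemma \ref{lem:diagonal} gives $(N(x_1)+\sqrt{-1}s+t)^{r}(N(x_1)+\sqrt{-1}s-t)^{\kappa-r}P_r$. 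For the $M_{\kappa}$-part, note $\ol{x_2}x_1=\ol{\ol{x_1}x_2}=s+it$, so $(a+1)\ol{x_2}x_1=(a+1)s+i(a+1)t$; discarding the real part (since $M_{\kappa}(\xi)$ depends only on the $\bH^-$-component of $\xi$) and applying Proposition \ref{prop:M-kappa} yields $\delta_{r\kappa}\,\lambda(\kappa)\,((a+1)t)^{\kappa-1}e^{-4\pi(a+1)t}P_{\kappa}$. The Kronecker delta immediately forces $r=\kappa$, which is the source of the $\delta_{r\kappa}$ in the target; for $r=\kappa$ the prefactor collapses to $(N(x_1)+\sqrt{-1}s+t)^{\kappa}$.

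It then remains to carry out the $a$-integral. I would first rewrite the exponential using the reduced-trace values $\tr(\ol{x_1}x_2)=2s$ and $\tr(\sqrt{-1}N(x_1))=2\sqrt{-1}N(x_1)$, giving $\e(\tr(-\ol{x_1}x_2+\sqrt{-1}N(x_1))a)=\exp(-4\pi a(N(x_1)+\sqrt{-1}s))$; combining this with $e^{-4\pi(a+1)t}$ and the surviving powers $a^{\kappa}(a+1)^{1-\kappa}(a+1)^{\kappa-1}=a^{\kappa}$ leaves the integrand $a^{\kappa}\exp(-4\pi a\,w)\,d\cross a$ with $w=N(x_1)+\sqrt{-1}s+t$, multiplied by $t^{\kappa-1}e^{-4\pi t}(N(x_1)+\sqrt{-1}s+t)^{\kappa}$. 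The elementary integral $\int_0^{\infty}a^{\kappa}e^{-4\pi a w}\,d\cross a=(\kappa-1)!\,(4\pi w)^{-\kappa}$ converges because $\Re(w)=N(x_1)+t>0$ (this is where the standing hypothesis $x_1\ne0$ is used), and its factor $w^{-\kappa}$ cancels the surviving $(N(x_1)+\sqrt{-1}s+t)^{\kappa}$. What remains is a pure constant times $t^{\kappa-1}e^{-4\pi t}P_{\kappa}$, and the final step is the bookkeeping $2^{2\kappa-1}\pi\inv\kappa\cdot\lambda(\kappa)\cdot(\kappa-1)!\,(4\pi)^{-\kappa}=2^{2\kappa-2}$, using $\lambda(\kappa)=2^{2\kappa-1}\pi^{\kappa+1}/\kappa!$ and $\kappa\cdot(\kappa-1)!/\kappa!=1$. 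This produces exactly $\delta_{r\kappa}\cdot2^{2\kappa-2}t^{\kappa-1}e^{-4\pi t}P_{\kappa}$. I do not expect any genuine obstacle: the argument is mechanical, and the only points needing care are keeping the quaternion unit $i$ distinct from the complex $\sqrt{-1}$ when applying Lemma \ref{lem:diagonal}, and the constant collection at the end.
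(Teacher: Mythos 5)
Your proposal is correct and follows essentially the same route as the paper: evaluate the closed form from Lemma \ref{lem:8-7-1} on $\sigma_{\kappa}(x_1+\sqrt{-1}x_2)P_r$ by combining $\sigma_{\kappa}(\ol{x_1})\sigma_{\kappa}(x_1+\sqrt{-1}x_2)=\sigma_{\kappa}(N(x_1)+\sqrt{-1}\ol{x_1}x_2)$ via Lemma \ref{lem:diagonal}, reduce $M_{\kappa}((a+1)\ol{x_2}x_1)$ to $M_{\kappa}(i(a+1)t)$ via Proposition \ref{prop:M-kappa}, and finish with the gamma integral $\int_0^{\infty}a^{\kappa}e^{-4\pi aw}d\cross a=(\kappa-1)!\,(4\pi w)^{-\kappa}$ with $w=N(x_1)+t+\sqrt{-1}s$. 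The constant bookkeeping and the cancellation of $w^{\kappa}$ match the paper's computation exactly.
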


\begin{proof}
 Put $c'=2^{2\kappa-1}\pi\inv\kappa$. 
By Lemma \ref{lem:8-7-1}, Lemma \ref{lem:diagonal} and 
Proposition \ref{prop:M-kappa},
we have
\begin{align*}
 & I_{\infty}\inv \phi_{\omega,\infty}\vecm{x_1\\x_2}\,
\sigma_{\kappa}(x_1+\sqrt{-1}x_2)\, P_r\\
&=c'
\int_0^{\infty}\,a^{\kappa}(a+1)^{1-\kappa}
\,\e(\tr(-\ol{x_1}x_2+\sqrt{-1}N(x_1)a)\,
M_{\kappa}((a+1)\ol{x_2}x_1)\,d\cross a\\
 &\qquad\cdot \,\sigma_{\kappa}(N(x_1)+\sqrt{-1}\ol{x_1}x_2)\,P_r\\
 &=c'
\int_0^{\infty}\,a^{\kappa}(a+1)^{1-\kappa}
\,\e(2(-s+\sqrt{-1}N(x_1))a)\,
M_{\kappa}(i(a+1)t)\,d\cross a\\
 &\qquad\cdot \,(N(x_1)+t+\sqrt{-1}s)^{r}(N(x_1)-t+\sqrt{-1}s)^{\kappa-r}
 P_r\\
&=\delta_{r\kappa}\cdot c'\,\lambda(\kappa)\cdot 
  (N(x_1)+t+\sqrt{-1}s)^{\kappa}\\
&\qquad    \int_{0}^{\infty}\,a^{\kappa}(a+1)^{1-\kappa}
  ((a+1)t)^{\kappa-1}\,e^{4\pi\sqrt{-1}(-sa+\sqrt{-1}N(x_1)a)-4\pi (a+1)t}
   d\cross a\cdot P_{\kappa}\\
&=\delta_{r\kappa}\cdot\dfrac{2^{4\kappa-2}\pi^{\kappa}}{(\kappa-1)!}
   t^{\kappa-1}e^{-4\pi t}
(N(x_1)+t+\sqrt{-1}s)^{\kappa}
   \int_0^{\infty}\,a^{\kappa}e^{-4\pi(N(x_1)+t+\sqrt{-1}s)a}d\cross a\cdot P_{\kappa}\\
&=\delta_{r\kappa}\cdot 2^{2\kappa-2}t^{\kappa-1}e^{-4\pi t}\cdot P_{\kappa}.
\end{align*}

\end{proof}

 \subsection{}\label{subsec:estimate}

We define the norm of $X=(x_{ij})\in\bH^{(m,n)}$  by
\[
 \|X\|=\sqrt{\sum_{i=1}^m\sum_{j=1}^n\,N(x_{ij})}.
\]

The aim of this subsection is to prove the following estimates, which
are needed in the proof of Theorem \ref{th:commutation-relations}.

\begin{proposition}
 \label{prop:estimate}
Suppose that $\kappa>4$ and let $D$ be a compact subset of $H_{\infty}\times G_{\infty}$.
\begin{enumerate}
 \item 
There exists a positive constant $C_1$ such that
\begin{equation}
 \label{eq:estimate-phi-omega-general}
\left|\langle r'(h,g)\phi_{\omega,\infty}(X')P,P'\rangle_{\kappa}\right|
\le C_1 \|P\|_{\kappa}\|P'\|_{\kappa}\cdot (1+\|X'\|)^{-\kappa-2}
\end{equation}
holds for any $X'\in \bH^{(1,2)}$, $(h,g)\in D$ and $P,P'\in V_{\kappa}$.
 \item 
There exists a positive constant $C_2$ such that
\begin{equation}
 \label{eq:estimate-varphi-Omega-general}
\left|\langle r(h,g)\varphi_{\Omega,\infty}(X)P,P'\rangle_{\kappa}\right|
\le C_2 \|P\|_{\kappa}\|P'\|_{\kappa} \cdot (1+\|X\|)^{-\kappa}
\end{equation}
holds for any $X\in \bH^{(2,1)}$, $(h,g)\in D$ and $P,P'\in V_{\kappa}$.
\end{enumerate}

\end{proposition}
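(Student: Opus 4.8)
The plan is to deduce both estimates from the closed-form expressions for the test functions in Proposition \ref{prop:ecr-testf} (and its starred analogue Proposition \ref{prop:ecr-testf-*}), the only analytic input being the conformality of the embedding $A$ of (\ref{eq:def-A}): the identity $A(x)^*A(x)=A(\ol{x}x)=N(x)1_2$ shows that $A(x)$ is $N(x)^{1/2}$ times a unitary matrix, so that $\|\sigma_{\kappa}(x)^{\pm1}\|=N(x)^{\pm\kappa/2}$ with respect to the $A(\bH^1)$-invariant norm and hence $\|\Phi_{\kappa}(x)\|=N(x)^{-(\kappa+2)/2}$ (operator norm). I would first prove the two bounds at $(h,g)=(1,1)$, i.e.\ for $\phi_{\omega,\infty}$ and $\varphi_{\Omega,\infty}$ themselves, and then promote them to uniform bounds over the compact set $D$.

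For part (1), Proposition \ref{prop:ecr-testf} (1) expresses $\phi_{\omega,\infty}(x_1,x_2)$ as a bounded scalar times $\delta(s>0)\,s^{\kappa-1}\e(\sqrt{-1}s)\,\Phi_{\kappa}(x_1+x_2)$, where $s=\tr(\ol{x_1}x_2)$ and $\e(\sqrt{-1}s)=e^{-2\pi s}$. Combining $\|\Phi_{\kappa}(x_1+x_2)\|=N(x_1+x_2)^{-(\kappa+2)/2}$ with $N(x_1+x_2)=\|X'\|^2+s$, which on the support $s>0$ gives both $N(x_1+x_2)\ge\|X'\|^2$ and $N(x_1+x_2)\ge s$, yields
\[
\|\phi_{\omega,\infty}(X')\|\le C\,s^{\kappa-1}e^{-2\pi s}\,N(x_1+x_2)^{-(\kappa+2)/2}.
\]
For $\|X'\|\ge1$ the factor $s^{\kappa-1}e^{-2\pi s}$ is bounded and $N(x_1+x_2)^{-(\kappa+2)/2}\le\|X'\|^{-\kappa-2}$; for $\|X'\|\le1$ one has $s\le\|X'\|^2\le1$, so the bound is $\le C\,s^{(\kappa-4)/2}$, which stays bounded precisely because $\kappa>4$. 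Together these give $\|\phi_{\omega,\infty}(X')\|\le C(1+\|X'\|)^{-\kappa-2}$.

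For part (2), Proposition \ref{prop:ecr-testf} (2) shows that $\varphi_{\Omega,\infty}\vecm{x_1\\x_2}$ equals the rank-one operator $\mu\,E\,\sigma_{\kappa}(x_1+\sqrt{-1}x_2)^{-1}$, with $E$ the orthogonal projection onto $\bC P_{\kappa}$ and $\mu=2^{2\kappa}\pi\kappa\inv t^{\kappa-1}e^{-4\pi t}$, $\ol{x_1}x_2=s-it$. The key point is that $P_{\kappa}=X^{\kappa}$ is a singular vector of $x_1+\sqrt{-1}x_2$: by Lemma \ref{lem:diagonal} the matrix $A\big((\ol{x_1}-\sqrt{-1}\ol{x_2})(x_1+\sqrt{-1}x_2)\big)$ is diagonal with entries $\|X\|^2\pm2t$, so the standard basis vectors diagonalize $A(x_1+\sqrt{-1}x_2)^*A(x_1+\sqrt{-1}x_2)$ and $P_{\kappa}$ is aligned with the \emph{larger} singular value $(\|X\|^2+2t)^{1/2}$. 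Consequently $\|E\,\sigma_{\kappa}(x_1+\sqrt{-1}x_2)^{-1}\|=(\|X\|^2+2t)^{-\kappa/2}$, whence $\|\varphi_{\Omega,\infty}(X)\|\le C\,t^{\kappa-1}e^{-4\pi t}(\|X\|^2+2t)^{-\kappa/2}\le C(1+\|X\|)^{-\kappa}$, using $0\le2t\le\|X\|^2$ (the range $\|X\|\le1$ being controlled by $t^{\kappa-1}(2t)^{-\kappa/2}=C\,t^{(\kappa-2)/2}$). I stress that evaluating on the specific vector $P_{\kappa}$, rather than bounding by the operator norm of $\sigma_{\kappa}(x_1+\sqrt{-1}x_2)^{-1}$, is what avoids a spurious blow-up along the locus $N(x_1+\sqrt{-1}x_2)=0$.

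It remains to insert the group action and make the constants uniform over $D$. I would factor $r(h,g)=r(h,1)r(1,g)$ and $r'(h,g)=r'(h,1)r'(1,g)$ and exploit the asymmetry in Lemma \ref{lem:r'}: on $W$ the group $H_{\infty}$ acts by the substitution $\varphi\mapsto\varphi(\tp\ol{h}\,\cdot)$, and on $W'$ the group $G_{\infty}$ acts by $\phi\mapsto\phi(\,\cdot\,g)$. As $\tp\ol{h}$ and $g$ range over compact sets one has $\|\tp\ol{h}X\|\asymp\|X\|$ and $\|X'g\|\asymp\|X'\|$ uniformly, so these factors preserve the decay rates with uniform constants. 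The remaining factor is the one acting by a partial Fourier transform ($r(1,g)$ on $W$, $r'(h,1)$ on $W'$); since $\varphi_{\Omega,\infty}$ and $\phi_{\omega,\infty}$ are themselves averages over $G_{\infty}$, respectively $H_{\infty}$, against $\Omega$ and $\omega$, this factor merely replaces the spherical weight by its translate, which differs from the original by an automorphy-factor cocycle that stays in a uniformly bounded family for $(h,g)\in D$. The main obstacle is exactly this last point: the Fourier-transform member of the dual pair does not act geometrically on $\bH^{(2,1)}$ (respectively $\bH^{(1,2)}$), so the decay rate is not manifestly preserved, and one must verify that the translated average still obeys the pointwise bound uniformly. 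I would handle this using compactness of $D$ together with the $U_{\infty}$- and $\bH^1$-equivariance of Lemma \ref{lem:action-of-H^1}, which absorbs the maximal-compact part of the Fourier-transform factor and leaves only the unipotent piece (a unimodular phase) and the split-torus piece (a bounded dilation), both of which preserve the decay. Finally, as $\phi_{\omega,\infty}$ and $\varphi_{\Omega,\infty}$ are interchanged by $I_{\infty}$ (Proposition \ref{prop:ecr-testf} (3)), the two parts are linked, and the shift of exponent between $-\kappa-2$ and $-\kappa$ is precisely the effect of the partial Fourier transform on the rate of decay.
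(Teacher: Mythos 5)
Your proposal is correct in substance and follows the same overall strategy as the paper: reduce to bounds on $\phi_{\omega,\infty}$ and $\varphi_{\Omega,\infty}$ themselves, exploit the explicit formulas of Proposition \ref{prop:ecr-testf}, use the support inequalities $N(x_1+x_2)\ge\|X'\|^2$, $\tr(\ol{x_1}x_2)\le\|X'\|^2$ (resp.\ $2t\le\|X\|^2$), and split into $\|X\|\le1$ (where $\kappa>4$ saves the day) and $\|X\|\ge1$. The differences are in the bookkeeping. Where you use operator norms via the singular value decomposition --- $A(x)^*A(x)=N(x)1_2$, hence $\|\Phi_{\kappa}(x)\|=N(x)^{-\kappa/2-1}$, and for part (2) the identification of $P_{\kappa}$ as the top singular vector giving $\|E\,\sigma_{\kappa}(x_1+\sqrt{-1}x_2)\inv\|=(\|X\|^2+2t)^{-\kappa/2}$ --- the paper works with the matrix coefficients $s_{a,b}$ in the basis $\{P_r\}$ (Lemmas \ref{lem:estimate-Phi_kappa} and \ref{lem:estimate-matrix-coefficients}), arriving at the sharper but equivalent intermediate bound $\|X\|^{\kappa}(\|X\|^2+2t)^{-\kappa}$; your $(\|X\|^2+2t)^{-\kappa/2}$ is weaker but still suffices. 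Two points deserve more care. First, the identity $\varphi_{\Omega,\infty}(X)=\mu\,E\,\sigma_{\kappa}(x_1+\sqrt{-1}x_2)\inv$ as written presupposes invertibility, which fails exactly on the locus $x_2=-x_1i$; the paper devotes a separate computation to this case (Lemma \ref{lem:varphi-Omega-anotherform}~(2)), whereas you would need to either do the same or argue that your rank-one operator, rewritten as $v\mapsto\mu(\|X\|^2+2t)^{-\kappa}(v,\sigma_{\kappa}(x_1+\sqrt{-1}x_2)P_{\kappa})_{\kappa}\|P_{\kappa}\|_{\kappa}^{-2}P_{\kappa}$, extends by continuity across that locus --- your remark about ``avoiding a spurious blow-up'' gestures at this but does not close it. Second, the reduction from general $(h,g)\in D$ to $(h,g)=(1,1)$: the paper simply asserts that the bounds at the identity suffice, while you correctly identify the non-geometric (Fourier-transform) member of each action as the real issue and propose absorbing it into a translate of the spherical weight; this is the right idea and is more than the paper offers, though as written it remains a sketch that would require redoing the integrals of Sections \ref{subsec:proof-erc-testf-1}--\ref{subsec:proof-erc-testf-2} with the translated weight to extract uniform constants.
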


\bigskip

To prove the proposition, we need some preparations.

\begin{lemma}
 \label{lem:inequality-tr}
Let $x_1,x_2\in\bH$.
\begin{enumerate}
 \item We have $|\tr(\ol{x_1}x_2)|\le N(x_1)+N(x_2)$, and
the equality holds only when $x_2=\pm x_1$.
 \item Suppose that $\ol{x_1}x_2=s-it\;\;(s,t\in\bR,\,t\ge 0)$.
Then we have $t\le\dfrac{1}{2}(N(x_1)+N(x_2))$, 
and the equality holds only when $x_2=- x_1i$.
 \item Under the same assumption as in (2), $x_1+\sqrt{-1}x_2$
is invertible in $\bH\otimes_{\bR}\bC$ if and only if $x_2\not =- x_1i$.
In this case, $\ol{x_1}-\sqrt{-1}\ol{x_2}$ is invertible and we have
\begin{align*}
 (x_1+\sqrt{-1}x_2)\inv&=
(N(x_1)+N(x_2)-2\sqrt{-1}it)\inv 
(\ol{x_1}-\sqrt{-1}\ol{x_2}),\\
(\ol{x_1}-\sqrt{-1}\ol{x_2})\inv&=(x_1+\sqrt{-1}x_2)
(N(x_1)+N(x_2)-2\sqrt{-1}it)\inv. 
\end{align*}

\end{enumerate}
\end{lemma}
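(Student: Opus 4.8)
The plan is to reduce the whole lemma to two elementary inputs: the positive-definiteness of the reduced norm $N$ on the definite algebra $\bH$, and the isomorphism $\bH\otimes_{\bR}\bC\simeq\mathrm{M}_2(\bC)$ afforded by $A$ in (\ref{eq:def-A}), under which $\det A(\alpha)=N(\alpha)$. For (1) I would simply expand the nonnegative quantities $N(x_1\mp x_2)$. Using $\tr(\ol{x_1}x_2)=\tr(x_1\ol{x_2})$ (both being twice the real part of $\ol{x_1}x_2$), one obtains
\[
 N(x_1\mp x_2)=N(x_1)+N(x_2)\mp\tr(\ol{x_1}x_2)\ge 0,
\]
which is precisely $\pm\tr(\ol{x_1}x_2)\le N(x_1)+N(x_2)$. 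Positive-definiteness then settles the equality case: $\tr(\ol{x_1}x_2)=N(x_1)+N(x_2)$ forces $N(x_1-x_2)=0$, i.e.\ $x_2=x_1$, and likewise the lower bound forces $x_2=-x_1$, so equality holds exactly when $x_2=\pm x_1$.

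For (2) the point is that right-multiplication of $x_1$ by the \emph{quaternion} $i$ turns the coefficient $t$ into a reduced trace. Since $\ol{x_1 i}=-i\,\ol{x_1}$, we get $\ol{x_1 i}\,x_2=-i(s-it)=-t-is$, whose reduced trace is $-2t$, while $N(x_1 i)=N(x_1)$. Applying part (1) to the pair $(x_1 i,\,x_2)$ yields $2t=|\tr(\ol{x_1 i}\,x_2)|\le N(x_1)+N(x_2)$, which is the asserted inequality. As the relevant trace is $\le 0$, the equality case of (1) is the one giving $x_2=-(x_1 i)=-x_1 i$; and this is genuinely an equality, since $\ol{x_1}(-x_1 i)=-N(x_1)\,i$ forces $s=0$, $t=N(x_1)=N(x_2)$, whence $t=\frac{1}{2}(N(x_1)+N(x_2))$.

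For (3) I would begin from the expansion already recorded in Section~\ref{subsec:proof-erc-testf-2},
\[
 (\ol{x_1}-\sqrt{-1}\ol{x_2})(x_1+\sqrt{-1}x_2)=N(x_1)+N(x_2)-2\sqrt{-1}it=:D,
\]
and study the factor $D\in\bH\otimes_{\bR}\bC$. Writing $c=N(x_1)+N(x_2)$ and applying the main involution ($i\mapsto -i$, $\sqrt{-1}$ fixed) gives $\ol{D}=c+2\sqrt{-1}it$ and, after the cross terms cancel and $(2\sqrt{-1}it)^2=4t^2$, the scalar relation $D\,\ol{D}=c^2-4t^2$. Hence $D$ is invertible in $\bH\otimes_{\bR}\bC$ precisely when $c\neq 2t$, which by part (2) is exactly the condition $x_2\neq -x_1 i$, and then $D\inv=(c^2-4t^2)\inv\,\ol{D}$. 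Reading the displayed identity through $\mathrm{M}_2(\bC)$ gives $\det(\ol{x_1}-\sqrt{-1}\ol{x_2})\cdot\det(x_1+\sqrt{-1}x_2)=\det D\neq 0$ in this case, so both $x_1+\sqrt{-1}x_2$ and $\ol{x_1}-\sqrt{-1}\ol{x_2}$ are invertible; left- and right-multiplying the identity by $D\inv$ then produces the two stated inverse formulas. For the converse, when $x_2=-x_1 i$ one has $x_1+\sqrt{-1}x_2=x_1(1-\sqrt{-1}i)$, and $(1-\sqrt{-1}i)(1+\sqrt{-1}i)=0$ exhibits $1-\sqrt{-1}i$ as a nonzero zero divisor, so $x_1+\sqrt{-1}x_2$ fails to be invertible.

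The computations are routine; the only real care is notational, namely keeping the quaternion $i$ rigorously separate from the complex unit $\sqrt{-1}$ and tracking signs so that the equality case comes out as $x_2=-x_1 i$ rather than $+x_1 i$. The one genuinely structural point I would be careful to invoke is that invertibility in $\bH\otimes_{\bR}\bC$ is cleanest to decide via the determinant on $\mathrm{M}_2(\bC)$, so that $D$ invertible forces both factors of $D$ invertible, rather than relying on one-sided inverses alone.
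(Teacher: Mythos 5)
Your proposal is correct and follows essentially the same route as the paper: part (1) by the elementary positivity of $N(x_1\mp x_2)$ (the paper cites Cauchy--Schwarz and AM--GM, which is the same computation), part (2) by reducing to (1) after inserting the quaternion $i$ (you place it on $x_1$, the paper on $x_2$ via $t=\frac{1}{2}\tr(\ol{x_1}x_2 i)$), and part (3) from the product identity together with the norm of $N(x_1)+N(x_2)-2\sqrt{-1}it$ (your $D\ol{D}=c^2-4t^2$ is exactly the determinant of the diagonal matrix $A(D)$ the paper writes down). Your explicit zero-divisor $1-\sqrt{-1}i$ for the converse and the determinant argument for passing from invertibility of $D$ to invertibility of both factors are details the paper leaves implicit, and are worth having.
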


\begin{proof}
 The first assertion is an immediate consequence of the Cauchy-Schwarz
inequality and the inequality of arithmetic and geometric means. 
The second one follows from the first one and the fact that
$t=\dfrac{1}{2}\tr(\ol{x_1}x_2i)$.
The third one follows from
\[
 (\ol{x_1}-\sqrt{-1}\ol{x_2})(x_1+\sqrt{-1}x_2)=N(x_1)+N(x_2)-2\sqrt{-1}it
\]
and 
\[
 A(N(x_1)+N(x_2)-2\sqrt{-1}it)=\Nmat{N(x_1)+N(x_2)+2t&0}{0&N(x_1)+N(x_2)-2t},
\]
which is invertible in $\bH\otimes_{\bR}\bC=\mathrm{M}_2(\bC)$ if and only if
$x_2\not =- x_1i$ in view of the second assertion
(for the definition of $A$, see (\ref{eq:def-A})).
\end{proof}

\begin{lemma}
 \label{lem:varphi-Omega-anotherform}
Let $X=\tp(x_1,x_2)\in\bH^{(2,1)}$ with $\ol{x_1}x_2=s-it\;\;(s,t\in\bR,t\ge 0)$.
\begin{enumerate}
 \item If $x_2\ne -x_1i$, we have
\begin{align}
\label{eq:varphi-Omega-anotherform-nonsingular}
  \varphi_{\Omega,\infty}(X)
    \sigma_{\kappa}(\ol{x_1}-\sqrt{-1}\ol{x_2})\inv P_r
&=\delta_{r\kappa}\cdot 2^{2\kappa}\pi\kappa\inv\cdot t^{\kappa-1}
   e^{-4\pi t}(\|X\|^2+2t)^{-\kappa}\,P_{\kappa}.
\end{align}
 \item If $x_2 =-x_1i\ne 0$, we have
\begin{equation}
 \label{eq:varphi-Omega-anotherform-singular}
\varphi_{\Omega,\infty}(X)\sigma_{\kappa}(x_1)P_r
=\delta_{r\kappa}\cdot 2\pi\kappa\inv\cdot \|X\|^{2\kappa-2}
e^{-2\pi \|X\|^2}\, P_{\kappa}.
\end{equation}
\end{enumerate}
\end{lemma}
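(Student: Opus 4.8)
The plan is to deduce both identities from Proposition~\ref{prop:ecr-testf}~(2), which already evaluates $\varphi_{\Omega,\infty}(X)\,\sigma_{\kappa}(x_1+\sqrt{-1}x_2)P_r$, by rewriting its argument in terms of the arguments $\tilde{w}\inv:=(\ol{x_1}-\sqrt{-1}\ol{x_2})\inv$ and $x_1$ occurring in \eqref{eq:varphi-Omega-anotherform-nonsingular} and \eqref{eq:varphi-Omega-anotherform-singular}. Write $w=x_1+\sqrt{-1}x_2$ and $\tilde{w}=\ol{x_1}-\sqrt{-1}\ol{x_2}$. The starting point is the factorisation $\tilde{w}\,w=\mu$ with $\mu=N(x_1)+N(x_2)-2\sqrt{-1}it$ from Lemma~\ref{lem:inequality-tr}~(3); since $A(\mu)$ is diagonal, Lemma~\ref{lem:diagonal} gives the scalar action $\sigma_{\kappa}(\mu)P_r=(\|X\|^2+2t)^r(\|X\|^2-2t)^{\kappa-r}P_r$, where $\|X\|^2=N(x_1)+N(x_2)$. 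I shall use freely that $\sigma_{\kappa}$ is multiplicative on $\bH\otimes_{\bR}\bC$.

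For part (1), the hypothesis $x_2\neq -x_1i$ makes $\tilde{w}$ and $w$ invertible (Lemma~\ref{lem:inequality-tr}~(3)), so $w=\tilde{w}\inv\mu$ and hence $\sigma_{\kappa}(w)=\sigma_{\kappa}(\tilde{w}\inv)\sigma_{\kappa}(\mu)$. Substituting this into Proposition~\ref{prop:ecr-testf}~(2) yields, for each $r$,
\[
(\|X\|^2+2t)^r(\|X\|^2-2t)^{\kappa-r}\,\varphi_{\Omega,\infty}(X)\,\sigma_{\kappa}(\tilde{w}\inv)P_r=\delta_{r\kappa}\,2^{2\kappa}\pi\kappa\inv t^{\kappa-1}e^{-4\pi t}P_{\kappa}.
\]
Because $x_2\neq -x_1i$, Lemma~\ref{lem:inequality-tr}~(2) gives the \emph{strict} inequality $2t<\|X\|^2$, so both scalar factors on the left are positive and may be divided out: for $r<\kappa$ the right-hand side is $0$, and for $r=\kappa$ the factor $(\|X\|^2-2t)^{0}$ drops and $(\|X\|^2+2t)^{-\kappa}$ survives, giving exactly \eqref{eq:varphi-Omega-anotherform-nonsingular}.

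For part (2) we have $x_2=-x_1i\neq0$, so $x_1\neq0$ and $w=x_1(1-\sqrt{-1}i)$. By Lemma~\ref{lem:diagonal}, $1-\sqrt{-1}i$ is a zero divisor with $\sigma_{\kappa}(1-\sqrt{-1}i)P_r=\delta_{r\kappa}\,2^{\kappa}P_{\kappa}$, hence $\sigma_{\kappa}(w)P_r=\delta_{r\kappa}\,2^{\kappa}\sigma_{\kappa}(x_1)P_{\kappa}$. For $r=\kappa$, Proposition~\ref{prop:ecr-testf}~(2) then gives $\varphi_{\Omega,\infty}(X)\sigma_{\kappa}(x_1)P_{\kappa}=2^{\kappa}\pi\kappa\inv t^{\kappa-1}e^{-4\pi t}P_{\kappa}$, and inserting $t=\tfrac12\|X\|^2$ (the equality case of Lemma~\ref{lem:inequality-tr}~(2)) produces the constant $2\pi\kappa\inv\|X\|^{2\kappa-2}e^{-2\pi\|X\|^2}$ of \eqref{eq:varphi-Omega-anotherform-singular}.

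The main obstacle is the range $r<\kappa$ in part (2): there $\sigma_{\kappa}(w)P_r=0$, so Proposition~\ref{prop:ecr-testf}~(2) collapses to $0=0$ and carries no information about $\varphi_{\Omega,\infty}(X)\sigma_{\kappa}(x_1)P_r$. To handle this I would return to the integral representation of Lemma~\ref{lem:8-6-1}, in which $\varphi_{\Omega,\infty}(X)$ is an $a$-integral of $M_{\kappa}(-(a+1)\ol{x_1}x_2)$ composed on the right with $\sigma_{\kappa}(\tilde{w})$. Applying this to $\sigma_{\kappa}(x_1)P_r$ brings in the factor $\sigma_{\kappa}(\tilde{w}x_1)P_r$, and the identity $\tilde{w}x_1=(1-\sqrt{-1}i)\ol{x_1}x_1=N(x_1)(1-\sqrt{-1}i)$ together with $\sigma_{\kappa}(1-\sqrt{-1}i)P_r=0$ for $r<\kappa$ forces the whole expression to vanish. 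The only routine part is the constant chase; should one instead verify the $r=\kappa$ value directly from Lemma~\ref{lem:8-6-1}, it comes down to the standard integral $\int_0^{\infty}a^{\kappa-1}e^{-8\pi t a}\,da=\Gamma(\kappa)(8\pi t)^{-\kappa}$.
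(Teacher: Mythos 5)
Your proposal is correct and follows essentially the same route as the paper: part (1) is obtained from Proposition \ref{prop:ecr-testf} (2) via the factorization $(\ol{x_1}-\sqrt{-1}\ol{x_2})(x_1+\sqrt{-1}x_2)=\|X\|^2-2\sqrt{-1}it$ and Lemma \ref{lem:diagonal}, and part (2) is handled by Proposition \ref{prop:ecr-testf} (2) for $r=\kappa$ together with Lemma \ref{lem:8-6-1} and the vanishing $\sigma_{\kappa}(1-\sqrt{-1}i)P_r=0$ for $r<\kappa$. The constants also check out, so nothing is missing.
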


\begin{proof}
 First suppose that $x_2\ne -x_1i$.
By Lemma \ref{lem:inequality-tr},
 $\ol{x_1}-\sqrt{-1}\ol{x_2}$ 
is invertible in $\bH\otimes_{\bR}\bC$.
By Lemma \ref{lem:inequality-tr} (3),
Lemma \ref{lem:diagonal} and
Proposition \ref{prop:ecr-testf} (2), we have
\begin{align*}
 &\varphi_{\Omega,\infty}(X)
    \sigma_{\kappa}(\ol{x_1}-\sqrt{-1}\ol{x_2})\inv P_r\\
&=\varphi_{\Omega,\infty}(X)
  \sigma_{\kappa}(x_1+\sqrt{-1}x_2)
   \sigma_{\kappa}(\|X\|^2-2\sqrt{-1}it)\inv P_r\\
 &=(\|X\|^2+2t)^{-r}(\|X\|^2-2t)^{-\kappa+r} 
\varphi_{\Omega,\infty}(X)
  \sigma_{\kappa}(x_1+\sqrt{-1}x_2)\,P_r\\
 &= \delta_{r\kappa}\cdot 2^{2\kappa}\pi\kappa\inv\cdot t^{\kappa-1}
   e^{-4\pi t}(\|X\|^2+2t)^{-\kappa}\,P_{\kappa},
\end{align*}
which proves (\ref{eq:varphi-Omega-anotherform-nonsingular}).

Next suppose that $x_2=-x_1i\ne 0$. 
Then $\ol{x_1}x_2=-N(x_1)i=-\dfrac{1}{2}\|X\|^2\,i$.
Note that, if $r\ne \kappa$,
we have
\[
 \sigma_{\kappa}(\ol{x_1}-\sqrt{-1}\ol{x_2})\sigma_{\kappa}(x_1)\,P_r
=N(x_1)^{\kappa}\,\sigma_{\kappa}(1-\sqrt{-1}\,i)\,P_r=0
\]
by Lemma \ref{lem:diagonal}.
It follows from Lemma \ref{lem:8-6-1} that
\[
 \varphi_{\Omega,\infty}(X)\sigma_{\kappa}(x_1)\,P_r
=0
\qquad(0\le r\le \kappa-1).
\]
Since
$\sigma_{\kappa}(x_1+\sqrt{-1}x_2)\,P_{\kappa}
=  \sigma_{\kappa}(x_1)\,\sigma_{\kappa}(1-\sqrt{-1}\,i)\,P_{\kappa}
=2^{\kappa}\,  \sigma_{\kappa}(x_1)\,P_{\kappa}$
by Lemma \ref{lem:diagonal},
we have
\begin{align*}
  \varphi_{\Omega,\infty}(X)
  \sigma_{\kappa}(x_1)\,P_{\kappa}
=2\pi\kappa\inv\cdot \|X\|^{2\kappa-2}
e^{-2\pi \|X\|^2}\, P_{\kappa}
\end{align*}
by Proposition \ref{prop:ecr-testf}, which proves 
(\ref{eq:varphi-Omega-anotherform-singular}).
\end{proof}

For $z\in\bH\otimes_{\bR}\bC$,  
define $(s_{a,b}(z))_{a,b=0}^{\kappa}\in \bC^{(\kappa+1,\kappa+1)}$ by
\[
 \sigma_{\kappa}(z)P_b=\sum_{a=0}^{\kappa}\,P_a\,s_{a,b}(z)
\qquad(0\le b\le\kappa).
\]
Then
\begin{equation}
 \label{eq:s_ab}
\sum_{j=0}^{\kappa}\,s_{a,j}(z)s_{j,b}(w)=s_{a,b}(zw) \qquad
(z,w\in \bH\otimes_{\bR}\bC,\,0\le a,b\le \kappa).
\end{equation}
We see that $x\mapsto N(x)^{-\kappa/2}\left|s_{a,b}(x)\right|$ 
is bounded on $\bH\cross$, from which
the following two lemmas are deduced.

\begin{lemma}
 \label{lem:estimate-Phi_kappa}
There exists a positive constant $c$ such that
\begin{equation}
 \label{eq:estimate-Phi_kappa}
|(\Phi_{\kappa}(x)P_a,P_b)_{\kappa}|\le c\,N(x)^{-\kappa/2-1}
\end{equation}
for $x\in\bH\cross$ and $0\le a,b\le\kappa$.
\end{lemma}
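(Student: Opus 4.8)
The plan is to reduce the estimate to the boundedness of the matrix coefficients $s_{a,b}$ stated immediately above the lemma, exploiting the orthogonality of the basis $\{P_r\}$ with respect to $(\cdot,\cdot)_{\kappa}$. First I would rewrite $\Phi_{\kappa}$ in terms of $\sigma_{\kappa}$: by Lemma \ref{lem:Phi-kappa} (3) we have $\sigma_{\kappa}(\ol{x})=N(x)^{\kappa+1}\Phi_{\kappa}(x)$, hence $\Phi_{\kappa}(x)=N(x)^{-\kappa-1}\sigma_{\kappa}(\ol{x})$ for $x\in\bH\cross$. Substituting this gives
\[
(\Phi_{\kappa}(x)P_a,P_b)_{\kappa}=N(x)^{-\kappa-1}\,(\sigma_{\kappa}(\ol{x})P_a,P_b)_{\kappa}.
\]

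Next I would expand $\sigma_{\kappa}(\ol{x})P_a=\sum_{j=0}^{\kappa}P_j\,s_{j,a}(\ol{x})$ using the definition of the $s_{a,b}$, so that $(\sigma_{\kappa}(\ol{x})P_a,P_b)_{\kappa}=\sum_{j=0}^{\kappa}s_{j,a}(\ol{x})\,(P_j,P_b)_{\kappa}$. The key simplification is that $\{P_r\}$ is orthogonal for $(\cdot,\cdot)_{\kappa}$: the defining integral factors as a product of two integrals over $\bC$, and $\int_{\bC}x^j\ol{x^b}e^{-2\pi|x|^2}dx=0$ for $j\ne b$ by passing to polar coordinates, so $(P_j,P_b)_{\kappa}=\delta_{j,b}\,\|P_b\|_{\kappa}^2$. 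Only the $j=b$ term then survives, yielding
\[
(\Phi_{\kappa}(x)P_a,P_b)_{\kappa}=N(x)^{-\kappa-1}\,s_{b,a}(\ol{x})\,\|P_b\|_{\kappa}^2.
\]

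Finally I would invoke the boundedness assertion: there is a constant $C$ with $|s_{a,b}(x)|\le C\,N(x)^{\kappa/2}$ on $\bH\cross$ for all $a,b$, and since $N(\ol{x})=N(x)$ this gives $|s_{b,a}(\ol{x})|\le C\,N(x)^{\kappa/2}$. Combining, I obtain
\[
|(\Phi_{\kappa}(x)P_a,P_b)_{\kappa}|\le C\,\|P_b\|_{\kappa}^2\,N(x)^{-\kappa/2-1},
\]
and taking $c=C\max_{0\le b\le\kappa}\|P_b\|_{\kappa}^2$, a finite maximum over the $\kappa+1$ basis vectors, delivers the uniform bound \eqref{eq:estimate-Phi_kappa}. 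I do not anticipate a genuine obstacle here; the only point requiring any care is the orthogonality of $\{P_r\}$ under $(\cdot,\cdot)_{\kappa}$, which is immediate from the Gaussian integral, and the rest is bookkeeping with identities already established.
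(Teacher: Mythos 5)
Your argument is correct and is exactly the deduction the paper intends: the paper gives no details beyond noting that $x\mapsto N(x)^{-\kappa/2}|s_{a,b}(x)|$ is bounded on $\bH\cross$, and your use of Lemma \ref{lem:Phi-kappa} (3) together with the orthogonality of $\{P_r\}$ under $(\cdot,\cdot)_{\kappa}$ is the natural way to reduce $(\Phi_{\kappa}(x)P_a,P_b)_{\kappa}$ to a single matrix coefficient $N(x)^{-\kappa-1}s_{b,a}(\ol{x})\|P_b\|_{\kappa}^2$ and conclude. No gaps.
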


\begin{lemma}
\label{lem:estimate-matrix-coefficients} 
There exists a positive constant $c'$ such that
\begin{equation}
 \label{eq:estimate-matrix-coefficients} 
|s_{a,b}(x+\sqrt{-1}y)|\le c'\,(N(x)+N(y))^{\kappa/2}
\end{equation}
holds for any $x,y\in\bH$ and $0\le a,b\le\kappa$.
\end{lemma}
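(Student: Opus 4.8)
The plan is to read off $s_{a,b}$ explicitly from the definition of $\sigma_{\kappa}$ and then control the size of the matrix entries of $A(x+\sqrt{-1}y)$. Recall from (\ref{eq:sigma}) that $\sigma_{\kappa}(z)$ acts on $V_{\kappa}$ by $(\sigma_{\kappa}(z)P)(X,Y)=P((X,Y)A(z))$, where $A$ is the embedding (\ref{eq:def-A}), extended to an isomorphism $\bH\otimes_{\bR}\bC\simeq\mathrm{M}_2(\bC)$. Writing $A(z)=\sNmat{\alpha&\beta}{\gamma&\delta}$, so that $(X,Y)A(z)=(\alpha X+\gamma Y,\ \beta X+\delta Y)$, a direct expansion of $\sigma_{\kappa}(z)P_b(X,Y)=(\alpha X+\gamma Y)^b(\beta X+\delta Y)^{\kappa-b}$ and comparison of the coefficient of $X^aY^{\kappa-a}$ yields the closed form
\[
s_{a,b}(z)=\sum_{p+q=a}\binom{b}{p}\binom{\kappa-b}{q}\,\alpha^p\gamma^{b-p}\beta^q\delta^{\kappa-b-q}.
\]
In particular $s_{a,b}(z)$ is a polynomial in $\alpha,\beta,\gamma,\delta$ that is homogeneous of total degree exactly $\kappa$, with nonnegative coefficients whose sum is at most $\sum_p\binom{b}{p}\sum_q\binom{\kappa-b}{q}=2^{\kappa}$.

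Next I would bound the four entries of $A(z)$ for $z=x+\sqrt{-1}y$. Since $A$ is $\bC$-linear, $A(x+\sqrt{-1}y)=A(x)+\sqrt{-1}A(y)$, and from the explicit shape (\ref{eq:def-A}) every entry of $A(x)$ is of the form $\pm a_i\pm a_j\sqrt{-1}$ with $a_i^2+a_j^2\le N(x)$, hence has modulus at most $\sqrt{N(x)}$; the same holds for $A(y)$. Consequently each of $|\alpha|,|\beta|,|\gamma|,|\delta|$ is at most $\sqrt{N(x)}+\sqrt{N(y)}\le\sqrt{2\,(N(x)+N(y))}$.

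Combining the two steps, the triangle inequality applied to the closed form gives
\[
|s_{a,b}(z)|\le 2^{\kappa}\bigl(\sqrt{2\,(N(x)+N(y))}\bigr)^{\kappa}=2^{3\kappa/2}\,(N(x)+N(y))^{\kappa/2},
\]
so the assertion holds with $c'=2^{3\kappa/2}$ (the degenerate case $x=y=0$ being automatic, as every monomial above then vanishes). I do not expect a genuine obstacle here: the statement is elementary once the closed form for $s_{a,b}$ is recorded. The only two points requiring any care are the entrywise estimate $|\text{entry of }A(x)|\le\sqrt{N(x)}$, read directly off (\ref{eq:def-A}), and the observation that the exponents in each monomial of $s_{a,b}$ sum to $\kappa$, which is precisely what forces the power $(N(x)+N(y))^{\kappa/2}$ rather than a larger one. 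This is exactly the complex-argument refinement of the remark that $x\mapsto N(x)^{-\kappa/2}|s_{a,b}(x)|$ is bounded on $\bH\cross$, and the same computation yields that boundedness upon specializing to $y=0$ and using $\det A(x)=N(x)$.
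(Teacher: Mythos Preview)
Your argument is correct. The paper does not give a detailed proof of this lemma; it only remarks that $x\mapsto N(x)^{-\kappa/2}|s_{a,b}(x)|$ is bounded on $\bH^{\times}$ and says that the lemma is ``deduced'' from this. The underlying reason in both cases is the same: $s_{a,b}(z)$ is a homogeneous polynomial of degree $\kappa$ in the entries of $A(z)$. You make this explicit by writing down the polynomial via the binomial expansion, bounding each entry of $A(x)+\sqrt{-1}A(y)$ by $\sqrt{N(x)}+\sqrt{N(y)}$ directly from (\ref{eq:def-A}), and summing the coefficients, which even yields an explicit constant $c'=2^{3\kappa/2}$. The paper's one-line remark, read literally, concerns only real $x\in\bH^{\times}$ and leaves the passage to $x+\sqrt{-1}y$ implicit; your computation fills that gap cleanly.
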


We now prove Proposition \ref{prop:estimate}. It suffices to show the following
estimates: 
There exist positive constants $C_1,C_2$ such that the  inequalities
\begin{align}
 |(\phi_{\omega,\infty}(X')P_a,P_b)_{\kappa}|&\le\,C_1\,(1+\|X\|)^{-\kappa-2},
\label{eq:estimate-phi-omega}\\
 |(\varphi_{\Omega,\infty}(X)P_a,P_b)_{\kappa}|&\le\,C_2\,(1+\|X\|)^{-\kappa}
\label{eq:estimate-varphi-Omega}
\end{align}
hold for any $X'\in\bH^{(1,2)}$ and $X\in\bH^{(1,2)}$.

We first show (\ref{eq:estimate-phi-omega}).
Let $X'=(x_1,x_2)\in \bH^{(1,2)}$ and $0\le a,b\le\kappa$.
By Proposition \ref{prop:ecr-testf} and Lemma \ref{lem:estimate-Phi_kappa},
we have
\begin{equation*}
 \label{eq:estimate-1}
\left|( \phi_{\omega,\infty}(X')P_a,P_b)_{\kappa}\right|
\le c_1\cdot \delta(\tr(\ol{x_1}x_2)>0)\,(\tr(\ol{x_1}x_2))^{\kappa-1}
\,e^{-2\pi \tr(\ol{x_1}x_2)}\,N(x_1+x_2)^{-\kappa/2-1}
\end{equation*}
with a positive constant $c_1$.
Observe that, if $\tr(\ol{x_1}x_2)>0$, we have 
$N(x_1+x_2)>\|X'\|^2$ and
$\tr(\ol{x_1}x_2)\le \|X'\|^2$ by Lemma \ref{lem:inequality-tr}.
It follows that
\[
 \left|( \phi_{\omega,\infty}(X')P_a,P_b)_{\kappa}\right|
\le c_1\cdot\,\delta(\tr(\ol{x_1}x_2)>0)\|X'\|^{\kappa-4}
\]
and hence 
\begin{equation}
 \label{eq:estimates-1}
\mathrm{Sup}_{\|X'\|\le 1}\,
\left|( \phi_{\omega,\infty}(X')P_a,P_b)_{\kappa}\right|
<\infty
\end{equation}
under the assumption $\kappa>4$.
On the other hand, since
$t\mapsto t^{\kappa-1}e^{-2\pi t}$ is bounded on $\bR_{>0}$,
we have
\begin{equation}
 \label{eq:estimates-2}
 \left|( \phi_{\omega,\infty}(X')P_a,P_b)_{\kappa}\right|
\le c_2\cdot 
\delta(\tr(\ol{x_1}x_2)>0)\|X'\|^{-\kappa-2}
\end{equation}
with a positive constant $c_2$.
The estimate (\ref{eq:estimate-phi-omega}) follows from 
(\ref{eq:estimates-1}) and (\ref{eq:estimates-2}).

We next show (\ref{eq:estimate-varphi-Omega}).
Let $X=\tp(x_1,x_2)\in\bH^{(2,1)}$ and $0\le a,b\le\kappa$.
In view of Lemma \ref{lem:action-of-H^1} (2), we may (and do) 
suppose that $\ol{x_1}x_2=s-it\;\;(s,t\in\bR,t\ge0)$. 

We first consider the case $x_2\ne -x_1i$.
Then $\xi=\ol{x_1}-\sqrt{-1}\ol{x_2}$ is 
invertible in $\bH\otimes_{\bR}\bC$ by Lemma \ref{lem:inequality-tr}.
By (\ref{eq:s_ab}) and Lemma \ref{lem:varphi-Omega-anotherform} (1), we have
\begin{align*}
\varphi_{\Omega,\infty}(X)P_a&
=\sum_{j=0}^{\kappa}\,s_{j,a}(1)\varphi_{\Omega,\infty}(X)P_j\\
&
=\sum_{j=0}^{\kappa}
\left(\sum_{r=0}^{\kappa}s_{j,r}(\xi\inv)s_{r,a}(\xi)
\right)\,\varphi_{\Omega,\infty}(X)P_j\\
&
 =\sum_{r=0}^{\kappa}\,s_{r,a}(\xi)\,\varphi_{\Omega,\infty}(X)
\sigma_{\kappa}(\xi)\inv P_r\\
&=2^{2\kappa}\pi\kappa\inv\cdot s_{\kappa,a}(\xi)\,t^{\kappa-1}
e^{-4\pi t}\,(\|X\|^2+2t)^{-\kappa}\,P_{\kappa}.
\end{align*}
By Lemma \ref{lem:estimate-matrix-coefficients}, 
there exists a positive constant $c_3$  such that
\begin{equation}
 \label{eq:matrix-coef-varphi-Omega}
| (\varphi_{\Omega,\infty}(X)P_a,P_b)_{\kappa}|
\le
c_3\,t^{\kappa-1}e^{-4\pi t}\,\|X\|^{\kappa}\,(\|X\|^2+2t)^{-\kappa}
\end{equation}
holds for any $X\in\bH^{(2,1)}$.
Since
\[
  t^{\kappa-1}e^{-4\pi t}\,\|X\|^{\kappa}\,(\|X\|^2+2t)^{-\kappa}
 \le \left(\dfrac{t}{\|X\|^2+2t}\right)^{\kappa-1}
\dfrac{\|X\|^{\kappa}}{\|X\|^2+2t}
\le 2^{-\kappa+1}\|X\|^{\kappa-2},
\]
we have
\begin{equation}
 \label{eq:estimates-3}
\mathrm{Sup}_{\|X\|\le 1}|(\varphi_{\Omega}(X)P_a,P_b)_{\kappa}|<\infty.
\end{equation}
We also have 
\begin{equation}
 \label{eq:estimates-4}
| (\varphi_{\Omega,\infty}(X)P_a,P_b)_{\kappa}|
\le c_3c_4\,\|X\|^{-\kappa},
\end{equation}
where $c_4=\mathrm{Sup}_{t>0}\,t^{\kappa-1}e^{-4\pi t}$.
The estimate (\ref{eq:estimate-varphi-Omega})
in the case $x_2\ne -x_1i$ follows from 
(\ref{eq:estimates-3}) and (\ref{eq:estimates-4}).

We next consider the remaining case where $x_2=-x_1i$ and $x_1\ne 0$.
A similar argument as above  and Lemma \ref{lem:varphi-Omega-anotherform}
(2) show that
\begin{align*}
  (\varphi_{\Omega,\infty}(X)P_a,P_b)_{\kappa}
&=2\pi\kappa\inv\cdot s_{\kappa,a}(x_1\inv)
\,\|X\|^{2\kappa-2}\,e^{-2\pi\|X\|^2} (P_{\kappa},P_b)_{\kappa}.
\end{align*}
Since $|s_{\kappa,a}(x_1\inv)|\,\|X\|^{\kappa}$
is bounded,
we have
\[
 |(\varphi_{\Omega,\infty}(X)P_a,P_b)_{\kappa}|\le
c_5\cdot \|X\|^{\kappa-2}e^{-2\pi\|X\|^2}
\]
with a positive constant $c_5$,
which implies (\ref{eq:estimate-varphi-Omega}) in this case.

 \subsection{}

In this subsection, we complete the proof of (\ref{eq:ecr}). 

Let
\begin{align*}
 \phi_{\omega}(X')
    &=\phi_{\omega,\infty}(X'_{\infty})\prod_{p<\infty}\phi_{0,p}(X'_p)
    \qquad(X'=(X'_v)_{v\le\infty}\in B_{\bA}^{(1,2)}),\\
\varphi_{\Omega}(X)
    &=\varphi_{\Omega,\infty}(X_{\infty})\prod_{p<\infty}\varphi_{0,p}(X_p)
    \qquad(X=(X_v)_{v\le\infty}\in B_{\bA}^{(2,1)}).
\end{align*}

Proposition \ref{prop:estimate} 
immediately implies the following:

\begin{proposition}
 \label{prop:convergence}

Suppose that $\kappa>4$ and let $(h,g)\in H_{\bA}\times G_{\bA}$.

\begin{enumerate}
 \item The sum
\begin{equation}
 \label{eq:convergence-1}
  \sum_{X'\in B^{(1,2)}}\,r'(h,g)\phi_{\omega}(X')
\end{equation}
is absolutely convergent and coincides with the integral
\[
 \int_{H_{\infty}}\,\theta(hx,g)\omega(x)dx.
\]
 \item The sum
\begin{equation}
 \label{eq:convergence-2}
\sum_{X\in B^{(2,1)}}\,r(h,g)\varphi_{\Omega}(X)
\end{equation}
is absolutely convergent and coincides with the integral
\[
 \int_{G_{\infty}}\,\Omega(y)\theta(h,gy\inv)dy.
\]

\end{enumerate}
\end{proposition}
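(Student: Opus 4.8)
The plan is to derive each identity by expanding $\theta$ as a lattice sum, interchanging that sum with the archimedean integral, and recognizing the remaining archimedean integral as the defining integral of $\phi_{\omega,\infty}$ (resp.\ $\varphi_{\Omega,\infty}$). For (1) I would begin from the Poisson-dual form $\theta(hx,g)=\sum_{X'\in B^{(1,2)}}r'(hx,g)\phi_0(X')$ and write $r'(hx,g)=r'(h,g)r'(x,1)$ for $x\in H_\infty$. Because $r'$ acts on the Schwartz--Bruhat factor and trivially on the $\mathrm{End}(V_\kappa)$-factor, the fixed operator $r'(h,g)$ commutes with the constant right multiplication by $\omega(x)$, so it may be pulled out of the $H_\infty$-integral; moreover $r'(x,1)$ touches only the archimedean component of $\phi_0=\phi_{0,\infty}\otimes\bigotimes_{p}\phi_{0,p}$. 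Granting the interchange, this yields
\[
\int_{H_\infty}\theta(hx,g)\,\omega(x)\,dx
=\sum_{X'\in B^{(1,2)}}r'(h,g)\Bigl(\int_{H_\infty}r'(x,1)\phi_0\,\omega(x)\,dx\Bigr)(X')
=\sum_{X'\in B^{(1,2)}}r'(h,g)\phi_\omega(X'),
\]
the inner $H_\infty$-integral being $\phi_{\omega,\infty}$ at the infinite place by definition and the finite components being carried along unchanged. Part (2) is identical with $r(h,gy\inv)=r(h,g)r(1,y\inv)$ and with $\Omega(y)$ now multiplying on the left, which again commutes with $r(h,g)$; the $G_\infty$-integral reproduces $\varphi_{\Omega,\infty}$.

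Everything therefore reduces to two points: absolute convergence of the resulting sums, and the legitimacy of the interchange, which by Fubini--Tonelli follows as soon as a summable majorant is exhibited. Writing $(h,g)=(h_\infty h_f,g_\infty g_f)$, the finite part $r'_f(h_f,g_f)\bigl(\bigotimes_p\phi_{0,p}\bigr)$ is a bounded scalar Schwartz--Bruhat function supported on finitely many cosets of a rank-$8$ lattice $L$ in $B^{(1,2)}\otimes_{\bQ}\bR\cong\bR^8$; hence the sum effectively runs over $L$, and on it Proposition \ref{prop:estimate}(1), applied with $D=\{(h_\infty,g_\infty)\}$, bounds the summand by $C(1+\|X'\|)^{-\kappa-2}$. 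For the interchange I would run the same estimate with $\omega$ replaced by its operator norm, which alters only constants and produces the integrable majorant needed for Fubini--Tonelli.

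The point requiring real care is the convergence range. A naive use of $(1+\|X'\|)^{-\kappa-2}$ over a rank-$8$ lattice converges only for $\kappa>6$ (and $(1+\|X\|)^{-\kappa}$ in (2) only for $\kappa>8$), short of the asserted $\kappa>4$. To reach $\kappa>4$ I would instead invoke the sharper estimates established inside the proof of Proposition \ref{prop:estimate}---in particular (\ref{eq:estimate-phi-omega}) and (\ref{eq:matrix-coef-varphi-Omega})---which keep the support factor $\delta(\tr(\ol{x_1}x_2)>0)$ and the weight $t^{\kappa-1}e^{-4\pi t}$, together with the identity $N(x_1+x_2)=\|X'\|^2+\tr(\ol{x_1}x_2)$. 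Passing to the coordinates $u=x_1+x_2,\ v=x_1-x_2$ in (1) (and to $x_2=x_1w$ in (2)) shows that these weights cut off the slow directions, effectively lowering the dimension of the sum and giving convergence precisely for $\kappa>4$; this is the same phenomenon already visible in the archimedean integrals of Sections \ref{subsec:proof-erc-testf-1}--\ref{subsec:proof-erc-testf-3}. Once summability is secured, Fubini--Tonelli simultaneously yields the absolute convergence of the sums and their equality with the integrals.
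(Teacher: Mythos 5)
Your first two paragraphs are exactly the argument the paper intends: the paper's own ``proof'' of this proposition is the single sentence that Proposition \ref{prop:estimate} immediately implies it, and your unfolding of $\theta$, the factorizations $r'(hx,g)=r'(h,g)r'(x,1)$ and $r(h,gy\inv)=r(h,g)r(1,y\inv)$, the identification of the inner archimedean integral with $\phi_{\omega,\infty}$ (resp.\ $\varphi_{\Omega,\infty}$), the reduction of the finite part to finitely many cosets of a rank-$8$ lattice, and the Fubini--Tonelli step with the majorant from Proposition \ref{prop:estimate} supply the missing details correctly. (For Tonelli you should say explicitly that the absolute-value version of the defining integral of $\phi_{\omega,\infty}$ obeys the same $(1+\|X'\|)^{-\kappa-2}$ bound, since Proposition \ref{prop:estimate} as stated only bounds the oscillatory integral; this is true and follows from the same computation as in Section \ref{subsec:proof-erc-testf-1}.)

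The gap is your last paragraph. You are right that $(1+\|X'\|)^{-\kappa-2}$ summed over a rank-$8$ lattice needs $\kappa>6$ and $(1+\|X\|)^{-\kappa}$ needs $\kappa>8$, but the proposed repair to reach $\kappa>4$ is an assertion, not an argument. The substitutions $u=x_1+x_2,\ v=x_1-x_2$ and $x_2=x_1w$ do not reduce the problem (the second does not even preserve the lattice); what would actually be needed is a count of lattice points of $B^{(1,2)}$ near the null cone of the indefinite form $\tr(\ol{x_1}x_2)$ (resp., for (2), near the locus where $\ol{x_1}x_2$ is real, where the factor $t^{\kappa-1}e^{-4\pi t}$ supplies no extra decay), and nothing in your sketch controls that count. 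Note, however, that the paper never uses the proposition below $\kappa>8$: Theorem \ref{th:commutation-relations} assumes $\kappa>8$, and the Poisson-summation step that follows explicitly requires $\kappa>\dim_{\bQ}B^{(1,2)}=8$. So the stated threshold $\kappa>4$ is not delivered by the paper's one-line justification either; for the purposes of the paper it suffices to prove the statement under $\kappa>8$ (for part (1), $\kappa>6$ already works), which your first two paragraphs accomplish, and the final paragraph should be dropped or replaced by an honest lattice-point estimate.
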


\bigskip

We now complete the proof of (\ref{eq:ecr}).
In view of Corollary 2.6 in Chapter VII 
\cite{SW} and Proposition \ref{prop:estimate}, 
under the assumption that $\kappa>\dim_{\Q}B^{(1,2)}=8$,
we can apply the Poisson summation formula to the sum (\ref{eq:convergence-1})
and obtain
\begin{align*}
  \int_{H_{\infty}}\,\theta(hx,g)\omega(x)dx
&=\sum_{X'\in B^{(1,2)}}\,r'(h,g)\phi_{\omega}(X')\\
 &=\sum_{X\in B^{(2,1)}}\,I\inv (r'(h,g)\phi_{\omega})(X).
\end{align*}
By  Proposition \ref{prop:ecr-testf}, we have
$I\inv (r'(h,g)\phi_{\omega})=
\dfrac{c_{\kappa}^G}{c_{\kappa}^H}\cdot
r(h,g)\varphi_{\Omega}$.
We thus have
\begin{align*}
 c_{\kappa}^H\,\int_{H_{\infty}}\,\theta(hx,g)\omega(x)dx
 &=c_{\kappa} ^G\,\sum_{X\in B^{(2,1)}}\,r(h,g)\varphi_{\Omega}(X)\\
 &=c_{\kappa}^G\, \int_{G_{\infty}}\,\Omega(y)\theta(h,gy\inv)dy,
\end{align*}
proving (\ref{eq:ecr}).

 \section{The correspondence of automorphic $L$-functions
 for Arakawa lifting}
\label{sec:functoriality}

 \subsection{}

In the remaining part of the paper, we will show the following
correspondence of automorphic $L$-functions for Arakawa lifting 
and its adjoint 
by using the Eichler commutation relations 
given in Section \ref{subsec:construction-upsilon-unramified}
for the unramified case
and in Section \ref{subsec:construction-upsilon-ramified}
for the ramified case.

\begin{theorem}
\label{th:functoriality}
 \begin{enumerate}
  \item If $f\in\cS_{\kappa}^H$ is a Hecke eigenform,
then so is $\cL(f)$ and 
\begin{equation}
 \label{eq:L-functoriality}
L(s,\cL(f),\mathrm{std})=\zeta(s)L(s,f,\mathrm{std}).
\end{equation}
Here $\zeta(s)$ stands for the Riemann zeta function.
For the definitions of the standard $L$-functions, see
(\ref{eq:global-L-H}) and (\ref{eq:global-L-G}).

\item If $F\in\cS_{\kappa}^G$ is a Hecke eigenform,
then so is $\cL^*(F)$ and 
\begin{equation}
 \label{eq:L-functoriality*}
L(s,F,\mathrm{std})=\zeta(s)L(s,\cL^*(F),\mathrm{std}).
\end{equation}
 \end{enumerate}
\end{theorem}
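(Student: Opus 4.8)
The plan is to reduce everything to the $p$-adic Eichler commutation relation (Theorem \ref{th:Eichler-commutation-relations}), which furnishes an algebra homomorphism $\upsilon\colon\cH(G_p,K_p)\to\cH(H_p,U_p)$ with
\[
 \int_{H_p}\theta(hx,g)\,\upsilon(\alpha)(x)\,dx=\int_{G_p}\theta(h,gy\inv)\,\alpha(y)\,dy
 \qquad(\alpha\in\cH(G_p,K_p)).
\]
The first step is to turn this into Hecke equivariance of the lift. For $f\in\cS_\kappa^H$ and $\alpha\in\cH(G_p,K_p)$ I would unfold $(\cL(f)*\alpha)(g)=\int_{G_p}\cL(f)(gy\inv)\alpha(y)\,dy$, interchange the $G_p$-integral with the integral over $H_\bQ\backslash H_\bA$, apply the commutation relation to replace the $G_p$-average of $\theta(h,gy\inv)$ by the $H_p$-average of $\theta(hx,g)$ against $\upsilon(\alpha)$, and finally translate $h\mapsto hx\inv$ using the invariance of the quotient measure. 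The outcome is the clean identity
\[
 \cL(f)*\alpha=\cL\bigl(f*\upsilon(\alpha)\bigr).
\]
Absolute convergence, hence the Fubini interchanges, is guaranteed by the rapid decay of $\theta$ in both variables together with the boundedness of $f$. Consequently, if $f$ is a Hecke eigenform with $f*\varphi_p=\lambda_p^f(\varphi_p)f$ for all $\varphi_p$, then $\cL(f)*\alpha=\lambda_p^f(\upsilon(\alpha))\,\cL(f)$; thus $\cL(f)$, when nonzero, is again a Hecke eigenform, with eigencharacter $\Lambda_p^{\cL(f)}=\lambda_p^f\circ\upsilon$ at every finite $p$.

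The $L$-function identity (\ref{eq:L-functoriality}) then amounts to transporting the relation $\Lambda_p^{\cL(f)}=\lambda_p^f\circ\upsilon$ through the Satake isomorphism. In the unramified case I would use the explicit form of $\upsilon$ computed in Section \ref{subsec:construction-upsilon-unramified} to show that, if $\chi=(\chi_1,\chi_2)$ is the Satake parameter of $f$ (Lemma \ref{lem:SatakeParameter-H-unramified}), then the parameter $\Xi=(\Xi_1,\Xi_2)$ of $\cL(f)$ (Lemma \ref{lem:SatakeParameter-G-unramified}) has $\{\Xi_1^{\pm1},\Xi_2^{\pm1}\}$ equal to $\{\chi_1^{\pm1},\chi_2^{\pm1}\}$ as a Weyl orbit. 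Substituting into the definitions of the two standard local factors — the $G$-factor carrying the extra built-in $\zeta_p(s)$ — gives $L_p(s,\cL(f),\mathrm{std})=\zeta_p(s)L_p(s,f,\mathrm{std})$ at each $p\nmid d_B$. The ramified places are treated identically, now with unramified characters of $B_p\cross$ and the explicit $\upsilon$ of Section \ref{subsec:construction-upsilon-ramified}, the surplus $\zeta_p(s)$ in the $G$-definition again accounting for the degree mismatch. Taking the Euler product over all finite $p$ and collecting the factors $\zeta_p(s)$ into $\zeta(s)$ yields (\ref{eq:L-functoriality}).

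For the adjoint assertion (2) I would run the mirror argument. Pairing the commutation relation against $(\,\cdot\,v,v')_\kappa$ and using the adjoint identity $(\theta(h,g)v,v')_\kappa=(v,\theta^*(h,g)v')_\kappa$ of Lemma \ref{lem:adjoint-theta} converts it into the analogous relation for $\theta^*$; unfolding $\cL^*(F*\alpha)$ as before then produces $\cL^*(F*\alpha)=\cL^*(F)*\upsilon(\alpha)$. Hence a $G$-eigenform $F$ with $F*\alpha=\Lambda_p^F(\alpha)F$ lifts to an $H$-eigenform $\cL^*(F)$ with $\lambda_p^{\cL^*(F)}\circ\upsilon=\Lambda_p^F$, and the same Satake bookkeeping gives (\ref{eq:L-functoriality*}). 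The inversion involution $\alpha\mapsto\check\alpha$, $\check\alpha(y)=\alpha(y\inv)$, that enters the $\theta^*$-manipulation only replaces a parameter by its inverse, and so leaves the inverse-symmetric standard factors unchanged.

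The genuinely hard input is none of the formal steps above, but rather the explicit evaluation of $\upsilon$ on Satake parameters carried out in Sections \ref{subsec:construction-upsilon-unramified} and \ref{subsec:construction-upsilon-ramified}. There one must match the degree-four standard $L$-datum attached to $O^*(4)$ with the degree-five datum attached to $\Sp(1,1)$ and verify that the discrepancy is exactly the trivial parameter responsible for the factor $\zeta_p(s)$, uniformly across unramified and ramified places. Once that dictionary is in hand, the proof of Theorem \ref{th:functoriality} is precisely the bookkeeping sketched here.
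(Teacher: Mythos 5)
Your proposal follows essentially the same route as the paper: the non-Archimedean Eichler commutation relations give $\cL(f)*\alpha=\cL(f*\upsilon(\alpha))$ (the paper's Corollary \ref{cor:functoriality}), and then the explicit formulas for $\upsilon$ on the generators $\cT_i^G$ together with the spherical-function eigenvalue computations show that the symmetric functions of the Satake parameters of $f$ and $\cL(f)$ agree, so the standard local factors match up to the $\zeta_p(s)$ already built into the $G$-side definition (Propositions \ref{prop:functoriality-unramified} and \ref{prop:functoriality-ramified}). The adjoint case is handled by the mirror argument exactly as you describe, so the proposal is correct.
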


We only prove the first assertion of Theorem \ref{th:functoriality}
since the second assertion is similarly proved.
The key of the proof of Theorem \ref{th:functoriality} is the following
Eichler commutation relations:

\begin{theorem}
 \label{th:Eichler-commutation-relations}
  For a finite place $p$ of $\bQ$, there exists a $\bC$-
algebra homomorphism
\[
 \upsilon\colon \cH(G_p,K_p)\,\to\,\cH(H_p,U_p)
\]
such that the equality
\begin{equation}
 \label{eq:Eichler-commutation-relations}
\int_{G_p}\,\theta(h,gy\inv)\alpha(y)dy=\int_{H_p}\,\theta(hx,g)\upsilon(\alpha)(x)dx
\end{equation}
holds for $\alpha\in\cH(G_p,K_p)$.
\end{theorem}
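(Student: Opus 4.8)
The plan is to reduce the global identity (\ref{eq:Eichler-commutation-relations}) to a single identity of Schwartz--Bruhat functions at $p$, and then to construct $\upsilon$ from the explicit action of the local Weil representation on the spherical vector $\varphi_{0,p}$. First I would insert $\theta(h,g)=\sum_{X\in B^{(2,1)}}r(h,g)\varphi_0(X)$ and interchange the theta sum with the Haar integral over $G_p$ (respectively $H_p$); this is harmless because $\alpha$ and $\upsilon(\alpha)$ are compactly supported, so each integral is a finite sum of translates, while the Gaussian decay of $\varphi_{0,\infty}$ makes the series absolutely convergent. Using that $H_p$ and $G_p$ act through commuting factors of $r$, I factor $r(h,gy\inv)=r(h,g)\circ r_p(1,y\inv)$ and $r(hx,g)=r(h,g)\circ r_p(x,1)$ and pull $r(h,g)$ out of both sides. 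Since the components at the places $v\ne p$ are untouched and both sides run over the same set $B^{(2,1)}$, it suffices to prove the local identity
\[
\int_{G_p} r_p(1,y\inv)\varphi_{0,p}\,\alpha(y)\,dy
=\int_{H_p} r_p(x,1)\varphi_{0,p}\,\upsilon(\alpha)(x)\,dx
\qquad\text{in }\mathscr{S}(B_p^{(2,1)}).
\]

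Next I would evaluate each side in the model where the relevant group acts by translation. Both sides lie in the subspace of $\mathscr{S}(B_p^{(2,1)})$ fixed by $r_p(U_p,1)$ and $r_p(1,K_p)$, since $\varphi_{0,p}$ is $U_p\times K_p$-invariant by Lemma \ref{lem:phi0-UK}(1), the two actions commute, and $\alpha,\upsilon(\alpha)$ are bi-invariant. In the model $W_p=\mathscr{S}(B_p^{(2,1)})$ the group $H_p$ acts by $r_p(x,1)\varphi(X)=\varphi(\tp\ol{x}X)$, so the right-hand side is a finite $\bC$-combination of characteristic functions of lattices $(\tp\ol{\xi})\inv L_p'$ indexed by coset representatives $\xi$ of $\upsilon(\alpha)$. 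In the model $W_p'=\mathscr{S}(B_p^{(1,2)})$ the group $G_p$ acts simply by right translation $r_p'(1,y)\phi(X')=\phi(X'y)$ (Lemma \ref{lem:r'}), so after applying $I_p$ and using $\phi_{0,p}=I_p\varphi_{0,p}$ (Lemma \ref{lem:phi0}) the left-hand side becomes a finite combination of characteristic functions of lattice translates $L_p\eta$. The local identity thus becomes the assertion that the partial Fourier transform $I_p$ carries the lattice data $\{L_p\eta\}$ produced by right multiplication on $L_p$ onto the lattice data $\{(\tp\ol{\xi})\inv L_p'\}$ produced by $\tp\ol{(\cdot)}$ on $L_p'$.

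To construct $\upsilon$ itself I would argue case by case. When $p\nmid d_B$, the identifications of Sections \ref{subsec:Satake-H-unramified} and \ref{subsec:Satake-G-unramified} present $(H_p,G_p)$ as the split dual pair of an orthogonal group in four variables and $\Sp_2(\bQ_p)$, with hyperspecial maximal compacts $U_p,K_p$; both $\cH(H_p,U_p)$ and $\cH(G_p,K_p)$ are then commutative polynomial algebras, and I would define $\upsilon$ as the algebra homomorphism realizing, on Satake parameters, the correspondence $\Xi\mapsto\chi$ implicit in Lemmas \ref{lem:SatakeParameter-H-unramified} and \ref{lem:SatakeParameter-G-unramified} (the dual-group inclusion underlying the expected relation (\ref{eq:L-functoriality})). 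When $p\mid d_B$, the algebra $B_p$ is a division algebra and $H_p,G_p$ have semisimple rank one; the Hecke algebras are again commutative and finitely generated (Sections \ref{subsec:Satake-H-ramified}, \ref{subsec:Satake-G-ramified}), and $\upsilon$ is determined by its value on a single generator. Being defined through Satake, $\upsilon$ is an algebra homomorphism by construction; writing the two sides of the local identity as $\Theta_G(\alpha)$ and $\Theta_H(\upsilon(\alpha))$, and noting that the $H_p$- and $G_p$-convolution actions on $\mathscr{S}(B_p^{(2,1)})$ commute, it then suffices to verify $\Theta_G=\Theta_H\circ\upsilon$ on a set of algebra generators and propagate to all of $\cH(G_p,K_p)$ by induction on degree.

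The main obstacle is precisely the explicit local computation invoked above: one must carry out the bookkeeping of coset representatives and lattices compatibly with the partial Fourier transform $I_p$ and with the quadratic character occurring in the Weil representation, and in particular show that the $G_p$-convolution of $\varphi_{0,p}$ is genuinely reached by an $H_p$-convolution, so that $\upsilon(\alpha)$ lands in $\cH(H_p,U_p)$ rather than in some larger completion. This is where the unramified and ramified cases diverge and require the separate, detailed treatments of Sections \ref{sec:ecr-unramified} and \ref{sec:ecr-ramified}; the reduction and the extension from generators, by contrast, are uniform and comparatively routine.
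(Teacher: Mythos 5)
Your reduction of (\ref{eq:Eichler-commutation-relations}) to a single identity in $\mathscr{S}(B_p^{(2,1)})$ involving only $\varphi_{0,p}$, and your remark that it suffices to check the identity on algebra generators because the $H_p$- and $G_p$-actions commute, both agree with what the paper does (the paper works throughout in the model $W_p'$ via $I_p$, but that is cosmetic). The genuine gap is in how you propose to obtain $\upsilon$. Defining $\upsilon$ as ``the algebra homomorphism realizing, on Satake parameters, the correspondence $\Xi\mapsto\chi$ implicit in Lemmas \ref{lem:SatakeParameter-H-unramified} and \ref{lem:SatakeParameter-G-unramified}'' is circular: those lemmas only say that each eigenvalue system comes from some unramified character(s); they supply no map from $G$-parameters to $H$-parameters. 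The parameter correspondence $\Xi_i=\chi_i$ is exactly the content of Theorem \ref{th:functoriality}, which in the paper is \emph{deduced from} the commutation relations, not assumed. If instead you fix the candidate map $\Xi_i\mapsto\chi_i$ by fiat (guided by the expected $L$-function identity), you still must verify that the resulting $\upsilon$ satisfies the local test-function identity, and nothing in your argument does that; a priori the normalization could be off by powers of $p$ or the $G_p$-convolutions of $\varphi_{0,p}$ might not even lie in the span of the $H_p$-convolutions, a point you flag but do not resolve.

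The paper's route avoids this by never guessing $\upsilon$ in advance: it writes down explicit coset decompositions of the double cosets $T_i^H$, $T_i^G$ (Lemmas \ref{lem:hecke-operators-H-unramified} and \ref{lem:hecke-operators-G-unramified} and their ramified analogues), evaluates $J_i^H=\Theta_H(\cT_i^H)\phi_0$ and $J_i^G=\Theta_G(\cT_i^G)\phi_0$ as explicit linear combinations of characteristic functions of lattice translates (Lemmas \ref{lem:J1H}--\ref{lem:J2G} and Lemmas \ref{lem:explicit-formula-JH-ramified}, \ref{lem:explicit-formula-JG-ramified}), and then \emph{reads off} $\upsilon$ from the resulting linear relations $J_1^G=pJ_1^H+(p^2-1)\phi_0$, $J_2^G=(p^2-p)J_1^H+p^2J_2^H$ (Proposition \ref{prop:ecr-unramified}) and $J^G=pJ^H+(p-1)\phi_0$ (Proposition \ref{prop:ecr-ramified}), yielding Corollaries \ref{cor:ecr-unramified} and \ref{cor:ecr-ramified}. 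This computation is not an incidental ``obstacle'' to be deferred; it is the entire content of the theorem, occupying Sections \ref{sec:ecr-unramified-details} and \ref{sec:ecr-ramified-details}. As it stands your proposal establishes the (correct, and essentially routine) outer framework but not the statement itself.
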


\begin{corollary}
 \label{cor:functoriality}
If $f\in\cS_{\kappa}^H$ is a Hecke eigenform, so is $\cL(f)$ and
its Hecke eigenvalues are given by
\begin{equation}
 \label{eq:Hecke-eigenvalues}
\Lambda_p^{\cL(f)}(\alpha)=\lambda_p^f(\upsilon(\alpha))
\end{equation}
for any $\alpha\in\cH(G_p,K_p)$.
(For the definitions of $\Lambda_p^{\cL(f)}$ and $\lambda_p^f$,
see (\ref{eq:Hecke-eigenvalues-G}) and (\ref{eq:Hecke-eigenvalues-H})).
\end{corollary}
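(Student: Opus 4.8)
The plan is to deduce the corollary directly from the Eichler commutation relations of Theorem \ref{th:Eichler-commutation-relations} by transporting the Hecke action on $\cS_{\kappa}^G$ across the theta lift $\cL$ to the Hecke action on $\cS_{\kappa}^H$. Fix a finite place $p$ and $\alpha\in\cH(G_p,K_p)$. Using the definition of the Hecke action on $\cS_{\kappa}^G$ together with the integral representation $\cL(f)(g)=\int_{H_{\bQ}\backslash H_{\bA}}\theta(h,g)f(h)dh$, I would show that $\cL(f)*\alpha=\cL(f*\upsilon(\alpha))$. The Hecke-eigenform hypothesis on $f$ then gives $f*\upsilon(\alpha)=\lambda_p^f(\upsilon(\alpha))\,f$, whence $\cL(f)*\alpha=\lambda_p^f(\upsilon(\alpha))\,\cL(f)$, which is exactly \eqref{eq:Hecke-eigenvalues}.

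First I would unwind the definitions and interchange the $G_p$- and $H_{\bQ}\backslash H_{\bA}$-integrations to obtain
\begin{align*}
(\cL(f)*\alpha)(g)
&=\int_{G_p}\cL(f)(gy\inv)\alpha(y)dy
=\int_{H_{\bQ}\backslash H_{\bA}}\left(\int_{G_p}\theta(h,gy\inv)\alpha(y)dy\right)f(h)dh.
\end{align*}
Applying \eqref{eq:Eichler-commutation-relations} to the inner $G_p$-integral replaces it by $\int_{H_p}\theta(hx,g)\upsilon(\alpha)(x)dx$. After interchanging the $H_p$- and $H_{\bQ}\backslash H_{\bA}$-integrations and performing the substitution $h\mapsto hx\inv$ (legitimate since $dh$ is right-invariant on $H_{\bQ}\backslash H_{\bA}$ and $x\in H_p\subset H_{\bA}$), the integrand becomes $\theta(h,g)f(hx\inv)$; recollecting the $x$-integral in the normalization of Section \ref{subsec:Omega-H} identifies it with $\int_{H_{\bQ}\backslash H_{\bA}}\theta(h,g)\bigl(f*\upsilon(\alpha)\bigr)(h)dh=\cL(f*\upsilon(\alpha))(g)$.

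Then I would invoke that $\upsilon$ is a $\bC$-algebra homomorphism, so that $\alpha\mapsto\lambda_p^f(\upsilon(\alpha))$ is again a $\bC$-algebra homomorphism from $\cH(G_p,K_p)$ to $\bC$. Combined with the displayed identity and with $\cL(f)\in\cS_{\kappa}^G$ (Theorem \ref{th:holomorphy-L}), this shows that $\cL(f)$ satisfies \eqref{eq:Hecke-eigenvalues-G} at every finite place $p$, i.e.\ $\cL(f)$ is a Hecke eigenform with eigencharacter $\Lambda_p^{\cL(f)}=\lambda_p^f\circ\upsilon$, as claimed.

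The routine-but-essential points to verify are the two Fubini interchanges: each is justified by the absolute convergence of $\cL(f)$ (already stated), the compact support of $\alpha$ and of $\upsilon(\alpha)$, and the boundedness of $f$ on $H_{\bA}$. The main conceptual obstacle lies not in this corollary but upstream, in constructing the homomorphism $\upsilon$ and establishing the local identity \eqref{eq:Eichler-commutation-relations}; granting that, the only genuine subtlety here is the bookkeeping in the change of variable $h\mapsto hx\inv$ and the verification that the resulting $x$-integral is precisely the Hecke operator $f*\upsilon(\alpha)$ in the fixed measure normalizations.
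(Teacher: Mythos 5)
Your argument is correct and is exactly the intended deduction: the paper states this corollary as an immediate consequence of Theorem \ref{th:Eichler-commutation-relations} without writing out the computation, and your unwinding (interchange the $G_p$- and $H_{\bQ}\backslash H_{\bA}$-integrals, apply \eqref{eq:Eichler-commutation-relations}, substitute $h\mapsto hx\inv$, and recognize $\cL(f*\upsilon(\alpha))$) is the standard chain the authors have in mind, with the measure normalizations handled consistently. No gaps.
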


We will give an explicit construction of $\upsilon$
and the proof of Theorem \ref{th:functoriality} (1)
for the unramified (respectively ramified) case in
Section \ref{sec:ecr-unramified} (respectively in
Section \ref{sec:ecr-ramified}).

 \section{Eichler commutation relations at unramified places}
\label{sec:ecr-unramified}

 \subsection{}
\label{subsec:ecr-unramified} 

In this section, we consider the unramified case; namely we assume that
 $p$ does not divide the discriminant
$d_B$ of $B$. Throughout this section, we often drop the subscript 
$p$.
As in Section \ref{subsec:Satake-G-unramified}, we let
\begin{align*}
H & =\left\{h\in \GL_4(\bQ_p)\mid \tp h\Nmat{0&J}{-J&0}h=
\Nmat{0&J}{-J&0}\right\},\\
U&=H\cap \GL_4(\bZ_p),\\
 G& =\left\{g\in \GL_4(\bQ_p)\mid \tp g\Nmat{0&1_2}{-1_2&0}g=
\Nmat{0&1_2}{-1_2&0}\right\},\\
 K&=G\cap \GL_4(\bZ_p),
\end{align*}
where
\[
 J=\Nmat{0&1}{-1&0}\in\GL_2(\bQ_p).
\]
Define  subgroups $N_H$ and $M_H$ of $H$ by
\begin{align*}
N_H&=
\left\{
n_H(b):=\Nmat{1_2&b1_2}{0&1_2}
\mid b\in\bQ_p
\right\},\\
M_H&=
\left\{
d_H(A):=\Nmat{A&0}{0&J\inv \tp A\inv J}
\mid A\in\GL_2(\bQ_p)
\right\}.
\end{align*}
Then $H=N_HM_HU$.
We also define subgroups $N_G$ and $M_G$ of $G$ by
\begin{align*}
 N_G &= \left\{
n_G(b,c,d):=
\left(
\begin{array}{c|c} 
  {\begin{array}{cc} 1 & 0 \\ 0 & 1 \end{array}}
 & {\begin{array}{cc} b & c \\ c & d \end{array}} \\ 
    \hline
  O & {\begin{array}{cc} 1 & 0 \\ 0 & 1 \end{array}}
\end{array}
\right)
\mid b,c,d\in\bQ_p
\right\},\\
M_G&=
\left\{
d_G(A):=\Nmat{A&0}{0& \tp A\inv}
\mid A\in\GL_2(\bQ_p)
\right\}.
\end{align*}
Then $G=N_GM_GK$.

The Weil representation $r'$ given in Lemma \ref{lem:r'}
is reformulated as follows: For $\phi\in\mathscr{S}(\bQ_p^{(2,4)})$ and $x,y\in\bQ_p^{(2,2)}$,
\begin{align*}
 r'(1,g)\phi(x,y)&=\phi((x,y)g)
\qquad(g\in G),\\
 r'(d_H(A),1)\phi(x,y)&=|\det A|^2\,\phi(\tp Ax,\tp Ay)
\qquad(A\in\GL_2(\bQ_p)), \\
 r'(n_H(b),1)\phi(x,y)& =
\psi(-b\,\Tr(J\inv \tp xJy))\phi(x,y)
\qquad(b\in \bQ_p).
\end{align*}

 \subsection{}
\label{subsec:hecke-operators-G-unramified}

In this subsection, we prepare several facts on Hecke algebras.

For $i=1,2$, let $\cT_i^H$ and $\cT_i^G$ be the characteristic functions
of
\[
 T_i^H  =\{h\in H\mid ph\in \mathrm{M}_4(\bZ_p),\,
\mathrm{rank}_{\bF_p}(ph)=i\}
\]
and
\[
 T_i^G =\{g\in G\mid pg\in \mathrm{M}_4(\bZ_p),\,
\mathrm{rank}_{\bF_p}(pg)=i\},
\]
respectively.
Here, for $X\in\mathrm{M}_2(\bZ_p)$, we write
$\mathrm{rank}_{\bF_p}(X)$ 
for the rank of $(X \bmod{p\,\mathrm{M}_2(\bZ_p)})$ in $\mathrm{M}_2(\bF_p)$.
Then $\cT_i^H\in\cH(H,U)$ and $\cT_i^G\in\cH(G,K)$.

\begin{lemma}
 \label{lem:hecke-algebras-unramified}
We have 
\begin{align*}
 \cH(H,U)&=\bC[\cT_1^H,\cT_2^H],\\
 \cH(G,K)&=\bC[\cT_1^G,\cT_2^G].
\end{align*}
\end{lemma}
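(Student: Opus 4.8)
The plan is to exploit that, under the identifications of Section~\ref{subsec:ecr-unramified}, both $H$ and $G$ are the degree-two symplectic group $\Sp_2(\bQ_p)\subset\GL_4(\bQ_p)$, with $U$ and $K$ the stabilizers of the $\bZ_p$-lattice $\bZ_p^{4}$ equipped with the forms $\Nmat{0&J}{-J&0}$ and $\Nmat{0&1_2}{-1_2&0}$. Since $J\in\GL_2(\bZ_p)$ has unit determinant, both forms are unimodular over $\bZ_p$, so $U$ and $K$ are hyperspecial maximal compact subgroups and the two algebras $\cH(H,U)$ and $\cH(G,K)$ are two instances of the spherical Hecke algebra of a rank-two split symplectic group. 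Accordingly the two assertions are proved by one and the same argument; I would carry it out for $G$ and only record the parallel torus computation for $H$.

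First I would pin down the double cosets that occur. By the Cartan decomposition $G=\bigsqcup K\lambda(a_1,a_2)K$ over dominant coweights $\lambda(a_1,a_2)=\mathrm{diag}(p^{a_1},p^{a_2},p^{-a_1},p^{-a_2})$ with $a_1\ge a_2\ge 0$, and $\mathrm{rank}_{\bF_p}(pg)$ is constant on $KgK$ because left and right multiplication by $\GL_4(\bZ_p)$ preserve it. Inspecting the exponents shows $p\,\lambda(a_1,a_2)\in\MM_4(\bZ_p)$ exactly when $0\le a_2\le a_1\le 1$, i.e. for $(a_1,a_2)\in\{(0,0),(1,0),(1,1)\}$, with $\mathrm{rank}_{\bF_p}(p\,\lambda(a_1,a_2))=0,1,2$ respectively. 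Hence each $T_i^G$ is a single double coset: $\cT_1^G$ is the characteristic function of $K\lambda(1,0)K$ and $\cT_2^G$ that of $K\lambda(1,1)K$. The same computation with $d_H(\mathrm{diag}(p^{a},p^{b}))=\mathrm{diag}(p^{a},p^{b},p^{-b},p^{-a})$ identifies $\cT_1^H,\cT_2^H$ with the double cosets of the coweights $(1,0)$ and $(1,1)$ for $H$.

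The heart of the matter is then the Satake isomorphism (\cite{Sa}, Section~5), identifying the spherical Hecke algebra with the Weyl-invariants $\bC[X_*(T)]^W$ of the cocharacter lattice. For the root system $C_2$ the group $W$ acts on $\bC[x_1^{\pm1},x_2^{\pm1}]$ by signed permutations, and putting $y_i=x_i+x_i\inv$ one gets $\bC[x_1^{\pm1},x_2^{\pm1}]^W=\bC[y_1,y_2]^{S_2}=\bC[e_1,e_2]$ with $e_1=y_1+y_2$, $e_2=y_1y_2$, a polynomial ring in two generators. The Weyl orbit sums of the fundamental coweights are $m_{(1,0)}=y_1+y_2=e_1$ and $m_{(1,1)}=y_1y_2=e_2$, and the standard triangularity of the Satake transform says that the transform of the operator attached to a dominant $\lambda$ is a positive multiple of $m_\lambda$ plus an $\bN$-linear combination of the $m_\mu$ with $\mu<\lambda$. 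Thus $\cT_1^G\mapsto c\,e_1+c_0$ and $\cT_2^G\mapsto c'\,e_2+q(e_1)$ for constants $c,c'\ne0$, $c_0$ and a polynomial $q$, so their images generate $\bC[e_1,e_2]$; inverting the isomorphism gives $\cH(G,K)=\bC[\cT_1^G,\cT_2^G]$, and the identical computation gives $\cH(H,U)=\bC[\cT_1^H,\cT_2^H]$.

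I expect the main obstacle to be the bookkeeping in this last step: making the triangularity precise enough to solve the resulting system, and in particular checking the dominance comparisons $(0,0)<(1,0)<(1,1)$ in the $C_2$ coweight lattice so that $e_1$ is recovered from $\cT_1$ and then $e_2$ from $\cT_2$. Identifying each $T_i$ with a single double coset via the rank condition is routine but indispensable, since it is precisely what ties the concretely defined $\cT_i$ to the abstract generators $e_1,e_2$ produced by Satake.
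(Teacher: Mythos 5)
Your argument is correct and is essentially an expanded version of the paper's proof, which simply cites Satake (Section 5.1, Theorem 1): identifying each $T_i$ with the single double coset of the appropriate dominant coweight via the Cartan decomposition and rank condition, and then invoking the Satake isomorphism together with the triangularity of the transform, is exactly the content behind that citation. (The only quibble is terminological: $(1,1)=2\varpi_2^\vee$ is not literally a fundamental coweight of $\Sp_4$, but it is the minimal dominant coweight in that direction lying in $X_*(T)$, so the generation argument is unaffected.)
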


\begin{proof}
 This follows from \cite{Sa}, Section 5.1, Theorem 1.
\end{proof}

For $a\in\bQ_p$, put
\begin{align*}
\nu_H(a)&=d_H\Nmat{1&a}{0&1}=
\begin{pmatrix}
 1&a&0&0\\
 0&1 &0 &0 \\
 0&0 &1 &a \\
 0&0 &0 &1 
\end{pmatrix}\in H,\\
 \nu_G(a)&=d_G\Nmat{1&a}{0&1}=
\begin{pmatrix}
 1&a&0&0\\
 0&1 &0 &0 \\
 0&0 &1 &0 \\
 0&0 &-a &1 
\end{pmatrix}\in G.
\end{align*}

We put
\begin{align*}
 \Lambda_i&=\{(b,c,d)\in(\bZ_p/p\bZ_p)^3\mid \mathrm{rank}_{
\bF_p}\Nmat{b&c}{c&d}=i\}
\end{align*}
for $i=1,2$.
Note that $|\Lambda_1|=p^2-1$ and $|\Lambda_2|=p^3-p^2$.

The proofs of the following facts are straightforward and we omit it.

\begin{lemma}
 \label{lem:hecke-operators-H-unramified}
We have
\begin{align*}
 T_1^H &=
\bigsqcup_{a,\,b\in \bZ_p/p\bZ_p}\,n_H(b)\nu_H(a)d_H\Nmat{p&0}{0&1}U
\,\sqcup\, 
\bigsqcup_{b\in \bZ_p/p\bZ_p}\,n_H(b)d_H\Nmat{1&0}{0&p}U \\
&  \qquad \,\sqcup\, \bigsqcup_{a\in \bZ_p/p\bZ_p}\,\nu_H(a)d_H\Nmat{1&0}{0&p\inv}U 
\,\sqcup\, \bigsqcup d_H\Nmat{p\inv&0}{0&1}U
\end{align*}
and
\begin{align*}
 T_2^H &=
\bigsqcup_{a\in\bZ_p/p^2\bZ_p}\,\nu_H(a)d_H\Nmat{p&0}{0&p\inv}U
\,\sqcup\,   d_H\Nmat{p\inv&0}{0&p}U
\,\sqcup\, 
 \bigsqcup_{b\in \bZ_p/p^2\bZ_p}\,n_H(b)d_H\Nmat{p&0}{0&p}U \\
&\qquad \,\sqcup\,  d_H\Nmat{p\inv&0}{0&p\inv}U
\,\sqcup\,  \bigsqcup_{a\in \bZ_p\cross/p\bZ_p}\,\nu_H(p\inv a)\,U 
\,\sqcup\,  \bigsqcup_{b\in \bZ_p\cross/p\bZ_p} n_H(p\inv b)\,U,
\end{align*}
where $\bZ_p\cross/p\bZ_p$ means $(\bZ_p- p\bZ_p)/p\bZ_p$.
\end{lemma}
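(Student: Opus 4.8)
The plan is to reduce each $T_i^H$ to a single $U$-double coset by elementary-divisor theory, and then to enumerate the left $U$-cosets inside it. Note first that the matrix $\Nmat{0&J}{-J&0}$ is symmetric (since $\tp J=-J$), so $H$ is the isometry group of a split quaternary quadratic space and admits a Cartan decomposition $H=\bigsqcup_{a_1\ge a_2\ge 0}U\,d_H\Nmat{p^{a_1}&0}{0&p^{a_2}}\,U$, where $d_H\Nmat{p^{a_1}&0}{0&p^{a_2}}$ has diagonal entries $p^{a_1},p^{a_2},p^{-a_2},p^{-a_1}$. For $h$ in this double coset the elementary divisors of $ph$ are $p^{1+a_1},p^{1+a_2},p^{1-a_2},p^{1-a_1}$, so $ph\in\MM_4(\bZ_p)$ forces $a_1\le 1$, and $\mathrm{rank}_{\bF_p}(ph)$ is the number of vanishing exponents. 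Hence $(a_1,a_2)=(1,0)$ yields rank $1$ and $(a_1,a_2)=(1,1)$ yields rank $2$, proving $T_1^H=U\,d_H\Nmat{p&0}{0&1}\,U$ and $T_2^H=U\,d_H\Nmat{p&0}{0&p}\,U$; representatives such as $d_H\Nmat{p\inv&0}{0&1}$ and $d_H\Nmat{p\inv&0}{0&p}$ lie in the same double cosets because the Weyl group contains the sign changes $a_i\mapsto -a_i$.

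Next I would enumerate $U\,d\,U/U$. Using $H=N_HM_HU$ (the Iwasawa decomposition for the parabolic stabilizing an isotropic plane, with Levi $M_H\cong\GL_2$ and one-dimensional radical $N_H$), I would bring an arbitrary element of the double coset into the form $n_H(b)\,\nu_H(a)\,d_H(A)$ by right multiplication by $U$, where $\nu_H(a)$ is the unipotent of the Levi and $d_H(A)$ its torus part, and then reduce the parameters $a,b$ modulo the lattice conditions imposed by $U\cap dUd\inv$. In each Bruhat cell this is the standard $\GL_2$-type coset calculation: the open cell produces the families $n_H(b)\nu_H(a)d_H\Nmat{p&0}{0&1}U$ (respectively $\nu_H(a)d_H\Nmat{p&0}{0&p\inv}U$ and $n_H(b)d_H\Nmat{p&0}{0&p}U$), while the lower cells produce the remaining representatives. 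The isomorphism $H_p\cong\{(g_1,g_2)\in\GL_2(\bQ_p)^2:\det g_1=\det g_2\}/\Delta\bbG_m$ coming from $H'/Z'$ organizes this computation and provides a check on the coset numbers, since under it each $T_i^H$ decomposes through classical $\GL_2(\bZ_p)$-double cosets, consistent with the totals $(p+1)^2$ for $T_1^H$ and $2p(p+1)$ for $T_2^H$.

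Finally I would verify that the listed cosets are pairwise disjoint and exhaust $T_i^H$. Disjointness is checked by forming $h_j\inv h_{j'}$ for two representatives and testing membership in $U$, which comes down to integrality and mod-$p$ nondegeneracy of the resulting $4\times 4$ matrix; completeness then follows once the total count matches the index $[U:U\cap d\inv Ud]$ obtained in the first step. The main obstacle is the bookkeeping for $T_2^H$: beyond the generic families parametrized by $a\in\bZ_p/p^2\bZ_p$ or $b\in\bZ_p/p^2\bZ_p$, one must correctly isolate the boundary cosets $\nu_H(p\inv a)U$ and $n_H(p\inv b)U$ with $a,b\in\bZ_p\cross/p\bZ_p$, which come from the non-open cells and are easy to miss or to double-count. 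Tracking exactly when a product $n_H(b)\nu_H(a)d_H(A)$ degenerates into one of these, and justifying why some parameter ranges are $\bZ_p/p^2\bZ_p$ while others are $\bZ_p/p\bZ_p$, is where the care is needed.
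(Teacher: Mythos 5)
Your strategy is the natural one, and the paper gives you nothing to compare it with: the authors dismiss this lemma with ``the proofs of the following facts are straightforward and we omit it.'' The individual claims in your outline are all correct. The Gram matrix $\Nmat{0&J}{-J&0}$ is symmetric with associated form $2(x_1x_4-x_2x_3)$, so $H$ is the isometry group of the split quaternary space; $d_H\Nmat{p^{a_1}&0}{0&p^{a_2}}=\mathrm{diag}(p^{a_1},p^{a_2},p^{-a_2},p^{-a_1})$; the full group of sign changes and permutations of $(a_1,a_2)$ is realized inside $U$ (for instance the permutation matrix exchanging $e_2$ and $e_3$ preserves the form and effects $(a_1,a_2)\mapsto(a_1,-a_2)$), so the Cartan decomposition is indexed by $a_1\ge a_2\ge 0$ and your elementary-divisor argument correctly pins down $T_1^H=U\,d_H\Nmat{p&0}{0&1}U$ and $T_2^H=U\,d_H\Nmat{p&0}{0&p}U$. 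The totals $(p+1)^2$ and $2p(p+1)$ also agree with $\lambda_\chi(\cT_i^H)$ evaluated at the $\chi$ for which $\omega_\chi\equiv 1$, which is a genuine consistency check.

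The shortfall is that from the second paragraph on you describe computations rather than perform them, and the lemma \emph{is} those computations: the specific representatives, the parameter ranges ($\bZ_p/p\bZ_p$ versus $\bZ_p/p^2\bZ_p$ versus $\bZ_p\cross/p\bZ_p$), and pairwise disjointness are all left as future work, as you yourself concede for $T_2^H$. More importantly, your completeness criterion rests on the index $[U:U\cap dUd\inv]$, which you never compute independently --- you only observe that the $\GL_2\times\GL_2$ heuristic is ``consistent with'' totals read off from the statement being proved, which is circular as written. That index can be obtained directly and should be: for $d=d_H\Nmat{p&0}{0&1}$ the subgroup $U\cap dUd\inv$ is cut out by $u_{12},u_{13},u_{24},u_{34}\in p\bZ_p$ and $u_{14}\in p^2\bZ_p$; the mod~$p$ conditions say exactly that $u$ stabilizes the isotropic line $\langle e_4\rangle$ modulo $p$, the condition $u_{14}\in p^2\bZ_p$ is then automatic from the relation $u_{14}u_{44}=u_{24}u_{34}$ forced by $\tp uSu=S$ (note $u_{44}$ is a unit and $u_{24},u_{34}\in p\bZ_p$), and so the index equals the number of isotropic lines of the split quadric over $\bF_p$, namely $(p+1)^2$. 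With that and the analogous count for $T_2^H$ supplied, together with the routine membership and disjointness checks (each of which I verified on samples and which do work out), your scheme closes up; so this is a correct plan with the decisive bookkeeping still owed, not a wrong approach.
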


\begin{lemma}
  \label{lem:hecke-operators-G-unramified}
We have

\begin{align*}
 T_1^G 
&=\bigsqcup_{a,\,c\,\in\bZ_p/p\bZ_p,\,b\in\bZ_p/p^2\bZ_p}
   n_G(b,c,0)\nu_G(a)d_G\Nmat{p&0}{0&1}K\\
& \qquad \,\sqcup\, \bigsqcup_{c\,\in\bZ_p/p\bZ_p,\,d\in\bZ_p/p^2\bZ_p}
   n_G(0,c,d)d_G\Nmat{1&0}{0&p}K
\,\sqcup\, \bigsqcup_{(b,c,d)\in\Lambda_1}n_G(p\inv b,p\inv c,p\inv d)K\\
 &\qquad \,
\sqcup\, \bigsqcup_{a\in\bZ_p/p\bZ_p}\nu_G(a)d_G\Nmat{1&0}{0&p\inv}K
\,\sqcup\, d_G\Nmat{p\inv&0}{0&1}K.
\end{align*}
and
\begin{align*}
 T_2^G
&=
\bigsqcup_{b,\,c,\,d\in\bZ_p/p^2\bZ_p}\,n_G(b,c,d)d_G\Nmat{p&0}{0&p}K
\,\sqcup\,
\bigsqcup_{a,\,b\in\bZ_p/p^2\bZ_p}\,n_G(b,0,0)\nu_G(a)d_G\Nmat{p&0}{0&p\inv}K\\
 &\quad  \,\sqcup\, \bigsqcup_{d\in\bZ_p/p^2\bZ_p}\,n_G(0,0,d)d_G\Nmat{p\inv&0}{0&p}K
\,\sqcup\,d_G\Nmat{p\inv&0}{0&p\inv}K\\
&\quad\,\sqcup\,\bigsqcup_{a,\,m\in \bZ_p/p\bZ_p,\,d\in\bZ_p\cross/p\bZ_p,\,
l\in\bZ_p/p^2\bZ_p}\,n_G(p\inv a^2d+l,\,p\inv ad+m,\,p\inv d)
   \nu_G(a)d_G\Nmat{p&0}{0&1}K\\
&\quad\,\sqcup\,\bigsqcup_{b\in\bZ_p\cross/p\bZ_p,\,c\in\bZ_p/p\bZ_p,\,d\in\bZ_p/p^2\bZ_p}\,n_G(p\inv b,c,d)d_G\Nmat{1&0}{0&p}K\\
&\quad\,\sqcup\,\bigsqcup_{a\in\bZ_p/p\bZ_p,\,b\in\bZ_p\cross/p\bZ_p}
   \,n_G(p\inv b,0,0)\nu_G(a)d_G\Nmat{1&0}{0&p\inv}K
\,\sqcup\,\bigsqcup_{d\in\bZ_p\cross/p\bZ_p}\,n_G(0,0,p\inv d)
   d_G\Nmat{p\inv&0}{0&1}K\\
&\quad \,\sqcup\,\bigsqcup_{a\in\bZ_p\cross/p\bZ_p,\,c,\,m\in\bZ_p/p\bZ_p}
  n_G(p^{-2}(ac+pm),p\inv c,0)\nu_G(p\inv a)K\,\sqcup\,
\bigsqcup_{(b,c,d)\in\Lambda_2}\,n_G(p\inv b,p\inv c,p\inv d)K.
\end{align*}
\end{lemma}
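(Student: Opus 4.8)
The plan is to establish the two right-coset decompositions in parallel, in each case first passing through a single $(K,K)$-double coset and then exhibiting an explicit transversal.

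\emph{Reduction to one double coset.} For $k,k'\in K=G\cap\GL_4(\bZ_p)$ the reduction $\ol{k}$ lies in the finite symplectic group $\Sp_2(\bF_p)\subset\GL_4(\bF_p)$ and is therefore invertible; since $\ol{p(kgk')}=\ol{k}\,\ol{pg}\,\ol{k'}$, both the integrality of $pg$ and the rank of $(pg\bmod p)$ are unchanged under left or right multiplication by $K$. Hence each $T_i^G$ is bi-$K$-invariant, so it is a union of double cosets $K\delta K$. By the elementary divisor theorem (Cartan decomposition) for $G$, every such $\delta$ may be taken diagonal, $\delta=\mathrm{diag}(p^{a_1},p^{a_2},p^{-a_1},p^{-a_2})$ with $a_1\ge a_2\ge0$; integrality of $p\delta$ forces $a_j\le1$, and the rank of $(p\delta\bmod p)$ equals $\#\{j:a_j=1\}$. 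Thus $T_1^G=K\delta_1K$ and $T_2^G=K\delta_2K$ with $\delta_1=d_G\Nmat{p&0}{0&1}$ and $\delta_2=d_G\Nmat{p&0}{0&p}$, reducing each assertion to producing a transversal of $K\delta_iK/K$.

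\emph{The representatives.} I would use the Iwasawa decomposition for the Siegel parabolic, $G=N_GM_GK$, together with the isomorphism $M_G\simeq\GL_2(\bQ_p)$, $A\mapsto d_G(A)$, under which $M_G\cap K=d_G(\GL_2(\bZ_p))$. Writing $g=n_G(b,c,d)\,d_G(A)\,k$ and reducing $A$ on the right by $\GL_2(\bZ_p)$ to elementary-divisor form produces the factors $d_G\Nmat{p^{e_1}&0}{0&p^{e_2}}$ and the unipotent $\nu_G(a)$; it then remains to reduce the symmetric unipotent datum $n_G(b,c,d)$ modulo the remaining right action of $K$. Since $K$ contains the lower unipotent subgroup of $\Sp_2(\bF_p)$'s integral model, this is a genuinely symplectic reduction, and it cuts $(b,c,d)$ down to the stated finite ranges modulo powers of $p$.

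\emph{Membership, disjointness, exhaustion.} Membership $\gamma\in T_i^G$ of each listed $\gamma$ is a direct matrix check: form $p\gamma$, verify integrality, and read off the rank of its reduction. Disjointness follows from showing $\gamma\inv\gamma'\notin K$ for distinct parameters, comparing first the valuations of the torus entries and then the residues of the unipotent parameters. For exhaustion I would count: the listed families have cardinalities summing to $p(p+1)(p^2+1)$ for $i=1$ and to $p^3(p+1)(p^2+1)$ for $i=2$ (using $|\Lambda_1|=p^2-1$ and $|\Lambda_2|=p^3-p^2$), and these equal the index $[K:K\cap\delta_iK\delta_i\inv]=\#(K\delta_iK/K)$, which I compute independently by reducing modulo $p$ (equivalently, by counting the self-dual $\bZ_p$-lattices of the prescribed elementary-divisor type relative to the standard lattice). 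Membership and disjointness together with the matching count force the decomposition.

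\emph{Main obstacle.} The delicate point is the reduction for the mixed families of $T_2^G$, namely $n_G(p\inv a^2d+l,\,p\inv ad+m,\,p\inv d)\nu_G(a)d_G\Nmat{p&0}{0&1}K$ and $n_G(p^{-2}(ac+pm),\,p\inv c,\,0)\nu_G(p\inv a)K$. Conjugating $n_G(b,c,d)$ past $d_G(A)$ replaces the symmetric matrix $S=\Nmat{b&c}{c&d}$ by $AS\,\tp A$, so for the unipotent $A=\Nmat{1&a}{0&1}$ attached to $\nu_G(a)$ the $(1,1)$-entry acquires the quadratic term $a^2d$; this is precisely the source of the $p\inv a^2d$ coupling. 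The challenge is to carry out the symplectic reduction so that, despite this nonlinear entanglement of the unipotent coordinates with the Weyl/torus parameter $a$, each family lies in $T_2^G$, is internally irredundant, and is disjoint from every other family — which is exactly what the final count certifies. The companion decompositions for $H$ in Lemma \ref{lem:hecke-operators-H-unramified} are obtained by the same method and are simpler, the unipotent radical $N_H$ being one-dimensional.
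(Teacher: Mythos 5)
Your proposal is correct, but there is no proof in the paper to compare it with: the authors dispose of both this lemma and its companion for $H$ with the single remark that the proofs are straightforward and omitted. Your route is the natural one and, once the finitely many matrix checks are carried out, it is watertight: bi-$K$-invariance of $T_i^G$ follows exactly as you say from $\ol{pkgk'}=\ol{k}\,\ol{pg}\,\ol{k'}$ with $\ol{k},\ol{k'}$ invertible, the Cartan decomposition pins $T_1^G$ and $T_2^G$ down to the single double cosets of $d_G\Nmat{p&0}{0&1}$ and $d_G\Nmat{p&0}{0&p}$ respectively, and the certification by membership, pairwise distinctness and a degree count is a legitimate (and self-correcting) way to verify a list of this complexity. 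The arithmetic does close: the families sum to $p^4+p^3+p^2+p=p(p+1)(p^2+1)$ for $T_1^G$ and to $p^6+p^5+p^4+p^3=p^3(p+1)(p^2+1)$ for $T_2^G$, matching the indices $[K:K\cap\delta_iK\delta_i\inv]$. One point worth making explicit when you write up disjointness: several families (e.g.\ the second and third displayed families of $T_2^G$, or the first, fifth and ninth) share the same elementary-divisor type of the $M_G$-component, so the "compare torus valuations first" step does not separate them; there you must actually use that $N_GM_G\cap K=(N_G\cap K)(M_G\cap K)$ to reduce to comparing the $\GL_2(\bZ_p)$-coset of the $A$-part together with the unipotent datum modulo $S\mapsto S+A\,\Sigma\,\tp A$ for integral symmetric $\Sigma$ -- which is also precisely where the quadratic coupling $AS\,\tp A$ you single out enters. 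Your diagnosis of that coupling as the source of the $p\inv a^2d$ and $p^{-2}(ac+pm)$ terms is the right explanation of the shape of the mixed families.
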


Recall that the spherical functions
$\omega_{\chi}\colon H\to\bC$ and $\Omega_{\Xi}\colon G\to\bC$
for $\chi=(\chi_1,\chi_2),\Xi=(\Xi_1,\Xi_2)\in 
X_{\mathrm{unr}}(\bQ\cross_p)^2$
are defined in Section \ref{subsec:Satake-H-unramified} and
Section \ref{subsec:Satake-G-unramified}. 
Then we have
\begin{align*}
 (\omega_{\chi}*\varphi)(h)&:=\int_H\omega_{\chi}(hx\inv)\varphi(x)dx
=\lambda_{\chi}(\varphi)\omega_{\xi}(h)\qquad (h\in H,\varphi\in\cH(H,U)),\\
 (\Omega_{\Xi}*\Phi)(g)&:=\int_G\Omega_{\Xi}(gy\inv)\Phi(y)dy
=\Lambda_{\Xi}(\Phi)\Omega_{\Xi}(g)\qquad (g\in G,\Phi\in\cH(G,K))
\end{align*}
with $\lambda_{\chi}\in \mathrm{Hom}_{\bC\text{-alg}}(\cH(H,U),\bC)$
and $\Lambda_{\Xi}\in \mathrm{Hom}_{\bC\text{-alg}}(\cH(G,K),\bC)$
(cf. \cite{Sa}, Section 5.3).
Using Lemma \ref{lem:hecke-operators-H-unramified} and
\ref{lem:hecke-operators-G-unramified}, we get the following.

\begin{lemma}
 \label{lem:eigenvalues-spherical-f-unramified}
We have
\begin{align*}
 \lambda_{\chi}(\cT_1^H)&=p(\chi_1+\chi_2+\chi_1\inv+\chi_2\inv),\\  
 \lambda_{\chi}(\cT_2^H)&=p(\chi_1\chi_2+\chi_1\chi_2\inv+
       \chi_1\inv \chi_2+\chi_1\inv \chi_2\inv)+2p-2,\\
 \Lambda_{\Xi}(\cT_1^G)&=p^2(\Xi_1+\Xi_2+\Xi_1\inv+\Xi_2\inv)+p^2-1,\\  
 \Lambda_{\Xi}(\cT_2^G)&=p^3(\Xi_1\Xi_2+\Xi_1\Xi_2\inv+\Xi_1\inv \Xi_2
    +\Xi_1\inv \Xi_2\inv)+
     p^2(p-1)(\Xi_1+\Xi_2+\Xi_1\inv+\Xi_2\inv)+2p^3-2p^2,  
\end{align*}
where we write, for $i=1,2$, $\Xi_i$ and $\chi_i$ for $\Xi_i(p)$ and $\chi_i(p)$,
respectively. 
\end{lemma}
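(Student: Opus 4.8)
The plan is to reduce each eigenvalue to a Satake transform and then sum a finite list of explicit contributions. Since $\omega_\chi(1)=1$, $\Omega_\Xi(1)=1$ and the Haar measures are normalized by $\mathrm{vol}(U)=\mathrm{vol}(K)=1$, the defining relations $\omega_\chi*\varphi=\lambda_\chi(\varphi)\,\omega_\chi$ and $\Omega_\Xi*\Phi=\Lambda_\Xi(\Phi)\,\Omega_\Xi$, together with the spherical transform identity of \cite{Sa}, Section 5.3, express the eigenvalues as
\[
 \lambda_\chi(\varphi)=\int_H\eta^H_\chi(h)\varphi(h)\,dh,\qquad
 \Lambda_\Xi(\Phi)=\int_G\eta^G_\Xi(g)\Phi(g)\,dg.
\]
As $\eta^H_\chi$ and $\eta^G_\Xi$ are right-invariant under $U$ and $K$, applying this to $\varphi=\cT^H_i=\sum_j\mathbf{1}_{g_jU}$ and $\Phi=\cT^G_i=\sum_j\mathbf{1}_{g'_jK}$, whose single-coset decompositions are listed in Lemmas \ref{lem:hecke-operators-H-unramified} and \ref{lem:hecke-operators-G-unramified}, collapses the integrals to the finite sums $\lambda_\chi(\cT^H_i)=\sum_j\eta^H_\chi(g_j)$ and $\Lambda_\Xi(\cT^G_i)=\sum_j\eta^G_\Xi(g'_j)$. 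Everything then comes down to evaluating $\eta^H_\chi$ and $\eta^G_\Xi$ on each representative.

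To evaluate these functions on a representative I would put it in Iwasawa form and read off the torus part, using $\eta^H_\chi=\chi_1(a_1)\chi_2(a_2)|a_1|_p$ for a torus element of diagonal $(a_1,a_2,a_2\inv,a_1\inv)$ and $\eta^G_\Xi=\Xi_1(a_1)\Xi_2(a_2)|a_1|_p^2|a_2|_p$ for diagonal $(a_1,a_2,a_1\inv,a_2\inv)$. For the representatives already displayed as (unipotent)$\,\cdot\,d_H(\mathrm{diag}(p^{e_1},p^{e_2}))$ or (unipotent)$\,\cdot\,d_G(\mathrm{diag}(p^{e_1},p^{e_2}))$ this is immediate: the factors in $N_H$, $N_G$ and the Levi maps $\nu_H,\nu_G$ are upper-triangular with unit diagonal, so the torus part is exactly $\mathrm{diag}(p^{e_1},p^{e_2})$ and the value is a monomial in the $\chi_i(p)$ (resp. $\Xi_i(p)$) times a power of $p$. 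A direct count over the parameters running in $\bZ_p/p^n\bZ_p$ in each such stratum then yields the monomial terms $p(\chi_1+\chi_2+\chi_1\inv+\chi_2\inv)$, $p^2(\Xi_1+\cdots)$, $p^3(\Xi_1\Xi_2+\cdots)$ with the stated coefficients.

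The real work lies in the strata whose representatives sit in the unipotent radical with entries of negative valuation, and these split into two kinds. The first kind is genuinely unipotent, with no nontrivial diagonal factor: $\nu_H(p\inv a)$ and $n_H(p\inv b)$ for $\cT^H_2$, and the families indexed by $\Lambda_1,\Lambda_2$ and by $n_G(p^{-2}(ac+pm),p\inv c,0)\nu_G(p\inv a)$ for $\cT^G_i$. Here an $\SL_2$-type reduction -- right multiplication by $\GL_2(\bZ_p)$ turns $\Nmat{1&p\inv a}{0&1}$ into an upper-triangular matrix with unit diagonal -- shows the Iwasawa torus part is a unit, so each representative contributes $1$ and the stratum contributes its cardinality: $p-1$ for each of the two boundary families of $\cT^H_2$ (giving $2p-2$), the $p^2-1$ representatives indexed by $\Lambda_1$ (the constant in $\Lambda_\Xi(\cT^G_1)$), and $|\Lambda_2|=p^3-p^2$ together with the $p^2(p-1)$ representatives of the $n_G(\cdots)\nu_G(p\inv a)$ family (giving $2p^3-2p^2$ in $\Lambda_\Xi(\cT^G_2)$). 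The second, and hardest, kind consists of the mixed families of $\cT^G_2$, such as $n_G(p\inv b,c,d)\,d_G(\mathrm{diag}(1,p))$ and especially $n_G(p\inv a^2d+l,\,p\inv ad+m,\,p\inv d)\,\nu_G(a)\,d_G(\mathrm{diag}(p,1))$, in which a non-integral unipotent factor coexists with a nontrivial diagonal; for these I would carry out a genuine Bruhat reduction over $\bZ_p$ to verify that each parameter choice gives a single well-defined coset and that the effective Iwasawa torus is still $\mathrm{diag}(1,p)$, resp. $\mathrm{diag}(p,1)$, whereupon the multiplicities $p^3(p-1)$, $p^4(p-1)$ combine into $p^2(p-1)(\Xi_1+\Xi_2+\Xi_1\inv+\Xi_2\inv)$.

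The main obstacle is precisely this Bruhat reduction for the quadratic family of $\cT^G_2$: tracking which completion-of-square moves lie in $\GL_4(\bZ_p)$, confirming the coset count, and extracting the correct effective torus part is where the valuation bookkeeping is most delicate and where sign errors are most likely, so this is where I would concentrate the verification. Once the contributions of all strata are assembled with the cardinalities above, the four claimed identities follow.
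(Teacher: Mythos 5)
Your proposal is correct and is exactly the route the paper takes: the paper derives the lemma by summing $\eta^H_\chi$ (resp.\ $\eta^G_\Xi$) over the single-coset representatives listed in Lemmas \ref{lem:hecke-operators-H-unramified} and \ref{lem:hecke-operators-G-unramified}, which is precisely your reduction via the Satake transform plus right-$U$- (resp.\ $K$-) invariance of $\eta$. Your worry about the "mixed" families is slightly overcautious — since $n_G(\beta)\in N_G$ for arbitrary $\beta$ and $\nu_G(a)d_G(\mathrm{diag})$ reduces to a $\GL_2$ Iwasawa decomposition read off from the bottom row, the torus parts come out directly — and all the stated multiplicities and torus values check out.
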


 \subsection{}
\label{subsec:construction-upsilon-unramified}

Denote by $\phi_0\in
\mathscr{S}(\bQ_p^{(2,4)})$ the characteristic function of $\bZ_p^{(2,4)}$
and set
\begin{align*}
  J_i^H(x,y) &=\int_{T_i^H}\,r'(h,1)\phi_0(x,y)dh,\\
 J_i^G(x,y) &=\int_{T_i^G}\,r'(1,g)\phi_0(x,y)dg
\end{align*}
for $x,y\in \bQ_p^{(2,2)}$ and $i=1,2$.

We now state the main results of this section,
whose proof is given in Section \ref{subsec:proof-ecr-unramified}.
\begin{proposition}
 \label{prop:ecr-unramified}
 For $x,y\in \bQ_p^{(2,2)}$, we have
\begin{equation}
\label{eq:ecr-unramified-1}
 J_1^G(x,y)=pJ_1^H(x,y)+(p^2-1)\phi_0(x,y)
\end{equation}
and
\begin{equation}
\label{eq:ecr-unramified-2}
 J_2^G(x,y)=(p^2-p)J_1^H(x,y)+p^2J_2^H(x,y).
\end{equation}
\end{proposition}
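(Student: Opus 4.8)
The plan is to evaluate both sides of (\ref{eq:ecr-unramified-1}) and (\ref{eq:ecr-unramified-2}) as explicit functions of $(x,y)\in\bQ_p^{(2,4)}$ and to match them coset by coset. The first observation is that $\phi_0$, the characteristic function of $\bZ_p^{(2,4)}$, is fixed by the Weil representation action of $U$ and of $K$: for $u\in U$ one has $r'(u,1)\phi_0=\phi_0$, because $d_H(A)$ with $A\in\GL_2(\bZ_p)$ preserves $\bZ_p^{(2,4)}$ and $n_H(b)$ with $b\in\bZ_p$ acts by the character $\psi(-b\,\Tr(J\inv\tp xJy))$, which is trivial on integral arguments, and likewise $r'(1,k)\phi_0=\phi_0$ for $k\in K$. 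Since $\mathrm{vol}(U)=\mathrm{vol}(K)=1$, integration over a single coset collapses to evaluation at a representative, so that, using the decompositions in Lemmas \ref{lem:hecke-operators-H-unramified} and \ref{lem:hecke-operators-G-unramified},
\[
 J_i^H(x,y)=\sum_j r'(h_j,1)\phi_0(x,y),\qquad J_i^G(x,y)=\sum_j \phi_0((x,y)g_j),
\]
where $h_j$ and $g_j$ run through the listed representatives.

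On the $G$-side I would use $r'(1,g)\phi_0(x,y)=\phi_0((x,y)g)$, so that every summand is the indicator of an integrality condition: the unipotent block $n_G(b,c,d)$ sends $(x,y)$ to $(x,\,xS+y)$ with $S=\bigl(\begin{smallmatrix}b&c\\c&d\end{smallmatrix}\bigr)$, while $\nu_G(a)$ and the diagonal $d_G$ apply an integral transvection and a scaling. The sums over the parameters $b,c,d$ (over $\bZ_p/p\bZ_p$, $\bZ_p/p^2\bZ_p$, or $\Lambda_i$) thus become lattice-point counts: for integral $x$ one counts the symmetric $S$ modulo the relevant level for which $xS+y$ is integral. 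Since no additive character intervenes, each $J_i^G(x,y)$ reduces to a combination, with integer coefficients, of characteristic functions of lattices selected by the divisibility type of $(x,y)$.

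On the $H$-side I would use the reformulation of $r'$ recalled in Section \ref{subsec:ecr-unramified}: $r'(d_H(A),1)$ multiplies by $|\det A|_p^2$ and sends $(x,y)\mapsto(\tp Ax,\tp Ay)$, while $r'(n_H(b),1)$ multiplies by $\psi(-b\,\Tr(J\inv\tp xJy))$. Absorbing each $\nu_H(a)$ into the $\GL_2$-part $A_j$, every representative is of the form $n_H(b)d_H(A_j)$, and the character factor depends on $(x,y)$ alone; hence each family of summands factors as a character sum $\sum_b\psi(-b\,\Tr(J\inv\tp xJy))$ times $|\det A_j|_p^2\,\phi_0(\tp A_jx,\tp A_jy)$. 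A short computation shows that on the support $\phi_0(\tp A_jx,\tp A_jy)=1$ the leading $p^{-2}$-contributions to $\Tr(J\inv\tp xJy)$ cancel, so this quantity lies in $p\inv\bZ_p$; the character sum is then well defined and equals, by orthogonality, $p$ (or $p^2$) times the indicator of the congruence $\Tr(J\inv\tp xJy)\in\bZ_p$ (or $\in p\bZ_p$). Thus $J_i^H$ too becomes a combination of characteristic functions of lattices, now carrying explicit powers of $p$.

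Finally I would compare the two closed forms and read off the relations; the cosets $n_G(p\inv b,p\inv c,p\inv d)K$ with $(b,c,d)\in\Lambda_i$ on the $G$-side are precisely what produce the extra term $(p^2-1)\phi_0$ in (\ref{eq:ecr-unramified-1}) and the mixing of $J_1^H$ with $J_2^H$ in (\ref{eq:ecr-unramified-2}). The main obstacle is organizational rather than conceptual: the large number of coset types (ten for $T_2^G$) forces a case analysis by the divisibility type of $(x,y)$, and one must align the lattice-point counts on the $G$-side with the collapsed character sums on the $H$-side while tracking the powers of $p$ exactly. A convenient global check is the specialization $(x,y)=(0,0)$, where every indicator is $1$ and the two identities reduce to the numerical identities between $\sum_j|\det A_j|_p^2$ on the $H$-side and the coset counts $|T_i^G/K|$ on the $G$-side.
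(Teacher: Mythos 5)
Your plan is essentially the paper's own proof: both sides are evaluated as explicit integer-coefficient combinations of characteristic functions of lattices by summing over the coset representatives of Lemmas \ref{lem:hecke-operators-H-unramified} and \ref{lem:hecke-operators-G-unramified} (lattice-point counts on the $G$-side, collapsed character sums in $b$ on the $H$-side), and the two closed forms are then matched case by case; your consistency check at $(x,y)=(0,0)$ indeed balances, with $pJ_1^H(0,0)+(p^2-1)=p^4+p^3+p^2+p=|T_1^G/K|$. The only organizational device you omit is the paper's Lemma \ref{lem:reduction}, which uses the bi-$\GL_2(\bZ_p)$-equivariance $J_i^{H}(u_1xu_2,y)=J_i^{H}(x,u_1\inv y\,\tp u_2)$ to reduce to $x=\Nmat{p^{\alpha}&0}{0&p^{\beta}}$, so that the case analysis runs over the seven ranges of $(\alpha,\beta)$ rather than over arbitrary divisibility types of $(x,y)$.
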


\bigskip

This proposition immediately implies the following result:

\begin{corollary}
  \label{cor:ecr-unramified}
The algebra homomorphism $\upsilon\colon \cH(G,K)\,\to\,\cH(H,U)$ given by
\begin{align*}
 \upsilon(\cT_1^G)&=p\,\cT_1^H+(p^2-1)\,\cT_0^H,\\
 \upsilon(\cT_2^G)&=(p^2-p)\,\cT_1^H+p^2\cT_2^H
\end{align*}
satisfies (\ref{eq:Eichler-commutation-relations})
(and hence (\ref{eq:Hecke-eigenvalues})),
where $\cT_0^H$ denotes the characteristic function of $U$.
\end{corollary}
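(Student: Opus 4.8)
The plan is to deduce the commutation relation (\ref{eq:Eichler-commutation-relations}) from the local identities of Proposition \ref{prop:ecr-unramified} in three steps: first check that $\upsilon$ is a well-defined algebra homomorphism, then verify (\ref{eq:Eichler-commutation-relations}) on the generators $\cT_1^G,\cT_2^G$ (and the unit), and finally bootstrap to all of $\cH(G_p,K_p)$ by a convolution-compatibility argument. For the first step, by Lemma \ref{lem:hecke-algebras-unramified} together with the Satake isomorphism the algebra $\cH(G_p,K_p)=\bC[\cT_1^G,\cT_2^G]$ is a polynomial ring in two variables, so $\cT_1^G$ and $\cT_2^G$ are algebraically independent and the prescribed images determine a unique $\bC$-algebra homomorphism $\upsilon$.

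For the base cases, I would reduce (\ref{eq:Eichler-commutation-relations}) with $\alpha=\cT_i^G$ to Proposition \ref{prop:ecr-unramified}. Since $r'=\otimes_v r'_v$ and $\phi_0=\otimes_v\phi_{0,v}$ factor over the places and the integrations over $G_p$ and $H_p$ touch only the $p$-component, the theta sum $\theta(h,\cdot)=\sum_{X'\in B^{(1,2)}}r'(h,\cdot)\phi_0(X')$ reduces the identity, term by term in $X'$, to a statement about the $p$-factor. There, because $H$ and $G$ commute in the Weil representation, $r'_p(h_p,g_py\inv)=r'_p(h_p,g_p)\,r'_p(1,y\inv)$ and $r'_p(h_px,g_p)=r'_p(h_p,g_p)\,r'_p(x,1)$, so the fixed operator $r'_p(h_p,g_p)$ factors out and it suffices to compare $\int_{G_p}r'_p(1,y\inv)\phi_{0,p}\,\cT_i^G(y)\,dy$ with $\int_{H_p}r'_p(x,1)\phi_{0,p}\,\upsilon(\cT_i^G)(x)\,dx$. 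Using that $T_i^G$ and $T_i^H$ are stable under inversion (for $g\in G$, $g\inv=J_4\inv\,\tp g\,J_4$ with $J_4=\begin{pmatrix}0&1_2\\-1_2&0\end{pmatrix}\in\GL_4(\bZ_p)$ preserves both the integrality and the rank conditions), the first integral equals $J_i^G$; the second equals $\sum_j c_{ij}J_j^H$, the $\cT_0^H$-term giving $\int_{U_p}r'_p(u,1)\phi_{0,p}\,du=\phi_{0,p}$ by Lemma \ref{lem:phi0-UK}. These agree precisely by (\ref{eq:ecr-unramified-1}) and (\ref{eq:ecr-unramified-2}), whose coefficients are exactly those defining $\upsilon(\cT_1^G)$ and $\upsilon(\cT_2^G)$.

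To pass from generators to arbitrary $\alpha$, I would prove the auxiliary compatibility: writing $R(\alpha,\beta)$ for the assertion that $\int_{G_p}\theta(h,gy\inv)\alpha(y)\,dy=\int_{H_p}\theta(hx,g)\beta(x)\,dx$ for all $(h,g)$, the relations $R(\alpha,\beta)$ and $R(\alpha',\beta')$ imply $R(\alpha*\alpha',\beta*\beta')$. This is a Fubini computation: after expanding $(\alpha*\alpha')(y)=\int_{G_p}\alpha(yz\inv)\alpha'(z)\,dz$ and translating $g\mapsto gz\inv$, one applies $R(\alpha,\beta)$; translating $h\mapsto hx$ and applying $R(\alpha',\beta')$ then yields a double $H_p$-integral that, by unimodularity of $H_p$, equals $\int_{H_p}\theta(hw,g)(\beta*\beta')(w)\,dw$. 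Combined with the trivial base case $R(\cT_0^G,\cT_0^H)$ (from the right $K_p$- and $U_p$-invariance of $\theta$) and the multiplicativity of $\upsilon$, an induction on monomials in $\cT_1^G,\cT_2^G$ gives $R(\alpha,\upsilon(\alpha))$ for every $\alpha\in\cH(G_p,K_p)$; this is (\ref{eq:Eichler-commutation-relations}), and (\ref{eq:Hecke-eigenvalues}) follows as in Corollary \ref{cor:functoriality}.

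I expect essentially all of the genuine difficulty to be already absorbed into Proposition \ref{prop:ecr-unramified} (the coset decompositions of Lemmas \ref{lem:hecke-operators-H-unramified} and \ref{lem:hecke-operators-G-unramified} and the resulting Weil-representation computation). The only points needing care here are the legitimacy of pulling $r'_p(h_p,g_p)$ out of the Hecke integrals — unproblematic since $\theta$ converges absolutely and each Hecke integral is a finite sum of translates — and the change of variables in the convolution-compatibility step, where the unimodularity of $G_p$ and $H_p$ and the inversion-stability of the $T_i$'s are used.
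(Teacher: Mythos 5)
Your proof is correct and is essentially the argument the paper intends: the paper dismisses the corollary as an immediate consequence of Proposition \ref{prop:ecr-unramified}, and what you have written out (factoring $r'_p(h_p,g_p)$ out of the Hecke integrals to reduce the generator case to the identities for $J_i^G$ and $J_j^H$, then extending by the convolution-compatibility of the relation $R(\alpha,\beta)$ and the multiplicativity of $\upsilon$) is exactly the standard elaboration of that step. The observation that $T_i^G$ and $T_i^H$ are inversion-stable, needed to match $\int\theta(h,gy\inv)\cT_i^G(y)dy$ with the integrals defining $J_i^G$, is a detail the paper leaves implicit and you handle correctly.
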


 \subsection{}
\label{subsec:proof-L-unramified}

In this subsection, assuming Corollary \ref{cor:ecr-unramified},
we prove the following correspondence of the
local factors of the standard $L$-functions.

\begin{proposition}
 \label{prop:functoriality-unramified}
Let $f\in \cA_{\kappa}^H$ be a Hecke eigenform.
\begin{enumerate}
 \item The Arakawa lifting $\cL(f)$ of $f$ is a eigenfunction under the 
action of $\cH(G,K)$.
 \item We have\begin{equation*}
 \label{eq:functoriality-unramified-L}
L_p(s,\cL(f),\mathrm{std})=\zeta_p(s)L_p(s,f,\mathrm{std}).
\end{equation*}

\end{enumerate}
\end{proposition}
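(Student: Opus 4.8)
The plan is to obtain both assertions directly from Corollary \ref{cor:ecr-unramified}, by converting the Eichler commutation relation into an identity between Satake parameters. Throughout, $f\in\cS_{\kappa}^H$ is the given Hecke eigenform and I work at the fixed unramified place $p$.

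First I would dispatch assertion (1), which is exactly Corollary \ref{cor:functoriality}. For $\alpha\in\cH(G,K)$, unfolding the definition of $\cL$ and interchanging the (absolutely convergent) integrals gives
\[
\bigl(\cL(f)*\alpha\bigr)(g)
=\int_{H_{\bQ}\backslash H_{\bA}}
\Bigl(\int_{G_p}\theta(h,gy\inv)\alpha(y)\,dy\Bigr)f(h)\,dh .
\]
Substituting the commutation relation (\ref{eq:Eichler-commutation-relations}), then performing the change of variables $h\mapsto hx\inv$ (legitimate by right-invariance of the measure on $H_{\bQ}\backslash H_{\bA}$) and applying Fubini, the inner integral collapses to $(f*\upsilon(\alpha))(h)=\lambda_p^f(\upsilon(\alpha))\,f(h)$. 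Hence $\cL(f)*\alpha=\lambda_p^f(\upsilon(\alpha))\,\cL(f)$, so $\cL(f)$ is a Hecke eigenform with eigencharacter $\Lambda_p^{\cL(f)}=\lambda_p^f\circ\upsilon$, a composition of $\bC$-algebra homomorphisms and hence itself one.

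For assertion (2), now that $\cL(f)$ is known to be a Hecke eigenform, Lemma \ref{lem:SatakeParameter-G-unramified} supplies a parameter $\Xi=(\Xi_1,\Xi_2)$ with $\Omega_p^{\cL(f)}=\Omega_{\Xi}$ (so $\Lambda_p^{\cL(f)}=\Lambda_{\Xi}$), while Lemma \ref{lem:SatakeParameter-H-unramified} gives $\chi=(\chi_1,\chi_2)$ with $\omega_p^f=\omega_{\chi}$ (so $\lambda_p^f=\lambda_{\chi}$). The crux is to evaluate the two generators. Inserting the explicit $\upsilon$ from Corollary \ref{cor:ecr-unramified}, using $\lambda_{\chi}(\cT_0^H)=1$ and the eigenvalue table of Lemma \ref{lem:eigenvalues-spherical-f-unramified}, I would compute $\Lambda_{\Xi}(\cT_1^G)=\lambda_{\chi}(p\,\cT_1^H+(p^2-1)\cT_0^H)$ and $\Lambda_{\Xi}(\cT_2^G)=\lambda_{\chi}((p^2-p)\cT_1^H+p^2\cT_2^H)$. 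Comparing these with the formulas for $\Lambda_{\Xi}(\cT_1^G)$ and $\Lambda_{\Xi}(\cT_2^G)$ in that same lemma, the additive constants $p^2-1$ and $2p^3-2p^2$ cancel, and (writing $\chi_i,\Xi_i$ for $\chi_i(p),\Xi_i(p)$) I am left with the two symmetric identities
\[
\Xi_1+\Xi_2+\Xi_1\inv+\Xi_2\inv=\chi_1+\chi_2+\chi_1\inv+\chi_2\inv,
\]
\[
(\Xi_1+\Xi_1\inv)(\Xi_2+\Xi_2\inv)=(\chi_1+\chi_1\inv)(\chi_2+\chi_2\inv),
\]
where in the second I have factored $\Xi_1\Xi_2+\Xi_1\Xi_2\inv+\Xi_1\inv\Xi_2+\Xi_1\inv\Xi_2\inv=(\Xi_1+\Xi_1\inv)(\Xi_2+\Xi_2\inv)$, and similarly for $\chi$.

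Setting $a_i=\Xi_i+\Xi_i\inv$ and $b_i=\chi_i+\chi_i\inv$, these read $a_1+a_2=b_1+b_2$ and $a_1a_2=b_1b_2$, so $\{a_1,a_2\}=\{b_1,b_2\}$ as multisets; after permuting the indices of $\Xi$ this forces $\{\Xi_i,\Xi_i\inv\}=\{\chi_i,\chi_i\inv\}$ for each $i$, hence an equality of multisets $\{\Xi_1,\Xi_1\inv,\Xi_2,\Xi_2\inv\}=\{\chi_1,\chi_1\inv,\chi_2,\chi_2\inv\}$. Consequently $\prod_{i=1}^2 L_p(s,\Xi_i)L_p(s,\Xi_i\inv)=\prod_{i=1}^2 L_p(s,\chi_i)L_p(s,\chi_i\inv)$, and multiplying by the common factor $\zeta_p(s)$ built into the definition of $L_p(s,\cL(f),\mathrm{std})$ yields exactly $L_p(s,\cL(f),\mathrm{std})=\zeta_p(s)L_p(s,f,\mathrm{std})$. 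The only genuinely delicate input—the commutation relation and the explicit formula for $\upsilon$—is furnished by Corollary \ref{cor:ecr-unramified}, so the remaining labor is the symmetric-function bookkeeping above; the step most prone to slips is the evaluation of $\lambda_{\chi}(\upsilon(\cT_2^G))$, where the mixed term $(p^3-p^2)(\chi_1+\chi_2+\chi_1\inv+\chi_2\inv)$ must be correctly matched against the cross term in $\Lambda_{\Xi}(\cT_2^G)$ and cancelled using the first identity.
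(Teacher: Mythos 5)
Your proposal is correct and follows essentially the same route as the paper: assertion (1) via the Eichler commutation relation and the explicit $\upsilon$ of Corollary \ref{cor:ecr-unramified} (this is exactly Corollary \ref{cor:functoriality}), and assertion (2) by comparing the eigenvalues on the generators $\cT_1^G,\cT_2^G$ using Lemma \ref{lem:eigenvalues-spherical-f-unramified} to extract the same two symmetric identities in the Satake parameters. Your extra step of deducing the multiset equality $\{\Xi_1^{\pm 1},\Xi_2^{\pm 1}\}=\{\chi_1^{\pm 1},\chi_2^{\pm 1}\}$ is just a slightly more explicit way of drawing the conclusion the paper treats as immediate, and your coefficient $(p^3-p^2)$ in $\lambda_p^f(\upsilon(\cT_2^G))$ is the correct one.
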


\begin{proof}

The first assertion is a direct consequence of Lemma
\ref{lem:hecke-algebras-unramified} and 
Corollary \ref{cor:ecr-unramified}.
To show the second, 
we take $\chi=(\chi_1,\chi_2)\in X_{\mathrm{unr}}(\bQ_p\cross)^2$
such that $\omega_p^f=\omega_{\chi}$ (see
Lemma \ref{lem:SatakeParameter-H-unramified})
and
$\Xi=(\Xi_1,\Xi_2)\in X_{\mathrm{unr}}(\bQ_p\cross)^2$
such that $\Omega_p^{\cL(f)}=\Omega_{\Xi}$ (see
Lemma \ref{lem:SatakeParameter-G-unramified}).
By abuse of notation, 
we write $\chi_i$ and $\Xi_i$ for $\chi_i(p)$ and $\Xi_i(p)$ 
($i=1,2$), respectively.
In view of Lemma \ref{lem:eigenvalues-spherical-f-unramified}, we have
\begin{align}
\label{eq:ev-T1H}  
\lambda_p^f(\cT_1^H)&=p\left(\chi_1+\chi_2
+\chi_1\inv+\chi_2\inv\right),\\ 
\label{eq:ev-T2H}
 \lambda_p^f(\cT_2^H)&=p\left(\chi_1\chi_2+\chi_1\chi_2\inv+\chi_1\inv
\chi_2+\chi_1\inv\chi_2\inv\right)+2p-2,\\
\label{eq:ev-T1G}  
 \Lambda_p^{\cL(f)}(\cT_1^G)&
  =p^2\left(\Xi_1+\Xi_2+\Xi_1\inv+\Xi_2\inv\right)+p^2-1
,\\
\label{eq:ev-T2G}  
 \Lambda_p^{\cL(f)}(\cT_2^G)&
=p^3\left(\Xi_1\Xi_2+\Xi_1\Xi_2\inv+\Xi_1\inv \Xi_2+\Xi_1\inv\Xi_2\inv\right)\\ \notag
&\quad
+p^2(p-1)\left(\Xi_1+\Xi_2+\Xi_1\inv+\Xi_2\inv\right)
+2p^3-2p^2.
\end{align}
By (\ref{eq:Hecke-eigenvalues}), Corollary \ref{cor:ecr-unramified},
(\ref{eq:ev-T1H}) and (\ref{eq:ev-T2H}), we have
\begin{align*}
 \Lambda_p^{\cL(f)}(\cT_1^G)&=\lambda_p^f(\upsilon(\cT_1^G))\\
 &=p\,\lambda_p^f(\cT_1^H) +(p^2-1)\,\lambda_p^f(\cT_0^H) \\
 & =p^2\,\left(\chi_1+\chi_2+\chi_1\inv+\chi_2\inv\right)+p^2-1
\end{align*}
and
\begin{align*}
 \Lambda_p^{\cL(f)}(\cT_2^G)&=\lambda_p^f(\upsilon(\cT_2^G))\\
 &=(p^2-p)\,\lambda_p^f(\cT_1^H) +p^2\,\lambda_p^f(\cT_2^H) \\
 & =(p^2-p)\,\left(\chi_1+\chi_2+\chi_1\inv+\chi_2\inv\right)\\
&\quad+p^3\,\left(\chi_1\chi_2+\chi_1\chi_2\inv+\chi_1\inv
\chi_2+\chi_1\inv\chi_2\inv\right)
+2p^3-2p^2.
\end{align*}
Comparing the above formulas with (\ref{eq:ev-T1G}) and (\ref{eq:ev-T2G}),
we obtain
\begin{align*}
 \Xi_1+\Xi_2+\Xi_1\inv+\Xi_2\inv&=\chi_1+\chi_2+\chi_1\inv+\chi_2\inv,\\
 \Xi_1\Xi_2+\Xi_1\Xi_2\inv+\Xi_1\inv \Xi_2+\Xi_1\inv\Xi_2\inv
&=\chi_1\chi_2+\chi_1\chi_2\inv+\chi_1\inv
\chi_2+\chi_1\inv\chi_2\inv.
\end{align*}
Hence we have
\[
 \prod_{i=1}^2\,(1-\Xi_i p^{-s})\inv\,(1-\Xi_i\inv p^{-s})\inv
=\prod_{i=1}^2\,(1-\chi_i p^{-s})\inv\,(1-\chi_i\inv p^{-s})\inv,
\]
which immediately implies the proposition.
\end{proof}
 \subsection{}
\label{subsec:proof-ecr-unramified}

In this subsection, we give a sketch of the proof of Proposition 
\ref{prop:ecr-unramified}. The detail of the proof is given in Section
\ref{sec:ecr-unramified-details}.
The following fact is easily verified.

\begin{lemma}
 \label{lem:reduction}
For $u_1,u_2\in\GL_2(\bZ_p)$ and $x,y\in\bQ_p^{(2,2)}$, we have
 \begin{align*}
  J_i^H(u_1xu_2,y)&=J_i^H(x,u_1\inv y\, \tp u_2),\\
  J_i^G(u_1xu_2,y)&=J_i^G(x,u_1\inv y\, \tp u_2).
 \end{align*}
\end{lemma}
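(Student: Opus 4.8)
The plan is to derive both identities from a single formal transfer relation for the reformulated Weil representation recalled at the start of this section, and then to absorb the resulting translations using the bi-invariance of the Hecke supports $T_i^H$ and $T_i^G$. The only structural inputs are the explicit formulas $r'(d_H(A),1)\phi(x,y)=|\det A|^2\,\phi(\tp Ax,\tp Ay)$ and $r'(1,d_G(A))\phi(x,y)=\phi((x,y)d_G(A))$, the commutativity of the $H$- and $G$-actions, and the fact that $\GL_2(\bZ_p)$-elements give rise to elements of the maximal compacts $U$ and $K$.

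First I would record, for an arbitrary $\Phi\in\mathscr{S}(\bQ_p^{(2,4)})$, the pointwise identity
\[
[r'(d_H(\tp u_1),d_G(u_2))\Phi](x,y)=\Phi\bigl(u_1xu_2,\,u_1y\,\tp u_2\inv\bigr),
\]
obtained by composing the two displayed actions; replacing $y$ by $u_1\inv y\,\tp u_2$ then turns the right-hand side into $\Phi(u_1xu_2,y)$. Applying this with $\Phi=r'(h,1)\phi_0$ and using that the two factors commute, I would rewrite $[r'(h,1)\phi_0](u_1xu_2,y)$ as $[r'(d_H(\tp u_1)h,1)\,r'(1,d_G(u_2))\phi_0](x,\,u_1\inv y\,\tp u_2)$. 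Since $u_2\in\GL_2(\bZ_p)$ forces $d_G(u_2)\in K=G\cap\GL_4(\bZ_p)$, right multiplication by $d_G(u_2)$ preserves $\bZ_p^{(2,4)}$ and hence $r'(1,d_G(u_2))\phi_0=\phi_0$ (equivalently, Lemma \ref{lem:phi0-UK}(1)); the expression collapses to $[r'(d_H(\tp u_1)h,1)\phi_0](x,\,u_1\inv y\,\tp u_2)$.

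Next I integrate over $h\in T_i^H$ and substitute $h\mapsto d_H(\tp u_1)\inv h$. As $u_1\in\GL_2(\bZ_p)$ we have $d_H(\tp u_1)\in U$, so the left translate $d_H(\tp u_1)T_i^H$ equals $T_i^H$ because the support of $\cT_i^H\in\cH(H,U)$ is left $U$-invariant, and the Haar measure is preserved; this yields $J_i^H(u_1xu_2,y)=J_i^H(x,\,u_1\inv y\,\tp u_2)$. The $G$-identity is proved in the mirror-image way: starting from $\Phi=r'(1,g)\phi_0$ and the same transfer relation, I would instead use $d_H(\tp u_1)\in U$ to kill the $H$-factor via $r'(d_H(\tp u_1),1)\phi_0=\phi_0$, and then absorb the translation $g\mapsto d_G(u_2)\inv g$ using the left $K$-invariance of $T_i^G$ together with invariance of the measure.

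I do not expect a genuine obstacle; the proof is bookkeeping of two commuting actions. The only points demanding care are that the auxiliary elements lie in the correct maximal compacts — $d_H(\tp u_1)\in U$ and $d_G(u_2)\in K$, which is precisely where the hypothesis $u_1,u_2\in\GL_2(\bZ_p)$ (as opposed to $\GL_2(\bQ_p)$) enters — and that $T_i^H$ and $T_i^G$ are genuinely bi-invariant, which holds since $\cT_i^H\in\cH(H,U)$ and $\cT_i^G\in\cH(G,K)$.
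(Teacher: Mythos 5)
Your proof is correct. The paper omits the argument entirely (``The following fact is easily verified''), and what you have written is precisely the intended verification: the transfer identity $[r'(d_H(\tp u_1),d_G(u_2))\Phi](x,y)=\Phi(u_1xu_2,\,u_1y\,\tp u_2\inv)$ checks out against the displayed formulas for $r'$ (with $|\det u_1|_p^2=1$), the elements $d_H(\tp u_1)$ and $d_G(u_2)$ do lie in $U$ and $K$ respectively, and the bi-invariance of $T_i^H$, $T_i^G$ together with invariance of Haar measure absorbs the translations exactly as you describe.
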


Thus we may (and do) assume $x=\Nmat{p^{\alpha}&0}{0&p^{\beta}}\;\;
(\alpha\le\beta)$ and put
\begin{align*}
 J_i^H(y;\alpha,\beta)&=J_i^H\left(\Nmat{p^{\alpha}&0}{0&p^{\beta}},y\right),\\
 J_i^G(y; \alpha,\beta)&=J_i^G\left(\Nmat{p^{\alpha}&0}{0&p^{\beta}},y\right)
\end{align*}
to simplify the notation.
We have $J_i^H(y; \alpha,\beta)=J_i^G(y; \alpha,\beta)=0$ if $\beta\le -2$.
Henceforth we suppose that $\beta\ge -1$.

We denote by $\sigma$ the characteristic function of $\bZ_p$
and set
\[
 [i_1,i_2,i_3,i_4](y)=\prod_{k=1}^4\,\sigma(p^{-i_k}y_k)
\qquad \left(y=\Nmat{y_1&y_2}{y_3&y_4}\in\bQ_p^{(2,2)}\right).
\]

Proposition \ref{prop:ecr-unramified} is a straightforward 
consequence of the following four lemmas, whose
proofs are given in Section \ref{sec:ecr-unramified-details}.

\begin{lemma}
 \label{lem:J1H}
\begin{enumerate}
 \item If $\beta<-1$, we have
\[
 J_1^H(y; \alpha,\beta)=0.
\]
 \item If $\alpha=\beta=-1$, we have
\[
 J_1^H(y; \alpha,\beta)=0.
\]
 \item If $\alpha=0$ and $\beta=-1$, we have
\[
 J_1^H(y; \alpha,\beta)=p\inv[0,0,-1,-1](y)\,\sigma(p\inv y_2-y_3).
\]
 \item If $\alpha\ge 1$ and $\beta=-1$, we have
\[
 J_1^H(y; \alpha,\beta)=p\inv[0,1,-1,-1](y).
\]
 \item If $\alpha=\beta=0$, we have
\[
 J_1^H(y; \alpha,\beta)
=p\inv[-1,-1,-1,-1](y)\,\sigma(y_2-y_3)\sigma(p\,\det y)+
[0,0,0,0](y).
\]
 \item If $\alpha\ge 1$ and $\beta=0$, we have
\[
 J_1^H(y; \alpha,\beta)=[0,0,0,0](y)+p\inv [-1,0,-1,0](y)
-p\inv [0,0,-1,0](y)+p\inv [0,0,-1,-1](y)+p^2 [1,1,0,0](y).
\]
 \item If $\alpha\ge \beta\ge 1$, we have
\begin{align*}
  J_1^H(y; \alpha,\beta)&=p^2 [0,0,0,0](y)\,\sigma(p\inv \det y)
+p\inv [-1,-1,-1,-1](y)\,\sigma(p \det y)\\
&\quad +p^3 [1,1,1,1](y)+[0,0,0,0](y).
\end{align*}

\end{enumerate}
\end{lemma}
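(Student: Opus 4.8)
The plan is to convert the group integral defining $J_1^H$ into a finite sum indexed by the cosets furnished by Lemma~\ref{lem:hecke-operators-H-unramified}, and then to evaluate the Weil representation on each representative. Since $\phi_0$ is the characteristic function of $\bZ_p^{(2,4)}$, Lemma~\ref{lem:phi0-UK}(1) gives $r'(u,1)\phi_0=\phi_0$ for $u\in U$, so $r'(h,1)\phi_0$ is right $U$-invariant; because each coset in Lemma~\ref{lem:hecke-operators-H-unramified} has volume $\mathrm{vol}(U)=1$, the integral collapses to
\[
J_1^H(x,y)=\sum_{h}\,r'(h,1)\phi_0(x,y),
\]
the sum running over the four families of representatives of $T_1^H$.

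Each representative has the form $n_H(b)\,d_H(A)$, after absorbing $\nu_H(a)=d_H\sNmat{1&a}{0&1}$ into the Levi part (so that $A=\sNmat{1&a}{0&1}D$ for the relevant diagonal $D$). Using the reformulated Weil representation of Section~\ref{subsec:ecr-unramified},
\[
r'(n_H(b)\,d_H(A),1)\phi_0(x,y)=|\det A|_p^2\,\psi\!\bigl(-b\,\Tr(J\inv\tp x J y)\bigr)\,\phi_0(\tp A x,\tp A y).
\]
After normalizing $x=\sNmat{p^\alpha&0}{0&p^\beta}$ by Lemma~\ref{lem:reduction}, the factor $\phi_0(\tp A x,\,\cdot\,)$ is the gate that decides which family survives: the requirement $\tp A x\in\bZ_p^{(2,2)}$ depends only on $A$ and on $(\alpha,\beta)$, and it is exactly this requirement that separates the seven ranges of $(\alpha,\beta)$ in the statement. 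For the surviving representatives, $\phi_0(\,\cdot\,,\tp A y)$ becomes an explicit product of divisibility conditions on the entries of $y$, i.e.\ one of the symbols $[i_1,i_2,i_3,i_4]$, while $|\det A|_p^2$ contributes $p^{-2}$ when $\det A=p$ and $p^{2}$ when $\det A=p\inv$.

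It remains to carry out the sums over $a,b\in\bZ_p/p\bZ_p$. The $b$-sum is the character sum $\sum_b\psi(-b\,\Tr(J\inv\tp x J y))$, which equals $p$ when $\Tr(J\inv\tp x J y)\in\bZ_p$ and vanishes otherwise; combined with the preceding divisibility conditions it thus supplies a factor $p$ and, where relevant, an additional congruence recorded by an auxiliary factor such as $\sigma(p\inv y_2-y_3)$ or $\sigma(p\det y)$. The $a$-sum behaves similarly, either producing a factor $p$ or sharpening the support. Adding the contributions of the four families in each of the seven ranges yields the stated closed forms. The main obstacle is purely combinatorial: one must intersect the several divisibility conditions arising from $\phi_0(\tp A x,\tp A y)$ with those forced by the character sums, and check that the resulting indicator functions coalesce into the single symbols $[i_1,i_2,i_3,i_4]$ (together with the extra $\sigma$-factors) carrying the correct power of $p$. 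This verification is routine but lengthy, and is precisely what is deferred to Section~\ref{sec:ecr-unramified-details}.
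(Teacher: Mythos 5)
Your setup is exactly the paper's: by Lemma \ref{lem:hecke-operators-H-unramified} the integral over $T_1^H$ collapses (since $r'(h,1)\phi_0$ is right $U$-invariant and $\mathrm{vol}(U)=1$) into four families of terms $J_1^H[i]$, each evaluated via the reformulated Weil representation, with the $b$-sum giving a character sum and the normalization $x=\sNmat{p^{\alpha}&0}{0&p^{\beta}}$ from Lemma \ref{lem:reduction} deciding which families survive. Up to that point there is nothing to object to.

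The gap is that the lemma \emph{is} the seven closed forms, and you have not derived any of them; moreover your description of the remaining work understates what actually happens, in a way that would mislead the computation. The $a$-sum does not merely ``produce a factor $p$ or sharpen the support'': for the families containing $\nu_H(a)$ one is led to sums of the shape $\sum_{a}\sigma(p^{-1}(y_1+ay_2))\,\sigma(p^{-1}(y_3+ay_4))$ against the support condition $\sigma(y_2)\sigma(y_4)$, and these evaluate (paper's Lemma \ref{lem:formula-5}) to signed three-term combinations such as
\[
A(y)=[0,0,0,0]\,\sigma(p\inv\det y)+p\,[1,1,1,1]-[0,1,0,1],
\]
not to a single symbol times a power of $p$. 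It is precisely these identities that produce the determinant factors $\sigma(p\det y)$, $\sigma(p\inv\det y)$ in cases (5)--(7) and the negative terms such as $-p\inv[0,0,-1,0]$ in case (6), and the final formulas only emerge after cancellations \emph{between} the four families (e.g.\ in case (7) the $-p^2[1,1,0,0]$ from $\cJ[3]$ cancels against $\cJ[4]=p^2[1,1,0,0]$). A further auxiliary identity (Lemma \ref{lem:formula-4}, rewriting $[-1,0,-1,-1]\,\sigma(p\det y)$ as $[-1,0,-1,0]+[0,0,-1,-1]-[0,0,-1,0]$) is needed in case (6). Without supplying these evaluations your argument cannot reach, or even check, the stated right-hand sides; deferring them to ``Section \ref{sec:ecr-unramified-details}'' is deferring to the very proof you were asked to give.
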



\begin{lemma}
 \label{lem:J2H}
\begin{enumerate}
\item If $\beta<-1$, we have
\[
 J_2^H(y; \alpha,\beta)=0.
\]
 \item If $\alpha=\beta=-1$, we have
\[
 J_2^H(y; \alpha,\beta)=p^{-2} [-1,-1,-1,-1](y)\,\sigma(p\inv(y_2-y_3)).
\]
 \item If $\alpha=0$ and $\beta=-1$, we have
\[
 J_2^H(y; \alpha,\beta)=p^{-2} [-1,0,-1,-1](y)\,\sigma(p\inv y_2-y_3).
\]
 \item If $\alpha\ge 1$ and $\beta=-1$, we have
\[
 J_2^H(y; \alpha,\beta)=[1,1,-1,-1](y)+p^{-2}[-1,1,-1,-1](y).
\]
 \item If $\alpha=\beta=0$, we have
\begin{align*}
  J_2^H(y; \alpha,\beta)&=
p^{-2}[-1,-1,-1,-1](y)\,\sigma(y_2-y_3)+p[0,0,0,0](y)
\,\sigma(p\inv(y_2-y_3))
-[0,0,0,0].
\end{align*}
 \item If $\alpha\ge 1$ and $\beta=0$, we have
\begin{align*}
  J_2^H(y; \alpha,\beta)&=
[0,0,-1,-1](y)\,\sigma(\det y)+p[1,1,0,0](y)+p[0,1,0,0](y)\\
 &\quad +p^{-2} [-1,0,-1,-1](y)-2[0,0,0,0](y).
\end{align*}
 \item If $\alpha\ge \beta\ge 1$, we have
\begin{align*}
  J_2^H(y; \alpha,\beta)&=p\,[0,0,0,0](y)\,\sigma(p\inv \det y)
+[-1,-1,-1,-1](y)\,\sigma(\det y)+(p^4+p^2)[1,1,1,1](y)
\\
&\quad +p^{-2} [-1,-1,-1,-1](y)+(p-2)[0,0,0,0](y).
\end{align*}

\end{enumerate}
\end{lemma}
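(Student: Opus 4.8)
The plan is to evaluate the integral
$J_2^H(y;\alpha,\beta)=\int_{T_2^H} r'(h,1)\phi_0\!\left(\Nmat{p^{\alpha}&0}{0&p^{\beta}},y\right)dh$
directly from the explicit coset decomposition of $T_2^H$ furnished by Lemma \ref{lem:hecke-operators-H-unramified}, where by Lemma \ref{lem:reduction} we have already reduced to the diagonal argument $x=\Nmat{p^{\alpha}&0}{0&p^{\beta}}$ with $\alpha\le\beta$. Since the Haar measure on $H_p$ is normalized by $\mathrm{vol}(U)=1$ and $r'(u,1)\phi_0=\phi_0$ for $u\in U$ (Lemma \ref{lem:phi0-UK}), the integrand is constant on each coset $g_jU$, so the integral collapses to the finite sum $\sum_j r'(g_j,1)\phi_0(x,y)$ over the six families of representatives listed for $T_2^H$. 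The entire computation thus reduces to evaluating $r'(g_j,1)\phi_0$ on each representative and collecting the results.

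First I would evaluate each representative by writing it in the form $n_H(b)\,d_H(A)$ and applying the reformulated Weil representation of Section \ref{subsec:ecr-unramified}:
\[
 r'(n_H(b)d_H(A),1)\phi_0(x,y)
 = |\det A|^2\,\psi\!\left(-b\,\Tr(J\inv \tp x J\,y)\right)\,\phi_0(\tp A\,x,\tp A\,y).
\]
For diagonal $x$ one computes $J\inv \tp x J=\Nmat{p^{\beta}&0}{0&p^{\alpha}}$, so the phase is $\psi(-b(p^{\beta}y_1+p^{\alpha}y_4))$, while the factor $\phi_0(\tp A\,x,\tp A\,y)$ unwinds into a product of integrality conditions $\sigma(\cdot)$ recorded by the symbols $[i_1,i_2,i_3,i_4](y)$ together with conditions on the entries of $\tp A\,x$; the latter either hold identically or annihilate the term, which is precisely what forces the vanishing when $\beta<-1$ and fixes the coarse shape of each case.

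Next I would carry out the finite sums over the indices $a,b$ in the unipotent families. For the family $n_H(b)d_H(p\,1_2)$ with $b\in\bZ_p/p^2\bZ_p$ the phase sum is governed by orthogonality, $\sum_{b\in\bZ_p/p^n\bZ_p}\psi(-bc)=p^n$ if $c\in\bZ_p$ and $0$ if $c\in p^{-n}\bZ_p\setminus\bZ_p$; for the family $n_H(p\inv b)$ with $b\in\bZ_p\cross/p\bZ_p$ the analogous sum yields $p-1$ when the relevant linear form is $\equiv 0\pmod p$ and $-1$ otherwise. The $\nu_H(a)$ and $\nu_H(p\inv a)$ families contribute via direct integrality counting. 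Together these produce the auxiliary congruence conditions visible in the statement — the factors $\sigma(p\inv(y_2-y_3))$, $\sigma(p\inv y_2-y_3)$, $\sigma(\det y)$ and $\sigma(p\inv\det y)$ — and supply the powers of $p$ (the $p$, $p^{-2}$, $p^4+p^2$, and so on) weighting each symbol.

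Finally I would assemble the contributions case by case, matching the seven regimes $\beta<-1$; $\alpha=\beta=-1$; $\alpha=0,\beta=-1$; $\alpha\ge1,\beta=-1$; $\alpha=\beta=0$; $\alpha\ge1,\beta=0$; and $\alpha\ge\beta\ge1$. The main obstacle will be the bookkeeping at this last stage: the support regions of different representative families overlap (a single $y$ may satisfy the integrality constraints of several families at once), so one must track these overlaps to obtain the correct coefficients. The negative constant corrections $-[0,0,0,0](y)$, $-2[0,0,0,0](y)$ and $(p-2)[0,0,0,0](y)$ in the statement are exactly the residual terms of this inclusion–exclusion, with the $-1$ from the $\bZ_p\cross/p\bZ_p$ character sum as their ultimate source; verifying that all cross terms collapse into precisely these closed forms rather than a longer sum of symbols is the delicate, computation-heavy heart of the argument. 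The full verification follows the same scheme already used for $J_1^H$ in Lemma \ref{lem:J1H} and is carried out in detail in Section \ref{sec:ecr-unramified-details}.
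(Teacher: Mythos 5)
Your strategy coincides with the paper's: reduce to diagonal $x$ by Lemma \ref{lem:reduction}, expand $J_2^H$ as the sum of six terms over the coset families of Lemma \ref{lem:hecke-operators-H-unramified}, evaluate $r'(\cdot,1)\phi_0$ on each representative, and carry out the finite character and lattice sums. There is, however, one concrete error that would derail the computation: your phase. You compute $\Tr(J\inv\tp xJ\,y)=p^{\beta}y_1+p^{\alpha}y_4$, but the calculation that yields the stated formulas uses the phase $\Tr(\tp xJ\,y)=p^{\alpha}y_3-p^{\beta}y_2$, i.e.\ the quantity $\xi$ of Section \ref{sec:ecr-unramified-details} (the display in Section \ref{subsec:ecr-unramified} carries a spurious $J\inv$, and the two are not equal). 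With your phase the orthogonality relations impose congruences on $y_1,y_4$, so case (2), for example, would come out as $p^{-2}[-1,-1,-1,-1](y)\,\sigma(p\inv(y_1+y_4))$ instead of the asserted $\sigma(p\inv(y_2-y_3))$; the plan as written cannot reach the statement.

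Two further remarks. The assembly is not an inclusion--exclusion over overlapping supports: the six terms $J_2^H[i]$ are computed independently and simply added, and the negative constants $-[0,0,0,0]$, $-2[0,0,0,0]$, $(p-2)[0,0,0,0]$ arise \emph{inside} individual terms --- from the restricted sums over $\bZ_p\cross/p\bZ_p$ in $J_2^H[5]$ and $J_2^H[6]$ (e.g.\ $\sum_{b\in\bZ_p\cross/p\bZ_p}\psi(-p\inv b\xi)=p\,\sigma(p\inv\xi)-1$, and $\sum_{a\in\bZ_p\cross/p\bZ_p}=\sum_{a\in\bZ_p/p\bZ_p}-(a{=}0)$) combined with the identities of Lemmas \ref{lem:formula-4} and \ref{lem:formula-5} for $A$, $A'$, $B$. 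Finally, you defer the entire seven-case verification to Section \ref{sec:ecr-unramified-details}, which is where the paper's proof actually lives; since that bookkeeping is the whole content of the lemma, the proposal is an outline of the right method rather than a proof.
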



\begin{lemma}
 \label{lem:J1G}
\begin{enumerate}
\item If $\beta<-1$, we have
\[
 J_1^G(y; \alpha,\beta)=0.
\]
 \item If $\alpha=\beta=-1$, we have
\[
 J_1^G(y; \alpha,\beta)=0.
\]
 \item If $\alpha=0$ and $\beta=-1$, we have
\[
 J_1^G(y; \alpha,\beta)= [0,0,-1,-1](y)\,\sigma(p\inv y_2-y_3).
\]
 \item If $\alpha\ge 1$ and $\beta=-1$, we have
\[
 J_1^G(y; \alpha,\beta)=[0,1,-1,-1](y).
\]
 \item If $\alpha=\beta=0$, we have
\begin{align*}
  J_1^G(y; \alpha,\beta)&=
[-1,-1,-1,-1](y)\,\sigma(y_2-y_3)\sigma(p\,\det y)+(p^2+p-1)[0,0,0,0](y).
\end{align*}
 \item If $\alpha\ge 1$ and $\beta=0$, we have
\begin{align*}
  J_1^G(y; \alpha,\beta)&=
p^3\,[1,1,0,0](y)+(p^2+p-1)[0,0,0,0](y)+[0,0,-1,-1](y)\\
 & \quad -[0,0,-1,0](y)+[-1,0,-1,0](y).
\end{align*}
 \item If $\alpha\ge \beta\ge 1$, we have
\begin{align*}
  J_1^G(y; \alpha,\beta)&=p^3\,[0,0,0,0](y)\,\sigma(p\inv \det y)
+[-1,-1,-1,-1](y)\,\sigma(p\det y)+p^4[1,1,1,1](y)
\\
&\quad +(p^2+p-1)[0,0,0,0](y).
\end{align*}

\end{enumerate}

\end{lemma}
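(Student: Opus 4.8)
The plan is to convert the orbital integral defining $J_1^G$ into a finite sum over the coset representatives of $T_1^G$ and then evaluate that sum entry by entry. Since the Weil representation acts by $r'(1,g)\phi(x,y)=\phi((x,y)g)$, we have $J_1^G(x,y)=\int_{T_1^G}\phi_0((x,y)g)\,dg$. By Lemma \ref{lem:phi0-UK} the function $\phi_0$ is right $K$-invariant, and the Haar measure is normalized by $\mathrm{vol}(K)=1$; hence, writing $T_1^G=\bigsqcup_j g_jK$ with the representatives furnished by Lemma \ref{lem:hecke-operators-G-unramified}, the integral collapses to
\[
 J_1^G(x,y)=\sum_j\phi_0((x,y)g_j).
\]
This is the engine of the whole computation: $\phi_0((x,y)g_j)$ is simply the indicator that every entry of the $2\times 4$ matrix $(x,y)g_j$ lies in $\bZ_p$.

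First I would reduce to diagonal $x$. By Lemma \ref{lem:reduction} and the elementary divisor theorem we may write $x=u_1\,\mathrm{diag}(p^{\alpha},p^{\beta})\,u_2$ with $u_1,u_2\in\GL_2(\bZ_p)$ and $\alpha\le\beta$, at the cost of replacing $y$ by $u_1\inv y\,\tp u_2$; thus it suffices to evaluate $J_1^G(y;\alpha,\beta)$ for $x_0=\mathrm{diag}(p^{\alpha},p^{\beta})$. Writing $y=\Nmat{y_1&y_2}{y_3&y_4}$, I note that the second diagonal entry $p^{\beta}$ of $x_0$ sits in column $2$, which the five representative families rescale by at most one power of $p$ (and which shearing by $\nu_G(a)$ leaves untouched since the lower entry of column $1$ vanishes); hence no entry of $(x_0,y)g_j$ is integral once $\beta<-1$, giving the vanishing in part (1). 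For $\beta\ge -1$ I would carry out the block multiplication $(x_0,y)\,g_j$ explicitly for each family. As each $g_j$ is a product of the unipotent factors $n_G(b,c,d)$, $\nu_G(a)$ and a torus factor $d_G(\cdot)$, the entries of $(x_0,y)g_j$ are explicit low-degree expressions in the $y_i$ and the parameters (e.g.\ $y_i+p^{\alpha-1}b$ or $y_i-ay_j$), rescaled by powers of $p$, so integrality becomes a system of congruences in $y$ and the parameters.

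The remaining work is the counting. For each family and each admissible pair $(\alpha,\beta)$ the integrality conditions separate into three types: pure integrality constraints on the $y_i$, producing the indicators $[i_1,i_2,i_3,i_4]$; matching conditions that pin a parameter residue to a $y$-entry, each contributing a factor $1$ and, where one parameter is matched by two entries at once, a congruence such as $\sigma(y_2-y_3)$; and, for the families indexed by $\Lambda_i$, the rank condition on $\Nmat{b&c}{c&d}\bmod p$, which becomes a determinant condition of the shape $\sigma(p\,\det y)$. Summing over the parameters left free after matching (ranging over residues mod $p$ or $p^2$) produces the integer coefficients $p$, $p^2$, $p^3$ and combinations such as $p^2+p-1$. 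Assembling the contributions of the five families then yields the seven formulas of the lemma.

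The main obstacle I anticipate is organizational rather than conceptual: managing the interaction between the five representative families and the seven $(\alpha,\beta)$-regimes without error, and in particular handling overlaps correctly. Several families cover the same locus in $y$, so the answer must be repackaged as signed combinations of indicators (for instance the $[0,0,-1,-1]-[0,0,-1,0]$ in part (6)), and the requirement that the rank be \emph{exactly} $1$ forces an inclusion--exclusion that removes the rank-$0$ locus $[0,0,0,0]$ -- this is the source of the $-1$ in the coefficient $p^2+p-1$. Keeping the signs, the congruence side-conditions, and the parameter multiplicities mutually consistent across all cases is where the real care is needed; the individual matrix products and residue counts are routine.
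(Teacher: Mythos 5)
Your plan is exactly the paper's proof: unfold $J_1^G$ as the finite sum $\sum_j\phi_0((x,y)g_j)$ over the five coset families of Lemma \ref{lem:hecke-operators-G-unramified}, reduce to diagonal $x$ via Lemma \ref{lem:reduction}, and evaluate the resulting congruence counts regime by regime, with the inclusion--exclusion for the exact-rank condition on $\Lambda_1$ producing the $-1$ in $p^2+p-1$ just as you say (the paper packages the recurring residue sums into the auxiliary quantities $A$, $A'$ and the $\Lambda_i$-sums of its Lemmas \ref{lem:formula-5}--\ref{lem:formula-7}). The only slip is the inequality between the exponents: the case list forces $\alpha\ge\beta$, so it is the column carrying $p^{\beta}$ whose failure of integrality kills everything when $\beta<-1$.
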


%
\begin{lemma}
 \label{lem:J2G}
\begin{enumerate}
\item If $\beta<-1$, we have
\[
 J_2^G(y; \alpha,\beta)=0.
\]
 \item If $\alpha=\beta=-1$, we have
\[
 J_2^G(y; \alpha,\beta)=[-1,-1,-1,-1](y)\,\sigma(p\inv(y_2-y_3)).
\]
 \item If $\alpha=0$ and $\beta=-1$, we have
\[
 J_2^G(y; \alpha,\beta)= (p-1)\,[0,0,-1,-1](y)\,\sigma(p\inv y_2-y_3)+
[-1,0,-1,-1](y)\,\sigma(p\inv y_2-y_3).
\]
 \item If $\alpha\ge 1$ and $\beta=-1$, we have
\[
 J_2^G(y; \alpha,\beta)=(p-1)\,[0,1,-1,-1](y)+p^2[1,1,-1,-1](y)+[-1,1,-1,-1](y).
\]
 \item If $\alpha=\beta=0$, we have
\begin{align*}
  J_2^G(y; \alpha,\beta)&=p^3\,[0,0,0,0](y)\,\sigma(p\inv(y_2-y_3))
+(p-1)[-1,-1,-1,-1](y)\,\sigma(y_2-y_3)\sigma(p\,\det y)\\
 &\quad + [-1,-1,-1,-1](y)\,\sigma(y_2-y_3)-p[0,0,0,0](y).
\end{align*}
 \item If $\alpha\ge 1$ and $\beta=0$, we have
\begin{align*}
  J_2^G(y; \alpha,\beta)&=p^2\,[0,0,-1,-1](y)\,\sigma(\det y)+p^4\,[1,1,0,0](y)
            +p^3\,[0,1,0,0](y)\\
&\quad +(p-1)\,[-1,0,-1,0](y)+(-p+1)\,[0,0,-1,0](y)\\
&\quad +(p-1)\,[0,0,-1,-1](y)
+[-1,0,-1,-1](y)+(-p^2-p)\,[0,0,0,0](y).
\end{align*}
 \item If $\alpha\ge \beta\ge 1$, we have
\begin{align*}
  J_2^G(y; \alpha,\beta)&=
[-1,-1,-1,-1](y)\,(p^2\sigma(\det y)+1)
+
p^4\,[0,0,0,0](y)\,\sigma(p\inv \det y)\\
&\quad
+(p-1)[-1,-1,-1,-1](y)\,\sigma(p\det y)
+(p^6+p^5)\,[1,1,1,1](y)+[-1,-1,-1,-1](y)\\
&\quad
 +(p^3-p^2-p)[0,0,0,0](y).
\end{align*}

\end{enumerate}

\end{lemma}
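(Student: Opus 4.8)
The plan is to evaluate the integral defining $J_2^G$ by reducing it to a finite sum over the explicit single cosets of $T_2^G$ listed in Lemma \ref{lem:hecke-operators-G-unramified}. Since $\phi_0$ is the characteristic function of $\bZ_p^{(2,4)}$ and $r'(1,k)\phi_0=\phi_0$ for $k\in K$ by Lemma \ref{lem:phi0-UK}, the function $g\mapsto r'(1,g)\phi_0(x,y)=\phi_0((x,y)g)$ is right $K$-invariant; hence, with $\mathrm{vol}(K)=1$,
\[
J_2^G(x,y)=\int_{T_2^G}\phi_0((x,y)g)\,dg=\sum_j\phi_0\!\left((x,y)g_j\right),
\]
the sum running over coset representatives $g_j$ of $T_2^G=\bigsqcup_j g_jK$. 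By Lemma \ref{lem:reduction} I may take $x=\Nmat{p^{\alpha}&0}{0&p^{\beta}}$ with $\alpha\le\beta$, and one checks directly that the sum is empty (so $J_2^G=0$) once $\beta\le-2$; this settles case (1) and fixes the range $\beta\ge-1$ for the rest.

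Next I would record a single master formula that evaluates $\phi_0((x,y)g_j)$ uniformly. Each representative factors as $g=n_G(S_0)\,\nu_G(a)\,d_G(A)$ with $S_0$ the symmetric upper-right block, and a block computation gives $g=\Nmat{B&S_0\,\tp B\inv}{0&\tp B\inv}$ with $B=\Nmat{1&a}{0&1}A$. Consequently
\[
(x,y)\,g=\left(xB,\ (xS_0+y)\,\tp B\inv\right),
\]
so that $\phi_0((x,y)g)=1$ exactly when $xB\in\bZ_p^{(2,2)}$ and $(xS_0+y)\,\tp B\inv\in\bZ_p^{(2,2)}$, and is $0$ otherwise. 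Substituting $x=\Nmat{p^{\alpha}&0}{0&p^{\beta}}$ turns both conditions into explicit integrality/congruence conditions on the entries $y_1,\dots,y_4$ of $y$ and on the summation parameters $a,b,c,d,l,m,\dots$ attached to the family under consideration.

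The bulk of the proof is then the case-by-case assembly. For each family of Lemma \ref{lem:hecke-operators-G-unramified} and each value of $(\alpha,\beta)$ in (2)--(7), I would apply the master formula, use the condition $xB\in\bZ_p^{(2,2)}$ to decide whether the family contributes at all, and then count the number of summation parameters in the prescribed residue ranges for which $(xS_0+y)\,\tp B\inv$ is integral. This count produces the integer coefficients ($p,p^2,p^3,p^4,p^6+p^5$, and so on), while the constraint that $S_0$ be symmetric forces relations among the entries of $y$ that appear as the extra factors $\sigma(p\inv(y_2-y_3))$, $\sigma(y_2-y_3)$, $\sigma(\det y)$, $\sigma(p\inv\det y)$ and $\sigma(p\,\det y)$ in the stated formulas; the functions $[i_1,i_2,i_3,i_4](y)$ record the individual conditions $\ord_p(y_k)\ge i_k$. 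Summing the contributions of all families yields the right-hand sides of (2)--(7).

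The main obstacle will be the bookkeeping in the families where the parameters enter nonlinearly or with overlapping support, most notably the family $n_G(p\inv a^2d+l,\,p\inv ad+m,\,p\inv d)\nu_G(a)d_G\Nmat{p&0}{0&1}$, whose quadratic dependence on $a$ requires completing the square (equivalently, counting solutions of a quadratic congruence) to obtain the correct multiplicity, and the $\Lambda_2$-family, where the rank-$2$ condition modulo $p$ must be tracked. Because several families contribute over the same region of $y$-space, the negative coefficients (such as $-p\,[0,0,0,0]$ in (5) and $(-p^2-p)\,[0,0,0,0]$ in (6)) emerge only after an inclusion--exclusion reconciling these overlaps; securing these cancellations exactly, rather than merely the individual positive counts, is the delicate point. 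The computation is otherwise mechanical once the master formula is in place, but it is lengthy, which is why the detailed verification is deferred to Section \ref{sec:ecr-unramified-details}.
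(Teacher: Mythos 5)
Your proposal follows essentially the same route as the paper: Section \ref{sec:ecr-unramified-details} likewise writes $J_2^G(y;\alpha,\beta)=\sum_{i=1}^{10}J_2^G[i](y;\alpha,\beta)$ over the ten coset families of Lemma \ref{lem:hecke-operators-G-unramified}, and your ``master formula'' $(x,y)g=(xB,(xS_0+y)\,\tp B\inv)$ is exactly the mechanism underlying each $J_2^G[i]$. The only difference is one of execution rather than method: the paper discharges the counting and the inclusion--exclusion you flag as delicate by means of the prepared sum identities (Lemmas \ref{lem:formula-1}--\ref{lem:formula-7} and the functions $A$, $A'$, $B$), whereas your sketch leaves that verification to be carried out.
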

 \section{Eichler commutation relations at ramified places}
\label{sec:ecr-ramified}

 \subsection{}
\label{subsec:ecr-ramified}

In this section, we consider the ramified case; namely we assume that
 $p$ divides $d_B$. Throughout this section, we often drop the subscript 
$p$. 
Recall that $\cO$ is the maximal order of $B$
and
$\grP$ is the maximal ideal of $\cO$.
We fix a prime element $\Pi$ of $\cO$ so that $\grP=\Pi\cO$.
Define $\ord_{\Pi}\colon B\cross\to\bZ$ such that
$\ord_{\Pi}(\alpha)=n$ for  $\alpha\in \Pi^n\cO\cross$.
We fix a prime element $\pi$ of $\bQ_p$.

Let
\begin{align*}
 H&=\left\{h\in\GL_2(B)\mid \tp\ol{h}\Nmat{0&1}{-1&0}h
=\Nmat{0&1}{-1&0}\right\},\\
 U&=G\,\cap \GL_2(\cO)\\
 G&=\left\{g\in\GL_2(B)\mid \tp\ol{g}\Nmat{0&1}{1&0}g
=\Nmat{0&1}{1&0}\right\},\\
 K&=G\,\cap \Nmat{\cO&\grP\inv}{\grP&\cO}.
\end{align*}

Define subgroups of $H$ by
\begin{align*}
N_H&=
\left\{
n_H(b):=\Nmat{1&b}{0&1}
\mid b\in\bQ_p
\right\},\\
M_H&=
\left\{
d_H(\alpha):=\Nmat{\alpha&0}{0& \ol{\alpha}\inv}
\mid \alpha\in B\cross
\right\}.
\end{align*}
Then $H=N_HM_HU$.

We also define subgroups of $G$ by
\begin{align*}
 N_G &= \left\{n_G(\beta):=
\Nmat{1&\beta}{0&1}
\mid \beta\in B^-\right\},\\
M_G&=\left\{
d_G(\alpha):=\Nmat{\alpha&0}{0&\ol{\alpha}\inv}
\mid \alpha\in B\cross
\right\}.
\end{align*}
Then $G=N_GM_GK$.

The Weil representation $r'$ given in Lemma \ref{lem:r'}
is reformulated as follows: For $\phi\in\mathscr{S}(B^{(1,2)})$
and $X\in B^{(1,2)}$,
\begin{align*}
 r'(1,g)\phi(X)&=\phi(Xg)
\qquad(g\in G),\\
 r'(d_H(\alpha),1)\phi(X)&=|N(\alpha)|^2\,\phi(\ol{\alpha}X)
\qquad(\alpha\in B\cross), \\
 r'(n_H(b),1)\phi(x,y)& =
\psi(-b\,\ol{X}Q\,\tp X)\phi(X)
\qquad(b\in \bQ_p).
\end{align*}

 \subsection{}
\label{subsec:hecke-operators-G-ramified} 

In this subsection, we prepare several facts on Hecke algebras.

For $n\in\bZ$, we put
\begin{align*}
 X_n&=\{\beta\in B^-\mid \ord_{\Pi}(\beta)\ge n\},\\
 X_n^0&=\{\beta\in B^-\mid \ord_{\Pi}(\beta)= n\}=X_n\setminus X_{n+1},\\
\dd_n^G&=d_G(\Pi^n),\\
\dd_n^H&=d_H(\Pi^n).
\end{align*}
Note that
\begin{align*}
 |X_n/\pi X_n|&=p^3,\\
 |X_n/X_{n+1}|& =
\begin{cases}
 p & (n\,\text{ is even}),\\
 p^2 & (n\,\text{ is odd}).
\end{cases}
\end{align*}

Let $\cT_1^G\in\cH(G,K)$ (respectively $\cT_1^H\in\cH(H,U)$) 
the characteristic function of $T_1^G=K\dd_1^GK$
(respectively $T_1^H=U \dd_1^HU$). 

\begin{lemma}
 \label{lem:hecke-algebras-ramified}
We have 
\begin{align*}
 \cH(H,U)&=\bC[\cT_1^H],\\
 \cH(G,K)&=\bC[\cT_1^G].
\end{align*}
\end{lemma}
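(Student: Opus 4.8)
The plan is to deduce both equalities from the Cartan decomposition of these split-rank-one groups together with Satake's structure theorem \cite{Sa}, Section 5.1, Theorem 1, exactly as in the unramified Lemma \ref{lem:hecke-algebras-unramified}. The one genuinely new feature is that, because $B_p$ is a division algebra, both $H$ and $G$ have $\bQ_p$-split rank one, so that a single generator will suffice where the unramified case needed two.

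First I would record the Cartan decompositions
\begin{equation*}
 H=\bigsqcup_{n\ge 0}U\dd_n^HU,\qquad G=\bigsqcup_{n\ge 0}K\dd_n^GK,
\end{equation*}
with $\dd_n^H=d_H(\Pi^n)$ and $\dd_n^G=d_G(\Pi^n)$. Since $B_p$ is division we have $B\cross=\bigsqcup_{n\in\bZ}\Pi^n\cO\cross$, so reducing an element of $M_H$ (resp.\ $M_G$) modulo $\cO\cross$ leaves exactly the representatives $\dd_n$; the content is that every $U$-double coset (resp.\ $K$-double coset) meets $M_H$ (resp.\ $M_G$), which is the Cartan decomposition for a rank-one $p$-adic group. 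To fold $n<0$ onto $n>0$ one uses a Weyl reflection: for $H$ the element $w=J=\Nmat{0&1}{-1&0}$ lies in $U$ and satisfies $w\,d_H(\alpha)\,w\inv=d_H(\ol{\alpha}\inv)$, and since the main involution preserves $\ord_{\Pi}$ this sends $\dd_n^H$ to $\dd_{-n}^H$; for $G$ the analogous reflection lies in the special maximal compact $K$ (note that $K$, not $G_p\cap\GL_2(\cO_p)$, is the relevant class here). Distinctness of the cosets for $n\ge 0$ follows from the elementary divisor of $\dd_nL_p'$ relative to $L_p'$. One should keep in mind that even and odd $n$ both occur: as $\Pi^2\in\pi\cO\cross$, the even $\dd_n^G$ are $K$-equivalent to honest split-torus elements $\mathrm{diag}(\pi^{n/2},\pi^{-n/2})$, while the odd ones give the intermediate vertices of the Bruhat--Tits tree.

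Granting this, the characteristic functions $\cT_n^H$ and $\cT_n^G$ of the respective double cosets ($n\ge 0$, with $\cT_0$ the unit) form $\bC$-bases of $\cH(H,U)$ and $\cH(G,K)$, and both algebras are commutative by \cite{Sa}, Section 5.1. It then remains to prove, by induction on $n$, a leading-term relation of the shape
\begin{equation*}
 \cT_1*\cT_n=c_{n+1}\,\cT_{n+1}+\sum_{0\le m\le n}a_m\,\cT_m,\qquad c_{n+1}\neq 0
\end{equation*}
(and the same for the $G$-operators). Indeed the support of $\cT_1*\cT_n$ lies in $U\dd_1^HU\dd_n^HU$, all of whose Cartan indices are at most $n+1$ by subadditivity of the elementary-divisor invariant, while $\dd_{n+1}=\dd_1^H\dd_n^H$ itself lies in this set and contributes $\dd_{n+1}$ with positive coefficient. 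Solving for $\cT_{n+1}$ gives $\cT_{n+1}\in\bC[\cT_1]$, whence $\cH=\bC[\cT_1]$; the linear independence of the $\cT_n$ then shows $\cT_1$ satisfies no nontrivial polynomial relation, so the algebra is freely generated by it.

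The main obstacle is precisely this last leading-term identity: controlling the product $U\dd_1^HU\dd_n^HU$ (and its $G$-counterpart $K\dd_1^GK\dd_n^GK$) over the ramified division algebra $B_p$ and verifying both that $\dd_{n+1}$ occurs with nonzero multiplicity and that no coset of index exceeding $n+1$ appears. This is the ramified analogue of the explicit coset enumeration performed in Lemmas \ref{lem:hecke-operators-H-unramified}--\ref{lem:hecke-operators-G-unramified}, and it requires the lattice description of $K$ through $\Nmat{\cO&\grP\inv}{\grP&\cO}$; by contrast, the split-rank-one reduction and the commutativity are immediate from Satake once the Cartan decomposition is in place.
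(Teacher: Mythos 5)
Your proposal is correct and takes essentially the same route as the paper: the paper's entire proof of this lemma is the citation of \cite{Sa}, Section 5.1, Theorem 1, and your Cartan-decomposition-plus-leading-term induction is simply a self-contained unpacking of that structure theorem in the relative-rank-one case (with the Weyl folding $n\mapsto -n$ effected by $J\in U$ for $H$ and by an element such as $\Nmat{0&\Pi\inv}{\ol{\Pi}&0}$ of $K$ for $G$). The step you single out as the ``main obstacle''---the leading-term identity---is in fact already closed by the support and subadditivity argument you sketch, since $\dd_{n+1}=\dd_1\dd_n$ contributes with multiplicity at least one, so no explicit coset enumeration is needed for this lemma.
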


\begin{proof}
 This follows from \cite{Sa}, Section 5.1, Theorem 1.
\end{proof}

The following facts are easily verified.

\begin{lemma}
 \label{lem:hecke-operators-H-ramified}
We have
\begin{align*}
 T_1^H&=\bigsqcup_{b\in \bZ_p/\pi\bZ_p}
\,n_H(b)\dd_1^HU \,\sqcup\,
\dd_{-1}^HU.
\end{align*}
\end{lemma}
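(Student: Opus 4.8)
The plan is to decompose the double coset $T_1^H=U\dd_1^H U$ into left $U$-cosets. Writing a general element as $v\dd_1^H$ with $v\in U$, one has $v\dd_1^H U=v'\dd_1^H U$ iff $v\inv v'\in U_0:=U\cap \dd_1^H U(\dd_1^H)\inv$, so the left cosets are indexed by $U/U_0$ and it suffices to identify $U_0$ and to verify that the displayed $p+1$ representatives are distinct and exhaustive. Every computation rests on the Iwasawa decomposition $H=N_HM_HU$ together with the arithmetic of the local division algebra $B_p$: I will use the discrete valuation $\ord_{\Pi}$, the relation $\ol{\Pi}=N(\Pi)\Pi\inv$ with $\ord_p(N(\Pi))=1$ (so $\ol\Pi$ is again a prime element), and the reduction $\cO_p/\grP\cong\bF_{p^2}$, under which the main involution induces the Frobenius of $\bF_{p^2}/\bF_p$.

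First I would settle containment and disjointness, both routine. Each $n_H(b)$ with $b\in\bZ_p$ lies in $U$, so $n_H(b)\dd_1^H U\subseteq T_1^H$ automatically; for the last coset, note $J=\Nmat{0&1}{-1&0}\in U$ and compute $J\dd_1^H J\inv=d_H(\ol{\Pi}\inv)$, which equals $\dd_{-1}^H=d_H(\Pi\inv)$ times the unit factor $d_H(\Pi\ol{\Pi}\inv)\in U$, whence $\dd_{-1}^H U=J\dd_1^H U\subseteq T_1^H$. For disjointness, a direct conjugation gives $(\dd_1^H)\inv n_H(c)\dd_1^H=n_H(N(\Pi)\inv c)$ for $c\in\bQ_p$, so $n_H(b)\dd_1^H U=n_H(b')\dd_1^H U$ exactly when $b-b'\in\pi\bZ_p$, giving the $p$ distinct classes parametrized by $\bZ_p/\pi\bZ_p$; and comparing the $\ord_{\Pi}$-valuation of the $(1,1)$-entry shows $\dd_{-1}^H U$ is distinct from every $n_H(b)\dd_1^H U$.

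The heart of the matter is exhaustion, i.e.\ that these $p+1$ cosets exhaust $U/U_0$. Conjugating $v=\Nmat{a&b}{c&d}\in U$ and using that the $(1,2)$-entry of $(\dd_1^H)\inv v\dd_1^H$ is $\Pi\inv b\ol{\Pi}\inv$ (of valuation $\ord_{\Pi}(b)-2$) identifies $U_0=\{v\in U\mid b\in\grP^2\}$. The key observation is that membership in $H$ forces $\ol{b}d\in\bQ_p$, so $\ord_{\Pi}(b)+\ord_{\Pi}(d)=\ord_{\Pi}(\ol{b}d)$ is even; and if $b\in\grP$ then $\ol v\bmod\grP$ is lower triangular and invertible, forcing $d\in\cO_p\cross$, so by parity $b\in\grP$ already implies $b\in\grP^2$. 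Hence $U_0=\{v\in U\mid b\in\grP\}$ is simply the preimage of the lower-triangular Borel $\ol B$ of $\ol U:=U\bmod\grP$, which is the unitary group of the form $J$ over $\bF_{p^2}/\bF_p$. Since reduction $U\to\ol U$ is surjective, $[U:U_0]=[\ol U:\ol B]$ equals the number of $J$-isotropic lines in $\bP^1(\bF_{p^2})$, namely $p+1$, matching the representatives and finishing the proof; alternatively the count can be read off from the isomorphism $H\simeq H'/Z'$ of Section~\ref{subsec:groups-H}, under which $\dd_1^H$ corresponds to $\mathrm{diag}(1,\pi\inv)\in\GL_2(\bQ_p)$ and the decomposition becomes the classical $T_p$-decomposition of $\GL_2$. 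The step I expect to be most delicate is exactly this index count: the naive congruence condition ``$b\in\grP^2$'' would suggest a further $p^2$-fold splitting, and it is only the parity constraint coming from the isometry relations of $H$ (not the mere lattice condition) that collapses it to give precisely $p+1$.
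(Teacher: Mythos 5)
Your proposal is correct, and it supplies an actual argument where the paper offers none: the text dismisses this decomposition with ``the following facts are easily verified,'' so there is no proof to compare against. Your route --- containment and pairwise distinctness of the $p+1$ listed cosets by direct conjugation, then the index count $[U:U_0]$ with $U_0=U\cap \dd_1^H U(\dd_1^H)\inv=\{v\in U\mid b\in\grP^2\}$ --- is sound, and the observation that the isometry relation forces $\ol{b}d\in\bQ_p$, hence $\ord_{\Pi}(b)+\ord_{\Pi}(d)$ even, hence $b\in\grP\Rightarrow b\in\grP^2$ for $v\in U$, is exactly the point that makes the count come out to $p+1$ rather than $p^2+1$. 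The one assertion you lean on without proof is the surjectivity of the reduction $U\to\ol{U}$ onto the residue unitary group; this is true but is itself a Hensel-type lifting statement. Note, however, that you do not actually need it: the inclusion $U_0=U\cap(\text{preimage of }\ol{B})$ gives an injection $U/U_0\hookrightarrow \ol{U}/\ol{B}$, hence only the upper bound $[U:U_0]\le p+1$ (the number of isotropic lines of the reduced form on $\bP^1(\bF_{p^2})$, which you can count directly from $x_1^px_2-x_2^px_1=0$), and the matching lower bound is already furnished by the $p+1$ distinct cosets you exhibited. With that small rearrangement the argument is complete and self-contained; the alternative via $H\simeq H'/Z'$ and the classical $T_p$-decomposition of $\GL_2(\bQ_p)$ is also viable but would require you to identify the image of $\GL_2(\bZ_p)\times\cO\cross$ with $U$, which is about as much work as the direct count.
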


\begin{lemma}
 \label{lem:hecke-operators-G-ramified}
We have
\begin{align*}
 T_1^G&=\bigsqcup_{\beta\in X_{-1}/\pi X_{-1}}
\,n_G(\beta)\dd_1^GK \, \sqcup\,
\bigsqcup_{\beta\in X_{-2}^0/X_{-1}}\,n_G(\beta)K\,\sqcup\,
\dd_{-1}^GK.
\end{align*}
\end{lemma}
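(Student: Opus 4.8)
The plan is to read the asserted identity as a decomposition of the single double coset $T_1^G=K\dd_1^GK$ (which, by Lemma~\ref{lem:hecke-algebras-ramified}, generates $\cH(G,K)$) into right $K$-cosets, and to prove it by the three standard steps: membership, disjointness, and exhaustion. The organizing tool is the Iwasawa decomposition $G=N_GM_GK$: every right coset $gK$ meeting $T_1^G$ has a representative $n_G(\beta)d_G(\alpha)$, and membership in $K\dd_1^GK$ pins down the $M_G$-part to $\ord_\Pi(\alpha)\in\{1,0,-1\}$ up to $\cO\cross$. These three cases are exactly the three families listed: $\ord_\Pi(\alpha)=1$ gives $n_G(\beta)\dd_1^G$, $\ord_\Pi(\alpha)=-1$ gives $\dd_{-1}^G$, and $\ord_\Pi(\alpha)=0$ forces the singularity into the unipotent part, giving $n_G(\beta)$ with $\ord_\Pi(\beta)=-2$. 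The first family is immediate: for $\beta\in X_{-1}$ we have $\ord_\Pi(\beta)\ge-1$ and $\beta\in B^-$, so $n_G(\beta)\in K$ and hence $n_G(\beta)\dd_1^G\in K\dd_1^G\subset T_1^G$.

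For the two remaining ``big-cell'' families I would exhibit explicit factorizations $k_1\dd_1^Gk_2$ with $k_1,k_2\in K$. Here the natural candidate is the involution $w=\Nmat{0&1}{1&0}\in G$, which satisfies $w\,\dd_1^G\,w=\Nmat{\,\ol{\Pi}\inv&0\,}{\,0&\Pi\,}$; after a correction by a unit of $\cO\cross$ this differs from $\dd_{-1}^G$ by an element of $K$, giving $\dd_{-1}^G\in T_1^G$. Since $w\notin K$ (its lower-left entry $1\notin\grP$), these identities are genuine computations rather than bare conjugations, and the $n_G(\beta)$ with $\ord_\Pi(\beta)=-2$ are handled by combining $w$ with a suitable $n_G$ on each side to reabsorb the $\Pi^{-2}$-denominator into $K$.

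Disjointness is a valuation count built on the identity $\dd_{-1}^G\,n_G(\gamma)\,\dd_1^G=n_G(\Pi\inv\gamma\,\ol{\Pi}\inv)$, together with $\ord_\Pi(\Pi\inv\gamma\,\ol{\Pi}\inv)=\ord_\Pi(\gamma)-2$ and $\pi X_{-1}=X_1$. From these one reads off $n_G(\beta)\dd_1^GK=n_G(\beta')\dd_1^GK$ iff $\beta-\beta'\in X_1=\pi X_{-1}$, so the first family contributes precisely $|X_{-1}/\pi X_{-1}|=p^3$ cosets (using the parities $|X_{-1}/X_0|=p^2$, $|X_0/X_1|=p$ recorded before the lemma); analogous computations give $|X_{-2}^0/X_{-1}|=p-1$ distinct cosets $n_G(\beta)K$, one coset $\dd_{-1}^GK$, and mutual disjointness across the three families. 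This yields $p^3+(p-1)+1=p^3+p$ pairwise distinct cosets, all inside $T_1^G$. For exhaustion I would compare this total with the index $[K:K\cap\dd_1^GK(\dd_1^G)\inv]$ of the double coset; conjugating $K$ by $\dd_1^G$ reduces this to counting $\{k\in K:\text{lower-left entry}\in\grP^3\}$ inside $K$.

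The step I expect to be the main obstacle is this last index computation over the noncommutative ramified order $\cO$. The quaternion-Hermitian relations $\tp\ol{k}Qk=Q$ couple the four entries of $k\in K$, so the coarse count $|\grP/\grP^3|=p^4$ of the lower-left entry modulo $\grP^3$ must be cut down, via the residue-field reduction of these relations, to the exact value $p^3+p$; keeping track of $\ol{\Pi}$ versus $\Pi$ and of the units throughout is where the real bookkeeping lies. As an independent check one may interpret $\dd_1^G$ as a distance-two translation on the Bruhat--Tits tree of $G$, whose two vertex valences are $p^2+1$ and $p+1$, so that the number of distance-two vertices of the first type is $(p^2+1)p=p^3+p$, confirming the count; everything else in the argument is routine valuation arithmetic.
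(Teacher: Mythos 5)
The paper itself supplies no argument for this lemma (it is introduced with ``the following facts are easily verified''), so your plan has to be judged on its own terms. The skeleton --- Iwasawa decomposition, three families sorted by the valuation of the $M_G$-part, then membership/disjointness/exhaustion --- is the right one; the disjointness computation via $\dd_{-1}^G n_G(\gamma)\dd_1^G=n_G(\Pi\inv\gamma\ol{\Pi}\inv)$ together with $\pi X_{-1}=X_1$ is correct, as is the total count $p^3+(p-1)+1=p^3+p$. But two of the three steps are left genuinely open. First, membership of $\dd_{-1}^GK$ and of the cosets $n_G(\beta)K$ with $\ord_{\Pi}(\beta)=-2$ in $T_1^G=K\dd_1^GK$ is never established: conjugating $\dd_1^G$ by $w=\Nmat{0&1}{1&0}$ says nothing about the double coset precisely because $w\notin K$, and you acknowledge this without supplying a substitute. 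The repair is to replace $w$ by $k_1=\Nmat{0&\Pi\inv}{\ol{\Pi}&0}$, which \emph{does} lie in $K$ (one checks $\tp\ol{k_1}Qk_1=Q$ and the entry conditions defining $K$) and satisfies $k_1\dd_1^Gk_1=\dd_{-1}^G$ exactly, with no unit correction needed. For $\ord_{\Pi}(\beta)=-2$ one then writes $n_G(\beta)=\Nmat{1&0}{\beta\inv&1}\Nmat{0&\beta}{-\beta\inv&0}\Nmat{1&0}{\beta\inv&1}$, notes that $\Nmat{1&0}{\beta\inv&1}\in K$ because $\beta\inv=-\beta/N(\beta)\in B^-$ has $\ord_{\Pi}(\beta\inv)=2$, and that $\Nmat{0&\beta}{-\beta\inv&0}=d_G(\beta\Pi)\,k_1\in \dd_{-1}^GK$ since $\ord_{\Pi}(\beta\Pi)=-1$.

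Second, exhaustion is deferred entirely to the index $[K:K\cap\dd_1^GK(\dd_1^G)\inv]=p^3+p$, which you flag as the main obstacle and do not carry out; and the Bruhat--Tits valence count you offer as confirmation presupposes exactly the local structure (the valences $p^2+1$ and $p+1$) that this lemma is, in effect, establishing, so it cannot stand in for the computation. Without exhaustion you have only proved that the right-hand side is a disjoint union of $p^3+p$ cosets \emph{contained in} $T_1^G$, not that it equals $T_1^G$. A route that avoids the index altogether is to take $g\in T_1^G$, write $g=n_G(\beta)d_G(\alpha)k$ by the Iwasawa decomposition, observe that the class of $\alpha$ in $B\cross/\cO\cross$ and then the class of $\beta$ are invariants of the coset $gK$, and use the identities above in reverse to show that $g\in K\dd_1^GK$ forces $\ord_{\Pi}(\alpha)\in\{1,0,-1\}$ with, respectively, $\beta\in X_{-1}$ mod $\pi X_{-1}$, $\ord_{\Pi}(\beta)=-2$, and $\beta\equiv 0$. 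As it stands, the proposal is a correct outline with the two hardest verifications missing.
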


Recall that the spherical functions
$\omega_{\chi}\colon H\to\bC$ and 
 $\Omega_{\Xi}\colon G\to\bC$
for $\chi,\Xi\in 
X_{\mathrm{unr}}(B\cross)$
are defined in Section \ref{subsec:Satake-H-ramified} and
Section \ref{subsec:Satake-G-ramified}, respectively.
Then we have
\begin{align*}
 (\omega_{\chi}*\varphi)(h)&:=\int_H\omega_{\chi}(hx\inv)\varphi(x)dx
=\lambda_{\chi}(\varphi)(h)\qquad (h\in H,\varphi\in\cH(H,U)),\\
 (\Omega_{\Xi}*\Phi)(g)&:=\int_G\Omega_{\Xi}(gy\inv)\Phi(y)dy
=\Lambda_{\Xi}(\Phi)(g)\qquad (g\in G,\Phi\in\cH(G,K))
\end{align*}
with $\lambda_{\chi}\in \mathrm{Hom}_{\bC\text{-alg}}(\cH(H,U),\bC)$
and
$\Lambda_{\Xi}\in \mathrm{Hom}_{\bC\text{-alg}}(\cH(G,K),\bC)$
(cf. \cite{Sa}, Section 5.4).
Using Lemma \ref{lem:hecke-operators-H-ramified} and
\ref{lem:hecke-operators-G-ramified}, we get the following.

\begin{lemma}
 \label{lem:eigenvalues-spherical-f-ramified}
We have
\begin{align*}
 \lambda_{\chi}(\cT_1^H)&=p^{1/2}(\chi(\Pi)+\chi\inv(\Pi)),\\
\Lambda_{\Xi}(\cT_1^G)&=p^{3/2}(\Xi(\Pi)+\Xi\inv(\Pi))+p-1.  
\end{align*}
\end{lemma}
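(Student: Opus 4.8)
The plan is to identify each eigenvalue with a Satake transform and to evaluate it on the coset decompositions already at hand. By the theory of zonal spherical functions on $p$-adic groups (\cite{Sa}, Sections 5.3--5.4), the scalar $\lambda_\chi(\cT_1^H)$ for which $\omega_\chi*\cT_1^H=\lambda_\chi(\cT_1^H)\,\omega_\chi$ is the coset sum $\sum_i\eta_\chi(\xi_i)$ of the unramified character $\eta_\chi$ of Section \ref{subsec:Satake-H-ramified} taken over the left-coset representatives $\xi_i$ in $T_1^H=\bigsqcup_i\xi_iU$ (Lemma \ref{lem:hecke-operators-H-ramified}); here one uses that $\eta_\chi$ is right $U$-invariant with $\mathrm{vol}(U)=1$, and that the result is insensitive to the replacement $\chi\mapsto\chi\inv$, so the precise orientation of the transform is immaterial. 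The identical recipe with $\eta^G_\Xi$ and Lemma \ref{lem:hecke-operators-G-ramified} produces $\Lambda_\Xi(\cT_1^G)$. Thus I would reduce the whole statement to reading $\eta$ off each representative and counting cosets, the only normalization needed being $|N(\Pi)|_p=p^{-1}$.

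For $H$, each representative $n_H(b)\dd_1^H=n_H(b)d_H(\Pi)$ is already in the form $n_H(b)d_H(\alpha)u$ with $\alpha=\Pi$ and $u=1$, so $\eta_\chi(n_H(b)\dd_1^H)=\chi(\Pi)|N(\Pi)|_p^{1/2}=p^{-1/2}\chi(\Pi)$. There are $|\bZ_p/\pi\bZ_p|=p$ of these, contributing $p^{1/2}\chi(\Pi)$, while the lone remaining representative $\dd_{-1}^H=d_H(\Pi^{-1})$ gives $\eta_\chi(\dd_{-1}^H)=\chi\inv(\Pi)|N(\Pi^{-1})|_p^{1/2}=p^{1/2}\chi\inv(\Pi)$. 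Adding these yields $p^{1/2}(\chi(\Pi)+\chi\inv(\Pi))$.

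For $G$ the modulus exponent is $3/2$ in place of $1/2$. The $p^3=|X_{-1}/\pi X_{-1}|$ representatives $n_G(\beta)\dd_1^G=n_G(\beta)d_G(\Pi)$ each contribute $\Xi(\Pi)|N(\Pi)|_p^{3/2}=p^{-3/2}\Xi(\Pi)$, for a total of $p^{3/2}\Xi(\Pi)$; the representative $\dd_{-1}^G=d_G(\Pi^{-1})$ gives $p^{3/2}\Xi\inv(\Pi)$; and the representatives $n_G(\beta)$ with $\beta\in X_{-2}^0/X_{-1}$ have trivial diagonal part, so $\eta^G_\Xi(n_G(\beta))=\Xi(1)|N(1)|_p^{3/2}=1$, and there are $|X_{-2}^0/X_{-1}|=|X_{-2}/X_{-1}|-1=p-1$ of them (since $-2$ is even, $|X_{-2}/X_{-1}|=p$). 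Summing gives $p^{3/2}(\Xi(\Pi)+\Xi\inv(\Pi))+p-1$, as asserted.

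The essential content is not in this lemma but in the two coset decompositions of Section \ref{subsec:hecke-operators-G-ramified}, which carry the group-theoretic work and are established beforehand; granting them, the computation is purely mechanical. The points to watch are the index counts $|X_{-1}/\pi X_{-1}|=p^3$ and $|X_{-2}^0/X_{-1}|=p-1$ recorded there, and the consistent use of $\mathrm{vol}(U)=\mathrm{vol}(K)=1$ together with $|N(\Pi)|_p=p^{-1}$. The one conceptual input I would invoke rather than reprove—that the eigenvalue genuinely is the coset sum of $\eta$—is exactly what the cited results of \cite{Sa} supply, so I anticipate no real obstacle beyond keeping the normalizations straight.
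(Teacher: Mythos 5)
Your proposal is correct and is exactly the argument the paper intends: the paper gives no written proof beyond "Using Lemma \ref{lem:hecke-operators-H-ramified} and \ref{lem:hecke-operators-G-ramified}, we get the following," i.e.\ precisely the evaluation of $\eta_{\chi}$ (resp.\ $\eta^G_{\Xi}$) on the listed left-coset representatives with $\mathrm{vol}(U)=\mathrm{vol}(K)=1$ and $|N(\Pi)|_p=p^{-1}$. Your index counts ($p$, $p^{3}$, and $|X_{-2}^0/X_{-1}|=p-1$ since $-2$ is even) and the justification that the eigenvalue equals the coset sum of $\eta$ (the double coset being inversion-stable, so the $\chi\mapsto\chi\inv$ ambiguity is harmless) all check out.
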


 \subsection{}
\label{subsec:construction-upsilon-ramified}

Let $\phi_0\in W'$ be the characteristic function of $(\cO,\grP\inv)$ and
set
\begin{align*}
 J^H(X)&= \int_{T_1^H}\,r'(h,1)\phi_0(X)dh,\\
 J^G(X)&=\int_{T_1^G}\,r'(1,g)\phi_0(X)dg
\end{align*}
for $X\in B^{(1,2)}$.
We now state the main results of this section, whose proof
is given in Section \ref{subsec:proof-ecr-ramified}.

\begin{proposition}
 \label{prop:ecr-ramified}
We have
\begin{equation}
 \label{eq:ecr-ramified}
J^G(X)=pJ_H(X)+(p-1)\phi_0(X)\qquad(X\in B^{(1,2)}).
\end{equation}
\end{proposition}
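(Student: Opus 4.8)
The plan is to compute both sides of (\ref{eq:ecr-ramified}) completely explicitly, turning each Hecke integral into a finite sum of $r'$-translates of $\phi_0$ and then matching the two resulting functions on $B^{(1,2)}$. \textbf{Reduction to finite sums.} First I would use that $r'(u,1)\phi_0=\phi_0$ for $u\in U$ and $r'(1,k)\phi_0=\phi_0$ for $k\in K$ (Lemma \ref{lem:phi0-UK} (1)), so that the integrands $h\mapsto r'(h,1)\phi_0(X)$ and $g\mapsto r'(1,g)\phi_0(X)$ are right $U$- and right $K$-invariant. Since the measures are normalized by $\mathrm{vol}(U)=\mathrm{vol}(K)=1$, the coset decompositions of Lemma \ref{lem:hecke-operators-H-ramified} and Lemma \ref{lem:hecke-operators-G-ramified} collapse each integral to a sum over the listed representatives:
\[
J^H(X)=\sum_{b\in\bZ_p/\pi\bZ_p}r'(n_H(b)\dd_1^H,1)\phi_0(X)+r'(\dd_{-1}^H,1)\phi_0(X),
\]
while $J^G(X)$ becomes the analogous three-block sum coming from the cosets $n_G(\beta)\dd_1^GK$ with $\beta\in X_{-1}/\pi X_{-1}$, the cosets $n_G(\beta)K$ with $\beta\in X_{-2}^0/X_{-1}$, and $\dd_{-1}^GK$.

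\textbf{Evaluating the summands.} Next I would substitute the explicit action of $r'$ recorded at the start of Section \ref{subsec:ecr-ramified}. On the $H$-side, $d_H(\alpha)$ rescales the argument to $\ol{\alpha}X$ with factor $|N(\alpha)|_p^2$, and $n_H(b)$ contributes the additive character $\psi_p(-b\,\ol{X}Q\,\tp X)$, where $\ol{X}Q\,\tp X=\tr(\ol{x_1}x_2)$; using the standard local facts $\grP^2=p\cO$ and $|N(\Pi)|_p=p^{-1}$, the torus factors are the explicit powers $p^{\mp 2}$. On the $G$-side the action is pure right translation $r'(1,g)\phi_0(X)=\phi_0(Xg)$, so each term is $\phi_0$ evaluated on a $\grP$-lattice translate of $(x_1,x_1\beta+x_2)$. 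Thus every summand becomes a power of $p$ times the characteristic function of an explicit condition on the $\Pi$-valuations of $x_1$ and $x_2$ (or of $x_1\beta+x_2$), possibly weighted by $\psi_p(-b\,\tr(\ol{x_1}x_2))$.

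\textbf{Character sums and lattice counts.} The sum over $b\in\bZ_p/\pi\bZ_p$ of $\psi_p(-b\,\tr(\ol{x_1}x_2))$ evaluates by orthogonality to $p\cdot\delta(\tr(\ol{x_1}x_2)\in\bZ_p)$ on the relevant support, supplying the expected power of $p$; here the identity that the codifferent of $\cO$ equals $\grP^{-1}$ guarantees both well-definedness and that this congruence is automatic on the support of $\phi_0$. On the $G$-side the sums over $\beta$ reduce, for fixed $x_1$, to counting those $\beta$ in the relevant quotient of $B^-$ for which $x_1\beta+x_2$ lands in a prescribed power of $\grP$; these counts are controlled by the indices $|X_n/\pi X_n|=p^3$ and $|X_n/X_{n+1}|\in\{p,p^2\}$ already recorded. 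Stratifying $B^{(1,2)}$ by the pair $(\ord_{\Pi}x_1,\ord_{\Pi}x_2)$, and where needed by $\ord_p\tr(\ol{x_1}x_2)$, one then reads off closed forms for $J^H(X)$ and $J^G(X)$.

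\textbf{Comparison and the main obstacle.} Finally I would compare the two closed forms stratum by stratum to obtain (\ref{eq:ecr-ramified}); the surplus $(p-1)\phi_0(X)$ records, on the support of $\phi_0$, the net difference between the $G$-side lattice count and $p$ times its $H$-side counterpart. (It is suggestive that the factor $p-1$ equals the number $|X_{-2}^0/X_{-1}|$ of ``big-cell'' cosets $n_G(\beta)K$ present in $T_1^G$ but having no analogue in $T_1^H$.) The main difficulty is purely organizational: keeping the powers of $p$, the lattice-membership conditions, and the character-sum factors consistent across all valuation strata. This is appreciably lighter than the unramified computation of Section \ref{sec:ecr-unramified}, since at a ramified $p$ the order $\cO$ is maximal in a division algebra and its two-sided ideals form the single totally ordered chain $\{\grP^n\}$, so the stratification is essentially one-dimensional.
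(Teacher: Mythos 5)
Your proposal is correct and follows essentially the same route as the paper: the paper reduces the identity to closed-form evaluations of $J^H$ and $J^G$ (Lemmas \ref{lem:explicit-formula-JH-ramified} and \ref{lem:explicit-formula-JG-ramified}), obtained exactly as you describe by collapsing the Hecke integrals via the coset decompositions of Lemmas \ref{lem:hecke-operators-H-ramified} and \ref{lem:hecke-operators-G-ramified}, evaluating the character sum over $b$ by orthogonality, and handling the sums over $\beta$ with the lattice-count identities quoted from \cite{MN1}, stratifying by $\ord_{\Pi}(x_1)$. Your observation that the surplus $(p-1)\phi_0$ comes from the $p-1=|X_{-2}^0/X_{-1}|$ cosets $n_G(\beta)K$ matches what the paper's computation of $J_2(x,y)$ shows.
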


\begin{corollary}
 \label{cor:ecr-ramified}
The $\bC$-algebra homomorphism $\upsilon\colon
\cH(G,K)\,\to\,\cH(H,U)$ given by
\[
 \upsilon(\cT_1^G)=p\,\cT_1^H+(p-1)\cT_0^H
\qquad(X\in B^{(1,2)})
\]
satisfies (\ref{eq:Eichler-commutation-relations})
(and hence (\ref{eq:Hecke-eigenvalues})),
where $\cT_0^H$ denotes the characteristic function of $U$.
\end{corollary}

We now prove the following result assuming Corollary 
\ref{cor:ecr-ramified}.

\begin{proposition}
 \label{prop:functoriality-ramified}
We have
\begin{equation}
 \label{eq:functoriality-ramified}
L_p(s,\cL(f),\mathrm{std})=\zeta_p(s)L_p(s,f,\mathrm{std}).
\end{equation}
\end{proposition}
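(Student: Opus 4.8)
The plan is to follow verbatim the strategy used in the unramified case (Proposition \ref{prop:functoriality-unramified}), exploiting the fact that, by Lemma \ref{lem:hecke-algebras-ramified}, both Hecke algebras $\cH(H,U)=\bC[\cT_1^H]$ and $\cH(G,K)=\bC[\cT_1^G]$ are generated by a single operator. Consequently $\cL(f)$ is automatically a $\cH(G,K)$-eigenform as soon as it is an eigenfunction of $\cT_1^G$, which follows from Corollary \ref{cor:ecr-ramified} together with the eigenvalue relation (\ref{eq:Hecke-eigenvalues}). Everything then reduces to matching the single Satake parameter of $f$ with that of $\cL(f)$ and reading off the local $L$-factors from Sections \ref{subsec:Satake-H-ramified} and \ref{subsec:Satake-G-ramified}.

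Concretely, I would first write $\omega_p^f=\omega_{\chi}$ for some $\chi\in X_{\mathrm{unr}}(B_p\cross)$ (Lemma \ref{lem:SatakeParameter-H-ramified}) and $\Omega_p^{\cL(f)}=\Omega_{\Xi}$ for some $\Xi\in X_{\mathrm{unr}}(B_p\cross)$ (Lemma \ref{lem:SatakeParameter-G-ramified}). Applying (\ref{eq:Hecke-eigenvalues}) with $\alpha=\cT_1^G$ and the formula $\upsilon(\cT_1^G)=p\,\cT_1^H+(p-1)\cT_0^H$ of Corollary \ref{cor:ecr-ramified}, and using that $\cT_0^H$ acts as the identity so that $\lambda_p^f(\cT_0^H)=1$, I obtain
\[
\Lambda_p^{\cL(f)}(\cT_1^G)=p\,\lambda_p^f(\cT_1^H)+(p-1).
\]
Inserting the eigenvalue $\lambda_p^f(\cT_1^H)=p^{1/2}(\chi(\Pi)+\chi\inv(\Pi))$ from Lemma \ref{lem:eigenvalues-spherical-f-ramified} gives
\[
\Lambda_p^{\cL(f)}(\cT_1^G)=p^{3/2}\bigl(\chi(\Pi)+\chi\inv(\Pi)\bigr)+(p-1).
\]

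Comparing this with the independent evaluation $\Lambda_p^{\cL(f)}(\cT_1^G)=\Lambda_{\Xi}(\cT_1^G)=p^{3/2}(\Xi(\Pi)+\Xi\inv(\Pi))+(p-1)$, also furnished by Lemma \ref{lem:eigenvalues-spherical-f-ramified}, the $(p-1)$ terms cancel and I am left with $\Xi(\Pi)+\Xi\inv(\Pi)=\chi(\Pi)+\chi\inv(\Pi)$. Since $\chi(\Pi)\chi\inv(\Pi)=1=\Xi(\Pi)\Xi\inv(\Pi)$, the pairs $\{\Xi(\Pi),\Xi\inv(\Pi)\}$ and $\{\chi(\Pi),\chi\inv(\Pi)\}$ are roots of the same monic quadratic and hence coincide as multisets, so the two quadratic Euler factors agree:
\[
\bigl(1-\Xi(\Pi)p^{-s-1/2}\bigr)\inv\bigl(1-\Xi\inv(\Pi)p^{-s-1/2}\bigr)\inv
=\bigl(1-\chi(\Pi)p^{-s-1/2}\bigr)\inv\bigl(1-\chi\inv(\Pi)p^{-s-1/2}\bigr)\inv.
\]
Multiplying by the extra factor $(1-p^{-s})\inv=\zeta_p(s)$ appearing in the definition of $L_p(s,\cL(f),\mathrm{std})$ in Section \ref{subsec:Satake-G-ramified} yields the claimed identity (\ref{eq:functoriality-ramified}).

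Since all the substantive work, namely establishing Corollary \ref{cor:ecr-ramified} (which rests on the orbit computation of Proposition \ref{prop:ecr-ramified}), is already in hand, I expect no serious obstacle: the argument is purely formal bookkeeping. The only points requiring a moment's care are the verification that $\lambda_p^f(\cT_0^H)=1$ and that the additive constant $(p-1)$ produced by $\upsilon$ coincides exactly with the constant in $\Lambda_{\Xi}(\cT_1^G)$, so that the additional $\zeta_p(s)$ factor is accounted for entirely by the definition of the standard $L$-factor on $G$ and not by any discrepancy between the Satake parameters of $f$ and $\cL(f)$.
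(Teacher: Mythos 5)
Your proposal is correct and follows essentially the same route as the paper: the paper's proof likewise takes $\chi,\Xi$ with $\omega_p^f=\omega_\chi$, $\Omega_p^{\cL(f)}=\Omega_\Xi$, invokes Corollary \ref{cor:ecr-ramified} and Lemma \ref{lem:eigenvalues-spherical-f-ramified} to deduce $\Xi(\Pi)+\Xi\inv(\Pi)=\chi(\Pi)+\chi\inv(\Pi)$, and matches the Euler factors, leaving the extra $\zeta_p(s)$ to come from the definition of the $L$-factor on $G$. You merely spell out the eigenvalue bookkeeping that the paper compresses into ``an argument similar to the unramified case.''
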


\begin{proof}
 We take $\chi\in X_{\mathrm{unr}}(B\cross)$
such that $\omega_p^f=\omega_{\chi}$ (see
Lemma \ref{lem:SatakeParameter-H-ramified})
and $\Xi=\in X_{\mathrm{unr}}(B\cross)$
such that $\Omega_p^{\cL(f)}=\Omega_{\Xi}$ (see
Lemma \ref{lem:SatakeParameter-G-ramified}).
An argument similar to the proof of Proposition 
\ref{prop:functoriality-unramified} in Section
\ref{subsec:proof-L-unramified} shows 
\[
\Xi(\Pi)+\Xi\inv(\Pi)= \chi(\Pi)+\chi\inv(\Pi)
\]
and hence
\[
 (1-\Xi(\Pi)p^{-s})\inv\,(1-\Xi\inv(\Pi)p^{-s})\inv
= (1-\chi(\Pi)p^{-s})\inv\,(1-\chi\inv(\Pi)p^{-s})\inv.
\]
The proposition  immediately follows from this.
\end{proof}

 \subsection{}
\label{subsec:proof-ecr-ramified}

The proof of Proposition \ref{prop:ecr-ramified} is reduced to the 
following explicit formulas for $J^G(X)$ and $J^H(X)$,
whose proofs are given in Section
\ref{sec:ecr-ramified-details}.

We denote by $\sigma$ the characteristic function of $\cO$.

\begin{lemma}
 \label{lem:explicit-formula-JH-ramified}
For $x,y\in B$, we have
\begin{align*}
 & J^H(x,y)\\
&=
\begin{cases}
 0 & (\ord_{\Pi}(x)\le -2),\\
 p\inv \delta(\tr(\ol{x}y)\in\bZ_p)\,\sigma(\pi y) & (\ord_{\Pi}(x)=0,-1),\\
 p^2\,\sigma(y)+p\inv \sigma(\pi y) & (\ord_{\Pi}x\ge 1).
\end{cases}
\end{align*}
\end{lemma}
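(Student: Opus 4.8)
The plan is to replace the defining integral by the finite coset sum furnished by Lemma \ref{lem:hecke-operators-H-ramified}. Since $\phi_0$ is right $U$-invariant under $r'$ (Lemma \ref{lem:phi0-UK}(1)) and each coset $gU$ has volume $\mathrm{vol}(U)=1$, the integrand is constant on each coset, so
\[
J^H(x,y)=\sum_{b\in\bZ_p/\pi\bZ_p} r'(n_H(b)\dd_1^H,1)\phi_0(x,y)+r'(\dd_{-1}^H,1)\phi_0(x,y).
\]
First I would assemble the two building blocks from the explicit formulas for $r'$ in Section \ref{subsec:ecr-ramified}: the diagonal action $r'(d_H(\Pi^{\pm1}),1)\phi_0(x,y)=|N(\Pi)|_p^{\pm2}\,\phi_0(\ol{\Pi}^{\pm1}x,\ol{\Pi}^{\pm1}y)$ rescales by $p^{\mp2}$ and translates both arguments by the prime element $\ol{\Pi}$, while $r'(n_H(b),1)$ multiplies by $\psi_p(-b\,\ol{X}Q\,\tp X)=\psi_p(-b\,\tr(\ol{x}y))$, using the identity $\ol{X}Q\,\tp X=\ol{x}y+\ol{y}x=\tr(\ol{x}y)$.

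Combining these, the unipotent cosets contribute $p^{-2}\phi_0(\ol{\Pi}x,\ol{\Pi}y)\sum_{b\in\bZ_p/\pi\bZ_p}\psi_p(-b\,\tr(\ol{x}y))$, and the term $\dd_{-1}^H$ contributes $p^{2}\phi_0(\ol{\Pi}\inv x,\ol{\Pi}\inv y)$. The heart of the argument is the character sum. I would first show that on the support of $\phi_0(\ol{\Pi}x,\ol{\Pi}y)$, i.e.\ when $\ord_\Pi(x)\ge-1$ and $\ord_\Pi(y)\ge-2$, one has $\tr(\ol{x}y)\in\pi\inv\bZ_p$; this follows from $\ord_\Pi(\ol{x}y)\ge-3$ together with the fact that $\ord_\Pi$ restricts to $2\,\ord_p$ on $\bQ_p$ (because $p\cO=\grP^{2}$), which forces $\ord_p(\tr(\ol{x}y))\ge-1$. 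Once $\pi\,t\in\bZ_p$ for $t=\tr(\ol{x}y)$, the sum is independent of the representatives $b$ and reduces to the elementary Gauss sum $\sum_{b\in\bZ_p/\pi\bZ_p}\psi_p(-bt)=p\,\delta(t\in\bZ_p)$.

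Finally I would convert the lattice conditions into orders, using that $\ol{\Pi}=N(\Pi)\Pi\inv$ is again a prime element ($\ord_\Pi(\ol{\Pi})=1$): one has $\phi_0(\ol{\Pi}x,\ol{\Pi}y)\ne0\Leftrightarrow\ord_\Pi(x)\ge-1$ and $\sigma(\pi y)=1$, and $\phi_0(\ol{\Pi}\inv x,\ol{\Pi}\inv y)\ne0\Leftrightarrow\ord_\Pi(x)\ge1$ and $\sigma(y)=1$. Substituting yields
\[
J^H(x,y)=p\inv\,\delta(\tr(\ol{x}y)\in\bZ_p)\,\delta(\ord_\Pi(x)\ge-1)\,\sigma(\pi y)+p^{2}\,\delta(\ord_\Pi(x)\ge1)\,\sigma(y),
\]
and a case split on $\ord_\Pi(x)$ produces the three stated cases. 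The one point needing care is the merge in the range $\ord_\Pi(x)\ge1$: there I must note that whenever $\sigma(\pi y)=1$ the estimate $\ord_\Pi(\ol{x}y)\ge-1$ forces $\tr(\ol{x}y)\in\bZ_p$ automatically, so the factor $\delta(\tr(\ol{x}y)\in\bZ_p)$ disappears and the first term collapses to $p\inv\sigma(\pi y)$. I expect the valuation bookkeeping—tracking the interplay of $\ord_\Pi$, conjugation, the reduced norm and trace, and $\ord_p$—to be the main source of friction, rather than any conceptual obstacle.
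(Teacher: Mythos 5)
Your proposal is correct and follows essentially the same route as the paper: decompose $T_1^H$ via Lemma \ref{lem:hecke-operators-H-ramified}, apply the explicit formulas for $r'$ to get $p^{-2}\sum_b\psi(-b\,\tr(\ol{x}y))\,\phi_0(\ol{\Pi}x,\ol{\Pi}y)+p^2\,\phi_0(\ol{\Pi}\inv x,\ol{\Pi}\inv y)$, evaluate the character sum, and split into cases on $\ord_\Pi(x)$. The points you flag for care (well-definedness of the character sum on the support, and the automatic vanishing of the $\delta(\tr(\ol{x}y)\in\bZ_p)$ factor when $\ord_\Pi(x)\ge1$ via the parity of $\ord_\Pi$ on $\bQ_p$) are exactly the details the paper leaves implicit, and your justifications of them are sound.
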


\begin{lemma}
 \label{lem:explicit-formula-JG-ramified}
For $x,y\in B$, we have
\begin{align*}
 & J^G(x,y)\\
&=
\begin{cases}
 0 & (\ord_{\Pi}(x)\le -2),\\
 \delta(\tr(\ol{x}y)\in\bZ_p)\,\sigma(\pi y) & (\ord_{\Pi}(x)=-1),\\
 \delta(\tr(\ol{x}y)\in\bZ_p)\,\sigma(\pi y)+(p-1)\sigma(\Pi y) & (\ord_{\Pi}(x)=0),\\
 p^3\,\sigma(y)+(p-1)\sigma(\Pi y)+\sigma(\pi y) & (\ord_{\Pi}x\ge 1).
\end{cases}
\end{align*}
\end{lemma}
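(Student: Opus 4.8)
The plan is to evaluate $J^G(x,y)$ by turning the group integral over $T_1^G$ into a finite sum over the coset decomposition supplied by Lemma \ref{lem:hecke-operators-G-ramified}. Since $r'(1,\cdot)\phi_0$ is right $K$-invariant by Lemma \ref{lem:phi0-UK} (1) and $\mathrm{vol}(K)=1$, one has
\[
  J^G(x,y)=\sum_{g}\,r'(1,g)\phi_0(x,y)=\sum_{g}\,\phi_0\bigl((x,y)g\bigr),
\]
where $g$ runs over the representatives $n_G(\beta)\dd_1^G$ ($\beta\in X_{-1}/\pi X_{-1}$), $n_G(\beta)$ ($\beta\in X_{-2}^0/X_{-1}$), and $\dd_{-1}^G$. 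Writing $\phi_0$ as the characteristic function of $(\cO,\grP\inv)$ and computing each $(x,y)g$ from $r'(1,g)\phi(X)=\phi(Xg)$ (Lemma \ref{lem:r'}), every summand factors into an integrality condition on the first slot (an inequality on $\ord_\Pi$ of a $\grP$-power times $x$) and a lattice condition on the second slot, namely $x\beta+y\in\cO$ for the family $n_G(\beta)\dd_1^G$ and $x\beta+y\in\grP\inv$ for the family $n_G(\beta)$. I would then organize the entire computation according to the four ranges $\ord_\Pi(x)\le-2,\ =-1,\ =0,\ \ge1$ forced by the first-slot conditions, using throughout the structural facts $\ord_\Pi(\ol{\Pi})=\ord_\Pi(\Pi)=1$, $\pi\cO=\grP^2$, and $\cO/\grP\cong\bF_{p^2}$.

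First I would dispose of the inert contributions. For $\ord_\Pi(x)\le-2$ every first-slot condition fails and $J^G=0$; the representative $\dd_{-1}^G$ contributes exactly $\delta(\ord_\Pi x\ge1)\,\sigma(\pi y)$. In the range $\ord_\Pi(x)\ge1$ both families $n_G(\beta)\dd_1^G$ and $n_G(\beta)$ give $\beta$-independent summands, because there $x\beta$ is automatically forced into the relevant lattice: each of the $p^3$ representatives of the first family contributes $\sigma(y)$ and each of the $p-1$ representatives of the second contributes $\sigma(\Pi y)$, so that the three pieces sum to $p^3\sigma(y)+(p-1)\sigma(\Pi y)+\sigma(\pi y)$.

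The heart of the argument is the counting in the two mixed ranges $\ord_\Pi(x)\in\{-1,0\}$, where the number of admissible $\beta$ genuinely depends on $y$ and produces the factor $\delta(\tr(\ol{x}y)\in\bZ_p)$. The trace factor is created in exactly two situations: the family $n_G(\beta)\dd_1^G$ when $\ord_\Pi x=-1$ (lattice $\Lambda=\cO$) and the family $n_G(\beta)$ when $\ord_\Pi x=0$ (lattice $\Lambda=\grP\inv$); a direct valuation count shows $\ol{x}\Lambda=\grP\inv$ in each. The mechanism I would isolate as a separate lemma is this: for $\beta\in B^-$ one has $\tr(\ol{x}\cdot x\beta)=N(x)\tr(\beta)=0$, while the codifferent of the division algebra $B_p$ equals $\grP\inv$, i.e.\ $\tr(\grP\inv)\subseteq\bZ_p$; hence any admissible $\beta$ forces $\tr(\ol{x}y)=\tr\bigl(\ol{x}(x\beta+y)\bigr)\in\tr(\ol{x}\Lambda)=\tr(\grP\inv)\subseteq\bZ_p$, so $\delta(\tr(\ol{x}y)\in\bZ_p)$ is necessary. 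For sufficiency and the exact count I would use that multiplication by $x$ is injective modulo the appropriate $\grP$-power, so $\beta$ is determined up to a coset of fixed size, and then compare the order of the image of $\beta\mapsto x\beta$ with the order of the trace-orthogonal hyperplane $\{w:\tr(\ol{x}w)\in\bZ_p\}$ inside the relevant quotient of fractional $\grP$-ideals ($\grP^{-2}/\cO$ when $\ord_\Pi x=-1$, $\grP^{-2}/\grP\inv$ when $\ord_\Pi x=0$); a dimension count with $|X_n/X_{n+1}|\in\{p,p^2\}$ shows these orders agree, so the condition is also sufficient and the count is $1$.

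Finally I would assemble the three families in each mixed range. The only delicate bookkeeping is that the restriction "$\ord_\Pi(y)=-2$" coming from the $n_G(\beta)$ family must be rewritten as $\sigma(\pi y)-\sigma(\Pi y)$ and combined with the $\beta$-fibered contribution of $n_G(\beta)\dd_1^G$, after noting that $\delta(\tr(\ol{x}y)\in\bZ_p)$ is automatically $1$ once $\sigma(\Pi y)=1$ (there $\ol{x}y\in\grP\inv$). These pieces then collapse to $\delta(\tr(\ol{x}y)\in\bZ_p)\,\sigma(\pi y)+(p-1)\sigma(\Pi y)$ when $\ord_\Pi x=0$, and to $\delta(\tr(\ol{x}y)\in\bZ_p)\,\sigma(\pi y)$ when $\ord_\Pi x=-1$, matching the stated formula. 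I expect the main obstacle to be precisely this counting in the mixed ranges — keeping the nested lattices $\cO\subset\grP\inv\subset\grP^{-2}$, the reduced trace form, and the trace-zero constraint $\beta\in B^-$ mutually consistent — rather than the routine evaluation of $\phi_0$ on each representative.
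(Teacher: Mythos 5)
Your proposal is correct and follows essentially the same route as the paper: both expand $J^G(x,y)$ over the coset decomposition of $T_1^G$ from Lemma \ref{lem:hecke-operators-G-ramified}, evaluate $\phi_0((x,y)g)$ on each of the three families, and organize the result by the four ranges of $\ord_{\Pi}(x)$. The only difference is that where the paper simply invokes the counting identities of \cite{MN1}, Lemma 8.2 to produce the factors $\delta(\tr(\ol{x}y)\in\bZ_p)$, you rederive them from $\tr(\ol{x}\,x\beta)=N(x)\tr(\beta)=0$, the codifferent property $\tr(\grP\inv)\subseteq\bZ_p$, and an index count identifying the image of $\beta\mapsto x\beta$ with the trace-orthogonal hyperplane; this is a correct, self-contained substitute for the cited lemma.
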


 \section{The proof of Eichler commutation relations at unramified places}
\label{sec:ecr-unramified-details}

In this section, we give a detailed account of the 
proof of Eichler commutation relations at a finite place $p$ of $\bQ$
with $p\nmid d_B$. We freely use the notation of Section 
\ref{sec:ecr-unramified}.

 \subsection{}

First we recall that the Weil representation $r'$ 
of $H\times G$ on $\mathscr{S}(\bQ_p^{(2,4)})$
is given by
\begin{align*}
 r'(1,g)\phi(x,y)&=\phi((x,y)g)
\qquad(g\in G),\\
 r'(d_H(A),1)\phi(x,y)&=|\det A|^2\,\phi(\tp Ax,\tp Ay)
\qquad(A\in\GL_2(\bQ_p)), \\
 r'(n_H(b),1)\phi(x,y)& =
\psi(-b\,\Tr(\tp xJy))\phi(x,y)
\qquad(b\in \bQ_p).
\end{align*}

We also recall that
\begin{align*}
 \Lambda_i&=\{(b,c,d)\in(\bZ_p/p\bZ_p)^3\mid \mathrm{rank}_{
\bF_p}\Nmat{b&c}{c&d}=i\}\qquad(i=1,2)
\end{align*}
and $|\Lambda_1|=p^2-1,\,|\Lambda_2|=p^3-p^2$.

 \subsection{}

In this subsection, we prepare several notations and formulas for the
calculation in the next subsections.

We write $\sigma_{m,n}$ for the characteristic function of $\bZ_p^{(m,n)}$
and often write $\sigma$ if there is no fear of confusion.
Let $\tau$
denote the characteristic function of $\bZ_p\cross$.
Then $\tau(x)=\sigma(x)-\sigma(p\inv x)$.
We write $[i_1,i_2,i_3,i_4]$
for $[i_1,i_2,i_3,i_4](y)=\prod_{k=1}^4\,\sigma(p^{-i_k}y_k)
\;\;\left(y=\Nmat{y_1&y_2}{y_3&y_4}\in\bQ_p^{(2,2)}\right)$ 
to simplify the notation.
We abbreviate the notations as follows:
\begin{align*}
 \sum_a & \quad \text{ for }\sum_{a\in\bZ_p/p\bZ_p},\\
 \sum_a {}^{'} & \quad \text{ for }\sum_{a\in\bZ_p\cross/p\bZ_p},\\
 \sum_a {}^2 & \quad \text{ for }\sum_{a\in\bZ_p/p^2\bZ_p}.
\end{align*}

The following two lemmas are standard and we omit its proofs.

\begin{lemma}
 \label{lem:formula-1}
For $x\in\bQ_p$, we have
\begin{align*}
 \suma{a}\sigma(x+p\inv a)&=\sigma(px),\\
 \sumd{a}\sigma(x+p\inv a)&=\tau(x)=\sigma(px)-\sigma(x),\\
\sumt{a}\sigma(x+p^{-2} a)&=\sigma(p^2 x).
\end{align*}
\end{lemma}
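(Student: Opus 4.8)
The plan is to prove all three identities uniformly, reading each sum as a count of solutions to a single congruence. First I would record the key well-definedness point: for $a\in\bZ_p$ the value $\sigma(x+p\inv a)$ depends only on the class of $a$ in $\bZ_p/p\bZ_p$, since replacing $a$ by $a+pb$ $(b\in\bZ_p)$ alters the argument by $b\in\bZ_p$ and hence does not change membership in $\bZ_p$. Thus $\suma{a}\sigma(x+p\inv a)$ is well defined on the chosen set of representatives. Now $\sigma(x+p\inv a)=1$ exactly when $px+a\in p\bZ_p$, i.e. $a\equiv -px \pmod{p\bZ_p}$; this congruence is solved by some representative $a$ (and then uniquely) precisely when $px\in\bZ_p$, that is when $\sigma(px)=1$. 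In that case a single term of the sum equals $1$ and all others vanish, giving $\suma{a}\sigma(x+p\inv a)=\sigma(px)$, the first identity.

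For the third identity I would run the same argument verbatim with $p\inv$ replaced by $p^{-2}$ and the index set $\bZ_p/p\bZ_p$ replaced by $\bZ_p/p^2\bZ_p$: the summand is well defined modulo $p^2\bZ_p$, it equals $1$ iff $a\equiv -p^2x \pmod{p^2\bZ_p}$, and such a representative $a$ exists iff $p^2x\in\bZ_p$, whence $\sumt{a}\sigma(x+p^{-2}a)=\sigma(p^2x)$. The second identity I would then obtain from the first by restricting the index set: $\bZ_p\cross/p\bZ_p$ is exactly the set of nonzero classes in $\bZ_p/p\bZ_p$, so $\sumd{a}$ differs from $\suma{a}$ only by omission of the term at $a=0$, which is $\sigma(x)$. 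Subtracting yields $\sumd{a}\sigma(x+p\inv a)=\sigma(px)-\sigma(x)$, as asserted.

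There is no substantive obstacle here. The only step needing genuine care is the well-definedness of the summand on the finite quotient, together with the accompanying observation that when $x\in\bZ_p$ the unique contributing class is $a=0$; this is precisely what legitimises passing from $\suma{a}$ to $\sumd{a}$ in the second identity by deleting a single term.
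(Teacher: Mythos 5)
Your proof is correct, and since the paper explicitly omits the proof of this lemma as standard, your counting argument (well-definedness of the summand on the quotient, unique solvability of the congruence $a\equiv -px$, and deletion of the $a=0$ term for the primed sum) is exactly the expected one. One small remark not affecting your argument: what you establish for the second identity is $\sigma(px)-\sigma(x)$, which by the paper's own definition of $\tau$ equals $\tau(px)$ rather than $\tau(x)$; the middle expression in the lemma's statement appears to be a typo, and the quantity actually used later is $\sigma(px)-\sigma(x)$, which you have proved.
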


\begin{lemma}
 \label{lem:formula-2}
For $x,y\in\bQ_p$, we have
\begin{align*}
 \suma{a}\sigma(x+p\inv a)\,\sigma(y+p\inv a)&=
   \sigma(px)\,\sigma(py)\,\sigma(x-y),\\
 \sumt{a}\sigma(x+p^{-2} a)\,\sigma(y+p^{-2} a)&=
   \sigma(p^2x)\,\sigma(p^2y)\,\sigma(x-y),\\ 
\sumt{a} \sigma(p\inv (x+a))\,\sigma(p\inv (y+p^{-1}a))&=
   \sigma(x)\,\sigma(py)\,\sigma(p\inv x-y).
\end{align*}
\end{lemma}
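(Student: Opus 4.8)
The plan is to derive all three identities from a single counting principle. For an integer $k\ge 1$ and $x\in\bQ_p$, I would first observe that, as $a$ ranges over a fixed complete set of representatives of $\bZ_p/p^k\bZ_p$, the summand $\sigma(x+p^{-k}a)$ is the indicator of the condition $p^kx+a\in p^k\bZ_p$. The basic fact to record is that this can be nonzero only when $p^kx\in\bZ_p$: if $x\notin p^{-k}\bZ_p$ then $v_p(x)<-k$ while $v_p(p^{-k}a)\ge -k$ for every $a\in\bZ_p$, so $v_p(x+p^{-k}a)=v_p(x)<0$ and the summand vanishes for \emph{all} representatives $a$, not merely the integral ones. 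When $p^kx\in\bZ_p$, the condition reads $a\equiv -p^kx\pmod{p^k\bZ_p}$ and is met by exactly one representative.

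For the first two identities (the cases $k=1$ and $k=2$, which I would treat uniformly), the product $\sigma(x+p^{-k}a)\,\sigma(y+p^{-k}a)$ is the indicator of the simultaneous congruences $a\equiv -p^kx$ and $a\equiv -p^ky\pmod{p^k\bZ_p}$. By the observation above, the sum vanishes unless $p^kx,p^ky\in\bZ_p$, i.e.\ unless $\sigma(p^kx)=\sigma(p^ky)=1$; and when both lie in $\bZ_p$ there is a (necessarily unique) contributing $a$ precisely when the two residue classes coincide, that is when $p^k(x-y)\in p^k\bZ_p$, equivalently $x-y\in\bZ_p$. This gives $\sigma(p^kx)\sigma(p^ky)\sigma(x-y)$, which is the asserted right-hand side with $k=1$ and $k=2$ respectively.

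The third identity is the only one needing a little extra care, since the two characteristic functions now impose congruences at different levels. Here I would rewrite the factors as $\sigma(p\inv(x+a))=\delta\!\left(a\equiv -x \bmod p\bZ_p\right)$ and $\sigma(p\inv(y+p\inv a))=\sigma(p\inv y+p^{-2}a)=\delta\!\left(a\equiv -py \bmod p^2\bZ_p\right)$, the first of which is nonzero only when $x\in\bZ_p$ and the second only when $py\in\bZ_p$ (again by the valuation argument). Summing over $a\in\bZ_p/p^2\bZ_p$, the finer condition (mod $p^2$) pins $a$ down uniquely as $a\equiv -py$, and the coarser condition (mod $p$) then becomes the compatibility requirement $-py\equiv -x\pmod{p\bZ_p}$, i.e.\ $x-py\in p\bZ_p$, equivalently $p\inv x-y\in\bZ_p$. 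This yields exactly $\sigma(x)\,\sigma(py)\,\sigma(p\inv x-y)$.

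I do not expect a genuine obstacle here: the whole argument is $p$-adic valuation bookkeeping, and indeed the lemma is labelled standard in the paper. The only points that warrant explicit care are (i) justifying that each indicator is identically zero over the entire residue system—not just for integral representatives—when the relevant $p^k x$ fails to lie in $\bZ_p$, which is precisely the valuation computation above; and (ii) in the third identity, keeping the two levels $p$ and $p^2$ straight so that the mod-$p$ congruence is correctly interpreted as a compatibility condition on the mod-$p^2$ solution rather than an independent constraint.
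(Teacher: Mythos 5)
Your proof is correct; the paper omits the proof of this lemma as ``standard,'' and your valuation-and-congruence counting is exactly the intended routine verification (one representative $a$ contributes precisely when the residue classes forced by the two factors exist and coincide). The two points you flag --- that the summand vanishes for \emph{every} representative when $p^kx\notin\bZ_p$, and that in the third identity the mod~$p$ condition is a compatibility constraint on the unique mod~$p^2$ solution $a\equiv -py$ --- are indeed the only places requiring care, and you handle both.
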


\begin{lemma}
 \label{lem:formula-3}
For $x,y\in\bQ_p$, we have
\begin{align*}
 \sigma(y)\,\suma{a}\sigma(p\inv (x+ay))&
=p\,\sigma(p\inv x)\,\sigma(p\inv y)+\sigma(x)\,\sigma(y)
-\sigma(x)\,\sigma(p\inv y).
\end{align*}
\end{lemma}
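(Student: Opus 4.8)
The plan is to prove the identity by a short case analysis on the $p$-adic size of $y$, after first reducing to the case $y\in\bZ_p$. First I would note that the prefactor $\sigma(y)$ on the left forces the left-hand side to vanish when $y\notin\bZ_p$, and that the right-hand side vanishes there as well, since $y\notin\bZ_p$ gives $\sigma(y)=\sigma(p\inv y)=0$. So one may assume $\sigma(y)=1$ throughout. Under this assumption the summand $\sigma(p\inv(x+ay))$ depends only on the residue $a\bmod p$ — replacing $a$ by $a+pb$ with $b\in\bZ_p$ alters $x+ay$ by $pby\in p\bZ_p$ — so the sum $\suma{a}\sigma(p\inv(x+ay))$ simply counts the residues $a\in\bZ_p/p\bZ_p$ for which $x+ay\in p\bZ_p$.

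The core of the argument is then to evaluate this count in the two regimes $y\in\bZ_p\cross$ and $y\in p\bZ_p$. When $y$ is a unit, $a\mapsto ay$ permutes $\bZ_p/p\bZ_p$, so exactly one residue solves $x+ay\in p\bZ_p$ if $x\in\bZ_p$, while none does if $x\notin\bZ_p$ (as then $x+ay\notin\bZ_p$); the left-hand side therefore equals $\sigma(x)$. Since $\sigma(p\inv y)=0$ in this regime, the right-hand side reduces to $\sigma(x)\sigma(y)=\sigma(x)$, in agreement. When instead $y\in p\bZ_p$, every $ay$ lies in $p\bZ_p$, so the membership $x+ay\in p\bZ_p$ is equivalent to $x\in p\bZ_p$ independently of $a$; each of the $p$ summands equals $\sigma(p\inv x)$, and the left-hand side is $p\,\sigma(p\inv x)$. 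On the right the terms $\sigma(x)\sigma(y)$ and $\sigma(x)\sigma(p\inv y)$ cancel — both equal $\sigma(x)$ since $\sigma(y)=\sigma(p\inv y)=1$ — leaving $p\,\sigma(p\inv x)\sigma(p\inv y)=p\,\sigma(p\inv x)$, again in agreement. These two regimes are exhaustive, so the identity follows.

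I do not expect any genuine obstacle, as the identity is elementary and of the same flavor as Lemmas \ref{lem:formula-1} and \ref{lem:formula-2}. The only point demanding real attention is the bookkeeping of the two $\sigma(x)$-terms on the right-hand side at the boundary between the regimes: they must reproduce $\sigma(x)$ exactly in the unit case and cancel completely in the $p\bZ_p$ case. Verifying this matching regime by regime is the single step I would write out with care; everything else is immediate from the valuation dichotomy.
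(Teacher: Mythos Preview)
Your proof is correct and follows essentially the same approach as the paper: both split according to whether $y\in\bZ_p\cross$ or $y\in p\bZ_p$. The paper encodes this via the decomposition $\sigma(y)=\tau(y)+\sigma(p\inv y)$ and then invokes Lemma~\ref{lem:formula-1} to evaluate the sum in the unit case, while you carry out the same dichotomy by direct counting of residues; the underlying argument is identical.
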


\begin{proof}
By Lemma \ref{lem:formula-1}, we have
 \begin{align*}
  &  \sigma(y)\,\suma{a}\sigma(p\inv (x+ay))\\
  &= \tau(y)\,\suma{a}\sigma(p\inv (x+ay))+
   \sigma(p\inv y)\,\suma{a}\sigma(p\inv (x+ay))\\
&=\tau(y)\,\suma{a}\sigma(p\inv (xy\inv +a))+
   p\,\sigma(p\inv x)\,\sigma(p\inv y)\\
  &= \tau(y)\,\sigma(xy\inv)+
   p\,\sigma(p\inv x)\,\sigma(p\inv y)\\
    &= \tau(y)\,\sigma(x)+
   p\,\sigma(p\inv x)\,\sigma(p\inv y)\\
  &= p\,\sigma(p\inv x)\,\sigma(p\inv y)+\sigma(x)\,\sigma(y)
-\sigma(x)\,\sigma(p\inv y).
 \end{align*}

\end{proof}

\begin{lemma}
 \label{lem:formula-4}
For $x,y\in\bQ_p$, we have
\begin{align*}
\sigma(x)\,\sigma(y)\,\sigma(p\inv xy)& =
\sigma(x)\,\sigma(p\inv y)+\sigma(p\inv x)\,\sigma(y)
-\sigma(p\inv x)\,\sigma(p\inv y),\\
\sigma(px)\,\sigma(py)\,\sigma(pxy)& =
\sigma(px)\,\sigma(y)+\sigma(x)\,\sigma(py)
-\sigma(x)\,\sigma(y).
\end{align*}
\end{lemma}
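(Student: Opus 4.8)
The two identities are equalities of $\{0,1\}$-valued functions on $\bQ_p\times\bQ_p$, so it suffices to decide, for each side, exactly when it takes the value $1$. Writing $a=\ord_p(x)$ and $b=\ord_p(y)$ (with the convention $\ord_p(0)=+\infty$), one has $\sigma(p^{j}z)=1$ if and only if $\ord_p(z)\ge -j$; thus $\sigma(x)=\delta(a\ge 0)$, $\sigma(p\inv x)=\delta(a\ge 1)$, $\sigma(p\inv xy)=\delta(a+b\ge 1)$, and symmetrically for $y$. The plan is to prove the first identity by this valuation bookkeeping, and then to deduce the second from the first by a change of variables.

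For the first identity I would argue by cases on $(a,b)$. If $a<0$ or $b<0$, the left-hand side vanishes (it requires $a\ge 0$ and $b\ge 0$), and each of the three terms on the right vanishes as well, since a negative value of $a$ forces $\sigma(x)=\sigma(p\inv x)=0$ and likewise for $b$; hence both sides are $0$. If $a\ge 0$ and $b\ge 0$, the left-hand side equals $\delta(a+b\ge 1)$, which is $0$ precisely when $a=b=0$, while on the right the expression $\delta(a\ge 1)+\delta(b\ge 1)-\delta(a\ge 1)\delta(b\ge 1)$ is, by inclusion–exclusion, the indicator of $\{a\ge 1\}\cup\{b\ge 1\}$, which is also $0$ precisely when $a=b=0$. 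So the two sides agree in every case. Equivalently, one checks that both sides equal $\sigma(x)\sigma(y)-\tau(x)\tau(y)$, where $\tau=\sigma-\sigma(p\inv\,\cdot\,)$ is the indicator of $\bZ_p\cross$.

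For the second identity I would simply substitute $x\mapsto px$ and $y\mapsto py$ into the first. Since $\sigma(p\inv(px)(py))=\sigma(pxy)$, $\sigma(p\inv\cdot px)=\sigma(x)$, and $\sigma(p\inv\cdot py)=\sigma(y)$, the first identity transforms into
\[
\sigma(px)\,\sigma(py)\,\sigma(pxy)=\sigma(px)\,\sigma(y)+\sigma(x)\,\sigma(py)-\sigma(x)\,\sigma(y),
\]
which is exactly the asserted formula.

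There is essentially no genuine obstacle here: the content is the elementary observation that each factor depends on $x$ and $y$ only through the threshold conditions on $a=\ord_p(x)$ and $b=\ord_p(y)$, combined with a one-line inclusion–exclusion. The only points requiring a little care are keeping the thresholds ($\ge 0$ versus $\ge 1$) straight when reading off the indicators, and noting that the degenerate values $x=0$ or $y=0$ (where $a$ or $b$ is $+\infty$) fall harmlessly under the case $a,b\ge 0$.
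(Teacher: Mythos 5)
Your proof is correct and rests on the same elementary observation as the paper's: both sides depend only on the threshold conditions $\ord_p(x)\ge 0,1$ and $\ord_p(y)\ge 0,1$. The paper argues slightly more algebraically, splitting $\sigma(x)=\tau(x)+\sigma(p\inv x)$ and simplifying $\sigma(y)\,\sigma(p\inv xy)$ in each of the two resulting terms, which is exactly the reformulation you mention at the end ($\text{both sides}=\sigma(x)\sigma(y)-\tau(x)\tau(y)$); your valuation case check is an equivalent bookkeeping of the same fact. One small point in your favor: the paper only writes out the first identity and leaves the second implicit, whereas your substitution $x\mapsto px$, $y\mapsto py$ gives a clean one-line derivation of the second identity from the first.
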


\begin{proof}
Using $\sigma(x)=\tau(x)+\sigma(p\inv x)$, we have
 \begin{align*}
  \sigma(x)\,\sigma(y)\,\sigma(p\inv xy)& =
\tau(x)\,\sigma(y)\,\sigma(p\inv xy)+\sigma(p\inv x)\,\sigma(y)\,\sigma(p\inv xy)\\
  &=\tau(x)\,\sigma(p\inv y) +\sigma(p\inv x)\,\sigma(y)\\
  & =\sigma(x)\,\sigma(p\inv y)-\sigma(p\inv x)\,\sigma(p\inv y)
+\sigma(p\inv x)\,\sigma(y).
 \end{align*}
\end{proof}

For $y=\left(
\begin{smallmatrix}
 y_1&y_2\\y_3&y_4
\end{smallmatrix}
\right)\in\bQ_p^{(2,2)}
$, define
\begin{align*}
 A(y)&=\sigma(y_2)\,\sigma(y_4)\,
\suma{a}\,\sigma(p\inv(y_1+ay_2))\,\sigma(p\inv(y_3+ay_4)),\\
 A'(y)&=A(py)\\
&=\sigma(py_2)\,\sigma(py_4)\,
\suma{a}\,\sigma(y_1+ay_2)\,\sigma(y_3+ay_4),\\
B(y)&=
\sigma(py_2)\,\sigma(py_4)\,
\sumt{a}\,\sigma(p\inv(y_1+ay_2))\,\sigma(p\inv(y_3+ay_4)).
\end{align*}

\begin{lemma}
 \label{lem:formula-5}
For $y\in\bQ_p^{(2,2)}$, we have
\begin{align*}
 A(y)&=[0,0,0,0]\,\sigma(p\inv\,\det y)+p\,[1,1,1,1]-[0,1,0,1],\\
 A'(y)&=[-1,-1,-1,-1]\,\sigma(p\,\det y)+p\,[0,0,0,0]-[-1,0,-1,0],\\
 B(y)&=[-1,-1,-1,-1]\,\sigma(\det y)+p[0,0,0,0]\,\sigma(p\inv\det y) 
       -[-1,0,-1,0]\,\sigma(\det y)\\
&\qquad +p^2\,[1,1,1,1]-p\,[0,1,0,1].
\end{align*}
\end{lemma}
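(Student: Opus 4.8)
The plan is to prove all three identities by reducing each defining sum to the number of solutions of a linear congruence in the summation variable, and then repackaging that count through the elementary identities of Lemmas \ref{lem:formula-1}--\ref{lem:formula-4}.

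First I would treat $A(y)$. The prefactor $\sigma(y_2)\sigma(y_4)$ forces $y_2,y_4\in\bZ_p$, and since then $ay_2,ay_4\in\bZ_p$, the two factors $\sigma(p^{-1}(y_1+ay_2))$ and $\sigma(p^{-1}(y_3+ay_4))$ vanish unless $y_1,y_3\in\bZ_p$ as well; so one may assume $y\in\bZ_p^{(2,2)}$. Under this reduction the inner sum over $a\in\bZ_p/p\bZ_p$ is exactly the number $N$ of $a\in\bF_p$ solving the system $y_1+ay_2\equiv y_3+ay_4\equiv 0\pmod p$. A short case analysis on the reductions $(\bar y_2,\bar y_4)$ gives $N=p$ when all four entries lie in $p\bZ_p$, $N=1$ when $(\bar y_2,\bar y_4)\neq(0,0)$ and $\det y\equiv 0\pmod p$, and $N=0$ otherwise. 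Each alternative translates into a product of $\sigma$'s (using that $\det y\in p\bZ_p$ is encoded by $\sigma(p^{-1}\det y)$ and that all entries in $p\bZ_p$ is $[1,1,1,1]$), and matching the overlaps by inclusion--exclusion — the algebra of which is Lemma \ref{lem:formula-4} — yields the asserted expression for $A(y)$.

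The identity for $A'(y)$ then needs no new computation: since $A'(y)=A(py)$, I would substitute $y\mapsto py$ in the formula just obtained, using $[i_1,i_2,i_3,i_4](py)=[i_1-1,i_2-1,i_3-1,i_4-1](y)$ and $\det(py)=p^2\det y$ (so that $\sigma(p^{-1}\det(py))=\sigma(p\det y)$). This turns the three terms into $[-1,-1,-1,-1]\sigma(p\det y)$, $p[0,0,0,0]$ and $-[-1,0,-1,0]$, exactly as claimed.

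The genuinely harder identity is $B(y)$, and this is where I expect the main obstacle to lie. Here the prefactor is $\sigma(py_2)\sigma(py_4)$ and the sum runs over $a\in\bZ_p/p^2\bZ_p$; setting $u_k=py_k$ rewrites the summand conditions as $u_1+au_2\equiv u_3+au_4\equiv 0\pmod{p^2}$, with $u_1,u_2,u_3,u_4\in\bZ_p$ forced, so that $B(y)=[-1,-1,-1,-1](y)\,N'$, where $N'$ counts solutions $a\in\bZ/p^2\bZ$ of a linear system over $\bZ/p^2\bZ$. Counting $N'$ is more delicate than $N$: the coefficient vector $(u_2,u_4)$ now has three possible common valuations, and the consistency of the system must be tracked through $\det u=p^2\det y$ at two distinct levels, namely $\det u\in p^2\bZ_p$ versus $\det u\in p^3\bZ_p$; the outcome is $N'\in\{0,1,p,p^2\}$. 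The cleanest way to organize the bookkeeping is to split the $a$-sum according to the valuations of $u_2$ and $u_4$ and apply the three summation identities of Lemma \ref{lem:formula-2}, in particular its third identity, which is tailored precisely to a $\sum^2$ whose two factors carry the scalings $p^{-1}a$ and $p^{-2}a$, i.e. to the stratum where $u_2$ and $u_4$ have different valuations. Assembling the sub-cases and collapsing the resulting $\sigma$-products with Lemmas \ref{lem:formula-3} and \ref{lem:formula-4} produces the five-term expression for $B(y)$. The real difficulty is combinatorial: keeping the degenerate strata — a whole line of solutions mod $p^2$, which contribute the $p[0,0,0,0]\sigma(p^{-1}\det y)$ and $p^2[1,1,1,1]$ terms — consistent with the inclusion--exclusion among the overlapping indicator functions.
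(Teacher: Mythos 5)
Your treatment of $A(y)$ and $A'(y)$ is correct and amounts to the paper's own computation in a different packaging: the paper splits the $a$-sum according to whether $y_2$ is a unit (via $\sigma(y_2)=\tau(y_2)+\sigma(p\inv y_2)$) and applies Lemmas \ref{lem:formula-3} and \ref{lem:formula-4}, whereas you count the solutions $N\in\{0,1,p\}$ of the congruence system directly; the two answers reconcile because $[0,1,0,1]\,\sigma(p\inv\det y)=[0,1,0,1]$ (if $y_2,y_4\in p\bZ_p$ then $p\mid\det y$ automatically), so your $p[1,1,1,1]+([0,0,0,0]-[0,1,0,1])\sigma(p\inv\det y)$ equals the stated formula. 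Where you diverge genuinely is $B(y)$, and there you have correctly identified the crux but not resolved it: your plan is a from-scratch stratified count of solutions mod $p^2$, which is feasible but is exactly the bookkeeping where errors creep in, and you leave the assembly of the strata as an assertion. The paper avoids this entirely by writing $a=a'+pa''$ with $a',a''\in\bZ_p/p\bZ_p$, which exhibits the inner sum over $a''$ as an instance of the already-proved $A$-formula applied to the matrix $\Nmat{y_1+a'y_2&py_2}{y_3+a'y_4&py_4}$ (whose determinant is $p\det y$); summing the three resulting terms over $a'$ gives
\begin{equation*}
B(y)=A'(y)\,\sigma(\det y)+p\,A(y)-p\,[0,0,0,0],
\end{equation*}
and substituting the formulas for $A$ and $A'$ finishes the proof in two lines. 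I would strongly recommend adopting this reduction: it converts the one genuinely delicate case of your plan into a corollary of the easy cases, and it is the step your proposal is missing if it is to count as a complete argument rather than an outline.
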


\begin{proof}
 We have
\[
 A(y)=A_1(y)+A_2(y),
\]
where
\begin{align*}
 A_1(y)&=\tau(y_2)\,\sigma(y_4)\,
\suma{a}\,\sigma(p\inv(y_1+ay_2))\,\sigma(p\inv(y_3+ay_4)),\\
 A_2(y)& =
\sigma(p\inv y_2)\,\sigma(y_4)\,
\suma{a}\,\sigma(p\inv(y_1+ay_2))\,\sigma(p\inv(y_3+ay_4)).
\end{align*}
Then
\begin{align*}
 A_1(y)&=\sigma(y_1)\,\tau(y_2)\,\sigma(y_3)\,\sigma(y_4)\,
         \sigma(p\inv(y_3-y_1y_2\inv y_4))\\
 & =\sigma(y_1)\,\tau(y_2)\,\sigma(y_3)\,\sigma(y_4)\,
         \sigma(p\inv(y_1 y_4-y_2y_3))\\
 &= [0,0,0,0]\,\sigma(p\inv \det y)
-[0,1,0,0]\,\sigma(p\inv y_1y_4)\\
 & =[0,0,0,0]\,\sigma(p\inv \det y)
-[0,1,0,0]\,\{\sigma(y_1)\,\sigma(p\inv y_4)+
\sigma(p\inv y_1)\,\sigma(y_4)-\sigma(p\inv y_1)\,\sigma(p\inv y_4)
\}\\
 &= [0,0,0,0]\,\sigma(p\inv \det y)
-[0,1,0,1]-[1,1,0,0]+[1,1,0,1].
\end{align*}
By Lemma \ref{lem:formula-3}, we have
\begin{align*}
 A_2(y)&=\sigma(p\inv y_1)\,\sigma(p\inv y_2)\,\sigma(y_4)\,
          \suma{a}\,\sigma(p\inv(y_3+ay_4))\\
 &= \sigma(p\inv y_1)\,\sigma(p\inv y_2)\,\sigma(y_4)\,
\{
p\,\sigma(p\inv y_3)\,\sigma(p\inv y_4)+\sigma(y_3)\,\sigma(y_4)
-\sigma(y_3)\,\sigma(p\inv y_4)
\}\\
 &=p[1,1,1,1]+[1,1,0,0]-[1,1,0,1]. 
\end{align*}
Thus
\begin{align*}
 A(y)&
=[0,0,0,0]\,\sigma(p\inv\,\det y)+p\,[1,1,1,1]-[0,1,0,1]
\end{align*}
and
\begin{align*}
 A'(y)&=A(py)\\
 &= [-1,-1,-1,-1]\,\sigma(p\,\det y)+p\,[0,0,0,0]-[-1,0,-1,0].
\end{align*}
Using the above formulas for $A(y)$ and $A'(y)$, we have
\begin{align*}
 B(y)&=\sigma(py_2)\,\sigma(py_4)
\,\suma{a'}\,\suma{a''}\,\sigma(p\inv(y_1+a'y_2+a'' py_2))
                       \,\sigma(p\inv(y_3+a'y_4+a'' py_4))\\
 &=\suma{a'} \,A\Nmat{y_1+a'y_2& py_2}{y_3+a'y_4&py_4}\\
 &=\suma{a'}\,\sigma(y_1+a'y_2)\,\sigma(py_2)\,\sigma(y_3+a'y_4)\,
              \sigma(py_4)\,\sigma(\det y)\\
 &\quad +p\,\suma{a'}\,\sigma(p\inv(y_1+a'y_2))\,\sigma(y_2)
       \,\sigma(p\inv(y_3+a'y_4))\,
              \sigma(y_4)\\
 &\quad - \suma{a'}\,\sigma(y_1+a'y_2)\,\sigma(y_2)\,\sigma(y_3+a'y_4)\,
              \sigma(y_4)\\
 &=A'(y) \,\sigma(\det y)+p\,A(y)-p\,[0,0,0,0]\\
 &=\{[-1,-1,-1,-1] +p[0,0,0,0]-[-1,0,-1,0]\}\,\sigma(\det y)\\
&\quad   +p\,\{[0,0,0,0]\,\sigma(p\inv \det y)+p\,[1,1,1,1]-p\,[0,1,0,1]\}
   -p\,[0,0,0,0]\\
 &=[-1,-1,-1,-1]\,\sigma(\det y)+p[0,0,0,0]\,\sigma(p\inv\det y) 
       -[-1,0,-1,0]\,\sigma(\det y)\\
&\qquad +p^2\,[1,1,1,1]-p\,[0,1,0,1]. 
\end{align*}
\end{proof}

\begin{lemma}
 \label{lem:formula-6}
For $y\in\bQ_p^{(2,2)}$, we have
\begin{align*}
& \sum_{(b,c,d)\in\Lambda_1}
\sigma(y_1+p\inv b)\,\sigma(y_2+p\inv c)\,
\sigma(y_3+p\inv c)\,\sigma(y_4+p\inv d)\\
&\qquad =[-1,-1,-1,-1]\,\sigma(y_2-y_3)\,\sigma(p\,\det y)-[0,0,0,0],\\
& \sum_{(b,c,d)\in\Lambda_2}\sigma(y_1+p\inv b)\,\sigma(y_2+p\inv c)\,
\sigma(y_3+p\inv c)\,\sigma(y_4+p\inv d)\\
&\qquad =[-1,-1,-1,-1]\,\sigma(y_2-y_3)\,\left\{1-\sigma(p\,\det y)\right\}.
\end{align*}
\end{lemma}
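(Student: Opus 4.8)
The plan is to evaluate both sums simultaneously by stratifying the full index set $(\bZ_p/p\bZ_p)^3$ according to the rank over $\bF_p$ of the symmetric matrix $\left(\begin{smallmatrix} b & c \\ c & d \end{smallmatrix}\right)$. Write $\Sigma_i(y)$ for the sum in question restricted to $\Lambda_i$ ($i=0,1,2$), where $\Lambda_0=\{(0,0,0)\}$, and let $S(y)$ denote the sum over all of $(\bZ_p/p\bZ_p)^3$, so that $S=\Sigma_0+\Sigma_1+\Sigma_2$. First I would compute $S$ and $\Sigma_0$ directly; the substance of the lemma is then the rule that decides, for each surviving term, whether it sits in the rank-$\le1$ part or the rank-$2$ part, and this rule will be read off from $\sigma(p\det y)$.

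For $S$ the summations decouple. The factors involving $b$ and $d$ separate off, and by Lemma \ref{lem:formula-1} contribute $\sum_b\sigma(y_1+p\inv b)=\sigma(py_1)$ and $\sum_d\sigma(y_4+p\inv d)=\sigma(py_4)$, while the two factors sharing $c$ combine through Lemma \ref{lem:formula-2} into $\sum_c\sigma(y_2+p\inv c)\sigma(y_3+p\inv c)=\sigma(py_2)\sigma(py_3)\sigma(y_2-y_3)$. Multiplying the three pieces yields $S(y)=[-1,-1,-1,-1](y)\,\sigma(y_2-y_3)$. The rank-$0$ stratum contributes only the single term at $(b,c,d)=(0,0,0)$, so $\Sigma_0(y)=[0,0,0,0](y)$ unconditionally.

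The heart of the argument is that $S(y)$ is the indicator of a \emph{single} surviving triple, whose rank is then legible from $\det y$. Indeed, $S(y)\neq0$ forces $y_k\in p\inv\bZ_p$ for every $k$, so I would write $y_k=p\inv z_k$ with $z_k\in\bZ_p$ and let $\bar z_k\in\bF_p$ be its reduction. Each factor $\sigma\bigl(y_k+p\inv(\cdot)\bigr)$ then vanishes unless the inserted variable reduces to $-\bar z_k$, and the two $c$-factors survive together only when $\bar z_2=\bar z_3$, i.e. $\sigma(y_2-y_3)=1$. Thus exactly one triple survives, with reduction $(-\bar z_1,-\bar z_2,-\bar z_4)$ in $\bF_p^{3}$, and it contributes the value $1$. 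The symmetric matrix attached to it has determinant $\bar z_1\bar z_4-\bar z_2^2$, which on the locus $\bar z_2=\bar z_3$ is the reduction of $z_1z_4-z_2z_3$; since $\det y=p^{-2}(z_1z_4-z_2z_3)$, this determinant vanishes precisely when $\sigma(p\det y)=1$. Hence the surviving triple lies in $\Lambda_0\cup\Lambda_1$ exactly when $\sigma(p\det y)=1$, and in $\Lambda_2$ exactly when $\sigma(p\det y)=0$.

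Assembling these facts gives $\Sigma_0+\Sigma_1=S\cdot\sigma(p\det y)$, whence $\Sigma_1=[-1,-1,-1,-1]\sigma(y_2-y_3)\sigma(p\det y)-[0,0,0,0]$ after subtracting $\Sigma_0=[0,0,0,0]$, and $\Sigma_2=S-(\Sigma_0+\Sigma_1)=[-1,-1,-1,-1]\sigma(y_2-y_3)\bigl(1-\sigma(p\det y)\bigr)$; these are the two asserted formulas. The subtraction of $[0,0,0,0]$ is exactly the correction for the case in which the surviving triple is the rank-zero triple $(0,0,0)$ rather than a genuine rank-one triple. The only delicate point, and the step I expect to require the most care, is the determinant bookkeeping in the previous paragraph: one must invoke the support condition $\bar z_2=\bar z_3$ to pass from $\det y$ to the mod-$p$ determinant of the symmetric matrix, and one must separately confirm that in the degenerate range $S(y)=0$ all three strata vanish (so that in particular $[0,0,0,0]=\Sigma_0=0$ and the $-[0,0,0,0]$ term is harmless), which follows since $S=0$ forces some $y_k\notin\bZ_p$.
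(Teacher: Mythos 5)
Your proof is correct, but it takes a genuinely different route from the paper's. The paper computes the $\Lambda_1$-sum \emph{directly}, by writing down the explicit parametrization $\Lambda_1=\{(b,0,0)\mid b\ne 0\}\cup\{(d\inv c^2,c,d)\mid d\ne 0\}$ and evaluating the two resulting character sums $I_1,I_2$ separately (which requires the auxiliary manipulation $\sigma(py_2)\sigma(py_3)\sigma(y_2-y_3)\sigma(py_2y_3)=\sigma(y_2)\sigma(y_3)$); it then obtains the $\Lambda_2$-sum by subtracting from the full sum. You instead compute only the full sum $S$ and the rank-zero contribution $\Sigma_0=[0,0,0,0]$ directly, and recover both stratum sums from the observation that on the support of $S$ exactly one residue triple $(-\bar z_1,-\bar z_2,-\bar z_4)$ survives, whose rank over $\bF_p$ is read off from $\sigma(p\det y)$ via the identity $\bar z_1\bar z_4-\bar z_2^2=\bar z_1\bar z_4-\bar z_2\bar z_3$ on the locus $\bar z_2=\bar z_3$. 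Your argument is shorter and more conceptual --- it explains \emph{why} the factor $\sigma(p\det y)$ appears, namely as the rank condition on the unique surviving matrix --- and it avoids the explicit parametrization of $\Lambda_1$ entirely; the paper's computation, while more mechanical, has the mild advantage of producing each stratum sum by a self-contained calculation in the same style as the neighbouring Lemmas. You also correctly handle the two edge cases (the degenerate range $S=0$, where all strata and $[0,0,0,0]$ vanish simultaneously, and the role of the $-[0,0,0,0]$ correction for the rank-zero triple), so no gap remains.
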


\begin{proof}
 Observe that
\begin{align*}
 \Lambda_1&= \{(b,c,d)\in(\bZ_p/p\bZ_p)^3\mid (b,c,d)\ne (0,0,0),\,
bd=c^2\}\\
 &= \{(b,0,0)\in(\bZ_p/p\bZ_p)^3\mid b\ne 0\}
\,\cup\,\{(d\inv c^2,c,d)\in(\bZ_p/p\bZ_p)^3\mid d\ne 0\},\\
 \Lambda_2&= (\bZ_p/p\bZ_p)^3 \setminus (\Lambda_1\,\cup \{(0,0,0)\}).
   \end{align*}
We have
\begin{align*}
 \sum_{(b,c,d)\in\Lambda_1}
\sigma(y_1+p\inv b)\,\sigma(y_2+p\inv c)\,
\sigma(y_3+p\inv c)\,\sigma(y_4+p\inv d)
&=I_1(y)+I_2(y),
\end{align*}
where
\begin{align*}
 I_1(y)&=\sumd{b}\,\sigma(y_1+p\inv b)
\,\sigma(y_2)\,\sigma(y_3)\,\sigma(y_4),\\
 I_2(y)& =\suma{c}\,\sumd{d}\,\sigma(y_1+p\inv d\inv c^2)
          \,\sigma(y_2+p\inv c)\,\sigma(y_3+p\inv c)\,\sigma(y_4+p\inv d). 
\end{align*}
By Lemma \ref{lem:formula-1}, we have
\begin{align*}
 I_1(y)&=\{\sigma(py_1)-\sigma(y_1)\}\,\sigma(y_2)\,\sigma(y_3)\,\sigma(y_4)\\
 &=[-1,0,0,0] -[0,0,0,0].
\end{align*}
We have
\begin{align*}
 I_2(y)&=
\{\sigma(py_4)-\sigma(y_4)\}\,
        \suma{c}\,\sigma(y_1+p\inv(-py_4)\inv c^2)\,
        \sigma(y_2+p\inv c)\,\sigma(y_3+p\inv c)\\
 &= \{\sigma(py_4)-\sigma(y_4)\}\,
        \suma{c}\,\sigma(py_1y_4-p\inv c^2)\,
        \sigma(y_2+p\inv c)\,\sigma(y_3+p\inv c)\\
 &=\sigma(py_1)\, \sigma(py_2)\, \sigma(py_3)\, \sigma(y_2-y_3)\, 
   \{\sigma(py_4)-\sigma(y_4)\}\,\sigma(py_1y_4-py_2^2)\\
 &= \sigma(py_1)\, \sigma(py_2)\, \sigma(py_3)\, \sigma(y_2-y_3)\, 
   \{\sigma(py_4)-\sigma(y_4)\}\,\sigma(p\det y)\\
 &=[-1,-1,-1,-1]\,\sigma(y_2-y_3)\,\sigma(p \det y) 
-\sigma(py_1)\, \sigma(py_2)\, \sigma(py_3)\, \sigma(y_2-y_3)\, 
\sigma(py_2y_3)\,\sigma(y_4).
\end{align*}
Since
\begin{align*}
 & \sigma(py_2)\, \sigma(py_3)\, \sigma(y_2-y_3)\, \sigma(py_2y_3)\\
 &=\{\sigma(py_2)\,\sigma(y_3)+\sigma(y_2)\,\sigma(py_3)-
\sigma(y_2)\,\sigma(y_3)
\} \, \sigma(y_2-y_3)\\
 &= \sigma(y_2)\,\sigma(y_3),
\end{align*}
we have
\begin{align*}
  I_2(y)&=[-1,-1,-1,-1]\,\sigma(y_2-y_3)\,\sigma(p \det y) -
[-1,0,0,0].
\end{align*}
Thus
\begin{align*}
 & \sum_{(b,c,d)\in\Lambda_1}
\sigma(y_1+p\inv b)\,\sigma(y_2+p\inv c)\,
\sigma(y_3+p\inv c)\,\sigma(y_4+p\inv d)\\
 &= [-1,-1,-1,-1]\,\sigma(y_2-y_3)\,\sigma(p \det y) -[0,0,0,0].
\end{align*}
Next we have
\begin{align*}
 & \sum_{(b,c,d)\in\Lambda_2}
\sigma(y_1+p\inv b)\,\sigma(y_2+p\inv c)\,
\sigma(y_3+p\inv c)\,\sigma(y_4+p\inv d)\\
&=  \sum_{(b,c,d)\in (\bZ_p/p\bZ_p)^3}
\sigma(y_1+p\inv b)\,\sigma(y_2+p\inv c)\,
\sigma(y_3+p\inv c)\,\sigma(y_4+p\inv d)\\
 & \quad -\sum_{(b,c,d)\in\Lambda_1}
\sigma(y_1+p\inv b)\,\sigma(y_2+p\inv c)\,
\sigma(y_3+p\inv c)\,\sigma(y_4+p\inv d)
-[0,0,0,0]
\\
&=\sigma(py_1)\,\sigma(py_2)\,\sigma(py_3)\,\sigma(y_2-y_3)\,\sigma(py_4)
-\{
[-1,-1,-1,-1]\,\sigma(y_2-y_3)\,\sigma(p \det y) -[0,0,0,0]
\}
-[0,0,0,0]\\
 & =[-1,-1,-1,-1]\sigma(y_2-y_3)
-
[-1,-1,-1,-1]\,\sigma(y_2-y_3)\,\sigma(p \det y).
\end{align*}

\end{proof}

\begin{lemma}
 \label{lem:formula-7}
For $x,y\in\bQ_p$, we have
\begin{align*}
 \sum_{(b,c,d)\in\Lambda_1}\,\sigma(x+p\inv c)\,\sigma(y+p\inv d)
&=
(p-1)\,\sigma(x)\,\sigma(y)+\sigma(px)\,\sigma(py)-
\sigma(px)\,\sigma(y),\\
 \sum_{(b,c,d)\in\Lambda_2}\,\sigma(x+p\inv c)\,\sigma(y+p\inv d)
&=
(p-1)\,\sigma(px)\,\sigma(py)+\sigma(px)\,\sigma(y)-
p\,\sigma(x)\,\sigma(y).
\end{align*}
\end{lemma}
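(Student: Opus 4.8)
The plan is to exploit that the summand $\sigma(x+p\inv c)\,\sigma(y+p\inv d)$ is independent of the coordinate $b$. Writing $(c,d)$ for the reduction modulo $p$ of the last two coordinates and setting
\[
 n_i(c,d)=\#\Big\{b\in\bZ_p/p\bZ_p\ \Big|\ \mathrm{rank}_{\bF_p}\Nmat{b&c}{c&d}=i\Big\}\qquad(i=1,2),
\]
each of the two sums collapses to a sum over $(c,d)\in(\bZ_p/p\bZ_p)^2$:
\[
 \sum_{(b,c,d)\in\Lambda_i}\sigma(x+p\inv c)\,\sigma(y+p\inv d)
 =\sum_{c,d}\,n_i(c,d)\,\sigma(x+p\inv c)\,\sigma(y+p\inv d).
\]
Thus everything reduces to computing the multiplicities $n_i(c,d)$ and then evaluating one-variable sums over residues.

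First I would determine $n_1(c,d)$. Since $\det\Nmat{b&c}{c&d}=bd-c^2$, rank one occurs precisely when $bd=c^2$ and the matrix is nonzero, and rank two when $bd\ne c^2$. A short case analysis then gives: for $(c,d)=(0,0)$ one has $n_1=p-1$ (every $b\ne 0$) and $n_2=0$; for $c=0,d\ne0$ and for $c\ne0,d\ne0$ one has $n_1=1$ (namely $b=0$, respectively $b=c^2d\inv$) and $n_2=p-1$; and for $c\ne0,d=0$ the determinant $-c^2$ never vanishes, so $n_1=0$ and $n_2=p$. Substituting the values of $n_1$ and factoring the resulting double sum into a product of one-variable sums, I would then invoke Lemma \ref{lem:formula-1} in the forms $\suma{c}\sigma(x+p\inv c)=\sigma(px)$ and $\sumd{c}\sigma(x+p\inv c)=\sigma(px)-\sigma(x)$; collecting like terms among $\sigma(x)\sigma(y)$, $\sigma(px)\sigma(py)$, $\sigma(px)\sigma(y)$ and $\sigma(x)\sigma(py)$ yields exactly the stated formula for the $\Lambda_1$-sum.

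For the $\Lambda_2$-sum I would avoid a second bookkeeping of multiplicities by using the disjoint decomposition $(\bZ_p/p\bZ_p)^3=\Lambda_1\sqcup\Lambda_2\sqcup\{(0,0,0)\}$. Summing the ($b$-free) summand over all of $(\bZ_p/p\bZ_p)^3$ contributes a factor $p$ from the $b$-sum and then, by Lemma \ref{lem:formula-1}, equals $p\,\sigma(px)\sigma(py)$, while the single point $(0,0,0)$ contributes $\sigma(x)\sigma(y)$. Hence the $\Lambda_2$-sum equals $p\,\sigma(px)\sigma(py)-\sigma(x)\sigma(y)$ minus the $\Lambda_1$-sum already found, which simplifies to the asserted expression. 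The only point requiring care — and the sole genuine obstacle — is the asymmetric case $c\ne0,\,d=0$, where all $p$ choices of $b$ give rank two (so $n_2=p$), in contrast to the generic case $c\ne0,\,d\ne0$ where exactly one $b$ drops the rank to one; a miscount here would propagate a spurious $\sigma(px)\sigma(y)$ term into both identities. Everything else is the routine evaluation of these sums over residues.
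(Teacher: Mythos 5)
Your proof is correct and follows essentially the same route as the paper: the paper parametrizes $\Lambda_1$ as $\{(b,0,0)\mid b\ne 0\}\cup\{(d\inv c^2,c,d)\mid d\ne 0\}$, which is exactly your multiplicity count $n_1(0,0)=p-1$, $n_1(c,d)=1$ for $d\ne 0$, $n_1(c,0)=0$ for $c\ne 0$, and it likewise obtains the $\Lambda_2$-sum by subtracting the $\Lambda_1$-sum and the contribution of $(0,0,0)$ from the full sum $p\,\sigma(px)\sigma(py)$. Your case analysis, including the point that $c\ne 0,\ d=0$ forces rank two for every $b$, matches the paper's bookkeeping.
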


\begin{proof}
 We have
\begin{align*}
&  \sum_{(b,c,d)\in\Lambda_1}\,\sigma(x+p\inv c)\,\sigma(y+p\inv d)\\
 &=(p-1)\,\sigma(x)\,\sigma(y) +
\suma{c}\,\sumd{d}\,\sigma(x+p\inv c)\,\sigma(y+p\inv d) \\
 & =(p-1)\,\sigma(x)\,\sigma(y) +\sigma(px)\,\sigma(py)-
\sigma(px)\,\sigma(y)
\end{align*}
and
\begin{align*}
 &  \sum_{(b,c,d)\in\Lambda_2}\,\sigma(x+p\inv c)\,\sigma(y+p\inv d)\\
&=\suma{b}\,\suma{c}\,\suma{d}\,\sigma(x+p\inv c)\,\sigma(y+p\inv d)
-\sum_{(b,c,d)\in\Lambda_1}\,\sigma(x+p\inv c)\,\sigma(y+p\inv d)
-\sigma(x)\,\sigma(y)\\
 &=p\,\sigma(px)\,\sigma(py)-
\{
(p-1)\,\sigma(x)\,\sigma(y)+\sigma(px)\,\sigma(py)-
\sigma(px)\,\sigma(y)
\} -\sigma(x)\,\sigma(y)\\
&=(p-1)\,\sigma(px)\,\sigma(py)+\sigma(px)\,\sigma(y)-
p\,\sigma(x)\,\sigma(y).
\end{align*}
\end{proof}

 \subsection{}

In this subsection, we let 
\[
 y=\Nmat{y_1&y_2}{y_3&y_4}\in\bQ^{(2,2)}_p
\]
and put
\[
 \xi=p^{\alpha}y_3-p^{\beta}y_2.
\]
The following four lemmas are proved by a direct calculation.

\begin{lemma}
 We have
\[
 J_1^H(y:\alpha,\beta)=\sum_{i=1}^4\,J_1^H[i](y:\alpha,\beta),
\]
where
\begin{align*}
 J_1^H[1](y:\alpha,\beta)&=p^{-2}\sum_{b\in\bZ_p/p\bZ_p}
  \,\psi(b\xi)\,\sum_{a\in\bZ_p/p\bZ_p}
\, \sigma\Nmat{p^{\alpha+1}&0}{p^{\alpha}a&p^{\beta}}
   \sigma\left(\Nmat{p&0}{a&1}y\right),\\
 J_1^H[2](y:\alpha,\beta)&=p^{-2}\,
\sum_{b\in\bZ_p/p\bZ_p}\,\psi(b\xi)
\, \sigma\Nmat{p^{\alpha}&0}{0&p^{\beta+1}}
   \sigma\left(\Nmat{1&0}{0&p}y\right),\\
 J_1^H[3](y:\alpha,\beta)&=p^2\,
\sum_{a\in\bZ_p/p\bZ_p}
\, \sigma\Nmat{p^{\alpha}&0}{p^{\alpha-1}a&p^{\beta-1}}
   \sigma\left(\Nmat{1&0}{p\inv a&p\inv}y\right),\\
 J_1^H[4](y:\alpha,\beta)&=p^2\,
\sigma\Nmat{p^{\alpha-1}&0}{0&p^{\beta}}
   \sigma\left(\Nmat{p\inv &0}{0&1}y\right).
\end{align*}
\end{lemma}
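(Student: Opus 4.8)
The plan is to compute $J_1^H(y:\alpha,\beta)=\int_{T_1^H}r'(h,1)\phi_0(x,y)\,dh$, with $x=\Nmat{p^\alpha&0}{0&p^\beta}$, by converting the integral into a finite sum over the coset representatives supplied by Lemma \ref{lem:hecke-operators-H-unramified}. The first step is to note that $h\mapsto r'(h,1)\phi_0$ is right $U$-invariant: since $r'$ is a representation we have $r'(hu,1)=r'(h,1)\circ r'(u,1)$, and $r'(u,1)\phi_0=\phi_0$ for $u\in U$ by Lemma \ref{lem:phi0-UK}(1), so the integrand is constant on each right $U$-coset. Because the Haar measure is normalized by $\mathrm{vol}(U)=1$ and $T_1^H$ is, by Lemma \ref{lem:hecke-operators-H-unramified}, the disjoint union of the four displayed families of right $U$-cosets, the integral reduces to the sum of the values $r'(h_i,1)\phi_0(x,y)$ over one representative $h_i$ from each coset.

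The second step is a per-representative evaluation, and the four families will match the four summands $J_1^H[1],\dots,J_1^H[4]$ exactly. For each representative I would write $h=n_H(b)\,d_H(A)$, absorbing the factor $\nu_H(a)=d_H\Nmat{1&a}{0&1}$ into $d_H$ via the homomorphism property $\nu_H(a)\,d_H(A_0)=d_H\bigl(\Nmat{1&a}{0&1}A_0\bigr)$, so that the four families give $A=\Nmat{p&a}{0&1},\ \Nmat{1&0}{0&p},\ \Nmat{1&ap\inv}{0&p\inv},\ \Nmat{p\inv&0}{0&1}$ respectively. Applying the explicit action of $r'$ recalled at the start of this section (Lemma \ref{lem:r'}), the operator $r'(d_H(A),1)$ contributes the scalar $|\det A|^2$ and sends $\phi_0(x,y)$ to $\phi_0(\tp Ax,\tp Ay)=\sigma(\tp Ax)\,\sigma(\tp Ay)$, where $\sigma$ is the characteristic function of $\bZ_p^{(2,2)}$, while $r'(n_H(b),1)$ multiplies by the character $\psi(-b\,\Tr(\tp xJy))$. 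Computing $\tp Ax$ and $\tp Ay$ directly for each of the four matrices $A$ with $x=\Nmat{p^\alpha&0}{0&p^\beta}$ produces precisely the lattice conditions written in the statement, with prefactors $p^{-2},p^{-2},p^2,p^2$ coming from $|\det A|^2=|\det A|_p^2$.

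The only point needing genuine care is the character. I would compute $\Tr(\tp xJy)=\Tr(xJy)=p^\alpha y_3-p^\beta y_2=\xi$ directly, so $r'(n_H(b),1)$ yields $\psi(-b\xi)$; replacing $b$ by $-b$ in the sum over $b\in\bZ_p/p\bZ_p$ turns this into the $\psi(b\xi)$ appearing in $J_1^H[1]$ and $J_1^H[2]$. The third and fourth families carry no $n_H$-factor, hence no character, consistent with the absence of a $\psi$-sum in $J_1^H[3]$ and $J_1^H[4]$. I do not expect any serious obstacle here: once the coset decomposition of Lemma \ref{lem:hecke-operators-H-unramified} is in hand the whole argument is elementary, and the work is purely bookkeeping — transposing each $A$ correctly, tracking the scalars $|\det A|^2$, and matching the transported lattice conditions $\sigma(\tp Ax),\sigma(\tp Ay)$ against the arrays in the statement.
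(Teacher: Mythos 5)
Your proposal is correct and is precisely the ``direct calculation'' that the paper leaves to the reader: unwind the coset decomposition of $T_1^H$ from Lemma \ref{lem:hecke-operators-H-unramified} using $\mathrm{vol}(U)=1$ and the right $U$-invariance of $r'(\cdot,1)\phi_0$, absorb $\nu_H(a)$ into $d_H$, and apply the explicit formulas for $r'(d_H(A),1)$ and $r'(n_H(b),1)$; your verification that $\Tr(\tp xJy)=p^{\alpha}y_3-p^{\beta}y_2=\xi$ and the substitution $b\mapsto -b$ account correctly for the character factors. No gaps.
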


\begin{lemma}
 We have
\[
 J_2^H(y:\alpha,\beta)=\sum_{i=1}^6\,J_2^H[i](y:\alpha,\beta),
\]
where
\begin{align*}
 J_2^H[1](y:\alpha,\beta)&=
\sum_{a\in \bZ_p/p^2\bZ_p}\,
\sigma\Nmat{p^{\alpha+1}&0}{p^{\alpha-1}a&p^{\beta-1}}
   \sigma\left(\Nmat{p &0}{p\inv a&p\inv}y\right)
,\\
 J_2^H[2](y:\alpha,\beta)&=\sigma\Nmat{p^{\alpha-1}&0}{0&p^{\beta+1}}
   \sigma\left(\Nmat{p\inv &0}{0&p}y\right),\\
 J_2^H[3](y:\alpha,\beta)&=p^{-4}\,\sum_{b\in\bZ_p/p^2\bZ_p}\,
    \psi(-b\xi)\,
   \sigma\Nmat{p^{\alpha+1}&0}{0&p^{\beta+1}}
   \sigma\left(\Nmat{p &0}{0&p}y\right)
,\\
J_2^H[4](y:\alpha,\beta)&=p^4\,
\sigma\Nmat{p^{\alpha-1}&0}{0&p^{\beta-1}}
   \sigma\left(\Nmat{p\inv &0}{0&p\inv}y\right)
,\\
J_2^H[5](y:\alpha,\beta)&=
\sum_{a\in\bZ_p\cross/p\bZ_p}\,
\sigma\Nmat{p^{\alpha}&0}{p^{\alpha-1}a&p^{\beta}}
   \sigma\left(\Nmat{1 &0}{p\inv a&1}y\right)
,\\
J_2^H[6](y:\alpha,\beta)&=\sum_{b\in\bZ_p\cross/p\bZ_p}\,
\psi(-p\inv b \xi)\,\sigma\Nmat{p^{\alpha}&0}{0&p^{\beta}}\,\sigma(y).
\end{align*}
\end{lemma}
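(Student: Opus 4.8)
The plan is to unfold $J_2^H(x,y)=\int_{T_2^H}r'(h,1)\phi_0(x,y)\,dh$ at $x=\Nmat{p^{\alpha}&0}{0&p^{\beta}}$ and to convert the integral into a finite sum indexed by the coset decomposition of $T_2^H$ given in Lemma~\ref{lem:hecke-operators-H-unramified}. Since $\phi_0$ is right $U$-invariant under $r'(\cdot,1)$ (this is Lemma~\ref{lem:phi0-UK} with $k_p=1$) and the Haar measure on $H$ is normalized by $\mathrm{vol}(U)=1$, writing $T_2^H=\bigsqcup_i h_iU$ gives
\[
\int_{h_iU}r'(h,1)\phi_0\,dh=\int_U r'(h_i,1)\,r'(u,1)\phi_0\,du=r'(h_i,1)\phi_0 ,
\]
so that $J_2^H(x,y)=\sum_i r'(h_i,1)\phi_0(x,y)$. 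The six families of representatives listed in Lemma~\ref{lem:hecke-operators-H-unramified} then produce, in the same order, the six summands $J_2^H[i]$.

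For each family I would evaluate $r'(h_i,1)\phi_0$ from the reformulated Weil action recalled at the beginning of this section, using the multiplicativity $r'(n_H(b)d_H(A),1)=r'(n_H(b),1)\,r'(d_H(A),1)$. The rule $r'(d_H(A),1)\phi_0(x,y)=|\det A|^2\,\phi_0(\tp Ax,\tp Ay)$ sends a representative $\nu_H(a)d_H(A)=d_H\!\left(\Nmat{1&a}{0&1}A\right)$ to a product of characteristic functions $\sigma$ evaluated at $\tp Ax$ and $\tp Ay$, while a prefactor $n_H(b)$ contributes the phase $\psi(-b\,\Tr(\tp xJy))$. The single computational input is the identity $\Tr(\tp xJy)=p^{\alpha}y_3-p^{\beta}y_2=\xi$ for our diagonal $x$, which produces the characters $\psi(-b\xi)$ and $\psi(-p\inv b\xi)$ in the third and sixth summands. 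Running through the matrices $A=\Nmat{p&ap\inv}{0&p\inv},\ \Nmat{p\inv&0}{0&p},\ p\,1_2,\ p\inv 1_2,\ \Nmat{1&p\inv a}{0&1}$ and $1_2$, with the normalizations $|\det A|^2$ equal to $1,\,1,\,p^{-4},\,p^{4},\,1,\,1$ respectively, reproduces $J_2^H[1],\dots,J_2^H[6]$.

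The only point requiring care is bookkeeping rather than any genuine difficulty. One must match the summation ranges to the coset indices of Lemma~\ref{lem:hecke-operators-H-unramified} (namely $a,b\in\bZ_p/p^2\bZ_p$ in the families giving $J_2^H[1]$ and $J_2^H[3]$, and $a,b\in\bZ_p\cross/p\bZ_p$ in those giving $J_2^H[5]$ and $J_2^H[6]$), keep the transpose in the $\tp A$ action straight, and respect the sign convention in $r'(n_H(b),1)$. No analytic estimate is involved; the substance is simply the verification that each coset representative is carried to the asserted $\sigma$-product, which is a direct matrix computation in each of the six cases.
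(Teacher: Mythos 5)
Your proposal is correct and is exactly the ``direct calculation'' the paper intends (it states these lemmas are proved by a direct calculation and gives no further detail): you unfold the integral over $T_2^H$ along the coset decomposition of Lemma~\ref{lem:hecke-operators-H-unramified}, use right $U$-invariance of $\phi_0$ together with $\mathrm{vol}(U)=1$ to reduce to a sum over representatives, and apply the reformulated Weil action to each. The determinant factors $1,1,p^{-4},p^4,1,1$, the transposes, and the identity $\Tr(\tp xJy)=p^{\alpha}y_3-p^{\beta}y_2=\xi$ all check out against the six displayed summands.
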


\begin{lemma}
 We have
\[
 J_1^G(y:\alpha,\beta)
=\sum_{i=1}^5\,J_1^G[i](y:\alpha,\beta),
\]
where
\begin{align*}
  J_1^G[1](y:\alpha,\beta)&=
\sum_{\substack{a,c\in\bZ_p/p\bZ_p  \\ b\in\bZ_p/p^2\bZ_p}}\,
\sigma\Nmat{p^{\alpha+1}&p^{\alpha}a}{0&p^{\beta}}\,
\sigma\Nmat{p\inv\{y_1+p^{\alpha}b-a(y_2+p^{\alpha}c)\} & y_2+p^{\alpha}c}
           {p\inv\{y_3+p^{\beta}c-ay_4\} & y_4}
,\\
J_1^G[2](y:\alpha,\beta)&=
\sum_{\substack{c\in\bZ_p/p\bZ_p  \\ d\in\bZ_p/p^2\bZ_p}}\,
\sigma\Nmat{p^{\alpha}& 0}{0&p^{\beta+1}}\,
\sigma\Nmat{y_1 & p\inv(y_2+p^{\alpha}c)}
           {y_3+p^{\beta}c & p\inv(y_4+p^{\beta}d)}
,\\
J_1^G[3](y:\alpha,\beta)&=
\sum_{(b,c,d)\in\Lambda_1}\,
\sigma\Nmat{p^{\alpha}& 0}{0&p^{\beta}}\,
\sigma\Nmat{y_1+p^{\alpha-1}b & y_2+p^{\alpha-1}c}
           {y_3+p^{\beta-1}c & y_4+p^{\beta-1}d}
,\\
J_1^G[4](y:\alpha,\beta)&=
\sum_{a\in\bZ_p/p\bZ_p}\,
\sigma\Nmat{p^{\alpha}& p^{\alpha-1}a}{0&p^{\beta-1}}\,
\sigma\Nmat{y_1-ay_2 & py_2}
           {y_3-ay_4 & py_4}
,\\
J_1^G[5](y:\alpha,\beta)&=
\sigma\Nmat{p^{\alpha-1}& p^{\alpha}}{0&p^{\beta}}\,
\sigma\Nmat{py_1 & y_2}
           {py_3 & y_4}.
\end{align*}

\end{lemma}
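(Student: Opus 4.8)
The plan is to evaluate $J_1^G(y;\alpha,\beta)$ directly from its definition as an integral over $T_1^G$, using the explicit coset decomposition of $T_1^G$ recorded in Lemma \ref{lem:hecke-operators-G-unramified}. The first observation is that, because $\phi_0$ is the characteristic function of $\bZ_p^{(2,4)}$ and $K\subset\GL_4(\bZ_p)$, the function $\phi_0$ is right $K$-invariant for the action $r'(1,g)\phi_0(x,y)=\phi_0((x,y)g)$; since $\mathrm{vol}(K)=1$, the integral over a single coset $\gamma K$ contributes exactly $\phi_0((x,y)\gamma)$. Writing $T_1^G=\bigsqcup_{\gamma}\gamma K$ as in that lemma, I would thus obtain
\[
J_1^G(x,y)=\sum_{\gamma}\,\phi_0((x,y)\gamma),
\]
and then specialize to $x=\Nmat{p^{\alpha}&0}{0&p^{\beta}}$. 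The five families of representatives listed in Lemma \ref{lem:hecke-operators-G-unramified} are expected to yield, in order, the five summands $J_1^G[1],\dots,J_1^G[5]$.

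For each representative I would compute the $2\times 4$ matrix $(x,y)\gamma$ by block multiplication, using that the two building blocks act by $(x,y)\,d_G(A)=(xA,\,y\,\tp A\inv)$ and $(x,y)\,n_G(\beta)=(x,\,xB+y)$, where $B=\Nmat{b&c}{c&d}$ is the symmetric block attached to $n_G(\beta)$, and absorbing each $\nu_G(a)=d_G\!\Nmat{1&a}{0&1}$ into the adjacent $d_G$-factor before multiplying. Since $\phi_0(M)=\prod_{i,j}\sigma(M_{ij})$ for $M\in\bQ_p^{(2,4)}$, the value $\phi_0((x,y)\gamma)$ factors as $\sigma(\text{left }2\times2\text{ block})\cdot\sigma(\text{right }2\times2\text{ block})$, which is precisely the form in which each $J_1^G[i]$ is presented. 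For example, the third family gives $(x,y)\,n_G(p\inv b,p\inv c,p\inv d)=\bigl(x,\ \Nmat{y_1+p^{\alpha-1}b & y_2+p^{\alpha-1}c}{y_3+p^{\beta-1}c & y_4+p^{\beta-1}d}\bigr)$, yielding $J_1^G[3]$ immediately.

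It is worth emphasizing that the action $r'(1,g)$ is purely geometric right translation and carries no additive character; consequently no factor $\psi$ appears in any $J_1^G[i]$, which makes the $G$-side cleaner than the $H$-side (where the $n_H(b)$-factors in the coset representatives act through $r'(n_H(b),1)$ and do contribute characters $\psi$). This is also why the $J_1^G[i]$ involve only products of the $\sigma$'s and no exponential sums.

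The hard part will be the bookkeeping of the block products rather than any conceptual difficulty. One must track how the transpose-inverse $\tp(NA)\inv$, with $N=\Nmat{1&a}{0&1}$ coming from $\nu_G(a)$, redistributes the powers of $p$ across the columns of the right-hand block, how the cross term $xB$ interacts with the shear by $a$, and --- most importantly --- one must carry the ranges of the summation indices ($a,c\in\bZ_p/p\bZ_p$ and $b\in\bZ_p/p^2\bZ_p$ in the first family, $(b,c,d)\in\Lambda_1$ in the third, and so on) over verbatim from the coset decomposition, so that the finite sums match the stated $J_1^G[i]$ exactly. Carrying out these verifications for all five families is precisely the task completed in Section \ref{sec:ecr-unramified-details}.
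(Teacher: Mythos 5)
Your proposal is correct and is exactly the argument the paper intends: the paper dispatches this lemma with the phrase ``proved by a direct calculation,'' meaning precisely the reduction of $\int_{T_1^G}r'(1,g)\phi_0(x,y)\,dg$ to the finite sum $\sum_\gamma\phi_0((x,y)\gamma)$ over the coset representatives of Lemma \ref{lem:hecke-operators-G-unramified} (using $\mathrm{vol}(K)=1$ and the right $K$-invariance of $\phi_0$), followed by the block computations $(x,y)\,n_G(b,c,d)=(x,\,xB+y)$ and $(x,y)\,d_G(A)=(xA,\,y\,\tp A\inv)$ that you describe. Your observations that the five coset families produce the five summands in order, and that no $\psi$-factors arise on the $G$-side because $r'(1,g)$ is pure right translation, are both accurate.
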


\begin{lemma}
We put
\[
 x=\Nmat{p^{\alpha}&0}{0&p^{\beta}}.
\]
 We have
\[
 J_2^G(y:\alpha,\beta)
=\sum_{i=1}^{10}\,J_2^G[i](y:\alpha,\beta),
\]
where
\begin{align*}
 J_2^G[1](y:\alpha,\beta)&=
\sum_{b,c,d\in\bZ_p/p^2\bZ_p}
\sigma\!\Nmat{p^{\alpha+1}&0}{0&p^{\beta+1}}\,
\sigma\!\Nmat{p\inv(y_1+p^{\alpha}b) & p\inv(y_2+p^{\alpha}c)}
 {p\inv(y_3+p^{\beta}c) & p\inv(y_4+p^{\beta}d)}
,\\
 J_2^G[2](y:\alpha,\beta)&=
\sum_{a,b\in\bZ_p/p^2\bZ_p}\,
\sigma\!\Nmat{p^{\alpha+1} & p^{\alpha-1}a}{0 & p^{\beta-1}}\,
\sigma\!\Nmat{p\inv(y_1-ay_2+p^{\alpha}b) & py_3}
{p\inv(y_3-ay_4) & py_4}
,\\
 J_2^G[3](y:\alpha,\beta)&=
\sum_{d\in\bZ_p/p^2\bZ_p}\,
\sigma\!\Nmat{p^{\alpha-1}&0}{0&p^{\beta+1}}\,
\sigma\!\Nmat{py_1 & p\inv y_2}{py_3 & p\inv(y_4+p^{\beta}d)}
,\\
 J_2^G[4](y:\alpha,\beta)&=
\sigma\!\Nmat{p^{\alpha-1}&0}{0&p^{\beta-1}}\,
\sigma(py)
,\\
 J_2^G[5](y:\alpha,\beta)&=
\sum_{a,m\in\bZ_p/p\bZ_p}\,\sum_{d\in\bZ_p\cross/p\bZ_p}\,
\sum_{l\in\bZ_p/p^2\bZ_p}
\,\sigma\!\Nmat{p^{\alpha+1}&p^{\alpha}a}{0&p^{\beta}}\\
&\qquad
\sigma\!
\Nmat{p\inv(y_1-ay_2)+p^{\alpha-1}l & y_2+p^{\alpha}(m+p\inv ad)}
{p\inv(y_3-ay_4)+p^{\beta-1}m & y_4+p^{\beta-1}d}
,\\
 J_2^G[6](y:\alpha,\beta)&=
\sum_{b\in\bZ_p\cross/p\bZ_p}\,\sum_{c\in\bZ_p/p\bZ_p}\,
\sum_{d\in\bZ_p/p^2\bZ_p}
\,\sigma\!\Nmat{p^{\alpha}&0}{0&p^{\beta+1}}\,
\sigma\!\Nmat{y_1+p^{\alpha-1}b & p\inv(y_2+p^{\alpha}c)}
{y_3+p^{\beta}c & p\inv(y_4+p^{\beta}d)}
,\\
 J_2^G[7](y:\alpha,\beta)&=
\sum_{a\in\bZ_p/p\bZ_p}\,\sum_{b\in\bZ_p\cross/p\bZ_p}\,
\,\sigma\!\Nmat{p^{\alpha} & p^{\alpha-1}a}{0& p^{\beta-1}}
\sigma\!\Nmat{y_1-ay_2+p^{\alpha-1}b & py_2}{y_3-ay_4 & py_4}
,\\
 J_2^G[8](y:\alpha,\beta)&=
\sum_{d\in\bZ_p\cross/p\bZ_p}\,
\,\sigma\!\Nmat{p^{\alpha-1}&0}{0&p^{\beta}}\,
\sigma\!\Nmat{py_1 & y_2}{py_3 & y_4+p^{\beta-1}d}
,\\
 J_2^G[9](y:\alpha,\beta)&=
\sum_{a\in\bZ_p\cross/p\bZ_p}\,\sum_{c,m\in\bZ_p/p\bZ_p}\,
\,\sigma\!\Nmat{p^{\alpha}&p^{\alpha-1}a}{0&p^{\beta}}
\sigma\!\Nmat{y_1-p\inv ay_2+p^{\alpha-1}m & y_2+p^{\alpha-1}c}
{y_3-p\inv ay_4+p^{\beta-1}c & y_4}
,\\
 J_2^G[10](y:\alpha,\beta)&=
\sum_{(b,c,d)\in\Lambda_2}\,\sigma\!\Nmat{p^{\alpha}&0}{0&p^{\beta}}\,
\sigma\!\Nmat{y_1+p^{\alpha-1}b & y_2+p^{\alpha-1}c}
{y_3+p^{\beta-1}c & y_4+p^{\beta-1}d}.
\end{align*}
\end{lemma}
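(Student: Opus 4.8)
The plan is to turn the group integral defining $J_2^G$ into a finite sum over the left coset representatives listed in Lemma \ref{lem:hecke-operators-G-unramified}, and then to compute each summand directly from the Weil representation formula $r'(1,g)\phi_0(x,y)=\phi_0((x,y)g)$.

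First I would record that, since $\phi_0$ is the characteristic function of $\bZ_p^{(2,4)}$ and right translation by $K=G\cap\GL_4(\bZ_p)$ preserves $\bZ_p^{(2,4)}$ (this is the content of Lemma \ref{lem:phi0-UK} (1)), the integrand $g\mapsto\phi_0((x,y)g)$ is right $K$-invariant. Since $\mathrm{vol}(K)=1$, writing $T_2^G=\bigsqcup_j g_jK$ as in Lemma \ref{lem:hecke-operators-G-unramified} gives at once
\[
 J_2^G(x,y)=\sum_j\,\phi_0((x,y)g_j),
\]
the index $j$ running over the ten displayed families of representatives. Grouping the sum according to these ten families, and specializing $x=\sNmat{p^{\alpha}&0}{0&p^{\beta}}$, produces the ten pieces $J_2^G[i]$; it then remains to check the asserted form of $\phi_0((x,y)g_j)$ family by family.

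The evaluation of each family rests on the three elementary right actions on $(x,y)\in\bQ_p^{(2,4)}$ obtained by block multiplication from the matrix forms of the representatives, namely $(x,y)d_G(A)=(xA,\,y\,\tp A\inv)$, $(x,y)n_G(b,c,d)=(x,\,x\sNmat{b&c}{c&d}+y)$, and $\nu_G(a)=d_G\sNmat{1&a}{0&1}$. For a representative $g_j=n_G(\cdots)\nu_G(\cdots)d_G(\cdots)$ one applies these in turn: the $n_G$ factor adds a $\bZ_p$-combination of the columns of $x$ to those of $y$, the $\nu_G$ and $d_G$ factors carry out the matching column operation and the rescaling of columns by powers of $p$, and $\phi_0$ of the resulting $2\times 4$ matrix factors as the product of the functions $\sigma$ on its left and right $2\times 2$ blocks. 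For the families with at most one nontrivial elementary factor this matches the stated $J_2^G[i]$ essentially by inspection; for instance the fourth family $d_G\sNmat{p\inv&0}{0&p\inv}$ gives $(x,y)g=(p\inv x,\,py)$ and hence the factor $\sigma(p\inv x)\,\sigma(py)$.

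The main obstacle will be the families in which $n_G$, $\nu_G$ and a nondiagonal $d_G$ all interact, above all the fifth family $n_G(p\inv a^2d+l,\,p\inv ad+m,\,p\inv d)\,\nu_G(a)\,d_G\sNmat{p&0}{0&1}$ and the ninth. There the off-diagonal part of $x\sNmat{b&c}{c&d}$ generates cross terms, and carrying out $(x,y)g_j$ explicitly one sees that the quadratic pieces $p^{\alpha-1}a^2d$ and $p^{\beta-1}ad$ deliberately built into the $n_G$-argument cancel against the contribution of $\nu_G(a)$, leaving an expression linear in $y$ and the parameters. A final change of the summation variable $l$ (replacing $l$ by $l+am$, a bijection of $\bZ_p/p^2\bZ_p$) then brings the sum into exactly the tabulated form of $J_2^G[5]$, and the ninth family is treated the same way. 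The argument is entirely computational: all the difficulty lies in organizing these ten matrix products in accordance with the precise parametrizations of Lemma \ref{lem:hecke-operators-G-unramified}.
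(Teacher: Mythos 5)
Your proposal is correct and is exactly the argument the paper intends: the paper dispatches this lemma with the remark that it is ``proved by a direct calculation,'' meaning precisely the reduction $J_2^G(x,y)=\sum_j\phi_0((x,y)g_j)$ over the left coset representatives of $T_2^G$ from Lemma \ref{lem:hecke-operators-G-unramified} (using right $K$-invariance of $\phi_0$ and $\mathrm{vol}(K)=1$), followed by block multiplication family by family. Your observation that in the fifth and ninth families the quadratic terms in the $n_G$-argument cancel against the $\nu_G(a)$ column operation, after which the shift $l\mapsto l+am$ puts the sum in the tabulated form, is the only nontrivial point and you have it right.
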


 \subsection{The proof of Lemma \ref{lem:J1H}}

In this subsection, we let $\alpha\ge\beta$ and write $\cJ$ and $\cJ[i]\;\;(1\le i\le 4)$
for $J_1^H(y;\alpha,\beta)$ and $J_1^H[i](y;\alpha,\beta)$,
respectively.

\begin{enumerate}
 \item Suppose that $\beta<-1$. Then $\cJ=0$.
 \item Suppose that  $\alpha=\beta=-1$. Then $\cJ=0$.
 \item Suppose that  $\alpha=0$ and $\beta=-1$.
We have $\cJ[1]=\cJ[3]=\cJ[4]=0$ and
\begin{align*}
 \cJ[2]&=p^{-2}\,\suma{b}\psi(b(y_3-p\inv y_2))\,[0,0,-1,-1]\\
 &=p\inv\,\sigma(p\inv y_2-y_3) \,[0,0,-1,-1].
\end{align*}
Thus
\[
 \cJ=p\inv\,\sigma(p\inv y_2-y_3) \,[0,0,-1,-1].
\]
 \item Suppose that  $\alpha\ge 1$ and $\beta=-1$.
We have
$\cJ[1]=\cJ[3]=\cJ[4]=0$ and
\begin{align*}
 \cJ[2]&=p^{-2}\,\suma{b}\psi(b(p^{\alpha}y_3-p\inv y_2))\,[0,0,-1,-1]\\
 &=p^{-2}\,\suma{b}\psi(b\cdot p\inv y_2) \,[0,0,-1,-1]\\
 &=p\inv\,[0,1,-1,-1]. 
\end{align*}
Thus
\[
 \cJ=p\inv\,[0,1,-1,-1]. 
\]
 \item Suppose that  $\alpha=\beta=0$. We have $\cJ[3]=\cJ[4]=0$.
By Lemma \ref{lem:formula-5}, we have
\begin{align*}
 \cJ[1]&=p^{-2}\,\suma{b}\psi(b(y_3-y_2))\,
\suma{a}\,\sigma\!\Nmat{py_1&py_2}{ay_1+y_3&ay_2+y_4}\\
 &=p\inv\,\sigma(y_2-y_3) 
     A'\Nmat{y_4&y_2}{y_3&y_1}\\
&=p\inv\,\sigma(y_2-y_3) \,
\left\{
   [-1,-1,-1,-1]\sigma(p\det y)+p[0,0,0,0]-[0,0,-1,-1]
\right\}\\
&=p\inv\,[-1,-1,-1,-1]\sigma(y_2-y_3) \sigma(p\det y)+[0,0,0,0]
-p\inv [0,0,0,-1].
\end{align*}
Next we have
\begin{align*}
 \cJ[2]&=p^{-2}\,\suma{b}\psi(b(y_3-y_2))\,\sigma\!
\Nmat{y_1&y_2}{py_3&py_4}\\
 & =p\inv\,\sigma(y_2-y_3)\,[0,0,-1,-1]\\
&=p\inv\,[0,0,0,-1].
\end{align*}
Thus
\[
 \cJ=p\inv\,[-1,-1,-1,-1]\sigma(y_2-y_3) \sigma(p\det y)+[0,0,0,0].
\]
 \item If $\alpha\ge 1$ and $\beta=0$, we have $\cJ[3]=0$.
By Lemma \ref{lem:formula-5}, we have
\begin{align*}
 \cJ[1]&=p^{-2}\,\suma{b}\psi(b(p^{\alpha}y_3-y_2))\,
\suma{a}\,\sigma\!\Nmat{py_1&py_2}{ay_1+y_3&ay_2+y_4}\\
&=p\inv\,\sigma(y_2-p^{\alpha}y_3) 
     A'\Nmat{y_4&y_2}{y_3&y_1}\\
&=p\inv\,\sigma(y_2-p^{\alpha}y_3) \,
\left\{
   [-1,-1,-1,-1]\sigma(p\det y)+p[0,0,0,0]-[0,0,-1,-1]
\right\}\\
&=p\inv\,[-1,0,-1,-1]\, \sigma(p\det y)
+[0,0,0,0]-p\inv [0,0,-1,-1].
\end{align*}
Observe that, by Lemma \ref{lem:formula-4},
\begin{align*}
 [-1,0,-1,-1]\, \sigma(p\det y)&=[-1,0,-1,-1]\, \sigma(py_1y_4)\\
 &=[-1,0,-1,-1]\{\sigma(py_1)\,\sigma(y_4)+\sigma(y_1)\,\sigma(py_4)
-\sigma(y_1)\,\sigma(y_4)
\} \\
 &=[-1,0,-1,0]+[0,0,-1,-1]-[0,0,-1,0]. 
\end{align*}
It follows that
\[
 \cJ[1]=p\inv\,[-1,0,-1,0]-p\inv\,[0,0,-1,0]+[0,0,0,0].
\]
Next we have
\begin{align*}
 \cJ[2]&=p^{-2}\,\suma{b}\,\psi(b(p^{\alpha}y_3-y_2))\,[0,0,-1,-1]\\
 &=p\inv\,\sigma(p^{\alpha}y_3-y_2) \,[0,0,-1,-1]\\
&=p\inv\,[0,0,-1,-1]
\end{align*}
and 
\[
 \cJ[4]=p^2\,[1,1,0,0].
\]
Thus
\[
 \cJ=[0,0,0,0]+p\inv\,[-1,0,-1,0]-p\inv\,[0,0,-1,0]+
p\inv\,[0,0,-1,-1]+p^2[1,1,0,0].
\]
 \item Suppose that $\alpha\ge\beta\ge 1$.
By Lemma \ref{lem:formula-5}, we have
\begin{align*}
 \cJ[1]&=p\inv \,\sigma(p^{\alpha}y_3-p^{\beta}y_2)\,
A'\!\Nmat{y_4&y_2}{y_3&y_1}\\
 & = p\inv \,\sigma(p^{\alpha}y_3-p^{\beta}y_2)\,
\{
[-1,-1,-1,-1]\sigma(p\det y)+p[0,0,0,0]-[0,0,-1,-1]
\}\\
&=p\inv \,[-1,-1,-1,-1]\sigma(p\det y)+[0,0,0,0]-p\inv \,[0,0,-1,-1].
\end{align*}
We have
\begin{align*}
 \cJ[2]&=
p\inv\,\sigma(p^{\alpha}y_3-p^{\beta}y_2)[0,0,-1,-1]\\
 &= p\inv\,[0,0,-1,-1].
\end{align*}
By Lemma \ref{lem:formula-5}, we have
\begin{align*}
 \cJ[3]&=p^2\,\suma{a}\,
\sigma\!\Nmat{y_1&y_2}{p\inv (ay_1+y_3)&p\inv (ay_2+y_4)}\\
&=p^2\,A\!\Nmat{y_4&y_2}{y_3&y_1}\\
&=p^2\,[0,0,0,0]\,\sigma(p\inv\,\det y)
+p^3\,[1,1,1,1]-
p^2\,[1,1,0,0].
\end{align*}
We have
\[
 \cJ[4]=p^2\,[1,1,0,0].
\]
Thus we obtain
\begin{align*}
 \cJ&=p^2\,[0,0,0,0]\,\sigma(p\inv\,\det y)+
p\inv\,[-1,-1,-1,-1]\,\sigma(p\,\det y)+
p^3[1,1,1,1]+[0,0,0,0].
\end{align*}
\end{enumerate}

 \subsection{The proof of Lemma \ref{lem:J2H}}

In this subsection, we let $\alpha\ge\beta$ and write $\cJ$ and $\cJ[i]\;\;(1\le i\le 4)$
for $J_2^H(y;\alpha,\beta)$ and $J_2^H[i](y;\alpha,\beta)$,
respectively.

\begin{enumerate}
 \item Suppose that $\beta<-1$. Then $\cJ=0$.
 \item Suppose that  $\alpha=\beta=-1$. 
Then $\cJ[1]=\cJ[2]=\cJ[4]=\cJ[5]=0$. We have
\begin{align*}
 \cJ[3]&=p^{-4}\,\sumt{b}\,\psi(-p\inv b(y_3-y_2))\,[-1,-1,-1,-1]\\
 &=p^{-2}\,[-1,-1,-1,-1]\,\sigma(p\inv(y_2-y_3)) 
\end{align*}
and hence
\[
 \cJ=p^{-2}\,[-1,-1,-1,-1]\,\sigma(p\inv(y_2-y_3)). 
\]
  \item Suppose that  $\alpha=0$ and $\beta=-1$.
Then $\cJ[1]=\cJ[2]=\cJ[4]=\cJ[5]=\cJ[6]=0$.
We have
\begin{align*}
 \cJ[3]&=p^{-4}\,\sumt{b}\,\psi(-p\inv(y_3-p\inv\,y_2))\,[-1,-1,-1,-1]\\
&=p^{-2}\,\sigma(p\inv y_2-y_3)\,[-1,-1,-1,-1]\\
 & =p^{-2}\,\sigma(p\inv y_2-y_3)\,[-1,0,-1,-1]
\end{align*}
and hence
\[
 \cJ=p^{-2}\,\sigma(p\inv y_2-y_3)\,[-1,0,-1,-1].
\]

  \item Suppose that  $\alpha\ge 1$ and $\beta=-1$.
Then $\cJ[1]=\cJ[4]=\cJ[5]=\cJ[6]=0$.
We have
\begin{align*}
 \cJ[2]&=[1,1,-1,-1]
\end{align*}
and 
\begin{align*}
 \cJ[3]&=p^{-4}\,\sumt{b}\,\psi(-b(p^{\alpha}\,y_3-p\inv\,y_2))\,[-1,-1,-1,-1]\\
&=p^{-2}\,\sigma(p\inv\,y_2-p^{\alpha}y_3)\,[-1,-1,-1,-1]\\
 &= p^{-2}\,[-1,1,-1,-1].
\end{align*}
Thus we have
\[
 \cJ=[1,1,-1,-1]+p^{-2}\,[-1,1,-1,-1].
\]
  \item Suppose that  $\alpha=\beta=0$.
Then $\cJ[1]=\cJ[2]=\cJ[4]=\cJ[5]=0$.
We have
\begin{align*}
 \cJ[3]&=p^{-2}\,\sigma(y_2-y_3)\,[-1,-1,-1,-1],\\
 \cJ[6]&=\sumd{b}\,\psi(-p\inv(y_3-y_2)b)\,\sigma(y)\\
 & =\{p\,\sigma(p\inv(y_2-y_3))-1\}\,[0,0,0,0].
\end{align*}
Thus
\[
 \cJ=p^{-2}\,[-1,-1,-1,-1]\,\sigma(y_2-y_3)+
p\,[0,0,0,0]\,\sigma(p\inv(y_2-y_3))-[0,0,0,0].
\]

  \item Suppose that  $\alpha\ge 1$ and $\beta=0$.
Then $\cJ[1]=\cJ[4]=0$.
By Lemma \ref{lem:formula-5}, we have
\begin{align*}
 \cJ[2]&=[1,1,-1,-1],\\
 \cJ[3]&=p^{-2}\,\sigma(p^{\alpha}y_3-y_2)\,[-1,-1,-1,-1]\\
 &=p^{-2} \,[-1,0,-1,-1],\\
 \cJ[5]&=\sumd{a}\sigma\!\Nmat{y_1&y_2}{y_3+p\inv ay_1&y_4+p\inv a y_2}\\
 & =A\Nmat{py_4&y_2}{py_3&y_1}-\sigma(y)\\
&=[0,0,-1,-1]\,\sigma(\det y)
+p\,[1,1,0,0]-[1,1,-1,-1]-[0,0,0,0],\\
 \cJ[6]&=\{p\,\sigma(p\inv(p^{\alpha}y_3-y_2))-1\}\,\sigma(y)\\
 &=p\,[0,1,0,0] -[0,0,0,0].
\end{align*}
Thus
\begin{align*}
 \cJ&=[0,0,-1,-1]\,\sigma(\det y)+p[1,1,0,0]+p[0,1,0,0] +p^{-2} [-1,0,-1,-1]-2\,[0,0,0,0].
\end{align*}
  \item Suppose that  $\alpha\ge\beta\ge 1$.
By Lemma \ref{lem:formula-5}, we have
\begin{align*}
 \cJ[1]&=\sumt{a}\,
\sigma\!\Nmat{py_1&py_2}{p\inv(y_3+ay_1)&p\inv(y_4+ay_2)}\\
&=B\!\Nmat{y_4&y_2}{y_3&y_1}\\
&=[-1,-1,-1,-1]\,\sigma(\det y)+p\,[0,0,0,0]\,\sigma(p\inv\,\det y)
-[0,0,-1,-1]\,\sigma(\det y)\\
&\quad +p^2\,[1,1,1,1]-p\,[1,1,0,0],\\
 \cJ[2]&=[1,1,-1,-1],\\
 \cJ[3]&=p^{-2}\,\sigma(p^{\beta}y_2-p^{\alpha}y_3)\,\sigma(py)\\
 & =p^{-2}\,[-1,-1,-1,-1],\\
 \cJ[4]&=p^4\,[1,1,1,1],\\
 \cJ[5]&=\sumd{a}\,\sigma\!\Nmat{y_1&y_2}{y_3+p\inv ay_1&y_4+p\inv ay_2}\\
&=A\!\Nmat{py_4&y_2}{py_3&y_1}-\sigma(y)\\
&=[0,0,-1,-1]\,\sigma(\det y)+p\,[1,1,0,0]
-[1,1,-1,-1]-[0,0,0,0],\\
 \cJ[6]&=\sumd{b}\,\psi(-p\inv(p^{\alpha}y_3-p^{\beta}y_2))\,\sigma(y)\\
 &=(p-1)\,[0,0,0,0]. 
\end{align*}
Thus
\begin{align*}
 \cJ&=p\,[0,0,0,0]\,\sigma(p\inv\,\det y)+
[-1,-1,-1,-1]\,\sigma(\det y)+(p^4+p^2)\,[1,1,1,1]+p^{-2}\,[-1,-1,-1,-1]\\
 &\quad +(p-2)\,[0,0,0,0]. 
\end{align*}
\end{enumerate}


 \subsection{The proof of Lemma \ref{lem:J1G}}

In this subsection, we let $\alpha\ge\beta$ and write $\cJ$ and $\cJ[i]\;\;(1\le i\le 4)$
for $J_1^G(y;\alpha,\beta)$ and $J_1^G[i](y;\alpha,\beta)$,
respectively.

\begin{enumerate}
 \item Suppose that $\beta<-1$. Then $\cJ=0$.
 \item Suppose that  $\alpha=\beta=-1$. Then $\cJ=0$.
 \item Suppose that  $\alpha=0$ and $\beta=-1$. 
Then $\cJ[1]=\cJ[3]=\cJ[4]=\cJ[5]=0$.
We have
\begin{align*}
\cJ[2]&=
\suma{c}\,\sumt{d}\,\sigma\!\Nmat{y_1&p\inv(y_2+c)}
{y_3+p\inv c & p\inv(y_4+p\inv d)}\\
&=\sigma(y_1)\,\suma{c}\,\sigma(p\inv y_2+p\inv c)
   \,\sigma(y_3+p\inv c)\,
  \sumt{d}\,\sigma(p^{-2} y_4+p^{-2} d)\\
 &=\sigma(y_1)\,\sigma(y_2)\,\sigma(py_3)\,\sigma(p\inv y_2-y_3)
\,\sigma(py_4)
\end{align*}
by Lemma \ref{lem:formula-1}
 and Lemma \ref{lem:formula-2}.
Thus
\[
 \cJ=[0,0,-1,-1]\,\sigma(p\inv y_2-y_3).
\]
 \item Suppose that  $\alpha\ge 1$ and $\beta=-1$.
Then $\cJ[1]=\cJ[3]=\cJ[4]=\cJ[5]=0$.
We have
\begin{align*}
 \cJ[2]&=
\suma{c}\,\sumt{d}\,\sigma\!\Nmat{y_1&p\inv y_2}
{y_3+p\inv c & p\inv(y_4+p\inv d)}\\
&=\sigma(y_1)\,\sigma(p\inv y_2)\,\suma{c}\,
   \,\sigma(y_3+p\inv c)\,
  \sumt{d}\,\sigma(p^{-2} y_4+p^{-2} d)\\
 &=\sigma(y_1)\,\sigma(y_2)\,\sigma(py_3)\,
\,\sigma(py_4)
\end{align*}
by Lemma \ref{lem:formula-1}.
Thus
\[
 \cJ=[0,1,-1,-1].
\]
 \item Suppose that  $\alpha=\beta=0$.
Then $\cJ[4]=\cJ[5]=0$.
By Lemma \ref{lem:formula-1} and
Lemma \ref{lem:formula-6} and Lemma \ref{lem:formula-4},
we have
\begin{align*}
 \cJ[1]&=\suma{a,\,c}\,\sumt{b}\,
\sigma\!\Nmat{p\inv(y_1+b-ay_2-ac) & y_2+c}{p\inv(y_3+c-ay_4) & y_4}\\
&=\sigma(y_2)\,\sigma(y_4)\,\suma{a,\,c}\,\sigma(p\inv(y_3-ay_4+c))\,
\sumt{b}\,\sigma(p\inv(y_1-ay_2-ac)+p\inv b)\\
&=p\,\sigma(y_1)\,\sigma(y_2)\,\sigma(y_4)\,
  \suma{a,\,c}\,\sigma(p\inv(y_3-ay_4)+p\inv c)\\
&=p\,\sigma(y_1)\,\sigma(y_2)\,\sigma(y_4)\,
\suma{a}\,\sigma(y_3-ay_4)\\
&=p^2\,[0,0,0,0],\\
 \cJ[2]&=
\sigma(y_1)\,\sigma(y_3)\,
\suma{c}\,\sigma(p\inv(y_2+c))
\,\sumt{d}\,\sigma(p\inv(y_4+d))\\
 &= \sigma(y_1)\,\sigma(y_3)\,\sigma(y_2)\,p\,\sigma(y_4)\\
&=p\,[0,0,0,0],\\
 \cJ[3]&=
\sum_{(b,c,d)\in\Lambda_1}\,
\sigma(y_1+p\inv b)\,\sigma(y_2+p\inv c)\,\sigma(y_3+p\inv c)
\,\sigma(y_4+p\inv d)\\
 &=[-1,-1,-1,-1]\,\sigma(y_2-y_3)\,\sigma(p\,\det y)-[0,0,0,0].
\end{align*}

Thus
\begin{align*}
 \cJ&=[-1,-1,-1,-1]\,\sigma(y_2-y_3)\,\sigma(p\,\det y)+
(p^2+p-1)\,[0,0,0,0] .
\end{align*}
 \item Suppose that  $\alpha\ge 1$ and $\beta=0$.
Then $\cJ[4]=0$.
By Lemma \ref{lem:formula-1}, Lemma \ref{lem:formula-3} 
and Lemma \ref{lem:formula-7},
We have
\begin{align*}
 \cJ[1]&=
p^2\,\sigma(y_2)\,\sigma(y_4)\,
\suma{a}\,\sigma(p\inv(y_1-ay_2))\,
\suma{c}\,\sigma(p\inv(y_3-ay_4)+p\inv c)\\
 &=p^2\, \sigma(y_2)\,\sigma(y_3)\,\sigma(y_4)\,
\suma{a}\,\sigma(p\inv(y_1-ay_2))\\
 &=p^2\,\sigma(y_3)\,\sigma(y_4)\,\left\{
  p\,\sigma(p\inv y_1)\,\sigma(p\inv y_2)+\sigma(y_1)\,\sigma(y_2)
-\sigma(y_1)\,\sigma(p\inv y_2)
\right\}\\
 &=p^3\,[1,1,0,0]+p^2\,[0,0,0,0]-p^2\,[0,1,0,0],\\
 \cJ[2]&=
\suma{c}\,\sumt{d}\,\sigma(y_1)\,
\sigma(p\inv y_2)\,\sigma(y_3)\,\sigma(p\inv(y_4+d))\\
 &=p^2\,[0,1,0,0],\\
 \cJ[3]&=\sum_{(b,c,d)\in\Lambda_1}\,
\sigma(y_1)\,\sigma(y_2)\,\sigma(y_3+p\inv c)\,\sigma(y_4+p\inv d)\\
 &= \sigma(y_1)\,\sigma(y_2)\,
\left\{
(p-1)\,\sigma(y_3)\,\sigma(y_4)+\sigma(py_3)\,\sigma(py_4)-
\sigma(py_3)\,\sigma(y_4)
\right\}\\
 &=(p-1)\,[0,0,0,0]+[0,0,-1,-1]-[0,0,-1,0], \\
 \cJ[5]&=[-1,0,-1,0].
\end{align*}
Thus
\begin{align*}
 \cJ&=
p^3\,[1,1,0,0]+(p^2+p-1)\,[0,0,0,0]+[0,0,-1,-1]-[0,0,-1,0]
+[-1,0,-1,0].
\end{align*}
 \item Suppose that  $\alpha\ge \beta\ge 1$.
By Lemma \ref{lem:formula-5},
we have
\begin{align*}
 \cJ[1]&=p^3\,\sigma(y_2)\,\sigma(y_4)\,
  \suma{a}\,\sigma(p\inv(y_1-ay_2))\,\sigma(p\inv(y_3-ay_4))\\
 &=p^3\,A(y) \\
 &=p^3\,[0,0,0,0]\,\sigma(p\inv\,\det y)
+p^4\,[1,1,1,1]-p^3\,[0,1,0,1],\\
 \cJ[2]&=p^3\,[0,1,0,1],\\
 \cJ[3]&=|\Lambda_1|\,[0,0,0,0]\\
 &=(p^2-1) \,[0,0,0,0],\\
 \cJ[4]&=\sigma(py_2)\,\sigma(py_4)\,\suma{a}\,\sigma(y_1-ay_2)
\,\sigma(y_3-ay_4)\\
 &=A'(y) \\
 &=[-1,-1,-1,-1]\,\sigma(p\,\det y)+p\,[0,0,0,0]-[-1,0,-1,0],\\
 \cJ[5]&=[-1,0,-1,0].
\end{align*}
Thus
\begin{align*}
 \cJ&=p^3\,[0,0,0,0]\,\sigma(p\inv\,\det y)+
[-1,-1,-1,-1]\,\sigma(p\,\det y)+p^4\,[1,1,1,1]+(p^2+p-1)\,[0,0,0,0].
\end{align*}
\end{enumerate}

 \subsection{The proof of Lemma \ref{lem:J2G}}%

In this subsection, we let $\alpha\ge\beta$ and write $\cJ$ and $\cJ[i]\;\;(1\le i\le 4)$
for $J_2^G(y;\alpha,\beta)$ and $J_2^G[i](y;\alpha,\beta)$,
respectively.

\begin{enumerate}
 \item Suppose that $\beta<-1$. Then $\cJ=0$.
 \item Suppose that  $\alpha=\beta=-1$. 
Then $\cJ[i]=0$ for $2\le i\le 10$. By Lemma \ref{lem:formula-1}
and Lemma \ref{lem:formula-2},
we have
\begin{align*}
 \cJ[1]&=\sumt{b}\,\sigma(p\inv y_1+p^{-2}b)\,
   \sumt{d}\,\sigma(p\inv y_4+p^{-2}d)\,
  \sumt{c}\,\sigma(p\inv y_2+p^{-2}c)\,\sigma(p\inv y_3+p^{-2}c)\\
 &=\sigma(py_1) \,\sigma(py_4) \,\sigma(py_2) \,\sigma(py_3) \,
   \sigma(p\inv(y_2-y_3)).
\end{align*}
Thus
\[
 \cJ=[-1,-1,-1,-1]\,\sigma(p\inv(y_2-y_3)).
\]
 \item Suppose that  $\alpha=0$ and $\beta=-1$. 
Then $\cJ[i]=0$ unless $i=1,6$.
By Lemma \ref{lem:formula-1} and Lemma \ref{lem:formula-2}, we have
\begin{align*}
 \cJ[1]&=\sumt{b}\,\sigma(p\inv y_1+p\inv b)\,
\sumt{d}\,\sigma(p\inv y_4+p^{-2} d)\,
\sumt{c}\,\sigma(p\inv(y_2+c))\,\sigma(p\inv(y_3+p\inv c))\\
 &=p\,\sigma(y_1)\, \sigma(py_4)\, \sigma(y_2)\, \sigma(py_3)\,
  \sigma(p\inv y_2-y_3) \\
 &=[0,0,-1,-1]\,\sigma(p\inv y_2-y_3),\\
 \cJ[6]&=\sumd{b}\,\sigma(y_1+p\inv b)\,
         \sumt{d}\,\sigma(p\inv y_4+p^{-2}d)\,
         \suma{c}\,\sigma(p\inv y_2+p^{-1}c)\,\sigma(y_3+p^{-1}c)\\
 &=\{\sigma(py_1)-\sigma(y_1)\}\,\sigma(py_4)\,\sigma(y_1)\,\sigma(py_3)\,
   \sigma(p\inv y_2-y_3) \\
 &=\{[-1,0,-1,-1]-[0,0,-1,-1]\}\,\sigma(p\inv y_2-y_3). 
\end{align*}
Thus
\[
 \cJ=(p-1)\,[0,0,-1,-1]\,\sigma(p\inv y_2-y_3) +
   [-1,0,-1,-1]\,\sigma(p\inv y_2-y_3).
\]
 \item Suppose that  $\alpha\ge 1$ and $\beta=-1$. 
Then $\cJ[i]=0$ unless $i=1,3,6$.
By Lemma \ref{lem:formula-1} and Lemma \ref{lem:formula-2}, we have
\begin{align*}
 \cJ[1]&=p^2\,\sigma(p\inv y_1)\,\sigma(p\inv y_2)\,
\sumt{c}\,\sigma(p\inv y_3+p^{-2}c)\,
\sumt{d}\,\sigma(p\inv y_4+p^{-2}d)\\
 &=p^2 \,\sigma(p\inv y_1)\,\sigma(p\inv y_2)\,\sigma(py_3)\,\sigma(py_4)\\
 &=p^2\,[1,1,-1,-1], \\
 \cJ[3]&=\sigma(py_1)\,\sigma(p\inv y_2)\,\sigma(py_3)\,
\sumt{d}\,\sigma(p\inv y_4+p^{-2 d})\\
 &= \sigma(py_1)\,\sigma(p\inv y_2)\,\sigma(py_3)\,\sigma(py_4)\\
 &=[-1,1,-1,-1], \\
 \cJ[6]&=(p-1)\,\sigma(y_1)\,\sigma(p\inv y_2)\,
\suma{c}\,\sigma(y_3+p\inv c)\,
\sumt{d}\,\sigma(p\inv y_4+p^{-2} d)\\
 &= (p-1)\,\sigma(y_1)\,\sigma(p\inv y_2)\,\sigma(py_3)\,\sigma(py_4)\\
 &=(p-1)\,[0,1,-1,-1]. 
\end{align*}
Thus
\[
 \cJ=(p-1)\,[0,1,-1,-1]+p^2\,[1,1,-1,-1]+[-1,1,-1,-1].
\]
 \item Suppose that  $\alpha=\beta=0$. 
Then $\cJ[i]=0$ unless $i=1,5,6,10$.
By Lemma \ref{lem:formula-1} and Lemma \ref{lem:formula-2}, 
we have
\begin{align*}
 \cJ[1]&=
\sumt{b}\,\sigma(p\inv (y_1+b))\,
\sumt{c}\,\sigma(p\inv (y_2+c))\,\sigma(p\inv (y_3+c))\,
\sumt{d}\,\sigma(p\inv (y_4+d))\\
 &=p^3\,\sigma(y_1)\,\sigma(y_2)\,\sigma(y_3)\,
   \sigma(p\inv(y_2-y_3))\,\sigma(y_4) \\
 &=p^3\,[0,0,0,0]\,\sigma(p\inv(y_2-y_3))
\end{align*}
and
\begin{align*}
 \cJ[5]&=\suma{a}\,\sumd{d}\,\sigma(y_2+p\inv ad)\,\sigma(y_4+p\inv d)\,
  \sumt{l}\,\sigma(p\inv(y_1-ay_2)+p\inv l)\\
&\qquad  \suma{m}\,\sigma(p\inv(y_3-ay_4)+p\inv m)\\
 & =p\,\suma{a}\,\sigma(y_1-ay_2)\,\sigma(y_3-ay_4)\,
   \sumd{d}\,\sigma(y_2+p\inv ad)\,\sigma(y_4+p\inv d). 
\end{align*}
By Lemma \ref{lem:formula-5}, we have
\begin{align*}
 &\sumd{d}\,\sigma(y_2+p\inv ad)\,\sigma(y_4+p\inv d)\\
 &=-\sigma(y_2)\,\sigma(y_4)+A'\!\Nmat{y_2&p\inv a}{y_4&p\inv}\\
 &=-\sigma(y_2)\,\sigma(y_4)+\sigma(py_2)\,\sigma(py_4)\,\sigma(y_2-ay_4)
\end{align*}
and hence
\begin{align*}
 \cJ[5]&=-p\,\suma{a}\,\sigma(y_2)\,\sigma(y_4)\,
          \sigma(y_1-ay_2)\,\sigma(y_3-ay_4)\\
&\qquad +
         p\,\sigma(py_2)\,\sigma(py_4)\,
         \suma{a}\,\sigma(y_1-ay_2)\,\sigma(y_3-ay_4)\,\sigma(y_2-ay_4)\\
 &=-p^2\,[0,0,0,0]+p\,\sigma(py_2)\,\sigma(py_4) \sigma(y_2-y_3)\,
     \suma{a}\,\sigma(y_1+ay_2)\,\sigma(y_3+ay_4)\\
&=-p^2\,[0,0,0,0]+p\,\sigma(py_2)\,\sigma(py_4) \sigma(y_2-y_3)\,
     A'(y)\\
 &= -p^2\,[0,0,0,0]+p\,\sigma(py_2)\,\sigma(py_4) \sigma(y_2-y_3)\\
&\qquad \,\{
[-1,-1,-1,-1]\sigma(p\det y)+p[0,0,0,0]-[-1,0,-1,0]
\}\\
 &=p\,[-1,-1,-1,-1]\,\sigma(y_2-y_3)\,\sigma(p\det y)-p\,[-1,0,0,0]. 
\end{align*}
By Lemma \ref{lem:formula-1} and 
Lemma \ref{lem:formula-6}, we have
\begin{align*}
 \cJ[6]&=\sigma(y_3)\,\sumd{b}\,\sigma(y_1+p\inv b)\,
         \suma{c}\,\sigma(p\inv(y_2+c))\,\sumt{d}\,
         \sigma(p\inv(y_4+d))\\
 &=p\,\{\sigma(py_1)-\sigma(y_1)\}\,\sigma(y_2)\,\sigma(y_3)\, \sigma(y_4)\\
 &=p\,[-1,0,0,0]-p\,[0,0,0,0],\\
 \cJ[10]&=\sum_{(b,c,d)\in \Lambda_2}\,
     \sigma(y_1+p\inv b)\,\sigma(y_2+p\inv c)\,
     \sigma(y_3+p\inv c)\,\sigma(y_4+p\inv d)\\
 &=[-1,-1,-1,-1]\,\sigma(y_2-y_3) \{1-\sigma(p\det y)\}.
\end{align*}
Thus
\begin{align*}
 \cJ&=p^3\,[0,0,0,0]\,\sigma(p\inv(y_2-y_3))
 +(p-1)\,[-1,-1,-1,-1]\,\sigma(y_2-y_3) \{1-\sigma(p\det y)\}\\
&\quad +[-1,-1,-1,-1]\,\sigma(y_2-y_3) -p[0,0,0,0].
\end{align*}
 \item Suppose that  $\alpha\ge 1$ and $\beta=0$. 
Then $\cJ[2]=\cJ[4]=\cJ[7]=0$.
By Lemma \ref{lem:formula-1}, 
Lemma \ref{lem:formula-2} and Lemma \ref{lem:formula-6}, we have
\begin{align*}
 \cJ[1]&=p^2\,\sigma(p\inv y_1)\,\sigma(p\inv y_2)\,
    \sumt{c}\,\sigma(p\inv (y_3+c))\,
    \sumt{d}\,\sigma(p\inv (y_4+d))\\
 &=p^4\,\sigma(p\inv y_1)\,\sigma(p\inv y_2)\,\sigma(y_3)\,\sigma(y_4) \\
 &= p^4\,[1,1,0,0],\\
 \cJ[3]&=\sigma(p y_1)\,\sigma(p\inv y_2)\,\sigma(py_3)\,
  \sumd{d}\,\sigma(p\inv(y_4+d))\\
 &=p\, \sigma(p y_1)\,\sigma(p\inv y_2)\,\sigma(py_3)\,\sigma(y_4)\\
 &=p\,[-1,1,-1,0],
\end{align*}
\begin{align*}
 \cJ[5]&=p^2\,\sigma(y_2)\,
\suma{a}\,\sigma(p\inv(y_1-ay_2))\,
\suma{m}\,\sigma(p\inv(y_3-ay_4)+p\inv m)\,
\sumd{d}\,\sigma(y_4+p\inv d)\\
 &= p^2\,\sigma(y_2)\,
\{\sigma(py_4)-\sigma(y_4)\}\,
\suma{a}\,\sigma(p\inv(y_1-ay_2))\,\sigma(y_3-ay_4)\\
 &= p^2\,\sigma(y_2)\,
\{\sigma(py_4)-\sigma(y_4)\}\,A'\!\Nmat{p\inv y_1 & p\inv y_2}{y_3&y_4}\\
 & =p^2\,\sigma(y_2)\,
\{\sigma(py_4)-\sigma(y_4)\}\,
\{
[0,0,-1,-1]\,\sigma(\det y)+p\,[1,1,0,0]-[0,1,-1,0]
\}\\
 &=p^2\{[0,0,-1,-1]\,\sigma(\det y)-[0,0,-1,0]\,\sigma(\det y)\} \\
 &=p^2\{[0,0,-1,-1]\,\sigma(\det y)-[0,0,-1,0]\,\sigma(y_2y_3)\} \\
 &= p^2\,[0,0,-1,-1]\,\sigma(\det y)-[0,0,-1,0]\,
\{\sigma(y_2)\,\sigma(y_3)+\sigma(p\inv y_2)\,\sigma(py_3)-
\sigma(p\inv y_2)\,\sigma(y_3)
\}\\
 &=p^2\,\{
[0,0,-1,-1]\,\sigma(\det y)-[0,0,0,0]-[0,1,-1,0]+[0,1,0,0]
\},\\
 \cJ[6]&=p(p-1)\,\sigma(y_1)\,\sigma(p\inv y_2)\,\sigma(y_3)\,
\sumt{d}\,\sigma(p\inv(y_4+d))\\
 &=(p^3-p^2) \,\sigma(y_1)\,\sigma(p\inv y_2)\,\sigma(y_3)\,\sigma(y_4)\\
 &=(p^3-p^2) \,[0,1,0,0],\\
 \cJ[8]&=\sigma(py_1)\,\sigma(y_2)\,\sigma(py_3)\,
\sumd{d}\,\sigma(y_4+p\inv d)\\
 &= \sigma(py_1)\,\sigma(y_2)\,\sigma(py_3)\,\{\sigma(py_4)-\sigma(y_4)\}\\
 & =[-1,0,-1,-1]-[-1,0,-1,0],\\
 \cJ[9]&=p\,\sigma(y_2)\,\sigma(y_4)\,\sumd{a}\,\sigma(y_1-p\inv ay_2)\,
\suma{c}\,\sigma(y_3-p\inv ay_4+p\inv c)\\
 &=p\,\sigma(y_2)\,\sigma(y_4)\,
\sumd{a}\,\sigma(y_1-p\inv ay_2)
   \,\sigma(py_3-ay_4)  \\
 & =p\,\sigma(y_2)\,\sigma(py_3)\,\sigma(y_4)\,
\sumd{a}\,\sigma(y_1-p\inv ay_2)\\
 &= p\,\sigma(y_2)\,\sigma(py_3)\,\sigma(y_4)\,
\{p\,\sigma(y_1)\,\sigma(p\inv y_2)+\sigma(py_1)\,\sigma(y_2)
-\sigma(py_1)\,\sigma(p\inv y_2)-\sigma(y_1)
\}\\
 & =p^2\,[0,1,-1,0]+p\,[-1,0,-1,0]-p\,[-1,1,-1,0]-p\,[0,0,-1,0],\\
 \cJ[10]&=\sigma(y_1)\,\sigma(y_2)
\,\sum_{(b,c,d)\in\Lambda_2}\,\sigma(y_3+p\inv c)\,\sigma(y_4+p\inv d)\\
 & =\sigma(y_1)\,\sigma(y_2)\,
  \{(p-1)\,\sigma(py_3)\,\sigma(py_4)+\sigma(py_3)\,\sigma(y_4)-
p\,\sigma(y_3)\,\sigma(y_4)
\}\\
 &=(p-1)\,[0,0,-1,-1]+[0,0,-1,0]-p\,[0,0,0,0]. 
\end{align*}
Thus
\begin{align*}
 \cJ&=p^2\,[0,0,-1,-1]\,\sigma(\det y)+p^4\,[1,1,0,0]
            +p^3\,[0,1,0,0]\\
&\quad +(p-1)\,[-1,0,-1,0]+(-p+1)\,[0,0,-1,0]\\
&\quad +(p-1)\,[0,0,-1,-1]
+[-1,0,-1,-1]+(-p^2-p)\,[0,0,0,0].
\end{align*}

 \item Suppose that  $\alpha\ge \beta\ge 1$. 
By Lemma \ref{lem:formula-1},
Lemma \ref{lem:formula-2}, Lemma \ref{lem:formula-5} 
and Lemma \ref{lem:formula-6}, we have
\begin{align*}
 \cJ[1]&=p^6\,[1,1,1,1],\\
 \cJ[2]&=p^2\,\sigma(py_3)\,\sigma(py_4)\,
\sumt{a}\,\sigma(p\inv(y_1-ay_2))\,\sigma(p\inv(y_3-ay_4))\\
 &=p^2\,B(y) \\
 &=p^2\,[-1,-1,-1,-1] \,\sigma(\det y)+
   p^3\,[0,0,0,0] \,\sigma(p\inv \det y)-
   p^2\,[-1,0,-1,0] \,\sigma(\det y)\\
&\quad +   p^4\,[1,1,1,1]-p^3\,[0,1,0,1],\\
 \cJ[3]&=p^2\,[-1,1,-1,1],\\
 \cJ[4]&=[-1,-1,-1,-1],\\
 \cJ[5]&=(p^4-p^3)\,\sigma(y_2)\,\sigma(y_4)\suma{a}
\,\sigma(p\inv(y_1-ay_2))\,\sigma(p\inv(y_3-ay_4))\\
 &=(p^4-p^3) \,A(y)\\
&=(p^4-p^3)\,
[0,0,0,0]\,\sigma(p\inv\det y)
+(p^5-p^4)\,[1,1,1,1]-(p^4-p^3)\,[0,1,0,1],\\
 \cJ[6]&=(p^4-p^3)\,[0,1,0,1],\\
 \cJ[7]&=(p-1)\,\sigma(py_2)\,\sigma(py_4)\,
\sumt{a}\,\sigma(y_1-ay_2)\,\sigma(y_3-ay_4)\\
 & =(p-1)\,A'(y)\\
 & =(p-1)\,[-1,-1,-1,-1]\,\sigma(p\det y)+(p^2-p)\,[0,0,0,0]
-(p-1)\,[-1,0,-1,0],
\end{align*}
\begin{align*}
 \cJ[8]&=(p-1)\,[-1,0,-1,0],\\
 \cJ[9]&=p^2\,\sigma(y_2)\,\sigma(y_4)\,
    \sumd{a}\,\sigma(y_1-p\inv ay_2)\,\sigma(y_3-p\inv ay_4)\\
 &=p^2\,A\!\Nmat{py_1&y_2} {py_3&y_4}-p^2\,[0,0,0,0]\\
 &=p^2\,[-1,0,-1,0] \,\sigma(\det y)+p^3\,[0,1,0,1]-p^2\,[-1,1,-1,1]
-p^2\,[0,0,0,0],\\
 \cJ[10]&=|\Lambda_2|\cdot [0,0,0,0]\\
 & =(p^3-p^2)\,[0,0,0,0].
\end{align*}
Thus
\begin{align*}
 \cJ&=[-1,-1,-1,-1]\,(p^2\sigma(\det y)+1)
+
p^4\,[0,0,0,0]\,\sigma(p\inv \det y)\\
&\quad
+(p-1)[-1,-1,-1,-1]\,\sigma(p\det y)
+(p^6+p^5)\,[1,1,1,1]+[-1,-1,-1,-1]\\
&\quad
 +(p^3-p^2-p)[0,0,0,0].
\end{align*}
\end{enumerate}

 \section{The proof of Eichler commutation relations at ramified places}
\label{sec:ecr-ramified-details}

In this section, we give a detailed account of the 
proof of Eichler commutation relations at a finite place $p$ of $\bQ$
with $p | d_B$. We  freely use the notation of Section 
\ref{sec:ecr-ramified}.

\subsection{}

In this subsection, we prepare several notations and formulas for the
calculation in the next subsection.
Denote by $\sigma$ the characteristic function of $\cO$.
Recall that $\phi_0(x,y)=\sigma(x)\sigma(\Pi y)$. 

\begin{lemma}[\cite{Na4}, Lemma A.2 and Lemma A.3],
 We have
\begin{align*}
 [\cO : \pi\cO]&=p^4,\\
 [\cO : \Pi\cO]&=p^2,\\
 [\cO^- : \pi\cO^-]&=p^3,\\
  [\cO^- : (\Pi\cO)^-]&=p,\\ 
  [(\Pi\cO)^- : \pi\cO^-]&=p^2.
\end{align*}
\end{lemma}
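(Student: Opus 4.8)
The plan is to reduce every statement to a computation of the index of one $\bZ_p$-lattice inside another in the local quaternion algebra $B_p$, which for $p\mid d_B$ is the unique division algebra over $\bQ_p$. First I would recall its standard structure: $\cO$ is its valuation ring, free of rank $4$ over $\bZ_p$; $\grP=\Pi\cO=\cO\Pi$ is the unique maximal two-sided ideal with residue field $\cO/\grP\cong\bF_{p^2}$; and the ramification index is $2$, so that $\pi\cO=\grP^2$. A convenient explicit model is $B_p=L\oplus L\Pi$, where $L/\bQ_p$ is the unramified quadratic extension, $\Pi^2=\pi$, and $\Pi\ell\Pi\inv$ is the Frobenius conjugate of $\ell\in L$; then $\cO=\cO_L\oplus\cO_L\Pi$, $\tr(a+b\Pi)=\tr_{L/\bQ_p}(a)$ and $N(a+b\Pi)=N_{L/\bQ_p}(a)-\pi N_{L/\bQ_p}(b)$. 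Since $\cO^-$ consists of the trace-zero elements, this model gives $\cO^-=\cO_L^-\oplus\cO_L\Pi$, a $\bZ_p$-lattice of rank $3$ spanning $B_p^-$.

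With this in hand, three of the five indices are immediate. Because $\cO\cong\bZ_p^4$ and $\pi\bZ_p=p\bZ_p$, we get $[\cO:\pi\cO]=p^4$ (equivalently $[\cO:\grP][\grP:\grP^2]=p^2\cdot p^2$). The index $[\cO:\Pi\cO]=[\cO:\grP]$ is just $|\bF_{p^2}|=p^2$. Since $\cO^-$ has rank $3$, $\cO^-/\pi\cO^-\cong\bF_p^3$, giving $[\cO^-:\pi\cO^-]=p^3$; here I would first check the harmless identity $(\pi\cO)^-=\pi\cO^-$, which holds because $B_p^-$ is a $\bQ_p$-subspace stable under multiplication by $\pi\inv$.

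The remaining two indices require tracking how the trace-zero condition interacts with the filtration by powers of $\grP$, and this is where the real work lies. The key step is to compute $[\cO^-:(\Pi\cO)^-]$ by reducing modulo $\grP$: the restriction of $\cO\to\cO/\grP\cong\bF_{p^2}$ to $\cO^-$ has kernel $\cO^-\cap\grP=(\Pi\cO)^-$, so $\cO^-/(\Pi\cO)^-$ embeds into $\bF_{p^2}$. Using $\cO^-=\cO_L^-\oplus\cO_L\Pi$ and $\grP=\pi\cO_L\oplus\cO_L\Pi$, the $\cO_L\Pi$-part dies and the image is the reduction of the trace-zero line $\cO_L^-$, which lands in $\ker(\tr_{\bF_{p^2}/\bF_p})$, an $\bF_p$-line of size $p$. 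Exhibiting a trace-zero \emph{unit} in $\cO_L$ (so that the reduction is nonzero and fills this line) shows $[\cO^-:(\Pi\cO)^-]=p$. The last index then follows formally from multiplicativity: $[(\Pi\cO)^-:\pi\cO^-]=[\cO^-:\pi\cO^-]/[\cO^-:(\Pi\cO)^-]=p^3/p=p^2$.

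The main obstacle I anticipate is precisely the mod-$\grP$ analysis above, and in particular verifying that the reduction of $\cO_L^-$ is exactly the trace-zero $\bF_p$-line rather than zero; the subtlety is that $\cO$ need not split as $\bZ_p\oplus\cO^-$ when $p=2$, so I would avoid splitting off the identity and instead argue uniformly through the decomposition $B_p=L\oplus L\Pi$, checking the existence of a trace-zero unit (e.g. $1+2\omega$ for a suitable generator $\omega$ of $\cO_L$) in a case-free manner. Everything else is bookkeeping with lattice indices and the residue field $\bF_{p^2}$.
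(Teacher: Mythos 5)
Your proof is correct. Note, however, that the paper itself gives no argument for this lemma: it simply cites \cite{Na4}, Lemma A.2 and Lemma A.3, so there is no in-paper proof to compare against. Your route through the explicit cyclic-algebra model $B_p=L\oplus L\Pi$ with $L/\bQ_p$ unramified, $\Pi^2=\pi$, $\cO=\cO_L\oplus\cO_L\Pi$ and $\grP=\pi\cO_L\oplus\cO_L\Pi$ is the standard one and all five indices check out: the first three are rank counts, $[\cO^-:(\Pi\cO)^-]=p$ reduces to $\cO_L^-/\pi\cO_L^-$ via the decomposition $\cO^-=\cO_L^-\oplus\cO_L\Pi$, and the last follows by multiplicativity. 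One small simplification you could make: rather than exhibiting a trace-zero unit in $\cO_L$, it is enough to observe that $\cO_L^-=\ker(\tr_{L/\bQ_p}\colon\cO_L\to\bZ_p)$ is a $\bZ_p$-direct summand of $\cO_L$ (the trace is surjective since $L/\bQ_p$ is unramified, and $\bZ_p$ is projective), whence $\cO_L^-\cap\pi\cO_L=\pi\cO_L^-$ and the quotient has order $p$ uniformly in $p$, including $p=2$.
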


\begin{lemma}[\cite{MN1}, Lemma 8.2]
We have
\begin{align*}
 \sum_{\beta\in X_{-1}/X_0}\,\sigma(x+\beta) &=\sigma(\Pi x),\\
 \sum_{\beta\in X_{-1}/\pi X_{-1}}\,\sigma(\Pi\inv(x+\beta)) &
=\delta(\tr(x)\in p\bZ_p)\,\sigma(\Pi x),\\
 \sum_{\beta\in X_{-2}/X_{-1}}\,\sigma(\Pi(x+\beta)) &
=\delta(\tr(x)\in \bZ_p)\,\sigma(\pi x).
\end{align*}
 
\end{lemma}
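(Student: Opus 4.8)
The plan is to treat the three identities uniformly as coset counts inside the division algebra $B_p$. Recall that for $p\mid d_B$ the completion $B_p$ is the quaternion division algebra over $\bQ_p$, that $\cO$ is its valuation ring with unique maximal ideal $\grP=\Pi\cO$ and residue field $\cO/\grP\cong\bF_{p^2}$, and that $\pi\cO=\grP^2$, so $\ord_\Pi(\pi)=2$. Two elementary facts will drive everything. First, $\grP^n\cap\bQ_p=p^{\lceil n/2\rceil}\bZ_p$, and since $\grP^n$ is stable under the main involution one has $\tr(\grP^n)\subseteq\grP^n\cap\bQ_p$. Second, the lattices $X_n=\grP^n\cap B^-$ satisfy $\pi X_n=X_{n+2}$, together with the index data already recorded (in particular $\pi X_{-1}=X_1$). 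Thus the three summation ranges are $X_{-1}/X_0$, $X_{-1}/\pi X_{-1}=X_{-1}/X_1$, and $X_{-2}/X_{-1}$.

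First I would rewrite each left-hand side as an integer. Since $\sigma(\Pi^{c}(x+\beta))=1$ exactly when $x+\beta\in\grP^{-c}$, each sum equals $\#\{\beta\in X_a/X_{-c}:x+\beta\in\grP^{-c}\}$, where $(a,c)=(-1,0),\,(-1,-1),\,(-2,1)$ for the three identities respectively. The key structural point is that in every case the summation modulus coincides with $\grP^{-c}\cap B^-=X_{-c}$ — for the second identity this is precisely where $\pi X_{-1}=X_1=\grP\cap B^-$ is used. Consequently the solution set $\{\beta\in B^-:x+\beta\in\grP^{-c}\}$ is either empty or a single coset of $X_{-c}$, so the count is $0$ or $1$, and it equals $1$ iff a solution exists and the solution coset meets the representative range $X_a$.

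The second step analyses these two conditions. Writing $x=\tfrac12\tr(x)+x^-$ with $x^-\in B^-$, existence forces, upon applying $\tr$ to $x+\beta\in\grP^{-c}$ and using $\tr(\beta)=0$ together with $\tr(\grP^{-c})\subseteq\grP^{-c}\cap\bQ_p$, the relation $\tr(x)\in\grP^{-c}\cap\bQ_p$; this reads $\tr(x)\in\bZ_p$ for the third identity, $\tr(x)\in p\bZ_p$ for the second, and a condition subsumed by $\sigma(\Pi x)$ for the first. For $p$ odd this is also sufficient: the splitting $\grP^{-c}=(\grP^{-c}\cap\bQ_p)\oplus X_{-c}$ lets one peel off the scalar part and reduce to $x^-+\beta\in X_{-c}$. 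The solution coset $-x^-+X_{-c}$ then meets $X_a$ iff $x^-\in X_a$ (using $a\le -c$, hence $X_{-c}\subseteq X_a$), i.e. iff $\ord_\Pi(x^-)\ge a$; combined with the scalar constraint this is exactly $\sigma(\Pi x)$ in the first two identities and $\sigma(\pi x)$ in the third. Multiplying the existence indicator by the coset-meeting indicator reproduces each right-hand side, and the coefficient is exactly $1$ because a contributing configuration supplies a single coset modulo $X_{-c}$.

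The main obstacle is the residue characteristic $p=2$, where the decomposition $x=\tfrac12\tr(x)+x^-$ has a denominator that is no longer a unit, so the clean splitting $\grP^{n}=(\grP^{n}\cap\bQ_p)\oplus X_n$ of $\bZ_2$-lattices can fail; note that the necessity of the trace condition above is denominator-free, and only the sufficiency and the exact single-coset count are affected. There I would argue directly from the structure of the dyadic maximal order, verifying the trace condition and the single-coset count against the explicit indices $[\cO^-:(\Pi\cO)^-]=p$ and $[(\Pi\cO)^-:\pi\cO^-]=p^2$ rather than through the scalar/trace-zero splitting. This is precisely the bookkeeping carried out in \cite{MN1}, from which the lemma is quoted.
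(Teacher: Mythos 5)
The paper does not actually prove this lemma --- it is quoted verbatim from \cite{MN1}, Lemma 8.2 --- so your argument is necessarily a different route, and it is a correct one in odd residue characteristic. The two observations that carry your proof are exactly right: in each identity the summation modulus equals $\grP^{-c}\cap B^-=X_{-c}$ (for the middle identity via $\pi X_{-1}=X_{1}$), so the solution set $\{\beta\in B^-:\ x+\beta\in\grP^{-c}\}$ is empty or a single coset of the modulus and the sum is $0$ or $1$; and the necessary trace condition follows from $\ol{\grP^{\,n}}=\grP^{\,n}$ together with $\grP^{\,n}\cap\bQ_p=p^{\lceil n/2\rceil}\bZ_p$, which yields $\bZ_p$, $p\bZ_p$, $\bZ_p$ for $c=0,-1,1$ as the right-hand sides require (and is subsumed by $\sigma(\Pi x)$ in the first case). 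For $p$ odd the splitting $x=\tfrac12\tr(x)+x^-$ then gives sufficiency and the criterion $x^-\in X_a$ for the solution coset to meet the representative range, which combines with the scalar constraint to give $\sigma(\Pi x)$ resp.\ $\sigma(\pi x)$ with coefficient exactly $1$. Your caveat at $p=2$ is the one real loose end, and you diagnose it correctly: the direct-sum splitting of $\bZ_2$-lattices fails because of the $\tfrac12$. The deferral to a direct computation is legitimate and does succeed --- writing $\cO=\cO_4\oplus\cO_4\Pi$ with $\cO_4$ the unramified quadratic ring, the membership conditions decouple into the two coordinates, and solving for the trace-zero coordinate reproduces the same trace conditions and single-coset counts, so all three identities hold there as well. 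What your approach buys over the paper's bare citation is a short, self-contained and structurally transparent proof; what it costs is the residual dyadic bookkeeping that the cited reference also has to carry out.
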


 \subsection{The proof of Lemma \ref{lem:explicit-formula-JH-ramified}}

Let $x,y\in B$. 
By Lemma \ref{lem:hecke-operators-H-ramified}, we have
\begin{align*}
  J^H(x,y)&=
    \sum_{b\in \bZ_p/p\bZ_p}\,
r'\left(n_H(b)\dd^H_1,1\right)\,\phi_0(x,y)
+r'\left(\dd_{-1}^H,1\right)\,\phi_0(x,y)\\
 &=p^{-2} \,\sum_{b\in \bZ_p/p\bZ_p}\,\psi(-b\,\tr(\ol{x}y))\,
    \phi_0(\ol{\Pi}x,\ol{\Pi}y)
+p^2\,\phi_0(\ol{\Pi}\inv x,\ol{\Pi}\inv y)\\
 &= p\inv\,\delta(\tr(\ol{x}y)\in\bZ_p)\,\sigma(\Pi x)\,\sigma(\pi y)
+p^2\,\sigma(\Pi\inv x)\,\sigma(y).
\end{align*}
We have $J^H(x,y)=0$ if $\ord_{\Pi}(x)\le -2$.
If  $\ord_{\Pi}(x)=-1 \text{ or }0$, we have
\[
 J^H(x,y)=p\inv\,\delta(\tr(\ol{x}y)\in\bZ_p)\,\sigma(\pi y).
\]
If $\ord_{\Pi}(x)\ge 1$, we have
\[
 J^H(x,y)=p\inv\,\sigma(\pi y)
+p^2\,\sigma(y),
\]
which completes the proof 
of Lemma \ref{lem:explicit-formula-JH-ramified}.

 \subsection{The proof of Lemma \ref{lem:explicit-formula-JG-ramified}}

\begin{lemma}
 For $x,y\in B$, we have
\begin{align*}
 J^G(x,y)&=J_1(x,y)+J_2(x,y)+J_3(x,y),
\end{align*}
where
\begin{align*}
 J_1(x,y)&=\sum_{\beta\in X_{-1}/\pi X_{-1}}\,\sigma(x\Pi)\,
\sigma(y+x\beta),\\
 J_2(x,y)&=\sum_{\beta\in X^0_{-2}/X_{-1}}\,\sigma(x)\,\sigma(\Pi(y+x\beta)),\\
J_3(x,y)&=\sigma(\Pi\inv x)\,\sigma(\pi y).
\end{align*}
\end{lemma}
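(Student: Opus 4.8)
The plan is to reduce the defining integral $J^G(x,y)=\int_{T_1^G}r'(1,g)\phi_0(x,y)\,dg$ to a finite sum over coset representatives, and then evaluate the Weil representation on each representative. First I would invoke the decomposition of $T_1^G$ into the disjoint right $K$-cosets $n_G(\beta)\dd_1^G K$ ($\beta\in X_{-1}/\pi X_{-1}$), $n_G(\beta)K$ ($\beta\in X^0_{-2}/X_{-1}$), and $\dd_{-1}^G K$ given in Lemma \ref{lem:hecke-operators-G-ramified}. Since the Haar measure on $G_p$ is normalized by $\mathrm{vol}(K)=1$ and $r'(1,k)\phi_0=\phi_0$ for $k\in K$ by Lemma \ref{lem:phi0-UK}(1), the integral over each coset $gK$ collapses to the single value $r'(1,g)\phi_0(x,y)$. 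Thus $J^G(x,y)$ becomes a sum of three finite sums, one for each family of representatives, and it remains to identify these with $J_1,J_2,J_3$.

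Next I would compute $r'(1,g)\phi_0(X)=\phi_0(Xg)$ for $X=(x,y)$ using the formulas of Section \ref{subsec:ecr-ramified}. From $X\,n_G(\beta)=(x,\,y+x\beta)$ and $X\,d_G(\alpha)=(x\alpha,\,y\ol{\alpha}\inv)$, together with $\phi_0(u,v)=\sigma(u)\sigma(\Pi v)$, the three families produce respectively
\[
\sigma(x\Pi)\,\sigma(\Pi(y+x\beta)\ol{\Pi}\inv),\quad
\sigma(x)\,\sigma(\Pi(y+x\beta)),\quad
\sigma(x\Pi\inv)\,\sigma(\Pi y\ol{\Pi}).
\]
The middle family already has the claimed form of $J_2$, so no further work is needed there; the summations over $\beta$ are exactly those defining $J_1,J_2,J_3$.

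The one point requiring care — and essentially the only obstacle — is simplifying the $\sigma$-factors in which a prime element has been conjugated or absorbed, namely showing $\sigma(\Pi z\ol{\Pi}\inv)=\sigma(z)$ for the first family and $\sigma(x\Pi\inv)=\sigma(\Pi\inv x)$, $\sigma(\Pi y\ol{\Pi})=\sigma(\pi y)$ for the third. Since $B_p$ is a division algebra, $\ord_{\Pi}$ is a two-sided valuation that is additive on products, and both $\Pi$ and its conjugate $\ol{\Pi}$ are prime elements with $\ord_{\Pi}(\Pi)=\ord_{\Pi}(\ol{\Pi})=1$, while $\ord_{\Pi}(\pi)=2$ because $\pi\cO=\grP^2$. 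Writing $\sigma(w)=\delta(\ord_{\Pi}(w)\ge 0)$ and computing $\ord_{\Pi}(\Pi z\ol{\Pi}\inv)=\ord_{\Pi}(z)$, $\ord_{\Pi}(x\Pi\inv)=\ord_{\Pi}(\Pi\inv x)=\ord_{\Pi}(x)-1$, and $\ord_{\Pi}(\Pi y\ol{\Pi})=\ord_{\Pi}(y)+2=\ord_{\Pi}(\pi y)$, these identities follow at once. Hence the first and third families collapse to $J_1$ and $J_3$, and summing the three contributions yields the asserted decomposition.
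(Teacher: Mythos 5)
Your argument is correct and follows the same route as the paper: decompose $T_1^G$ via Lemma \ref{lem:hecke-operators-G-ramified}, use $\mathrm{vol}(K_p)=1$ together with the $K$-invariance of $\phi_0$ to collapse the integral to a sum over coset representatives, evaluate $r'(1,g)\phi_0(x,y)=\phi_0((x,y)g)$ on each family, and simplify the resulting $\sigma$-factors using the two-sided valuation $\ord_{\Pi}$ (with $\ord_{\Pi}(\pi)=2$). Your explicit justification of the identities $\sigma(\Pi z\ol{\Pi}\inv)=\sigma(z)$, $\sigma(x\Pi\inv)=\sigma(\Pi\inv x)$ and $\sigma(\Pi y\ol{\Pi})=\sigma(\pi y)$ is exactly the step the paper carries out implicitly in its final display.
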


\begin{proof}
By Lemma \ref{lem:hecke-operators-G-ramified}, we have
\begin{align*}
& J^G(x,y)\\
&=
\sum_{\beta\in X_{-1}/\pi X_{-1}}\,r'(1,n_G(\beta)\dd_1^G)\phi_0(x,y)\\
&\quad +\sum_{\beta\in X^0_{-2}/X_{-1}}\,r'(1,n_G(\beta))\phi_0(x,y)
+r'(1,d^G_{-1})\phi_0(x,y)\\
 &=
\sum_{\beta\in X_{-1}/\pi X_{-1}}\,\phi_0((x,y)n_G(\beta)\dd_1^G) 
+\sum_{\beta\in X^0_{-2}/X_{-1}}\,\phi_0((x,y)n_G(\beta))
+\phi_0((x,y) \,\dd^G_{-1})\\
 & =
\sum_{\beta\in X_{-1}/\pi X_{-1}}\,\phi_0(x\Pi,(y+x\beta)\ol{\Pi}\inv) 
+\sum_{\beta\in X^0_{-2}/X_{-1}}\,\phi_0(x,y+x\beta)
+\phi_0(x\Pi\inv,y\ol{\Pi})\\
&=\sum_{\beta\in X_{-1}/\pi X_{-1}}\,\sigma(x\Pi)\,
\sigma(\Pi (y+x\beta)\ol{\Pi}\inv) )
+\sum_{\beta\in X^0_{-2}/X_{-1}}\,\sigma(x)\,\sigma(\Pi(y+x\beta))
\\ &\quad
+\sigma(x\Pi\inv)\,\sigma(\Pi y\ol{\Pi})\\
&=\sum_{\beta\in X_{-1}/\pi X_{-1}}\,\sigma(x\Pi)\,\sigma(y+x\beta)+
\sum_{\beta\in X^0_{-2}/X_{-1}}\,\sigma(x)\,\sigma(\Pi(y+x\beta))
+\sigma(\Pi\inv x)\,\sigma(\pi y).
\end{align*}
\end{proof}

We now prove Lemma \ref{lem:explicit-formula-JG-ramified}.

It is easy to see that $J^G(x,y)=0$ if $\ord_{\Pi}(x)\le -2$.

Suppose that $\ord_{\Pi}(x)=-1$.
Then $J_2(x,y)=J_3(x,y)=0$ and
\begin{align*}
 J_1(x,y)&=\sum_{\beta\in X_{-1}/\pi X_{-1}}\,\sigma(y+x\beta)\\
 &=\sum_{\beta\in X_{-1}/\pi X_{-1}}\,\sigma(\Pi\inv(x\inv y+\beta))\\
 &=\delta(\tr(x\inv y)\in p\bZ_p)\,\sigma(\Pi x\inv y) \\
 & =\delta(\tr(\ol{x} y)\in \bZ_p)\,\sigma(\pi y).
\end{align*}

Suppose that $\ord_{\Pi}(x)=0$.
Then $J_3(x,y)=0$. We have
\begin{align*}
 J_2(x,y)&=\sum_{\beta\in X_{-1}/\pi X_{-1}}\,\sigma(y+x\beta)\\
 &=\sum_{\beta\in X_{-1}/\pi X_{-1}}\,\sigma(x\inv y+\beta)\\ 
&=\sum_{\beta'\in X_{-1}/\pi X_{0}}\,\sum_{\beta''\in X_{0}/\pi X_{1}}\,
\sigma(x\inv y+\beta'+\beta'')\\ 
&=|X_0/X_1|\,\sum_{\beta'\in X_{-1}/\pi X_{0}}\,
\sigma(x\inv y+\beta')\\
 &=p\,\sigma(\Pi x\inv y) \\
 &=p\,\sigma(\Pi y) 
\end{align*}
and
\begin{align*}
 J_2(x,y)&=
\sum_{\beta\in X^0_{-2}/X_{-1}}\,\sigma(\Pi(x\inv y+\beta))\\
 &=\delta(\tr(x\inv y)\in\bZ_p) \,\sigma(\pi x\inv y)-\sigma(\Pi x\inv y)\\
 & =\delta(\tr(\ol{x} y)\in\bZ_p) \,\sigma(\pi  y)-\sigma(\Pi  y).
\end{align*}

Suppose that $\ord_{\Pi}(x)\ge 1$.
We have
\begin{align*}
 J_1(x,y)&=|X_{-1}/\pi X_{-1}|\,\sigma(y)=p^3\,\sigma(y), \\
 J_2(x,y)&=|X^0_{-}/X_{-1}|\,\sigma(\Pi y)=(p-1)\,\sigma(\Pi y), \\
 J_3(x,y)&=\sigma(\pi y),
\end{align*}
which completes the proof of Lemma \ref{lem:explicit-formula-JG-ramified}.


\bigskip

\begin{flushleft}
Atsushi Murase

Department of Mathematical Science

Faculty of Science

Kyoto Sangyo University

Motoyama, Kamigamo, Kita-ku

Kyoto 603-8555, Japan

E-mail: murase@cc.kyoto-su.ac.jp
 
\end{flushleft}

\medskip

\begin{flushleft}
 Hiro-aki Narita

Department of Mathematics

Faculty of Science and Engineering

Waseda University

3-4-1 Okubo, Shinjuku-ku

Tokyo 169-8555, Japan

E-mail: hnarita@waseda.jp
\end{flushleft}

\end{document}